\documentclass[11pt]{amsart}

\usepackage{amsmath,amssymb,amsfonts,amsthm,mathtools}
\usepackage{mathrsfs}
\usepackage{enumitem}
\usepackage{array}
\usepackage{hyperref}
\hypersetup{
    colorlinks=true,
    linkcolor=blue,
    citecolor=red,
    urlcolor=red
}

\theoremstyle{plain}
\newtheorem{thm}{Theorem}[section]
\newtheorem{prop}[thm]{Proposition}
\newtheorem{lem}[thm]{Lemma}
\newtheorem{cor}[thm]{Corollary}

\theoremstyle{definition}
\newtheorem{dfn}[thm]{Definition}
\newtheorem{example}[thm]{Example}

\theoremstyle{remark}
\newtheorem{rem}[thm]{Remark}

\newcommand{\C}{\mathbb{C}}
\newcommand{\R}{\mathbb{R}}
\newcommand{\Z}{\mathbb{Z}}

\DeclareMathOperator{\Fred}{Fred}
\DeclareMathOperator{\Ker}{Ker}
\DeclareMathOperator{\Coker}{Coker}

\newcommand{\Fredzero}{\Fred_{0}}
\newcommand{\Fredsaone}{\Fred_{\mathrm{sa}}^{1}}

\newcommand{\IntroThm}[1]{\hyperref[#1]{Theorem~\ref*{#1}}}
\newcommand{\LaterThm}[1]{\hyperref[#1]{Theorem~\ref*{#1}}}
\newcommand{\LaterProp}[1]{\hyperref[#1]{Proposition~\ref*{#1}}}
\newcommand{\LaterDfn}[1]{\hyperref[#1]{Definition~\ref*{#1}}}
\newcommand{\LaterLem}[1]{\hyperref[#1]{Lemma~\ref*{#1}}}
\newcommand{\LaterCor}[1]{\hyperref[#1]{Corollary~\ref*{#1}}}
\newcommand{\LaterRem}[1]{\hyperref[#1]{Remark~\ref*{#1}}}
\newcommand{\LaterStmt}[1]{\hyperref[#1]{\ref*{#1}}}

\title[Odd Koschorke Classes]
{Odd Koschorke Classes}

\author[K. Horie]{Kyouhei Horie}

\address{
Department of Mathematics, Institute of Science Tokyo\\
2-12-1 Ookayama, Meguro-ku, Tokyo 152-8551, Japan
}

\email{horie.k.1e2d@m.isct.ac.jp}

\date{}

\begin{document}

\begin{abstract}
We introduce odd Koschorke classes in odd \(K\)-theory by using degeneracy loci
of self-adjoint Fredholm operators. These classes are characteristic classes
analogous to the even Koschorke classes in even \(K\)-theory. We study two aspects of these
classes: their role as obstruction classes and their realization as
characteristic classes with real coefficients for odd twisted \(K\)-theory.  On
the even side, we introduce generalized Koschorke classes indexed by arbitrary
partitions via Cibotaru's notion of a quasi-manifold.  These classes form a
\(\C\)-basis of \(H^*(\Fred_0;\C)\) and recover the usual Koschorke classes for
rectangular partitions. Finally, by analogy with the correspondence between even Koschorke classes
and singular vectors in a representation of the Virasoro algebra, we state a
result about a representation of a super-Virasoro algebra: each singular vector
in that representation has a unique finite expansion in terms of generalized Koschorke classes and
generalized odd Koschorke classes.
\end{abstract}

\maketitle
\tableofcontents

%%%%%%%%%%%%%%%%%%%%%%%%%%%%%%%%%%%%%%%%%%%%%%%%

\section{Introduction}
%%%%%%%%%%%%%%%%%%%%%%%%%%%%%%%%%%%%%%%%%%%%%%%%

\subsection{Background}

Topological \(K\)-theory is a generalized cohomology theory which assigns to a
compact Hausdorff space \(X\) a group built from complex vector bundles on
\(X\); see Atiyah's book \cite{AtiyahKTheory}.  One classical way to study
characteristic classes in topological \(K\)-theory is to realize them by
geometric degeneracy loci.  In the finite-dimensional theory, Chern classes and
Schur polynomial classes are represented by Schubert cycles on Grassmannians;
see, for example, Fulton \cite[Chapter~9]{Fulton}.

In the infinite-dimensional setting, Koschorke developed an analogous theory for
Fredholm bundle maps \cite{Koschorke}.  The basic idea is that the set of
Fredholm operators with prescribed dimensions of kernel and cokernel is a
stratum of finite codimension in the space of Fredholm operators.  The cohomology class associated with this stratum is a universal cohomology class;
its pullback along a family of Fredholm operators is the corresponding Koschorke class.  Atiyah and Jones used
Koschorke classes in their study of Yang--Mills theory: the Koschorke classes
for the Dirac operators over the space of connections give information on the
existence of connections for which the Dirac equation has a large solution
space \cite{AtiyahJones}.
Morimoto studied infinite-dimensional cycles associated with operators
\cite{Morimoto}.  From the present point of view, Morimoto's work belongs to the
same general strategy: cohomology classes of operator spaces are represented by
geometric subsets defined by degeneracy conditions.

Thus there is a reasonable amount of information about the even Koschorke
classes.  By contrast, the corresponding odd theory has not been developed to
the same extent.  In particular, there seems to be little literature on
classes corresponding to odd \(K\)-theory in the sense of Koschorke.  The main purpose of
this paper is to construct and study such classes, which we call \emph{odd
Koschorke classes}.

\subsection{Even and odd Koschorke classes}

We first recall the basic properties of the even Koschorke classes.  Let
\(\mathcal H\) be a separable infinite dimensional complex Hilbert space, and let
\(\Fred(\mathcal H)\) be the space of bounded Fredholm operators on
\(\mathcal H\).  For an integer \(m\), denote by \(\Fred_m\) the connected
component consisting of Fredholm operators of index \(m\).  In this component we
put
\[
A_{p,q}=
\{T\in\Fred_{p-q}\mid
\dim_{\C}\Ker T=p,\ \dim_{\C}\Coker T=q\}.
\]
The cohomology class associated with this stratum is the universal Koschorke class
\[
k_{p,q}\in H^{2pq}(\Fred_{p-q}).
\]
For a continuous family \(A\colon X\to\Fred_{p-q}\), we write
\[
\chi_{p,q}(A)=A^*k_{p,q}\in H^{2pq}(X).
\]

\begin{thm}[Even Koschorke classes
(Propositions~\ref{prop:even-vanishing}, \ref{prop:even-obstruction};
Theorem~\ref{thm:even-koschorke-determinant})]
\label{thm:intro-even-koschorke}
Let \(p,q\geq 1\), and let \(A\colon X\to\Fred_{p-q}\) be a continuous family.
The following assertions hold.
\begin{enumerate}[label=\textup{(\roman*)}]
\item If \(\dim_{\C}\Ker A_x<p\) and \(\dim_{\C}\Coker A_x<q\)
for every \(x\in X\), then
\[
\chi_{p,q}(A)=0.
\]
\item If \(X\) is a CW complex and \(\chi_{p,q}(A)=0\), then
\(A|_{X^{(2pq)}}\) is homotopic to a continuous map
\[
B\colon X^{(2pq)}\longrightarrow \Fred_{p-q}
\]
such that
\[
\dim_{\C}\Ker B_x<p
\qquad\text{and}\qquad
\dim_{\C}\Coker B_x<q
\]
for every \(x\in X^{(2pq)}\).  Here \(X^{(2pq)}\) denotes the
\(2pq\)-skeleton of the CW complex \(X\).
\item Let \(c_i\in H^{2i}(\Fred_{p-q})\) denote the universal Chern
classes.  Then one has the Hankel determinant formula
\[
k_{p,q}=\det(c_{p-i+j})_{1\leq i,j\leq q}.
\]
\end{enumerate}
\end{thm}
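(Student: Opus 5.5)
Our plan is to treat the three assertions separately, using throughout the standard description of the stratification of \(\Fred_{p-q}\): the set \(\overline{A}_{p,q}=\{T\mid \dim\Ker T\ge p\}\) (equivalently \(\dim\Coker T\ge q\), since the index is fixed) is the closure of \(A_{p,q}\). Its complement is \(U_{p,q}=\{T\mid \dim\Ker T<p\}\). The stratum \(A_{p,q}\) is a closed submanifold of \(\Fred_{p-q}\setminus(\overline{A}_{p,q}\setminus A_{p,q})\) of complex codimension \(pq\), and its normal bundle at \(T\) is \(\operatorname{Hom}(\Ker T,\Coker T)\). We therefore take \(k_{p,q}\) to be the image of the Thom class under
\[
H^{2pq}(\Fred_{p-q},U_{p,q})\to H^{2pq}(\Fred_{p-q}).
\]
The higher strata have codimension at least \(2(p+1)(q+1)>2pq+1\), so excision gives \(H^{2pq}(\Fred,U)\cong H^{2pq}(\nu,\nu\setminus 0)\).

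For (i), note that the hypothesis says exactly that \(A\) factors through \(U_{p,q}\). The class \(k_{p,q}\) comes from \(H^*(\Fred,U)\), so it restricts to zero on \(U\), and hence \(\chi_{p,q}(A)=A^*k_{p,q}=0\).

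For (ii), we use obstruction theory for the inclusion \(U_{p,q}\hookrightarrow\Fred_{p-q}\). We compute the connectivity of the pair via transversality. Any map of a \(k\)-disk into \(\Fred\) can be perturbed off a stratum of real codimension \(>k\); the perturbation is done in Banach-manifold charts by finite-rank modifications. It follows that \((\Fred_{p-q},U_{p,q})\) is \((2pq-1)\)-connected. By the relative Hurewicz theorem, \(\pi_{2pq}(\Fred,U)\cong H_{2pq}(\Fred,U)\cong\Z\). This requires \(U\) to be simply connected, which we check, or else we work with the local system, which is trivial because the normal bundle is complex and hence oriented. The primary obstruction to deforming \(A|_{X^{(2pq)}}\) into \(U\) is then the class \(A^*\) of the Thom class in \(H^{2pq}(X;\Z)\). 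To pass to \(\chi_{p,q}(A)\) we must check that vanishing in \(H^{2pq}(X)\) suffices. First we deform the \((2pq-1)\)-skeleton into \(U\); the obstruction cocycle on the \(2pq\)-cells then represents the lifted class. If \(\chi_{p,q}(A)=0\), we modify the deformation on the \((2pq-1)\)-skeleton by a difference cochain to kill the cocycle, and then extend over the \(2pq\)-cells. The homotopy extension property gives the homotopy to \(B\). The main obstacle is the connectivity and Hurewicz step, together with the identification of the obstruction cocycle with the pulled-back Thom class.

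For (iii), we reduce to finite dimensions. By Atiyah--Jänich, \(\Fred_{p-q}\simeq \Z\times BU\) componentwise, and cohomology is detected on finite Grassmannian models. Concretely, consider \(A\colon X\to\Fred_{p-q}\) with \(X\) compact. We choose a finite-rank trivial bundle \(\C^N\) with a map \(\C^N\to\mathcal H\) whose image, together with \(\operatorname{im}A_x\), spans \(\mathcal H\). Then \(\Ker A\) stabilizes to the bundle map \(\phi\colon E=\Ker(A\oplus\iota)\to\C^N\) with \(\operatorname{rk}E=N+p-q\). The strata correspond: \(\dim\Ker A_x\ge p\) if and only if \(\operatorname{rk}\phi_x\le N-q\). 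The Thom--Porteous formula then gives the class of this degeneracy locus as \(\det(c_{p-i+j}(\C^N-E))_{1\le i,j\le q}\). Here \(c(\C^N-E)\) equals the \(K\)-theoretic index class, up to the sign convention that defines the universal \(c_i\), and this yields \(\det(c_{p-i+j})\). Naturality of the Thom class under this stabilization, with \(X\) ranging over finite Grassmannian approximations of \(BU\), gives the universal identity.
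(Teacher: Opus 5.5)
You handle (i) and (ii) correctly. The paper proves the even statements only by citing Koschorke, but your arguments are the even-degree versions of what it does for the odd classes: Propositions~\ref{prop:odd-vanishing} and~\ref{prop:odd-pair-connected}, Lemma~\ref{lem:odd-obstruction-local-system}, Theorem~\ref{thm:odd-koschorke-obstruction}, and Corollary~\ref{cor:odd-koschorke-avoidance}. The simple connectivity of \(U_{p,q}\) that you promise to check does hold. For \(pq\ge2\) it follows from \(\pi_1(\Fred_{p-q})=0\) together with \(\pi_2(\Fred_{p-q},U_{p,q})=0\). For \(p=q=1\), \(U_{1,1}\) is the group of invertible operators, which is contractible by Kuiper.

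The genuine gap is the Thom--Porteous step in (iii), where you have swapped \(p\) and \(q\).

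\emph{What Porteous actually gives.} Set \(e=\operatorname{rk}E=N+p-q\), \(f=N\), and rank bound \(k=N-q\). Then \(e-k=p\) and \(f-k=q\), so Porteous gives
\[
[\{\operatorname{rk}\phi\le N-q\}]=\det\bigl(c_{q-i+j}(\C^N-E)\bigr)_{1\le i,j\le p}.
\]
This is a \(p\times p\) determinant, not the transposed \(q\times q\) one you wrote.

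\emph{A counterexample for \(p\ne q\).} Take \(p=1\), \(q=2\), \(N=2\), and \(E=L\) a line bundle. Then \(\phi\) is a pair of sections of \(L^*\), and the locus is their common zero set. Its class is \(c_1(L)^2=c_2(\C^2-L)\). Your determinant is \(c_1^2-c_2\); evaluated on \(c(\C^2-L)=(1+c_1(L))^{-1}\), it equals \(c_1(L)^2-c_1(L)^2=0\).

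\emph{Why a sign convention cannot fix it.} Passing between \(c(-\operatorname{Ind})\) and \(c(\operatorname{Ind})\) is not a sign change. It inverts the total Chern class and conjugates the partition:
\[
\det\bigl(c_{\lambda_i+j-i}(-\xi)\bigr)=(-1)^{|\lambda|}\det\bigl(c_{\lambda'_i+j-i}(\xi)\bigr).
\]

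\emph{The correct chain.} With the complex coorientation of \(\operatorname{Hom}(\Ker,\Coker)\),
\[
k_{p,q}=\det\bigl(c_{q-i+j}(-\operatorname{Ind})\bigr)_{1\le i,j\le p}
=(-1)^{pq}\det\bigl(c_{p-i+j}(\operatorname{Ind})\bigr)_{1\le i,j\le q}
=\det\bigl(c_{p-i+j}(\operatorname{Ind}^*)\bigr)_{1\le i,j\le q}.
\]
So the displayed Hankel formula holds exactly for the normalization \(c_i=c_i(\operatorname{Ind}^*)=(-1)^i c_i(\operatorname{Ind})\). You must fix this normalization explicitly and insert the duality step. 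The statement leaves the normalization implicit, and the paper defers to Koschorke for it.

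With \(c_i=c_i(-\operatorname{Ind})\), which is what your \(c(\C^N-E)\) actually is and what the paper adopts in Section~4, the \(q\times q\) determinant is the class of \(\{\dim\Ker\ge q\}\subset\Fred_{q-p}\), that is, \(k_{q,p}\). So Corollary~\ref{cor:ordinary-koschorke-pq-paper} cannot serve as a consistency check on your formula when \(p\ne q\).
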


The odd Koschorke classes constructed in this paper are the self-adjoint
counterparts of these classes.  Let \(\Fredsaone\) denote the non-contractible
component of the space of bounded self-adjoint Fredholm operators.  By the work of Atiyah and Singer \cite{AtiyahSingerSkew}, this space
is a model for odd complex \(K\)-theory.
For \(p\geq0\), put
\[
A_p^{\mathrm{sa}}
=
\{A\in\Fredsaone\mid \dim_{\C}\Ker A=p\}.
\]
From the cohomology class associated with this stratum, we define the universal
odd Koschorke class
\[
k_p^{\mathrm{odd}}\in H^{p^2}(\Fredsaone).
\]
For a continuous family \(A\colon X\to\Fredsaone\), we set
\[
\chi_p^{\mathrm{odd}}(A)=A^*k_p^{\mathrm{odd}}\in H^{p^2}(X).
\]

\begin{thm}[Odd Koschorke classes
(Definitions~\ref{dfn:odd-universal-koschorke},
\ref{dfn:odd-koschorke-characteristic}; Proposition~\ref{prop:odd-vanishing};
Theorem~\ref{thm:odd-koschorke-obstruction};
Corollary~\ref{cor:odd-koschorke-avoidance};
Theorem~\ref{thm:odd-koschorke-chern})]
\label{thm:intro-universal-odd-koschorke}
\label{thm:intro-odd-vanishing}
\label{thm:intro-odd-obstruction}
\label{thm:intro-odd-chern-formula}
Let \(p\geq1\), and let \(A\colon X\to\Fredsaone\) be a continuous family.  The
following assertions hold.
\begin{enumerate}[label=\textup{(\roman*)}]
\item If \(\dim_{\C}\Ker A_x<p\) for every \(x\in X\), then
\[
\chi_p^{\mathrm{odd}}(A)=0.
\]
\item If \(X\) is a CW complex and \(\chi_p^{\mathrm{odd}}(A)=0\), then
\(A|_{X^{(p^2)}}\) is homotopic to a continuous map
\[
B\colon X^{(p^2)}\longrightarrow \Fredsaone
\]
such that
\[
\dim_{\C}\Ker B_x<p
\]
for every \(x\in X^{(p^2)}\).  Here \(X^{(p^2)}\) denotes the
\(p^2\)-skeleton of the CW complex \(X\).
\item Let \(c_{(2m+1)/2}\in H^{2m+1}(\Fredsaone)\) denote the universal odd
Chern classes, recalled in Section~3.5.  Then one has
\[
k_p^{\mathrm{odd}}
=
c_{1/2}c_{3/2}\cdots c_{(2p-1)/2}.
\]
\end{enumerate}
\end{thm}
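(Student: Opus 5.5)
The plan is to set up the stratification of \(\Fredsaone\) by kernel dimension, define \(k_p^{\mathrm{odd}}\) as the Thom/Poincaré-dual class of the closure \(\overline{A_p^{\mathrm{sa}}}=\{\dim\Ker A\geq p\}\), and deduce (i)--(iii) from this geometric description.

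\textbf{The stratification.} First I would show that \(A_p^{\mathrm{sa}}\) is a smooth (Banach) submanifold of \(\Fredsaone\) of codimension \(p^2\), with cooriented normal bundle. Near \(A_0\in A_p^{\mathrm{sa}}\), choose \(\varepsilon>0\) with \(\mathrm{spec}(A_0)\cap[-\varepsilon,\varepsilon]=\{0\}\). For nearby \(A\), let \(E_A\) be the spectral subspace of \(A\) for \((-\varepsilon,\varepsilon)\); it is continuous in \(A\) and has dimension \(p\). Then \(A\in A_p^{\mathrm{sa}}\) iff the compression \(A|_{E_A}\), a Hermitian \(p\times p\) matrix, vanishes. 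This gives a submersion onto the \(p^2\)-dimensional real space of Hermitian matrices. The normal bundle is \(\mathrm{Herm}(\Ker A)\), which is canonically oriented since it is the real form of \(\mathrm{End}(\Ker A)\). The strata \(A_q^{\mathrm{sa}}\) with \(q>p\) lie in the closure and have codimension \(q^2\geq p^2+2p+1\). Hence \(\overline{A_p^{\mathrm{sa}}}\) is a cooriented "quasi-manifold" (pseudo-manifold) of codimension \(p^2\), whose singular part has codimension at least \(2p+1\geq 2\) more. So it carries a fundamental class. I would define \(k_p^{\mathrm{odd}}\) as the image of the Thom class under
\[
H^{p^2}(U,U\setminus \overline{A_p^{\mathrm{sa}}})\to H^{p^2}(\Fredsaone),
\]
where \(U\) is a tubular neighborhood. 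Equivalently, it is the Thom class of the normal bundle on the open stratum, extended across the deep strata by excision and a dimension count.

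\textbf{Parts (i) and (ii).} Part (i) is then formal: if \(A(X)\subset \Fredsaone\setminus\overline{A_p^{\mathrm{sa}}}\), the class factors through \(H^{p^2}(\Fredsaone,\Fredsaone\setminus\overline{A_p^{\mathrm{sa}}})\to H^{p^2}(\Fredsaone)\to H^{p^2}(\Fredsaone\setminus\overline{A_p^{\mathrm{sa}}})\), which is zero. For (ii) I would use obstruction theory.
\begin{enumerate}[label=\textup{(\arabic*)}]
\item By transversality in the complement, \(\Fredsaone\setminus\overline{A_p^{\mathrm{sa}}}\to\Fredsaone\) is \((p^2-1)\)-connected.
\item The first obstruction to deforming \(A\) off the locus, over the \(p^2\)-skeleton, lies in \(H^{p^2}(X;\pi_{p^2-1}(\text{fiber}))\). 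The local linking sphere is \(S^{p^2-1}\subset\mathrm{Herm}_p\), which generates \(\Z\).
\item This primary obstruction equals \(A^*k_p^{\mathrm{odd}}\), since the obstruction cocycle counts the transverse intersections of cells with the locus.
\end{enumerate}
The point needing care is showing that the homotopy fiber is \((p^2-2)\)-connected with \(\pi_{p^2-1}=\Z\) generated by the linking sphere, so that vanishing of the class suffices over \(X^{(p^2)}\). I would handle this with the transversality statement and the codimension gap \(q^2-p^2\geq 2p+1\).

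\textbf{Part (iii): the Chern formula.} I expect this to be the main obstacle. My approach is to compute both sides on a detecting family.
\begin{enumerate}[label=\textup{(\arabic*)}]
\item Recall that \(H^*(\Fredsaone;\Q)\cong H^*(U(\infty);\Q)\) is exterior on the odd classes \(c_{(2m+1)/2}\), so \(H^{p^2}\) is spanned by products of distinct odd classes.
\item Use a finite-dimensional model: \(U(n)\to\Fredsaone\), \(g\mapsto\) the Cayley-type operator \(i(1-g)(1+g)^{-1}\), suitably completed, in which kernel dimension \(p\) corresponds to \(\dim\Ker(g-1)=p\).
\item The locus \(\{\dim\Ker(g-1)\geq p\}\) in \(U(n)\) is a classical Schubert-type cycle. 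Its Poincaré dual in \(H^*(U(n))\) is the product \(x_1x_3\cdots x_{2p-1}\) of the primitive generators. This is known from the cell structure of \(U(n)\) via \(\Sigma\C P^{n-1}\) and Miller's filtration, or computed by induction on \(p\) with the fibration \(U(n-1)\to U(n)\to S^{2n-1}\).
\item Check that the \(x_{2m+1}\) pull back to \(c_{(2m+1)/2}\) with the paper's normalization from Section~3.5.
\end{enumerate}
To fix the integral sign and torsion, compare with the even case (Theorem~\ref{thm:intro-even-koschorke}(iii)) via the suspension \(\Sigma\Fredsaone\to\Fred_0\) and the loop/suspension relation between \(c_i\) and \(c_{(2i-1)/2}\). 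Alternatively, use the cohomology computation of \(U(\infty)\) directly, since \(H^*(U;\Z)\) is torsion-free.
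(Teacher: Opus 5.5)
\textbf{The gap is in (ii).} Your part (i) is fine and is the paper's argument. The problem sits exactly at the step you flag in (ii). You claim the homotopy fiber of $V_{p-1}^{\mathrm{sa}}\hookrightarrow\Fredsaone$ has $\pi_{p^2-1}\cong\Z$, generated by the linking sphere (equivalently $\pi_{p^2}(F,V)\cong\Z$). This is false for $p\ge2$, and no transversality or codimension count can establish it.
\begin{enumerate}
\item The connectivity of the pair forces $\pi_1(V_{p-1}^{\mathrm{sa}})\cong\pi_1(\Fredsaone)\cong\Z$ (spectral flow). So relative Hurewicz only identifies $H_{p^2}(F,V)\cong\Z$ with the $\pi_1(V)$-coinvariants of $\pi_{p^2}(F,V)$, not with $\pi_{p^2}(F,V)$ itself.
\item The stratum $A_p^{\mathrm{sa}}$ is simply connected. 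The map $T\mapsto\Ker T$ fibers it over $\operatorname{Gr}_p(\mathcal H)\simeq BU(p)$, and the fiber (invertible self-adjoint operators on $(\Ker T)^\perp$ with infinite positive and negative parts) is contractible by Kuiper.
\item Hence in the universal cover $\widetilde F$ the stratum lifts to $\Z$ disjoint sheets, each with its own linking sphere. The preimage $\widetilde V$ is simply connected, so relative Hurewicz plus the Thom isomorphism give $\pi_{p^2}(F,V)\cong H_{p^2}(\widetilde F,\widetilde V)\cong\Z[t,t^{-1}]$, with the generator of $\pi_1(V)$ acting by $t$.
\end{enumerate}
Consequently the primary obstruction lives in $H^{p^2}(X;A^*\Z[t,t^{-1}])$, and $\chi_p^{\mathrm{odd}}(A)$ is only its image under the augmentation $t\mapsto1$.

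\textbf{This loses information: a counterexample.} Take $p=2$, $Y=S^2\cup_d e^3$ with $d\ge2$, and $X=S^1\times Y$. Using $\Fredsaone\simeq U(\infty)\cong S^1\times SU(\infty)$, let $A=(z\mapsto z^d)\times g$, where $g\colon Y\to Y/S^2=S^3=SU(2)\subset SU(\infty)$.
\begin{itemize}
\item $X$ has a single $4$-cell $e^1\times e^3$, and the twisted coboundaries in degree $4$ form the ideal $(t^d-1,\,d)$.
\item Since $H^4(\Fredsaone)\cong\Z$ and $z\mapsto z^d$ has degree $d$, $A^*k_2^{\mathrm{odd}}$ is $d$ times a class in $H^4(X)\cong\Z/d$. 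So $\chi_2^{\mathrm{odd}}(A)=0$.
\item The lifted cell winds $d$ times in the $\pi_1$-direction, so its twisted value is $(1+t+\cdots+t^{d-1})m(t)$ with $m(1)=\pm1$ by (iii). This is $\equiv\pm(1+t+\cdots+t^{d-1})\neq0$ in $\Z[t^{\pm1}]/(t^d-1,d)\cong(\Z/d)^d$.
\end{itemize}
Since $\dim X=4$, this $A$ cannot be deformed into $V_1^{\mathrm{sa}}$, so (ii) fails as stated for $p\ge2$. The paper's own proof has the same defect. Its lemma identifying the obstruction coefficient system with the constant system $\Z$ asserts $\pi_n(F,V;v)\cong H_n(F,V)$ via relative Hurewicz, which needs $\pi_1(V)$ to act trivially, and it only checks monodromy along paths in $A_p^{\mathrm{sa}}$. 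For $p=1$, where $V_0^{\mathrm{sa}}$ is contractible, your argument can be made to work.

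\textbf{Part (iii): a different route.} The paper transports Cibotaru's infinite-dimensional Schubert quasi-manifold classes along the switched-graph diffeomorphism. It then checks that the coorientations agree on the flag-generic part of $A_p^{\mathrm{sa}}$ and concludes by injectivity of $H^{p^2}(\Fredsaone)\to H^{p^2}(V_p^{\mathrm{sa}})$. You instead reduce to the Poincar\'e dual of a kernel-dimension locus in $U(n)$. That is viable, but it needs three repairs:
\begin{itemize}
\item The Cayley transform you write is undefined where $-1\in\operatorname{spec}(g)$. You need a stratification-compatible comparison map, for example $T\mapsto-\exp(i\pi\phi(T))$ into unitaries of the form $1+\text{compact}$, which matches $\Ker T$ with $\Ker(u+1)$.
\item You need transversality of the finite-dimensional model to the strata.
\item You need an explicit coorientation check. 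The suspension comparison with the even case cannot fix signs, because cohomology suspension annihilates the decomposable classes $k_p^{\mathrm{odd}}$ for $p\ge2$.
\end{itemize}
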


The construction of the odd Koschorke classes and the proofs of assertions
(i) and (ii) of Theorem~\ref{thm:intro-universal-odd-koschorke} are formally
parallel to the even case.  The proof of the formula in (iii), which expresses the class in terms of Chern classes, follows
a different route from the proof in the even case and uses Cibotaru's notion of a
quasi-manifold \cite{Cibotaru}.

The following table compares the even side and the odd side.

\begin{center}
\renewcommand{\arraystretch}{1.25}
\begin{tabular}{|>{\raggedright\arraybackslash}p{0.18\textwidth}|>{\centering\arraybackslash}p{0.34\textwidth}|>{\centering\arraybackslash}p{0.34\textwidth}|}
\hline
& \textbf{Even side} & \textbf{Odd side} \\
\hline
Classifying space
& \(\Fred_{p-q}\)
& \(\Fredsaone\) \\
\hline
Degeneracy stratum
& \(A_{p,q}=\{T\mid \dim_\C\Ker T=p,\ \dim_\C\Coker T=q\}\)
& \(A_p^{\mathrm{sa}}=\{A\mid \dim_\C\Ker A=p\}\) \\
\hline
Real codimension
& \(\operatorname{codim}_{\R}(A_{p,q})=2pq\)
& \(\operatorname{codim}_{\R}(A_p^{\mathrm{sa}})=p^2\) \\
\hline
Universal class
& \(k_{p,q}\)
& \(k_p^{\mathrm{odd}}\) \\
\hline
Formula in terms of Chern classes
& \(k_{p,q}=\det(c_{p-i+j})_{1\leq i,j\leq q}\)
& \(k_p^{\mathrm{odd}}=c_{1/2}c_{3/2}\cdots c_{(2p-1)/2}\) \\
\hline
Avoidance condition
& \(\dim_\C\Ker T<p\) and \(\dim_\C\Coker T<q\)
& \(\dim_\C\Ker A<p\) \\
\hline
\end{tabular}
\end{center}

\subsection{Twisted \(K\)-theory}

Twisted \(K\)-theory is topological \(K\)-theory with a form of local
coefficients.  We use the formulation of Atiyah and Segal
\cite{AtiyahSegalTwisted}.  Atiyah and Segal also studied characteristic classes
for twisted \(K\)-theory with real coefficients and gave a condition which
decides when an ordinary characteristic class in \(K\)-theory lifts to a
characteristic class in twisted \(K\)-theory \cite{AtiyahSegal}.  The even
Koschorke classes satisfy this condition.

It is therefore natural to ask whether the odd Koschorke classes lift to twisted
\(K\)-theory.  Gomi studied conditions which decide when characteristic classes
in odd topological \(K\)-theory lift to characteristic classes for odd twisted
\(K\)-theory \cite{GomiMickelsson}.  Using this result, we prove that the odd
Koschorke classes lift to characteristic classes for odd twisted \(K\)-theory
(Theorem~\ref{thm:odd-koschorke-twisted-characteristic}).

\subsection{The relation with the Virasoro algebra}

Atiyah and Segal implicitly suggested that Koschorke classes are related to singular
vectors in a representation of the Virasoro algebra \cite{AtiyahSegal}.  Gomi
gave an explicit description of this relation \cite{GomiSugaku}: under the
realization of \(H^*(\Fred_0;\C)\) as a bosonic Fock space, the Koschorke classes correspond to singular vectors in the corresponding Virasoro representation.

After the odd Koschorke classes are added, it is natural to ask whether an
analogous statement holds for the super-Virasoro algebra.  The answer obtained in
this paper is not a perfectly parallel analogue of the even statement.  To
formulate the result, one has to generalize both the Koschorke classes and the
odd Koschorke classes.  In this paper we use Cibotaru's notion of a
quasi-manifold \cite{Cibotaru} to define generalized Koschorke classes and
generalized odd Koschorke classes indexed by partitions.  With these classes, we
prove that every singular vector in the \(N=1\) Neveu--Schwarz representation
considered in Section~5 has a unique finite
expansion with respect to the basis formed by the corresponding tensor products
(Theorem~\ref{thm:super-singular-expansion-paper}).

\subsection{Organization}

The paper is organized as follows. In Section~2 we recall the even
Koschorke classes, including their interpretation as characteristic classes,
their meaning from the viewpoint of obstruction theory, and their expression in terms of Chern classes.
In Section~3 we construct odd Koschorke classes using the model formed by self-adjoint
Fredholm operators, establish their basic properties, describe their interpretation from the viewpoint of obstruction theory, and explain their relation to twisted \(K\)-theory with real
coefficients. In Section~4 we introduce generalized Koschorke classes indexed
by arbitrary partitions, prove the Giambelli formula, explain how rectangular
partitions recover the even Koschorke classes, and recall the generalized
odd Koschorke classes needed to prove the formula expressing the universal odd Koschorke classes in terms of
odd Chern classes. In Section~5 we discuss the relation between Koschorke classes and the
Virasoro algebra, and we show that singular vectors in the Neveu--Schwarz representation considered in this paper
can be expanded using generalized Koschorke classes and generalized odd
Koschorke classes.

Unless otherwise specified, all cohomology groups are singular cohomology
groups with integer coefficients. We specify when real or complex coefficients
are used.

\subsection*{Acknowledgements.}

The author would like to thank his supervisor, Kiyonori Gomi. This work was supported by JST SPRING, Japan Grant Number JPMJSP2180, and by the Science Tokyo Support Program for Doctoral Students, funded by the Universities for International Research Excellence.

%%%%%%%%%%%%%%%%%%%%%%%%%%%%%%%%%%%%%%%%%%%%%%%%

\section{Koschorke classes}
%%%%%%%%%%%%%%%%%%%%%%%%%%%%%%%%%%%%%%%%%%%%%%%%

\subsection{The space of Fredholm operators and even \(K\)-theory}

Let $\mathcal{H}$ be a separable infinite-dimensional complex Hilbert space.
We write $\Fred(\mathcal{H})$ for the space of bounded Fredholm operators on
$\mathcal{H}$ endowed with the operator norm topology, and for each
$p\in \Z$ we write
\[
\Fred_{p}(\mathcal{H})
=
\{T\in \Fred(\mathcal{H})\mid
\operatorname{index}(T)=\dim\Ker T-\dim\Coker T=p\}.
\]
In particular,
\[
\Fred(\mathcal{H})
=
\coprod_{p\in \Z}\Fred_{p}(\mathcal{H}).
\]

Throughout this paper, for a paracompact Hausdorff space $X$, we adopt the
Fredholm-operator description as the definition of the even $K$-groups:
\[
K(X)=[X,\Fred(\mathcal{H})],
\qquad
\widetilde{K}(X)=[X,\Fred_{0}(\mathcal{H})].
\]
Here $[X,Y]$ denotes the set of homotopy classes of continuous maps from $X$ to
$Y$.

To define the addition operation on $K(X)$, we use two standard facts about separable
infinite-dimensional complex Hilbert spaces.
First, the orthogonal direct sum of two such Hilbert spaces is again a separable
infinite-dimensional complex Hilbert space.
Second, separable infinite-dimensional complex Hilbert spaces are unique up to
unitary isomorphism.
Hence there exists a unitary isomorphism
\[
\mathcal{H}\oplus \mathcal{H}\cong \mathcal{H}.
\]
We fix such an isomorphism once and for all.
Given $S,T\in \Fred(\mathcal{H})$, we form their direct sum
\[
S\oplus T\in \Fred(\mathcal{H}\oplus\mathcal{H}),
\]
and, via the above fixed unitary isomorphism, we regard $S\oplus T$ as an
element of $\Fred(\mathcal{H})$.
This induces an operation on homotopy classes
\[
[X,\Fred(\mathcal{H})]\times [X,\Fred(\mathcal{H})]
\longrightarrow
[X,\Fred(\mathcal{H})],
\]
and similarly on $[X,\Fred_{0}(\mathcal{H})]$.

The resulting operation is independent of the chosen unitary identification
$\mathcal{H}\oplus\mathcal{H}\cong\mathcal{H}$ up to canonical homotopy.  This
uses Kuiper's theorem, which states that the unitary group of a separable
infinite-dimensional Hilbert space is contractible in the norm topology
\cite{Kuiper}.  Hence the group law is well defined on homotopy classes.

From now on, we suppress the Hilbert space $\mathcal{H}$ in the notation and
write simply
\[
\Fred=\Fred(\mathcal{H}),
\qquad
\Fred_{p}=\Fred_{p}(\mathcal{H}),
\qquad
\Fred_{0}=\Fred_{0}(\mathcal{H}).
\]

Let $A$ be a commutative ring.  We write $H^{*}(X;A)$ for the
ordinary cohomology of a paracompact Hausdorff space $X$ with coefficients in $A$.

\begin{dfn}
A \emph{characteristic class of even $K$-theory with coefficients in $A$} is a
natural transformation
\[
\Phi\colon K(-)\longrightarrow H^{*}(-;A)
\]
from the functor \(K(-)\) on paracompact Hausdorff spaces to ordinary cohomology
with coefficients in $A$.  Similarly, a \emph{characteristic class of reduced
$K$-theory with coefficients in $A$} is a natural transformation
\[
\widetilde{\Phi}\colon \widetilde{K}(-)\longrightarrow H^{*}(-;A).
\]
\end{dfn}

\subsection{Even degeneracy strata}

Sections~2.2--2.5 recall Koschorke's results on Fredholm degeneracy strata
\cite{Koschorke}.

For integers $p,q\geq 0$, define
\[
A_{p,q}
=
\{T\in \Fred_{p-q}
\mid
\dim\Ker T=p,\ \dim\Coker T=q\}.
\]

\begin{prop}[Koschorke~\cite{Koschorke}]
For each pair $p,q\geq 0$, the subset $A_{p,q}$ is a Banach
submanifold of $\Fred_{p-q}$ of real codimension $2pq$.
\end{prop}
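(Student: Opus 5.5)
The plan is a local Lyapunov--Schmidt (Schur complement) reduction near a fixed point of \(A_{p,q}\). This exhibits \(A_{p,q}\) locally as the zero set of a submersion onto the finite-dimensional space \(\operatorname{Hom}(\Ker T_0,\Coker T_0)\), whose real dimension is \(2pq\).

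Fix \(T_0\in A_{p,q}\). Write \(K_0=\Ker T_0\) and \(R_0=\operatorname{Im}T_0\), which is closed because \(T_0\) is Fredholm. Decompose
\[
\mathcal H = K_0^{\perp}\oplus K_0,
\qquad
\mathcal H = R_0\oplus R_0^{\perp},
\]
where \(\dim K_0=p\) and \(\dim R_0^{\perp}=q\). With respect to these decompositions, every bounded \(T\) is a block matrix
\[
T=\begin{pmatrix} a(T) & b(T)\\ c(T) & d(T)\end{pmatrix},
\]
with \(a(T)\colon K_0^{\perp}\to R_0\), \(b(T)\colon K_0\to R_0\), \(c(T)\colon K_0^\perp\to R_0^\perp\) and \(d(T)\colon K_0\to R_0^\perp\). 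All four entries depend linearly and continuously on \(T\).

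At \(T_0\) the entry \(a(T_0)\) is an isomorphism and the other three entries vanish. Invertibility is an open condition, so on a neighbourhood \(U\) of \(T_0\) the entry \(a(T)\) stays invertible. On \(U\) we define the Schur complement
\[
\Phi(T)=d(T)-c(T)\,a(T)^{-1}b(T)\in \operatorname{Hom}_{\C}(K_0,R_0^{\perp}).
\]
This map is real-analytic (in particular smooth), because inversion is analytic on invertible operators.

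The key algebraic step is the factorization
\[
T=\begin{pmatrix}1&0\\ c a^{-1}&1\end{pmatrix}
\begin{pmatrix}a&0\\0&\Phi(T)\end{pmatrix}
\begin{pmatrix}1&a^{-1}b\\0&1\end{pmatrix}.
\]
The outer factors are invertible, so
\[
\Ker T\cong\Ker\Phi(T),
\qquad
\Coker T\cong\Coker\Phi(T).
\]
Here \(\Phi(T)\) is a linear map from a \(p\)-dimensional space to a \(q\)-dimensional space. Hence \(\dim\Ker T=p\) and \(\dim\Coker T=q\) both hold if and only if \(\Phi(T)=0\). Note that \(U\subset\Fred_{p-q}\) automatically, since the index is locally constant. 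This gives
\[
A_{p,q}\cap U=\Phi^{-1}(0).
\]

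It remains to show that \(\Phi\) is a submersion at points of \(\Phi^{-1}(0)\). For this I will use the affine slice
\[
T_0+\begin{pmatrix}0&0\\0&\delta\end{pmatrix},
\qquad \delta\in\operatorname{Hom}(K_0,R_0^\perp).
\]
Along this slice \(b=c=0\), so \(\Phi\) restricts to \(\delta\mapsto\delta\). The differential \(d\Phi_{T_0}\) is therefore surjective, and it splits because the target is finite-dimensional; hence its kernel is complemented. Surjectivity of the differential is an open condition, so after shrinking \(U\) the same holds at every point of \(U\). The implicit function theorem for Banach manifolds then shows that \(\Phi^{-1}(0)\) is a Banach submanifold of \(U\) of real codimension
\[
\dim_{\R}\operatorname{Hom}_{\C}(\C^p,\C^q)=2pq.
\]
Since \(T_0\) was arbitrary, \(A_{p,q}\) is a Banach submanifold of \(\Fred_{p-q}\) of real codimension \(2pq\).

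The only delicate point is checking that \(\Phi(T)\) exactly detects the kernel and cokernel dimensions, with no loss in the reduction. The block factorization above handles this directly. The rest is routine Banach-space calculus.
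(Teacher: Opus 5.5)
Your proof is correct and takes essentially the paper's route. The paper only cites Koschorke, whose argument is this Schur-complement reduction, and it carries out the same reduction itself in Proposition~\ref{prop:local-schur-complement-paper} (kernel and cokernel of $T$ identified with those of $\tau(T)$ via invertible block-triangular factors) and, for the self-adjoint analogue, in Proposition~\ref{prop:odd-normal-bundle} (stratum as the zero set of a submersion $S$ with $dS_{f_0}(\dot f)=\dot\alpha$), which matches your slice computation that $d\Phi$ is the identity on $\operatorname{Hom}(K_0,R_0^{\perp})$.
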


\begin{proof}
See Koschorke~\cite{Koschorke}.
\end{proof}

\subsection{Universal Koschorke classes and characteristic classes}

Let
\[
\nu_{p,q}\to A_{p,q}
\]
be the normal bundle of the Banach submanifold
\(A_{p,q}\subset \Fred_{p-q}\).
By Koschorke's local description of the stratum \(A_{p,q}\) \cite{Koschorke},
the normal bundle is canonically a complex vector bundle of complex rank \(pq\).
Hence it has a Thom class
\[
u_{p,q}\in H^{2pq}(\nu_{p,q},\nu_{p,q}\setminus A_{p,q}).
\]
This Thom class determines a universal cohomology class
\[
k_{p,q}\in H^{2pq}(\Fred_{p-q}),
\]
which we call the \emph{universal Koschorke class}.

Let \(X\) be a paracompact Hausdorff space and let
\[
A\colon X\to \Fred_{p-q}
\]
be a continuous map whose image lies in the fixed index component
\(\Fred_{p-q}\).  Then the universal Koschorke class gives
\[
\chi_{p,q}(A)=A^{*}(k_{p,q})\in H^{2pq}(X).
\]
If \([A]\in [X,\Fred_{p-q}]\) is the homotopy class of \(A\), then we set
\[
\chi_{p,q}([A])=A^*(k_{p,q}).
\]

\begin{dfn}
The class
\[
\chi_{p,q}([A])\in H^{2pq}(X)
\]
is the \emph{\((p,q)\)-th Koschorke class} of the family represented by
\(A\).
\end{dfn}

In particular, when \(p=q\), this construction gives a characteristic class on
the reduced \(K\)-group
\[
\widetilde K(X)=[X,\Fred_0].
\]

\begin{rem}
If \(X\) is not connected and \(A\colon X\to\Fred\) is an arbitrary Fredholm
family, the Fredholm index may vary from one connected component of \(X\) to
another.  Therefore a single finite-dimensional stabilization need not send the
whole of \(X\) into one fixed component \(\Fred_{p-q}\).  One may either work
componentwise or restrict to the fixed component \(\Fred_{p-q}\).  In this paper,
the later use of even Koschorke classes takes place on \(\Fred_0\), so no global assertion
on all of \(K(-)=[-,\Fred]\) is needed.
\end{rem}

\subsection{Interpretation in obstruction theory}

The Koschorke classes satisfy the following basic vanishing property.

\begin{prop}[Koschorke~\cite{Koschorke}]\label{prop:even-vanishing}
Let $X$ be a paracompact Hausdorff space and let
\[
A\colon X\to \Fred_{p-q}
\]
be a continuous map.
Assume that for every \(x\in X\), one has
\[
\dim \Ker A_x<p
\qquad\text{and}\qquad
\dim \Coker A_x<q.
\]
Then
\[
A^{*}(k_{p,q})=0.
\]
Equivalently,
\[
\chi_{p,q}(A)=0.
\]
\end{prop}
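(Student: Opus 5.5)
The plan is to make precise how the Thom class \(u_{p,q}\) yields \(k_{p,q}\), and then to observe that \(k_{p,q}\) restricts to zero on the open set where the hypotheses hold. I would first show that the closure of \(A_{p,q}\) in \(\Fred_{p-q}\) is
\[
\overline{A}_{p,q}=\{T\in\Fred_{p-q}\mid \dim\Ker T\geq p\}
=\{T\in\Fred_{p-q}\mid \dim\Coker T\geq q\}.
\]
The two descriptions agree because the index is fixed at \(p-q\). Closedness follows from upper semicontinuity of \(\dim\Ker\) on \(\Fred\). Set \(U=\Fred_{p-q}\setminus\overline{A}_{p,q}\). This set is open, and it is precisely the locus where \(\dim\Ker T<p\), or equivalently \(\dim\Coker T<q\). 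The construction of \(k_{p,q}\) from \(u_{p,q}\) should then be read as follows. A tubular neighbourhood \(N\cong\nu_{p,q}\) of \(A_{p,q}\) inside the open set \(V=U\cup A_{p,q}\) gives an excision isomorphism \(H^{2pq}(\nu_{p,q},\nu_{p,q}\setminus A_{p,q})\cong H^{2pq}(V,U)\). One then has to extend the resulting relative class to a class in \(H^{2pq}(\Fred_{p-q},U)\). Finally \(k_{p,q}\) is the image of that class under the restriction map \(j^*\colon H^{2pq}(\Fred_{p-q},U)\to H^{2pq}(\Fred_{p-q})\).

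For the extension step I would invoke the fact that the higher strata \(A_{p',q'}\) with \(p'>p\) have real codimension \(2p'q'>2pq+1\). A Banach-manifold version of the dimension-purity argument then gives \(H^{i}(\Fred_{p-q},V)=0\) for \(i\leq 2pq+1\). By the long exact sequence of the triple \((\Fred_{p-q},V,U)\), restriction \(H^{2pq}(\Fred_{p-q},U)\to H^{2pq}(V,U)\) is an isomorphism. I expect this to be the main obstacle. If it is too heavy, I would use Koschorke's original construction directly. It produces \(k_{p,q}\) as the image of a class \(\tilde k\in H^{2pq}(\Fred_{p-q},U)\), and this relative lift is the only feature I need.

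Granting the relative lift \(\tilde k\), the proposition is formal. The hypothesis says exactly that \(A(X)\subset U\), so \(A\) factors as \(X\xrightarrow{A'}U\xrightarrow{i}\Fred_{p-q}\). We have \(i^*k_{p,q}=i^*j^*\tilde k\), and the composite \(i^*j^*\) is two consecutive maps in the long exact sequence of the pair \((\Fred_{p-q},U)\), hence zero. Therefore
\[
A^*k_{p,q}=A'^*i^*k_{p,q}=0,
\]
that is, \(\chi_{p,q}(A)=0\). The hypothesis actually needs only one of the two inequalities, but the statement imposes both, so this costs nothing.
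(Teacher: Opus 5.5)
Your proposal is correct and matches the paper's approach. The paper only cites Koschorke for the even case, but its proof of the odd analogue (Proposition~\ref{prop:odd-vanishing}) is your argument: the relative class comes from duality on \((V_p,V_{p-1})\) and extends to the whole space because the higher strata force the relevant relative cohomology to vanish (Lemma~\ref{lem:relative-cohomology-fred-vp}), just as in your triple-sequence step. Then \(A\) factors through the open small-kernel locus and \(i^*j^*=0\) by exactness of the pair.
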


\begin{proof}
See Koschorke~\cite{Koschorke}.
\end{proof}

The converse statement, which is valid for CW complexes, is one of the basic
consequences for obstruction theory of the definition of Koschorke classes.

\begin{prop}[Koschorke~\cite{Koschorke}]\label{prop:even-obstruction}
Let $X$ be a CW complex and let
\[
A\colon X\to \Fred_{p-q}
\]
be a continuous map.
If
\[
\chi_{p,q}(A)=A^{*}(k_{p,q})=0,
\]
then there exists a continuous map
\[
B\colon X\to \Fred_{p-q}
\]
such that \(B\) is homotopic to \(A\) and
\[
\dim \Ker B_x<p
\qquad\text{and}\qquad
\dim \Coker B_x<q
\]
for every \(x\in X^{(2pq)}\). Here \(X^{(2pq)}\) denotes the \(2pq\)-skeleton
of \(X\).
\end{prop}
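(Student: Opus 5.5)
The plan is to follow the classical obstruction-theoretic argument. Set
\[
U_{p,q}=\{T\in\Fred_{p-q}\mid \dim\Ker T<p\ \text{and}\ \dim\Coker T<q\}.
\]
This is an open subset, since $\dim\Ker$ and $\dim\Coker$ are upper semicontinuous. Because the index is fixed to be $p-q$, its complement $Z=\Fred_{p-q}\setminus U_{p,q}$ is exactly the closure of $A_{p,q}$. Indeed, a point of $Z$ has $\dim\Ker T=p+j$ and $\dim\Coker T=q+j$ for some $j\geq 0$, so
\[
Z=\bigcup_{j\geq0}A_{p+j,q+j}.
\]
Each stratum $A_{p+j,q+j}$ has real codimension $2(p+j)(q+j)$. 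For $j\geq1$ this is at least $2pq+2$, and in fact $\geq 2pq+4$ when $p,q\geq1$. Hence $Z$ is a stratified closed subset whose top stratum has codimension $2pq$ with a complex (hence oriented) normal bundle, and all other strata have codimension at least $2pq+2$.

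\textbf{Step 1 (connectivity).} The first step is to show that the pair $(\Fred_{p-q},U_{p,q})$ is $(2pq-1)$-connected. For $k<2pq$, any map $(D^k,S^{k-1})\to(\Fred_{p-q},U_{p,q})$ can be perturbed rel boundary to a smooth map transverse to every stratum $A_{p+j,q+j}$; transversality in Banach manifolds with finite-codimension submanifolds is standard here. For dimension reasons the perturbed map then misses $Z$.

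\textbf{Step 2 (the first obstruction).} By Step 1 and obstruction theory (equivalently, cellular approximation relative to $U_{p,q}$), $A|_{X^{(2pq-1)}}$ can be deformed into $U_{p,q}$, and by homotopy extension we may assume this is done by a homotopy of $A$ on all of $X$. The obstruction to deforming further over $X^{(2pq)}$ is a class in
\[
H^{2pq}\bigl(X;\pi_{2pq}(\Fred_{p-q},U_{p,q})\bigr).
\]
By the Hurewicz theorem for the $(2pq-1)$-connected pair, together with the local structure near $A_{p,q}$ (a normal disc of real dimension $2pq$), we have
\[
\pi_{2pq}(\Fred_{p-q},U_{p,q})\cong H_{2pq}(\Fred_{p-q},U_{p,q})\cong\Z,
\]
generated by a normal disc to $A_{p,q}$. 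This uses that $A_{p,q}$ is connected, so that there is only one generator, and that the normal bundle is complex, so that there is no twisting of coefficients. Connectedness of $A_{p,q}$ would be checked via its description as a bundle of pairs of Grassmannian data over $\Fred$, or by using the transitive action of the group of invertibles.

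\textbf{Step 3 (identification with $\chi_{p,q}(A)$).} Since the primary obstruction is natural, it equals $A^*$ of the universal primary obstruction $\theta\in H^{2pq}(\Fred_{p-q},U_{p,q})\cong H^{2pq}(\Fred_{p-q},\Fred_{p-q}\setminus Z)$. Using a Thom isomorphism/excision argument, where the higher strata do not contribute in degree $2pq$ by codimension, $\theta$ restricts to the Thom class $u_{p,q}$ near $A_{p,q}$. Its image in $H^{2pq}(\Fred_{p-q})$ is therefore $k_{p,q}$. Consequently, if $\chi_{p,q}(A)=0$, the obstruction cocycle is a coboundary. We then redefine the deformation on $X^{(2pq-1)}$ (changing it only on $(2pq-1)$-cells) so that the obstruction cocycle vanishes, and extend into $U_{p,q}$ over $X^{(2pq)}$. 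Extending the homotopy to all of $X$ by homotopy extension yields $B$.

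The main obstacle is Step 3. The issue is making precise that the primary obstruction class, which a priori lives in relative cohomology of the pair, maps exactly to $A^*k_{p,q}$, and that vanishing in absolute cohomology suffices. The absolute vanishing only says that the relative class comes from $H^{2pq-1}(X^{(2pq-1)})$-type data, which is precisely the freedom of changing the deformation on $(2pq-1)$-cells; I would phrase this through the standard difference-cochain argument. I would handle the infinite-dimensional transversality issues by reducing, locally, to finite-dimensional stabilization: near any $T$, write $\Fred$ locally as a product with a space of finite matrices $\mathrm{Hom}(\C^{n+p-q},\C^n)$. In that local model the strata become the classical determinantal varieties, where everything is finite-dimensional.
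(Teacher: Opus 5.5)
Your proposal is correct and follows essentially the argument the paper relies on. The paper only cites Koschorke for the even case, but its proof of the odd analogue (Proposition~\ref{prop:odd-pair-connected}, Lemma~\ref{lem:odd-obstruction-local-system}, Theorem~\ref{thm:odd-koschorke-obstruction} and Corollary~\ref{cor:odd-koschorke-avoidance}) goes through your steps: connectivity of the pair by transversality, relative Hurewicz with untwisted $\Z$ coefficients from the oriented normal bundle, identification of the obstruction cocycle with $A^*$ of the relative class whose absolute image is the Koschorke class, and the standard extension step, with your difference-cochain and homotopy-extension remarks supplying details the paper calls standard (note also that the higher strata have codimension at least $2pq+6$, though only the bound $>2pq$ is used).
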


\begin{proof}
See Koschorke~\cite{Koschorke}.
\end{proof}

\subsection{Expression in terms of Chern classes}

We now recall the classical expression of the universal Koschorke classes in
terms of universal Chern classes.

Let
\[
c_i\in H^{2i}(\Fred_{p-q})
\qquad
(i\geq 1)
\]
denote the universal Chern classes.
Then the universal Koschorke class admits a determinantal expression in terms
of these classes.

\begin{thm}[Koschorke~\cite{Koschorke}]\label{thm:even-koschorke-determinant}
For each pair \(p,q\geq 1\), the universal Koschorke class \(k_{p,q}\) is given by
\[
k_{p,q}
=
\det(c_{p-i+j})_{1\leq i,j\leq q}.
\]
\end{thm}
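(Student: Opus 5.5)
The plan is to reduce the infinite-dimensional statement to the classical Thom--Porteous formula for degeneracy loci of finite-dimensional bundle maps.

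\textbf{Step 1: finite-dimensional approximation.} It suffices to check the identity after pulling back to compact skeleta, since $H^{2pq}(\Fred_{p-q})$ is detected on finite subcomplexes. So fix a compact $X$ and a map $A\colon X\to\Fred_{p-q}$. By the standard Atiyah--J\"anich construction, choose a finite-codimensional closed subspace $W\subset\mathcal H$ transverse to all images, i.e.\ $\operatorname{im}A_x+W^\perp=\mathcal H$. Then $V=A^{-1}(W)$ is a vector bundle over $X$. The operator $A$ restricts to an isomorphism $V^\perp\cong W$, and the degeneracy of $A_x$ is encoded by a finite-rank bundle map
\[
\phi\colon E\to F,\qquad E=V^\perp{}^{\perp}\text{-complement}=\mathcal H/V,\quad F=W^\perp,
\]
with $\operatorname{rank}E-\operatorname{rank}F=p-q$. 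More precisely, one takes $E$ to be the finite-dimensional complement of $V$ in $\mathcal H$ and $F=\mathcal H/W$, with $\phi$ induced by $A$. One then checks that $\Ker A_x\cong\Ker\phi_x$ and $\Coker A_x\cong\Coker\phi_x$. In particular, $A^{-1}(A_{p,q})$ is the degeneracy locus $D_{p,q}(\phi)$, and the normal structure of $A_{p,q}$ (Koschorke's local chart $\operatorname{Hom}(\Ker,\Coker)$) matches that of the finite-dimensional stratum $\{\phi\colon \dim\Ker\phi=p\}$ in $\operatorname{Hom}(E,F)$. Consequently $A^*k_{p,q}$ equals the Thom--Porteous class of that finite stratum, pulled back by $\phi$.

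\textbf{Step 2: universal finite model.} To make this rigorous, I would work universally: take the bundle $\operatorname{Hom}(E,F)$ over $BU(e)\times BU(f)$ (or a Grassmannian approximation) and show that the stabilization map into $\Fred_{p-q}$ pulls back the Banach submanifold $A_{p,q}$ to the finite stratum $\Sigma_{p}$ with its normal bundle and orientation. Then $k_{p,q}$ pulls back to the fundamental class $[\overline{\Sigma_p}]$. By the Thom--Porteous formula (Fulton, Chapter~14), this class is
\[
\det\bigl(c_{q-i+j}(F-E)\bigr)_{1\le i,j\le p}.
\]
By duality of Schur determinants (the conjugate partition $(q^p)\leftrightarrow(p^q)$ exchanges complete and elementary symmetric functions), this equals
\[
\det\bigl(c_{p-i+j}(E-F)\bigr)_{1\le i,j\le q},
\]
up to the sign convention $c(F-E)=c(E-F)^{-1}$. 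The index bundle $[E]-[F]$ represents $[A]$, so $c_i(E-F)=A^*c_i$ for the universal Chern classes.

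\textbf{Step 3: conclusion.} Combining Steps 1 and 2 shows that the two classes agree on every compact family. Since the approximating maps induce injections on cohomology in the relevant degree ($\Fred_{p-q}\simeq\Z\times BU$ and $H^*(BU)$ is detected by finite Grassmannians), the universal identity $k_{p,q}=\det(c_{p-i+j})$ follows.

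\textbf{Main obstacle.} I expect the hardest part to be Step 2: verifying that the Thom class $u_{p,q}$, with Koschorke's complex orientation of the normal bundle, pulls back exactly to the orientation used in Porteous' formula. This requires transversality of the stabilization map to $A_{p,q}$ and compatibility of orientations. I would handle it by comparing the local charts directly, since both normal spaces are $\operatorname{Hom}(\Ker,\Coker)$ with its complex structure. The remaining sign and conjugation bookkeeping must then be matched against the convention for $c_i$.
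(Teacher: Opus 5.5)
Your route is different from the paper's. The paper gives no argument and simply cites Koschorke; its closest internal analogue, Theorem~\ref{thm:schur-koschorke-giambelli-paper}, uses local Schur-complement charts to Grassmannians and the classical Giambelli formula. Yours is a global finite-dimensional reduction plus Thom--Porteous, detected on a universal finite model, and the Porteous output $\det(c_{q-i+j}(F-E))_{1\le i,j\le p}$ you write down is correct.

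The gap is the step that turns this into the displayed $q\times q$ determinant. The conjugate-partition identity $\det(h_{q-i+j})_{p\times p}=\det(e_{p-i+j})_{q\times q}$ holds when $H(t)E(-t)=1$, not when $H(t)E(t)=1$. With $h_i=c_i(F-E)$ this forces $e_i=(-1)^ic_i(E-F)=c_i(E^\vee-F^\vee)$, so
\[
\det\bigl(c_{q-i+j}(F-E)\bigr)_{1\le i,j\le p}
=\det\bigl(c_{p-i+j}(E^\vee-F^\vee)\bigr)_{1\le i,j\le q}
=(-1)^{pq}\det\bigl(c_{p-i+j}(E-F)\bigr)_{1\le i,j\le q}.
\]
Already for $p=q=1$ and $E,F$ line bundles, the non-invertibility locus of $\phi\colon E\to F$ is the zero set of a section of $E^\vee\otimes F$. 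Its class is $c_1(F)-c_1(E)$, not $c_1(E-F)$. So with your identification $A^*c_i=c_i(E-F)=c_i(\operatorname{Ind})$ and the complex coorientation of $\operatorname{Hom}(\Ker,\Coker)$ you intend to use, your argument proves $k_{p,q}=(-1)^{pq}\det(c_{p-i+j})$. That contradicts the statement whenever $pq$ is odd.

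This cannot be left as "sign bookkeeping", because the displayed formula is true for only one normalization. It holds for $c_i=c_i(\operatorname{Ind}^\vee)=(-1)^ic_i(\operatorname{Ind})$, or equivalently for $c_i(\operatorname{Ind})$ with the conjugate coorientation. With the normalization $c_i=c_i(-\operatorname{Ind})$ fixed later in Section~4, Porteous gives the transposed determinant $\det(c_{q-i+j})_{1\le i,j\le p}$. That agrees with the displayed one only when $p=q$; for $(p,q)=(2,1)$ it is $c_1^2-c_2$, not $c_2$. So you must fix the convention for $c_i$ at the outset and carry the correct duality through.

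Two smaller repairs are also needed.
\begin{enumerate}
\item Step~1 as written does not give a finite-rank map: $A^{-1}(W)$ for $W$ of finite codimension is infinite-dimensional, and $A$ does not induce a map $\mathcal H/V\to\mathcal H/W$. Instead, take a finite-dimensional $D$ with $\operatorname{Im}A_x+D=\mathcal H$ for all $x$, and set $\phi_x=A_x|_{A_x^{-1}(D)}\colon A_x^{-1}(D)\to D$, as for the bundle $E_f$ in the proof of Theorem~\ref{thm:schur-koschorke-giambelli-paper}.
\item The naturality claim $s^*k_{p,q}=[\overline{\Sigma_p}]$ requires disposing of the deeper strata $\{\dim\Ker\ge p+1\}$, which have real codimension at least $2(p+1)(q+1)$. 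For instance, restrict to $\{\dim\Ker\le p\}$, where restriction is injective in degree $2pq$.
\end{enumerate}
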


\begin{proof}
See Koschorke~\cite{Koschorke}.
\end{proof}

\begin{example}
The low-degree cases are as follows.

\begin{enumerate}[label=(\arabic*)]
\item For \(p=q=1\), one has
\[
k_{1,1}=\det(c_{1})=c_{1}.
\]

\item For \(p=q=2\), one has
\[
k_{2,2}
=
\det
\begin{pmatrix}
c_{2} & c_{3}\\
c_{1} & c_{2}
\end{pmatrix}
=
c_{2}^{2}-c_{1}c_{3}.
\]
\end{enumerate}
\end{example}

%%%%%%%%%%%%%%%%%%%%%%%%%%%%%%%%%%%%%%%%%%%%%%%%
\section{Odd Koschorke classes}
%%%%%%%%%%%%%%%%%%%%%%%%%%%%%%%%%%%%%%%%%%%%%%%%

\subsection{The space of self-adjoint Fredholm operators and odd \(K\)-theory}

Let $\mathcal{H}$ be a separable infinite-dimensional complex Hilbert space.
We write
\[
\Fred_{\mathrm{sa}}(\mathcal{H})
\]
for the space of bounded self-adjoint Fredholm operators on $\mathcal{H}$,
endowed with the operator norm topology.

Atiyah--Singer proved that $\Fred_{\mathrm{sa}}(\mathcal{H})$ has three
path-connected components: two are contractible, while the remaining
one is a classifying space for odd complex \(K\)-theory \cite{AtiyahSingerSkew}.
We denote this non-contractible component by
\[
\Fred_{\mathrm{sa}}^{1}(\mathcal{H}).
\]

\begin{dfn}
For a paracompact Hausdorff space $X$, we define
\[
K^{1}(X)=[X,\Fred_{\mathrm{sa}}^{1}(\mathcal{H})].
\]
\end{dfn}

The addition on $K^{1}(X)$ is defined in the same way as the addition on
$K(X)$ described in Section~2.1: one uses direct sums and a fixed unitary
isomorphism $\mathcal H\oplus\mathcal H\cong\mathcal H$.

From now on, we suppress the Hilbert space $\mathcal{H}$ in the notation and
write simply
\[
\Fred_{\mathrm{sa}}=\Fred_{\mathrm{sa}}(\mathcal{H}),
\qquad
\Fredsaone=\Fred_{\mathrm{sa}}^{1}(\mathcal{H}).
\]

\subsection{Odd degeneracy strata}

We set
\[
V_{-1}^{\mathrm{sa}}=\varnothing.
\]

\begin{dfn}
For each integer $p\ge 0$, we define
\[
V_p^{\mathrm{sa}}
=
\{A\in \Fred_{\mathrm{sa}}^{1}\mid \dim\ker A\le p\},
\qquad
A_p^{\mathrm{sa}}
=
\{A\in \Fred_{\mathrm{sa}}^{1}\mid \dim\ker A=p\}.
\]
\end{dfn}

\begin{lem}\label{lem:odd-degeneracy-open-closed}
For each integer $p\ge 0$, the following hold.

\smallskip

\noindent
\textup{(1)} The subset $V_p^{\mathrm{sa}}$ is open in $\Fred_{\mathrm{sa}}^{1}$.

\smallskip

\noindent
\textup{(2)} The subset $A_p^{\mathrm{sa}}$ is closed in $V_p^{\mathrm{sa}}$.
\end{lem}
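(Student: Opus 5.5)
Both parts follow from one fact: the dimension of the kernel is upper semicontinuous on \(\Fred_{\mathrm{sa}}\). Concretely, for every \(A_0\in\Fredsaone\) there is a neighbourhood \(U\) of \(A_0\) such that \(\dim\ker A\le\dim\ker A_0\) for all \(A\in U\). Part (1) is immediate from this, and part (2) is equivalent to openness of \(V_{p-1}^{\mathrm{sa}}\).

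To prove semicontinuity I would use the spectral gap of a self-adjoint Fredholm operator. Since \(A_0\) is self-adjoint and Fredholm, \(0\) is at most an isolated point of its spectrum, and it is an eigenvalue of finite multiplicity \(d=\dim\ker A_0\). Choose \(\varepsilon>0\) with \(\sigma(A_0)\cap[-2\varepsilon,2\varepsilon]\subset\{0\}\).

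For self-adjoint \(A\) with \(\|A-A_0\|<\varepsilon\), spectra of self-adjoint operators move by at most the norm of the perturbation. Hence \(\pm\varepsilon\notin\sigma(A)\), and the Riesz projection
\[
P_A=\frac{1}{2\pi i}\oint_{|z|=\varepsilon}(z-A)^{-1}\,dz
\]
is defined and depends norm-continuously on \(A\). Its range is the spectral subspace of \(A\) for \((-\varepsilon,\varepsilon)\), which contains \(\ker A\).

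When \(\|P_A-P_{A_0}\|<1\), the two projections have ranges of equal dimension. This gives
\[
\dim\ker A\le\operatorname{rank}P_A=\operatorname{rank}P_{A_0}=d,
\]
which is the semicontinuity claim.

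A more elementary alternative avoids the contour integral. On \((\ker A_0)^\perp\) one has \(\|A_0v\|\ge 2\varepsilon\|v\|\), so for \(\|A-A_0\|<\varepsilon\) the operator \(A\) is injective on \((\ker A_0)^\perp\). A subspace meeting \((\ker A_0)^\perp\) only in \(0\) has dimension at most \(d\), so \(\dim\ker A\le d\).

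For (1): if \(A_0\in V_p^{\mathrm{sa}}\) then \(\dim\ker A_0\le p\). By semicontinuity every \(A\) in the neighbourhood \(U\), intersected with \(\Fredsaone\), satisfies \(\dim\ker A\le p\). Thus \(U\cap\Fredsaone\subset V_p^{\mathrm{sa}}\), so \(V_p^{\mathrm{sa}}\) is open. Note that the neighbourhood consists of self-adjoint Fredholm operators, since Fredholmness is an open condition.

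For (2): \(A_p^{\mathrm{sa}}=V_p^{\mathrm{sa}}\setminus V_{p-1}^{\mathrm{sa}}\), where \(V_{-1}^{\mathrm{sa}}=\varnothing\) handles the case \(p=0\). By (1) applied to \(p-1\), the set \(V_{p-1}^{\mathrm{sa}}\) is open in \(\Fredsaone\), hence open in \(V_p^{\mathrm{sa}}\). Its complement \(A_p^{\mathrm{sa}}\) is therefore closed in \(V_p^{\mathrm{sa}}\).

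The only real content is the semicontinuity estimate. Its crucial input is that \(0\) is an isolated point of the spectrum (or, equivalently, that \(A_0\) is bounded below on \((\ker A_0)^\perp\)); with this the rest is routine. I would present the elementary injectivity argument, since it needs no functional calculus.
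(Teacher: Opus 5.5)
Your proof is correct, and your part (2) is the same as the paper's. For part (1) you take a different route.

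The paper decomposes $\mathcal H=\ker f_0\oplus\operatorname{Im} f_0$ and works on the open set $\nu_{f_0}$ where the lower-right block $\gamma$ of $f$ is invertible. It then shows that $(x,y)\mapsto x$ identifies $\ker f$ with the kernel of the Schur complement $S=\alpha-\beta^{*}\gamma^{-1}\beta\in\operatorname{Herm}(\ker f_0)$, whence $\dim\ker f\le\dim\ker f_0$.

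Your elementary argument uses only two facts: $A_0$ is bounded below by $2\varepsilon$ on $(\ker A_0)^\perp$, and this bound survives a perturbation of norm $<\varepsilon$. Hence $\ker A\cap(\ker A_0)^\perp=0$. This is shorter and gives an explicit neighbourhood. It also does not need the nearby operator to be self-adjoint; it is just upper semicontinuity of $\dim\ker$ at any closed-range operator. Your Riesz-projection variant gives something stronger: the total multiplicity of the spectrum in $(-\varepsilon,\varepsilon)$ is locally constant.

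What the paper's Schur-complement route buys is not needed for this lemma, but it is reused at once. The identity $A_p^{\mathrm{sa}}\cap\nu_{f_0}=S^{-1}(0)$, with $S$ a smooth submersion onto the fixed space $\operatorname{Herm}(\ker f_0)$, is exactly the local chart in Proposition~\ref{prop:odd-normal-bundle}. That chart shows $A_p^{\mathrm{sa}}$ is a split submanifold of codimension $p^2$ with normal bundle $\operatorname{Herm}(\ker)$.
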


\begin{proof}
The proof is the same as in the even case \cite{Koschorke}.
For \textup{(1)}, suppose that $f_0\in V_p^{\mathrm{sa}}$, and let
\[
r=\dim\ker f_0.
\]
By definition, we have $r\le p$.
Since $f_0$ is a self-adjoint Fredholm operator, we have the orthogonal
decomposition
\[
\mathcal H=\ker f_0\oplus \operatorname{Im} f_0.
\]
With respect to this decomposition, every $f\in \Fred_{\mathrm{sa}}^{1}$ can be written as
\[
f=
\begin{pmatrix}
\alpha & \beta^{*}\\
\beta & \gamma
\end{pmatrix},
\]
where
\[
\alpha\in \operatorname{End}(\ker f_0),\qquad
\beta\in \operatorname{Hom}(\ker f_0,\operatorname{Im} f_0),\qquad
\gamma\in \operatorname{End}(\operatorname{Im} f_0).
\]
We define
\[
\nu_{f_0}
=
\Bigl\{
f\in \Fred_{\mathrm{sa}}^{1}
\;\Big|\;
\gamma:\operatorname{Im} f_0\to \operatorname{Im} f_0
\text{ is invertible}
\Bigr\}.
\]
Since invertibility is an open condition, $\nu_{f_0}$ is an open subset of
$\Fred_{\mathrm{sa}}^{1}$.
Moreover, $f_0\in \nu_{f_0}$, because for $f=f_0$ the lower right block is
\[
\gamma=f_0|_{\operatorname{Im} f_0},
\]
which is invertible on $\operatorname{Im} f_0$.

Now let $f\in \nu_{f_0}$.
For $(x,y)\in \ker f_0\oplus \operatorname{Im} f_0$, the condition
\[
(x,y)\in \ker f
\]
is equivalent to
\[
\alpha x+\beta^{*}y=0,
\qquad
\beta x+\gamma y=0.
\]
Since $\gamma$ is invertible, the second equation is equivalent to
\[
y=-\gamma^{-1}\beta x.
\]
Substituting this into the first equation, we see that $(x,y)\in \ker f$ if and only if
\[
x\in \ker S,
\qquad
y=-\gamma^{-1}\beta x,
\]
where
\[
S=\alpha-\beta^{*}\gamma^{-1}\beta.
\]
Since $f$ is self-adjoint, both $\alpha$ and $\gamma$ are self-adjoint.
Hence $\gamma^{-1}$ is also self-adjoint, and therefore
\[
(\beta^{*}\gamma^{-1}\beta)^{*}
=
\beta^{*}(\gamma^{-1})^{*}\beta
=
\beta^{*}\gamma^{-1}\beta.
\]
Thus $S$ is a Hermitian operator on the finite-dimensional space $\ker f_0$.

The above characterization of $\ker f$ shows that the map
\[
\ker f\longrightarrow \ker S,\qquad (x,y)\longmapsto x
\]
is an isomorphism.
Hence
\[
\dim\ker f=\dim\ker S\le \dim\ker f_0=r\le p.
\]
Therefore $f\in V_p^{\mathrm{sa}}$.
We have proved that
\[
\nu_{f_0}\subset V_p^{\mathrm{sa}}.
\]
Since $\nu_{f_0}$ is an open neighborhood of $f_0$, it follows that
$V_p^{\mathrm{sa}}$ is open in $\Fred_{\mathrm{sa}}^{1}$.
This proves \textup{(1)}.

For \textup{(2)}, the assertion follows immediately from the identity
\[
A_p^{\mathrm{sa}}
=
\{A\in \Fred_{\mathrm{sa}}^{1}\mid \dim\ker A=p\}
=
V_p^{\mathrm{sa}}\setminus V_{p-1}^{\mathrm{sa}}.
\]
Indeed, by \textup{(1)}, the subset $V_p^{\mathrm{sa}}$ is open in
$\Fred_{\mathrm{sa}}^{1}$.
Hence its complement is closed in $\Fred_{\mathrm{sa}}^{1}$, and so
\[
A_p^{\mathrm{sa}}
=
V_p^{\mathrm{sa}}\cap
\bigl(\Fred_{\mathrm{sa}}^{1}\setminus V_{p-1}^{\mathrm{sa}}\bigr)
\]
is closed in $V_p^{\mathrm{sa}}$.
This completes the proof.
\end{proof}

\subsection{The submanifolds $A_p^{\mathrm{sa}}$ and their normal bundles}

\begin{dfn}
Let \(M\) be a Banach manifold.  A subset \(S\subset M\) is called a
\emph{split Banach submanifold} if, for every \(s\in S\), there are a chart
\((U,\varphi)\) of \(M\) at \(s\), Banach spaces \(E_1,E_2\), and an
isomorphism of Banach spaces
\[
\Phi\colon E_1\oplus E_2 \longrightarrow E,
\]
where \(E\) is the model Banach space of the chart, such that, after replacing
\(\varphi\) by \(\Phi^{-1}\circ\varphi\), one has
\[
(\Phi^{-1}\circ\varphi)(U\cap S)
=
(\Phi^{-1}\circ\varphi)(U)\cap(E_1\oplus\{0\}).
\]
See Lang \cite[Chapter~II, \S2]{LangFDG}.
\end{dfn}

\begin{prop}\label{prop:odd-normal-bundle}\label{prop:Asa-submanifold}
For each integer $p\ge 0$, the following hold.

\smallskip

\noindent
\textup{(1)} The subset $A_p^{\mathrm{sa}}$ is a split Banach submanifold of
$\Fred_{\mathrm{sa}}^{1}$ of real codimension $p^2$.  Its normal bundle is
canonically oriented.

\smallskip

\noindent
\textup{(2)} Its normal bundle
\[
N(A_p^{\mathrm{sa}})
=
T\Fred_{\mathrm{sa}}^{1}|_{A_p^{\mathrm{sa}}}/T(A_p^{\mathrm{sa}})
\]
is canonically isomorphic to the vector bundle
\[
\operatorname{Herm}(\ker)\to A_p^{\mathrm{sa}},
\]
where \(\operatorname{Herm}(\ker)\) denotes the vector bundle whose fiber at
$f\in A_p^{\mathrm{sa}}$ is the real vector space
\(\operatorname{Herm}(\ker f)\) of Hermitian endomorphisms of \(\ker f\).
\end{prop}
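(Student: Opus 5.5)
We reuse the local chart from the proof of Lemma~\ref{lem:odd-degeneracy-open-closed}. Fix \(f_0\in A_p^{\mathrm{sa}}\) and the decomposition \(\mathcal H=\ker f_0\oplus\operatorname{Im}f_0\). On the open neighbourhood \(\nu_{f_0}\), write \(f=\begin{pmatrix}\alpha&\beta^*\\ \beta&\gamma\end{pmatrix}\). The ambient space \(\Fred_{\mathrm{sa}}^{1}\) is an open subset of the real Banach space of bounded self-adjoint operators. We therefore use the coordinates \((\alpha,\beta,\gamma)\) in \(\operatorname{Herm}(\ker f_0)\oplus\operatorname{Hom}(\ker f_0,\operatorname{Im}f_0)\oplus\operatorname{Herm}(\operatorname{Im}f_0)\).

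The key step is the change of coordinates
\[
\Psi\colon f\longmapsto (S(f),\beta,\gamma),\qquad S(f)=\alpha-\beta^*\gamma^{-1}\beta.
\]
It is a smooth (indeed real-analytic) map, because inversion is smooth on invertible operators. Its inverse \((S,\beta,\gamma)\mapsto (S+\beta^*\gamma^{-1}\beta,\beta,\gamma)\) is also smooth. Hence \(\Psi\) is a diffeomorphism of \(\nu_{f_0}\) onto an open subset of the model space.

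By the computation in Lemma~\ref{lem:odd-degeneracy-open-closed}, \(\dim\ker f=\dim\ker S(f)\). Since \(S(f)\) is an endomorphism of the \(p\)-dimensional space \(\ker f_0\), we have \(\dim\ker S=p\) iff \(S=0\). So
\[
\Psi(\nu_{f_0}\cap A_p^{\mathrm{sa}})=\Psi(\nu_{f_0})\cap\bigl(\{0\}\oplus\operatorname{Hom}\oplus\operatorname{Herm}(\operatorname{Im}f_0)\bigr).
\]
This is exactly the split-submanifold condition, with \(E_2=\operatorname{Herm}(\ker f_0)\). The codimension is \(\dim_{\R}\operatorname{Herm}(\C^p)=p^2\).

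For (2), the chart shows that the differential \(dS_f\) restricted to \(T_f\Fred_{\mathrm{sa}}^{1}\) is surjective onto \(\operatorname{Herm}(\ker f_0)\) with kernel \(T_fA_p^{\mathrm{sa}}\) at points \(f\) of the stratum. To make this intrinsic, take \(f\in A_p^{\mathrm{sa}}\) and define
\[
T_f\Fred_{\mathrm{sa}}^{1}=\{\text{bounded self-adjoint }\dot f\}\ni\dot f\longmapsto P_{\ker f}\,\dot f\,|_{\ker f}\in\operatorname{Herm}(\ker f),
\]
where \(P_{\ker f}\) is the orthogonal projection.

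We then check, using the chart centred at \(f\) itself, that this map agrees with \(dS_f\). Since \(\beta=0\) at \(f\), the term \(\beta^*\gamma^{-1}\beta\) is quadratic in \(\beta\), so its derivative vanishes and \(dS_f(\dot f)=\dot\alpha\). This gives a canonical surjection whose kernel is \(T_fA_p^{\mathrm{sa}}\), hence an isomorphism \(N(A_p^{\mathrm{sa}})\cong\operatorname{Herm}(\ker)\).

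The map is defined without choosing a chart, and \(\ker f\) varies continuously along the stratum: the kernels form a rank-\(p\) vector bundle, since the spectral projection onto a small interval around \(0\) is continuous and equals \(P_{\ker f}\) on \(A_p^{\mathrm{sa}}\). Hence the isomorphism is an isomorphism of vector bundles.

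For the orientation, note that \(\operatorname{Herm}(\ker f)\) has a canonical orientation. It is isomorphic, via the real structure, to \(\mathfrak u(p)\) times \(i\). Its orientation is inherited from the decomposition into real diagonal entries and complex off-diagonal entries \(z_{jk}\), \(j<k\). Changing the unitary basis of \(\ker f\) acts by conjugation by \(U(p)\), which is connected, so the orientation does not depend on the basis. Thus \(\operatorname{Herm}(\ker)\), and therefore the normal bundle, is canonically oriented.

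The main point requiring care is the bundle-level statement: that \(\ker\) over \(A_p^{\mathrm{sa}}\) is a genuine locally trivial bundle, and that the pointwise identifications are compatible across charts. I would handle this with the spectral projection and the chart-independent formula \(\dot f\mapsto P\dot f P\) above.
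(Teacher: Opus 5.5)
Your argument is correct and follows the paper's route: the Schur-complement chart on $\nu_{f_0}$, the computation $dS_{f_0}(\dot f)=\dot\alpha$, and connectedness of $U(p)$ for the coorientation. The differences are improvements in detail rather than a new method. You write down the straightening diffeomorphism $\Psi(f)=(S(f),\beta,\gamma)$ explicitly instead of invoking the submersion criterion. Your chart-free compression $\dot f\mapsto P_{\ker f}\dot f|_{\ker f}$ agrees with the paper's local identification via $\Theta_f$, because $\Theta_f^{*}\dot f\,\Theta_f=dS_f(\dot f)$ for $f$ in the stratum, and it makes automatic the compatibility on overlaps that the paper only asserts.
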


\begin{proof}
The proof is the same as in the even case \cite{Koschorke}.
We fix $f_0\in A_p^{\mathrm{sa}}$ and set
\[
K=\ker f_0,\qquad W=\operatorname{Im} f_0.
\]
Since $f_0$ is self-adjoint and Fredholm, we have the orthogonal decomposition
\[
\mathcal H=K\oplus W.
\]
Moreover,
\[
\dim_{\mathbb C}K=p.
\]

We first prove \textup{(1)}.  As in the proof of \LaterLem{lem:odd-degeneracy-open-closed}, every $f\in \Fred_{\mathrm{sa}}^{1}$ can
be written with respect to this decomposition in the form
\[
f=
\begin{pmatrix}
\alpha & \beta^{*}\\
\beta & \gamma
\end{pmatrix},
\]
where
\[
\alpha\in \operatorname{Herm}(K),\qquad
\beta\in \operatorname{Hom}(K,W),\qquad
\gamma\in \Fred_{\mathrm{sa}}(W).
\]
Let
\[
\nu_{f_0}
=
\{f\in \Fred_{\mathrm{sa}}^{1}\mid \gamma \text{ is invertible}\}.
\]
This is an open neighborhood of $f_0$.
For $f\in \nu_{f_0}$, define the Schur complement
\[
S(f)=\alpha-\beta^{*}\gamma^{-1}\beta\in \operatorname{Herm}(K).
\]

By the argument in the proof of \LaterLem{lem:odd-degeneracy-open-closed}, for $(x,y)\in K\oplus W$
we have
\[
(x,y)\in \ker f
\iff
x\in \ker S(f),\quad y=-\gamma^{-1}\beta x.
\]
Hence the projection $(x,y)\mapsto x$ induces an isomorphism
\[
\ker f \cong \ker S(f).
\]
Since $\dim_{\mathbb C}K=p$, it follows that for $f\in \nu_{f_0}$,
\[
f\in A_p^{\mathrm{sa}}
\iff
\dim \ker f=p
\iff
\dim \ker S(f)=p
\iff
S(f)=0.
\]
Therefore
\[
A_p^{\mathrm{sa}}\cap \nu_{f_0}=S^{-1}(0).
\]

The map
\[
S\colon \nu_{f_0}\to \operatorname{Herm}(K)
\]
is smooth, because it is obtained from the block entries by addition,
composition, adjoint, and inversion on the open set of invertible operators;
see \cite[Chapter I, \S5, Theorem 5.9]{LangFDG}.

At the point $f_0$, we have
\[
f_0=
\begin{pmatrix}
0 & 0\\
0 & \gamma_0
\end{pmatrix}
\]
with $\gamma_0$ invertible.
Let
\[
\dot f=
\begin{pmatrix}
\dot\alpha & \dot\beta^{*}\\
\dot\beta & \dot\gamma
\end{pmatrix}
\in T_{f_0}\Fred_{\mathrm{sa}}^{1}.
\]
Differentiating the formula
\[
S(f)=\alpha-\beta^{*}\gamma^{-1}\beta
\]
at $f_0$, we obtain
\[
dS_{f_0}(\dot f)=\dot\alpha.
\]
Thus
\[
dS_{f_0}\colon T_{f_0}\Fred_{\mathrm{sa}}^{1}\to \operatorname{Herm}(K)
\]
is surjective.
Its kernel is the closed subspace given by $\dot\alpha=0$, and hence it
splits.
By the Banach-space implicit mapping theorem, or equivalently by the standard
submanifold criterion derived from it
\cite[Chapter II, \S2, Proposition 2.2]{LangFDG}, the zero set
$S^{-1}(0)=A_p^{\mathrm{sa}}\cap \nu_{f_0}$ is a Banach submanifold of
$\nu_{f_0}$.
Since
\[
\dim_{\mathbb R}\operatorname{Herm}(K)=p^2,
\]
this submanifold has real codimension $p^2$.
As $f_0$ was arbitrary, this proves the split Banach-submanifold assertion in
\textup{(1)}.

We prove \textup{(2)}.
For $f\in A_p^{\mathrm{sa}}\cap \nu_{f_0}$, we have $S(f)=0$, and therefore
\[
\Theta_f\colon K\to \ker f,\qquad x\mapsto (x,-\gamma^{-1}\beta x)
\]
is a complex-linear isomorphism.
Since $\Theta_f$ depends smoothly on $f$, these maps give a local
trivialization of the kernel bundle over $A_p^{\mathrm{sa}}\cap \nu_{f_0}$.
Hence $\ker\to A_p^{\mathrm{sa}}$ is a rank-$p$ complex vector bundle.

On the other hand, because $A_p^{\mathrm{sa}}\cap \nu_{f_0}=S^{-1}(0)$ is the
zero set of the submersion $S$, its normal bundle over this neighborhood is
canonically identified with the trivial bundle with fiber $\operatorname{Herm}(K)$:
\[
N(A_p^{\mathrm{sa}})|_{A_p^{\mathrm{sa}}\cap \nu_{f_0}}
\cong
(A_p^{\mathrm{sa}}\cap \nu_{f_0})\times \operatorname{Herm}(K).
\]
Via the isomorphisms $\Theta_f$, we identify $\operatorname{Herm}(K)$ with
$\operatorname{Herm}(\ker f)$.
These local identifications are compatible on overlaps, and therefore glue to a
canonical bundle isomorphism
\[
N(A_p^{\mathrm{sa}})\cong \operatorname{Herm}(\ker).
\]
This proves \textup{(2)}.

Finally, we discuss the orientation.
We choose local unitary frames of the complex vector bundle $\ker$.
Then $\operatorname{Herm}(\ker)$ is locally identified with the fixed real vector
space $\operatorname{Herm}(\mathbb C^p)$.
On overlaps, the transition maps are given by
\[
H\longmapsto uHu^{*},\qquad u\in U(p).
\]
Since $U(p)$ is connected, this action preserves the standard orientation of
$\operatorname{Herm}(\mathbb C^p)$.
Hence $\operatorname{Herm}(\ker)$ is canonically oriented.
By using the isomorphism
\[
N(A_p^{\mathrm{sa}})\cong \operatorname{Herm}(\ker),
\]
we obtain a canonical orientation of the normal bundle.
This completes the proof.
\end{proof}

\subsection{Alexander--Pontrjagin duality and stabilization}

Let \(p\ge 1\) be an integer.  By \LaterProp{prop:odd-normal-bundle},
\(A_p^{\mathrm{sa}}\) is a closed cooriented split Banach submanifold of
\(V_p^{\mathrm{sa}}\) of real codimension \(p^2\).  Moreover,
\[
V_{p-1}^{\mathrm{sa}}=V_p^{\mathrm{sa}}\setminus A_p^{\mathrm{sa}}.
\]

\begin{lem}[Alexander--Pontrjagin duality]\label{lem:odd-eells-duality}
For every \(p\geq1\), there is a canonical isomorphism
\[
\psi_p\colon
H^i(V_p^{\mathrm{sa}},V_{p-1}^{\mathrm{sa}})
\xrightarrow{\ \cong\ }
H^{i-p^2}(A_p^{\mathrm{sa}}).
\]
\end{lem}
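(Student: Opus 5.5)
The plan is to prove this as a Thom isomorphism combined with excision through a tubular neighborhood, exactly as in Koschorke's even case.

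\textbf{Step 1: tubular neighborhood.} By \LaterProp{prop:odd-normal-bundle}, \(A_p^{\mathrm{sa}}\) is a closed split Banach submanifold of the open set \(V_p^{\mathrm{sa}}\). Its normal bundle \(N=N(A_p^{\mathrm{sa}})\cong\operatorname{Herm}(\ker)\) is oriented and has rank \(p^2\). Since \(\Fredsaone\) is an open subset of a Hilbert space of bounded self-adjoint operators, \(V_p^{\mathrm{sa}}\) is a paracompact metrizable Banach manifold. The first step is therefore to produce an open neighborhood \(U\) of \(A_p^{\mathrm{sa}}\) in \(V_p^{\mathrm{sa}}\) and a homeomorphism \(N\cong U\) sending the zero section to \(A_p^{\mathrm{sa}}\).

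For this I would either invoke the tubular neighborhood theorem in Lang \cite[Chapter~IV]{LangFDG} for split submanifolds of manifolds admitting partitions of unity, or build \(U\) directly. The direct construction starts from the local charts \(S\colon\nu_{f_0}\to\operatorname{Herm}(K)\) of the proof of \LaterProp{prop:odd-normal-bundle}. An explicit global retraction seems natural here: send \(f\in U\) to \(f-P\,S\,P\), where \(P\) is the spectral projection of \(f\) onto its \(p\) smallest-in-modulus eigenvalues. This uses the fact that near \(A_p^{\mathrm{sa}}\) exactly \(p\) eigenvalues lie near \(0\), with a gap from the rest of the spectrum.

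\textbf{Step 2: excision.} With \(U\) in hand, note that \(A_p^{\mathrm{sa}}\) is closed in \(V_p^{\mathrm{sa}}\) by \LaterLem{lem:odd-degeneracy-open-closed}, and that \(V_{p-1}^{\mathrm{sa}}=V_p^{\mathrm{sa}}\setminus A_p^{\mathrm{sa}}\) is open. Excising the closed set \(V_p^{\mathrm{sa}}\setminus U\), which lies in the interior of \(V_{p-1}^{\mathrm{sa}}\), gives
\[
H^i(V_p^{\mathrm{sa}},V_{p-1}^{\mathrm{sa}})\cong H^i(U,U\setminus A_p^{\mathrm{sa}})\cong H^i(N,N\setminus 0).
\]

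\textbf{Step 3: Thom isomorphism.} Since \(N\) is an oriented real vector bundle of rank \(p^2\), it has a Thom class \(u_p\in H^{p^2}(N,N\setminus0)\). The Thom isomorphism \(H^{i-p^2}(A_p^{\mathrm{sa}})\to H^i(N,N\setminus0)\), \(a\mapsto \pi^*a\cup u_p\), holds over the paracompact base \(A_p^{\mathrm{sa}}\) by the usual Mayer--Vietoris argument, or via the Leray--Hirsch / Serre spectral sequence. Define \(\psi_p\) as the inverse of this composite.

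Canonicity follows from two facts. The orientation of \(N\) is canonical by \LaterProp{prop:odd-normal-bundle}. Any two tubular neighborhoods are isotopic, and if one uses the explicit spectral construction of Step 1, no choice is made at all.

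\textbf{Main obstacle.} I expect the main difficulty to be Step 1, the existence of a tubular neighborhood in the infinite-dimensional setting. Lang's theorem needs a smooth partition of unity, or at least a spray, on \(V_p^{\mathrm{sa}}\). Hilbert manifolds do admit these, so citing Lang should suffice. As a fallback, the spectral-projection construction gives the tubular neighborhood concretely and avoids these analytic issues.
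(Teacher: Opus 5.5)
Your architecture (tubular neighbourhood, then excision, then Thom isomorphism) is the standard proof of the Eells duality theorem. The paper's proof is only a one-line citation of that theorem. Steps~2 and~3 are fine. There is, however, a genuine error in the justification of Step~1.

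$\Fredsaone$ is open in the space of bounded self-adjoint operators with the \emph{operator norm}. That space is a non-separable Banach space, not a Hilbert space. It contains the real diagonal operators, an isometric copy of $\ell^\infty$, and hence a copy of $\ell^1$. A separable Banach space carrying a $C^1$ bump function has separable dual, so $\ell^1$ carries no $C^1$ bump functions, and neither does this model space. Consequently $V_p^{\mathrm{sa}}$ admits no smooth partitions of unity. The hypothesis of Lang's existence theorem for tubular neighbourhoods is therefore not met, and the isotopy-uniqueness you invoke for canonicity rests on the same machinery. So ``citing Lang should suffice'' is false, and your fallback has to carry the argument.

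The fallback does work, but it should be set up intrinsically. The Schur complement $S$ of Proposition~\ref{prop:odd-normal-bundle} is tied to a fixed base point $f_0$ and lives on $\ker f_0$. So $f-PSP$ only makes sense if the base point is the retraction itself. The map you want is $r(f)=f(1-P_f)=f-P_ffP_f$, with normal coordinate $h(f)=f|_{\operatorname{Im}P_f}$.

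To make this precise:
\begin{itemize}
\item Let $\mu_1(f)\le\mu_2(f)\le\cdots$ be the min--max values of $|f|$, that is, the moduli of the eigenvalues of $f$ below $\operatorname{dist}(0,\sigma_{\mathrm{ess}}(f))$, counted with multiplicity. They depend continuously on $f$.
\item Put $U=\{f\in\Fredsaone\mid \mu_p(f)<\mu_{p+1}(f)\}$, and let $P_f$ be the spectral projection of $|f|$ onto $[0,\mu_p(f)]$.
\item Then $U$ is open, $A_p^{\mathrm{sa}}\subset U\subset V_p^{\mathrm{sa}}$, and $r(f)\in A_p^{\mathrm{sa}}$ with $\ker r(f)=\operatorname{Im}P_f$.
\item The map $f\mapsto (r(f),h(f))$ is a homeomorphism of $U$ onto $\{(g,H)\in\operatorname{Herm}(\ker)\mid \|H\|<\mu_{p+1}(g)\}$. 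Its inverse is $(g,H)\mapsto g+H\Pi_g$, where $\Pi_g$ is the orthogonal projection onto $\ker g$.
\item Since $\mu_{p+1}$ is continuous and positive on $A_p^{\mathrm{sa}}$, fibrewise radial rescaling gives $(U,U\setminus A_p^{\mathrm{sa}})\cong(N,N\setminus 0)$.
\item At $g\in A_p^{\mathrm{sa}}$ the differential of $h$ is $\dot f\mapsto\dot\alpha=dS_g(\dot f)$. So the orientation is the canonical one of Proposition~\ref{prop:odd-normal-bundle}.
\end{itemize}
No choices are made in this construction, so canonicity of $\psi_p$ is immediate.

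With Step~1 done this way, your argument is a complete, self-contained proof of the case of Eells's theorem that the paper invokes without further detail.
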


\begin{proof}
This is a consequence of the Alexander--Pontrjagin duality theorem of
Eells \cite{EellsAlexanderPontrjagin}.
\end{proof}

\begin{lem}\label{lem:relative-vanishing-vp}
For each integer $p\ge 1$, one has
\[
H^i(V_p^{\mathrm{sa}},V_{p-1}^{\mathrm{sa}})=0
\qquad (i<p^2).
\]
\end{lem}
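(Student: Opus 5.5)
The plan is to deduce the vanishing directly from the Alexander--Pontrjagin duality isomorphism of \LaterLem{lem:odd-eells-duality}. For \(p\ge 1\) that lemma gives a canonical isomorphism
\[
\psi_p\colon H^i(V_p^{\mathrm{sa}},V_{p-1}^{\mathrm{sa}})\xrightarrow{\ \cong\ } H^{i-p^2}(A_p^{\mathrm{sa}}).
\]
When \(i<p^2\), the target is singular cohomology of \(A_p^{\mathrm{sa}}\) in negative degree \(i-p^2<0\), and that group is zero. Hence the source group vanishes, which is the claim.

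The only thing to check is that \LaterLem{lem:odd-eells-duality} applies in exactly this setting, and the paper has already arranged this. First, \(V_p^{\mathrm{sa}}\) is open in \(\Fredsaone\) by \LaterLem{lem:odd-degeneracy-open-closed}, so it is a Banach manifold. Second, \(A_p^{\mathrm{sa}}=V_p^{\mathrm{sa}}\setminus V_{p-1}^{\mathrm{sa}}\) is closed in \(V_p^{\mathrm{sa}}\), again by \LaterLem{lem:odd-degeneracy-open-closed}. Third, \(A_p^{\mathrm{sa}}\) is a split submanifold of codimension \(p^2\) with canonically oriented normal bundle, by \LaterProp{prop:odd-normal-bundle}. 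These are precisely the hypotheses used for the Eells duality, so \(\psi_p\) is available in every degree \(i\), including \(i<p^2\).

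Since one may be wary of relying on a black-box duality, I would also give a direct argument as a cross-check, using excision and a tubular neighborhood. The first step is to choose a tubular neighborhood \(U\) of \(A_p^{\mathrm{sa}}\) in \(V_p^{\mathrm{sa}}\) diffeomorphic to the normal bundle \(\operatorname{Herm}(\ker)\). Here I would use that \(V_p^{\mathrm{sa}}\) is an open subset of a Hilbert-space affine model, hence admits smooth partitions of unity. The second step is to apply excision,
\[
H^i(V_p^{\mathrm{sa}},V_{p-1}^{\mathrm{sa}})\cong H^i(U,U\setminus A_p^{\mathrm{sa}}).
\]
The third step identifies the right-hand side with \(H^i(\operatorname{Herm}(\ker),\operatorname{Herm}(\ker)\setminus 0)\). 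This is the cohomology of the Thom space of an oriented rank \(p^2\) real bundle, and it vanishes below degree \(p^2\) by the Thom isomorphism, or even just by a Mayer--Vietoris argument over trivializing opens.

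The main obstacle on this alternative route would be the existence of the tubular neighborhood in the Banach setting. For the lemma itself, however, that is already absorbed into \LaterLem{lem:odd-eells-duality}. The proof therefore reduces to the one-line negative-degree observation above.
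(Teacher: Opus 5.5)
Your proof is correct and is exactly the paper's argument: apply the duality isomorphism of \LaterLem{lem:odd-eells-duality} and note that $H^{i-p^2}(A_p^{\mathrm{sa}})$ sits in negative degree when $i<p^2$, so it vanishes. One caution about your optional cross-check: $V_p^{\mathrm{sa}}$ is an open subset of the Banach space of bounded self-adjoint operators with the operator norm, not of a Hilbert space, and that space contains a copy of $\ell^\infty$, so it admits no smooth partitions of unity; your tubular-neighborhood justification would therefore need a different argument, although the main proof does not depend on it.
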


\begin{proof}
By \LaterLem{lem:odd-eells-duality},
\[
H^i(V_p^{\mathrm{sa}},V_{p-1}^{\mathrm{sa}})
\cong
H^{i-p^2}(A_p^{\mathrm{sa}}).
\]
The group on the right vanishes for \(i<p^2\), because it has negative degree.
This completes the proof.
\end{proof}

\begin{lem}
For integers $r\ge p\ge 1$, the restriction map
\[
H^i(V_{r+1}^{\mathrm{sa}},V_{p-1}^{\mathrm{sa}})
\longrightarrow
H^i(V_r^{\mathrm{sa}},V_{p-1}^{\mathrm{sa}})
\]
is an isomorphism for every $i\le p^2$.
\end{lem}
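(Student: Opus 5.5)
Consider the triple \(V_{p-1}^{\mathrm{sa}}\subset V_r^{\mathrm{sa}}\subset V_{r+1}^{\mathrm{sa}}\), which is nested by definition. It has the long exact sequence
\[
\cdots\to H^i(V_{r+1}^{\mathrm{sa}},V_r^{\mathrm{sa}})
\to H^i(V_{r+1}^{\mathrm{sa}},V_{p-1}^{\mathrm{sa}})
\to H^i(V_r^{\mathrm{sa}},V_{p-1}^{\mathrm{sa}})
\to H^{i+1}(V_{r+1}^{\mathrm{sa}},V_r^{\mathrm{sa}})\to\cdots,
\]
and the middle arrow is exactly the restriction map in the statement. Because the sequence is exact, it suffices to show that the two flanking groups \(H^i(V_{r+1}^{\mathrm{sa}},V_r^{\mathrm{sa}})\) and \(H^{i+1}(V_{r+1}^{\mathrm{sa}},V_r^{\mathrm{sa}})\) vanish for every \(i\le p^2\).

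Both groups are handled by \LaterLem{lem:relative-vanishing-vp} applied with \(r+1\) in place of \(p\). That lemma gives \(H^j(V_{r+1}^{\mathrm{sa}},V_r^{\mathrm{sa}})=0\) whenever \(j<(r+1)^2\). Since \(r\ge p\ge1\),
\[
(r+1)^2\ge (p+1)^2=p^2+2p+1>p^2+1 .
\]
Hence for \(i\le p^2\) we have both \(i<(r+1)^2\) and \(i+1\le p^2+1<(r+1)^2\), so both flanking groups are zero. The restriction map is therefore injective and surjective, which proves the claim.

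The only point needing care is the use of the triple long exact sequence. For singular cohomology it holds for any nested triple of spaces, so no excision or openness hypothesis is required; openness of the \(V_k^{\mathrm{sa}}\) from \LaterLem{lem:odd-degeneracy-open-closed} is not needed here. There is no real obstacle: the degree bound coming from the codimension \((r+1)^2\) of \(A_{r+1}^{\mathrm{sa}}\) is comfortably larger than \(p^2+1\), and in fact the argument gives the isomorphism for all \(i\le (r+1)^2-2\).
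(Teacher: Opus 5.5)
Your proof is correct and is essentially the paper's argument: the long exact sequence of the triple $(V_{r+1}^{\mathrm{sa}},V_r^{\mathrm{sa}},V_{p-1}^{\mathrm{sa}})$, with Lemma~\ref{lem:relative-vanishing-vp} applied to $r+1$ so that both outer terms vanish. Your explicit check $i+1\le p^2+1<(r+1)^2$ is slightly more careful than the paper, which only notes $(r+1)^2>p^2$ and so does not spell out why the $H^{i+1}$ term vanishes.
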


\begin{proof}
Consider the long exact sequence of the triple
\[
(V_{r+1}^{\mathrm{sa}},V_r^{\mathrm{sa}},V_{p-1}^{\mathrm{sa}}):
\]
\[
\cdots\to
H^i(V_{r+1}^{\mathrm{sa}},V_r^{\mathrm{sa}})
\to
H^i(V_{r+1}^{\mathrm{sa}},V_{p-1}^{\mathrm{sa}})
\to
H^i(V_r^{\mathrm{sa}},V_{p-1}^{\mathrm{sa}})
\to
H^{i+1}(V_{r+1}^{\mathrm{sa}},V_r^{\mathrm{sa}})
\to\cdots .
\]
By \LaterLem{lem:relative-vanishing-vp}, applied with $p$ replaced by $r+1$, we have
\[
H^j(V_{r+1}^{\mathrm{sa}},V_r^{\mathrm{sa}})=0
\qquad (j<(r+1)^2).
\]
Since $r\ge p$, we have $(r+1)^2>p^2$.
Therefore, for every $i\le p^2$, both outer terms in the above exact
sequence vanish, and the middle arrow is an isomorphism.
\end{proof}

\begin{lem}\label{lem:relative-cohomology-fred-vp}
For each integer $p\ge 1$, one has
\[
H^i(\Fred_{\mathrm{sa}}^{1},V_{p-1}^{\mathrm{sa}})
\cong
H^i(V_p^{\mathrm{sa}},V_{p-1}^{\mathrm{sa}})
\qquad (i\le p^2).
\]
\end{lem}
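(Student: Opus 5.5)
Our approach is to exhaust \(\Fredsaone\) by the increasing chain of open subsets \(V_{p}^{\mathrm{sa}}\subset V_{p+1}^{\mathrm{sa}}\subset\cdots\). Every self-adjoint Fredholm operator has finite-dimensional kernel, so
\[
\Fredsaone=\bigcup_{r\ge p}V_r^{\mathrm{sa}}.
\]
By the preceding lemma, in the tower
\[
\cdots\to H^i(V_{r+1}^{\mathrm{sa}},V_{p-1}^{\mathrm{sa}})\to H^i(V_r^{\mathrm{sa}},V_{p-1}^{\mathrm{sa}})\to\cdots\to H^i(V_p^{\mathrm{sa}},V_{p-1}^{\mathrm{sa}})
\]
every map is an isomorphism for \(i\le p^2\). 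The remaining task is to pass from this tower to the union \(\Fredsaone\).

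The passage to the union is the main obstacle: singular cohomology does not commute with increasing unions in general. We plan to use the Milnor exact sequence for an open exhaustion. Since the \(V_r^{\mathrm{sa}}\) are open and increasing, every compact subset of \(\Fredsaone\), and in particular the image of every singular simplex, lies in some \(V_r^{\mathrm{sa}}\). Hence the singular chain complex \(C_*(\Fredsaone,V_{p-1}^{\mathrm{sa}})\) is the colimit of the complexes \(C_*(V_r^{\mathrm{sa}},V_{p-1}^{\mathrm{sa}})\). Dualizing gives the short exact sequence
\[
0\to {\varprojlim_r}^1 H^{i-1}(V_r^{\mathrm{sa}},V_{p-1}^{\mathrm{sa}})\to H^i(\Fredsaone,V_{p-1}^{\mathrm{sa}})\to \varprojlim_r H^i(V_r^{\mathrm{sa}},V_{p-1}^{\mathrm{sa}})\to0 .
\]
Alternatively, one can realize it through the mapping telescope of \((V_r^{\mathrm{sa}},V_{p-1}^{\mathrm{sa}})\), which is weakly equivalent to the union because the subsets are open.

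For \(i\le p^2\), the inverse system in degree \(i\) consists of isomorphisms, so its limit is \(H^i(V_p^{\mathrm{sa}},V_{p-1}^{\mathrm{sa}})\). The system in degree \(i-1<p^2\) also consists of isomorphisms, so it is Mittag-Leffler and its \(\varprojlim^1\) vanishes. It follows that the restriction \(H^i(\Fredsaone,V_{p-1}^{\mathrm{sa}})\to H^i(V_p^{\mathrm{sa}},V_{p-1}^{\mathrm{sa}})\) is an isomorphism for \(i\le p^2\).

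By Lemma~\ref{lem:relative-vanishing-vp}, both sides vanish for \(i<p^2\). The substantive content is therefore the case \(i=p^2\), where the right-hand side is \(H^0(A_p^{\mathrm{sa}})\) by Lemma~\ref{lem:odd-eells-duality}. This identification is what will later define \(k_p^{\mathrm{odd}}\) as the image of the Thom-type class.
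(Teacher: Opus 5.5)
Your proposal is correct and follows essentially the same route as the paper. Both arguments exhaust $\Fredsaone$ by the open sets $V_r^{\mathrm{sa}}$ and note that singular chains form a filtered colimit, since every compact image lies in some $V_r^{\mathrm{sa}}$; your increasing-open-cover argument for this is equivalent to the paper's appeal to upper semicontinuity of $\dim\ker$. Both then apply Milnor's $\varprojlim^1$ sequence, where the preceding stabilization lemma makes the tower constant, so $\varprojlim^1$ vanishes and $\varprojlim$ is $H^i(V_p^{\mathrm{sa}},V_{p-1}^{\mathrm{sa}})$.
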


\begin{proof}
By \LaterLem{lem:odd-degeneracy-open-closed}, every \(V_r^{\mathrm{sa}}\) is open in \(\Fred_{\mathrm{sa}}^1\).
Since every self-adjoint Fredholm operator has finite-dimensional kernel,
\[
\Fred_{\mathrm{sa}}^{1}
=
\bigcup_{r\ge p}V_r^{\mathrm{sa}}.
\]
Moreover, every singular simplex in \(\Fred_{\mathrm{sa}}^1\) has compact image.
The function
\[
T\longmapsto \dim_{\C}\ker T
\]
is upper semicontinuous on \(\Fred_{\mathrm{sa}}^1\).  Hence the image of any compact
subset of \(\Fred_{\mathrm{sa}}^1\) is contained in some \(V_N^{\mathrm{sa}}\).
Consequently the relative singular chain complex is the filtered colimit
\[
C_*(\Fred_{\mathrm{sa}}^1,V_{p-1}^{\mathrm{sa}};\Z)
=
\varinjlim_{r\ge p}
C_*(V_r^{\mathrm{sa}},V_{p-1}^{\mathrm{sa}};\Z).
\]
Passing to cochains gives the corresponding inverse system of relative cochain
complexes.  We use Milnor's exact sequence for the cohomology of such an
increasing union; see \cite{Milnor}.  It gives a short exact sequence
\[
0\to
\varprojlim\nolimits^{1}
H^{i-1}(V_r^{\mathrm{sa}},V_{p-1}^{\mathrm{sa}})
\to
H^i(\Fred_{\mathrm{sa}}^{1},V_{p-1}^{\mathrm{sa}})
\to
\varprojlim
H^i(V_r^{\mathrm{sa}},V_{p-1}^{\mathrm{sa}})
\to 0.
\]
By the preceding lemma, for every \(i\le p^2\) the inverse system
\[
\bigl\{H^i(V_r^{\mathrm{sa}},V_{p-1}^{\mathrm{sa}})\bigr\}_{r\ge p}
\]
is constant from \(r=p\) onward.  Hence
\[
\varprojlim\nolimits^{1}
H^{i-1}(V_r^{\mathrm{sa}},V_{p-1}^{\mathrm{sa}})=0
\]
and
\[
\varprojlim
H^i(V_r^{\mathrm{sa}},V_{p-1}^{\mathrm{sa}})
\cong
H^i(V_p^{\mathrm{sa}},V_{p-1}^{\mathrm{sa}}).
\]
Therefore
\[
H^i(\Fred_{\mathrm{sa}}^{1},V_{p-1}^{\mathrm{sa}})
\cong
H^i(V_p^{\mathrm{sa}},V_{p-1}^{\mathrm{sa}})
\qquad (i\le p^2).
\]
This completes the proof.
\end{proof}

\subsection{Universal odd Koschorke classes and characteristic classes}

Let \(p\ge 1\).  Let
\[
1\in H^{0}(A_p^{\mathrm{sa}})
\]
denote the unit class.  By \LaterLem{lem:odd-eells-duality}, the class
\(1\) has a unique inverse image
\[
\psi_p^{-1}(1)
\in
H^{p^2}(V_p^{\mathrm{sa}},V_{p-1}^{\mathrm{sa}}).
\]

\begin{dfn}
We denote this class by
\[
e k_p^{\mathrm{odd}}
:=
\psi_p^{-1}(1)
\in
H^{p^2}(V_p^{\mathrm{sa}},V_{p-1}^{\mathrm{sa}}).
\]
\end{dfn}

By \LaterLem{lem:relative-cohomology-fred-vp}, there is a natural
isomorphism
\[
H^{p^2}(V_p^{\mathrm{sa}},V_{p-1}^{\mathrm{sa}})
\cong
H^{p^2}(\Fred_{\mathrm{sa}}^{1},V_{p-1}^{\mathrm{sa}}).
\]
Accordingly, we also write
\[
e k_p^{\mathrm{odd}}
\in
H^{p^2}(\Fred_{\mathrm{sa}}^{1},V_{p-1}^{\mathrm{sa}})
\]
for the corresponding relative cohomology class.

Let
\[
j_p\colon
(\Fred_{\mathrm{sa}}^{1},\varnothing)
\longrightarrow
(\Fred_{\mathrm{sa}}^{1},V_{p-1}^{\mathrm{sa}})
\]
be the natural inclusion of pairs.
It induces a homomorphism
\[
j_p^{*}\colon
H^{p^2}(\Fred_{\mathrm{sa}}^{1},V_{p-1}^{\mathrm{sa}})
\longrightarrow
H^{p^2}(\Fred_{\mathrm{sa}}^{1},\varnothing)
\cong
H^{p^2}(\Fred_{\mathrm{sa}}^{1}).
\]

\begin{dfn}[\IntroThm{thm:intro-universal-odd-koschorke}]\label{dfn:odd-universal-koschorke}
The class
\[
k_p^{\mathrm{odd}}
=
j_p^{*}(e k_p^{\mathrm{odd}})
\in
H^{p^2}(\Fred_{\mathrm{sa}}^{1})
\]
is called the \emph{$p$-th universal odd Koschorke class}.
\end{dfn}

\begin{dfn}
Let $A$ be a commutative ring.  A \emph{characteristic class of odd $K$-theory
with coefficients in $A$} is a natural transformation
\[
\Phi^{\mathrm{odd}}\colon K^{1}(-)\longrightarrow H^{*}(-;A)
\]
from the functor \(K^{1}(-)\) on paracompact Hausdorff spaces to ordinary
cohomology with coefficients in $A$.
\end{dfn}

\begin{dfn}[\IntroThm{thm:intro-universal-odd-koschorke}]\label{dfn:odd-koschorke-characteristic}
Let $X$ be a paracompact Hausdorff space, and let
\[
[f]\in K^{1}(X)=[X,\Fred_{\mathrm{sa}}^{1}]
\]
be represented by a continuous map
\[
f\colon X\to\Fred_{\mathrm{sa}}^{1}.
\]
The \emph{$p$-th odd Koschorke class} of \([f]\) is defined by
\[
\chi_p^{\mathrm{odd}}([f])=f^{*}(k_p^{\mathrm{odd}})
\in
H^{p^2}(X).
\]
Thus the assignment
\[
\chi_p^{\mathrm{odd}}\colon K^{1}(-)\longrightarrow H^{p^2}(-)
\]
defines a characteristic class of odd $K$-theory.
\end{dfn}

\begin{prop}[\IntroThm{thm:intro-odd-vanishing}]\label{prop:odd-vanishing}
Let $X$ be a paracompact Hausdorff space, and let
\[
A\colon X\to \Fred_{\mathrm{sa}}^{1}
\]
be a continuous map.
Assume that
\[
\dim\ker A_x<p
\]
for every $x\in X$.
Then
\[
\chi_p^{\mathrm{odd}}(A)=0.
\]
\end{prop}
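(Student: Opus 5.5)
The plan is to use the naturality of the long exact sequence of the pair and the factorization of \(j_p^*\) through the restriction to \(V_{p-1}^{\mathrm{sa}}\). By definition,
\[
k_p^{\mathrm{odd}}=j_p^{*}(e k_p^{\mathrm{odd}}),
\qquad
e k_p^{\mathrm{odd}}\in H^{p^2}(\Fred_{\mathrm{sa}}^{1},V_{p-1}^{\mathrm{sa}}).
\]
The hypothesis \(\dim\ker A_x<p\) for every \(x\in X\) says exactly that \(A(X)\subset V_{p-1}^{\mathrm{sa}}\). Hence \(A\) factors as \(A=\iota\circ A'\), where \(A'\colon X\to V_{p-1}^{\mathrm{sa}}\) is the corestriction and \(\iota\colon V_{p-1}^{\mathrm{sa}}\hookrightarrow\Fred_{\mathrm{sa}}^{1}\) is the inclusion. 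It therefore suffices to show \(\iota^{*}k_p^{\mathrm{odd}}=0\) in \(H^{p^2}(V_{p-1}^{\mathrm{sa}})\), since then \(\chi_p^{\mathrm{odd}}(A)=A'^{*}\iota^{*}k_p^{\mathrm{odd}}=0\).

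For this I will consider the composite of maps of pairs
\[
(V_{p-1}^{\mathrm{sa}},\varnothing)\xrightarrow{\ \iota\ }(\Fred_{\mathrm{sa}}^{1},\varnothing)\xrightarrow{\ j_p\ }(\Fred_{\mathrm{sa}}^{1},V_{p-1}^{\mathrm{sa}}).
\]
This composite also factors through \((V_{p-1}^{\mathrm{sa}},V_{p-1}^{\mathrm{sa}})\), namely as
\[
(V_{p-1}^{\mathrm{sa}},\varnothing)\to(V_{p-1}^{\mathrm{sa}},V_{p-1}^{\mathrm{sa}})\to(\Fred_{\mathrm{sa}}^{1},V_{p-1}^{\mathrm{sa}}).
\]
Because \(H^{*}(V_{p-1}^{\mathrm{sa}},V_{p-1}^{\mathrm{sa}})=0\), it follows that
\[
\iota^{*}j_p^{*}(e k_p^{\mathrm{odd}})=0.
\]
Equivalently, this is the statement that two consecutive maps in the exact sequence of the pair \((\Fred_{\mathrm{sa}}^{1},V_{p-1}^{\mathrm{sa}})\) compose to zero.

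The only point needing care is that \(e k_p^{\mathrm{odd}}\) is genuinely a class in \(H^{p^2}(\Fred_{\mathrm{sa}}^{1},V_{p-1}^{\mathrm{sa}})\). This is guaranteed by the identification of Lemma~\ref{lem:relative-cohomology-fred-vp} already used in the definition, so no further work is needed there. I therefore expect no real obstacle: the argument is purely formal, and it works for arbitrary paracompact \(X\) because only functoriality of singular cohomology is used.
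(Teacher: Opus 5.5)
Your proposal is correct and is essentially the paper's own proof. You factor \(A\) through \(V_{p-1}^{\mathrm{sa}}\) and show \(i_{p-1}^*\circ j_p^*=0\). The paper gets this vanishing directly from exactness of the long exact sequence of the pair \((\Fred_{\mathrm{sa}}^{1},V_{p-1}^{\mathrm{sa}})\); you get it by factoring through \((V_{p-1}^{\mathrm{sa}},V_{p-1}^{\mathrm{sa}})\), which is an equivalent way of saying the same thing.
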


\begin{proof}
By assumption, one has
\[
A(X)\subset V_{p-1}^{\mathrm{sa}}.
\]
Let
\[
i_{p-1}\colon V_{p-1}^{\mathrm{sa}}\hookrightarrow \Fred_{\mathrm{sa}}^{1}
\]
denote the inclusion.
Then $A$ factors through $V_{p-1}^{\mathrm{sa}}$.
More precisely, there exists a continuous map
\[
\widetilde A\colon X\to V_{p-1}^{\mathrm{sa}}
\]
such that
\[
A=i_{p-1}\circ \widetilde A.
\]

Next consider the long exact sequence of the pair
\[
(\Fred_{\mathrm{sa}}^{1},V_{p-1}^{\mathrm{sa}}).
\]
Its relevant part is
\[
H^{p^2}(\Fred_{\mathrm{sa}}^{1},V_{p-1}^{\mathrm{sa}})
\xrightarrow{\,j_p^{*}\,}
H^{p^2}(\Fred_{\mathrm{sa}}^{1})
\xrightarrow{\,i_{p-1}^{*}\,}
H^{p^2}(V_{p-1}^{\mathrm{sa}}).
\]
By exactness, we have
\[
i_{p-1}^{*}\circ j_p^{*}=0.
\]
Since
\[
k_p^{\mathrm{odd}}=j_p^{*}(e k_p^{\mathrm{odd}}),
\]
it follows that
\[
i_{p-1}^{*}(k_p^{\mathrm{odd}})
=
i_{p-1}^{*}j_p^{*}(e k_p^{\mathrm{odd}})
=
0.
\]

By using $A=i_{p-1}\circ \widetilde A$, we obtain
\[
\chi_p^{\mathrm{odd}}(A)
=
A^{*}(k_p^{\mathrm{odd}})
=
\widetilde A^{*}i_{p-1}^{*}(k_p^{\mathrm{odd}})
=
0.
\]
This completes the proof.
\end{proof}

We next recall the cohomology ring of the space of self-adjoint Fredholm operators.
From the standard cohomology calculation for the stable unitary group, one has
an isomorphism
of graded rings
\[
H^{*}(\Fredsaone)
\cong
\Lambda_{\Z}
\bigl(c_{1/2},c_{3/2},c_{5/2},\ldots\bigr).
\]
Here
\[
c_{(2i+1)/2}\in H^{2i+1}(\Fredsaone)
\qquad (i\geq 0)
\]
denotes the universal odd Chern class.  More precisely, under the comparison
of \(\Fredsaone\) with the stable unitary model, \(c_{(2i+1)/2}\) corresponds
to the transgression of the ordinary Chern class \(c_{i+1}\); see
\cite{Borel,Cibotaru,QuillenCayley}.

\begin{thm}[\IntroThm{thm:intro-odd-chern-formula}]\label{thm:odd-koschorke-chern}
For every integer \(p\geq 1\), the \(p\)-th universal odd Koschorke class is
given by
\[
k_p^{\mathrm{odd}}
=
c_{1/2}c_{3/2}\cdots c_{(2p-1)/2}
\in
H^{p^2}(\Fredsaone).
\]
\end{thm}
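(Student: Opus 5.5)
The plan is to compare the degeneracy cycle \(A_p^{\mathrm{sa}}\) with a finite-dimensional model in which the Poincaré dual of the corresponding Schubert-type cycle is known. This is the quasi-manifold approach of Cibotaru \cite{Cibotaru}, which the introduction announces.

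\textbf{Step 1: restrict to finite dimensions.} Fix a finite-dimensional model. Use the map \(U(n)\to\Fredsaone\) given, on \(\C^n\subset\mathcal H\), by the Cayley-type transform \(g\mapsto i(1+g)(1-g)^{-1}\), suitably truncated, and extended by a fixed invertible operator with spectrum on both sides of \(0\) on \((\C^n)^\perp\). This map has three relevant features.

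It sends \(\dim\Ker(1+g)\) to the kernel dimension, up to the conventional sign choice \(\pm1\).

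It is a homotopy equivalence in the stable range, so it induces an isomorphism on \(H^{\le 2n-1}\), compatibly with the odd Chern classes \(c_{(2i+1)/2}\). This follows from the transgression description recalled just before the theorem.

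It is transverse to each stratum \(A_p^{\mathrm{sa}}\). The preimage is the smooth locus of the Schubert-type variety \(Z_p=\{g\in U(n)\mid \dim\Ker(1+g)\ge p\}\).

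The transversality follows from the Schur complement description in Proposition~\ref{prop:Asa-submanifold}. Near a point of \(Z_p\), the composite of \(S\) with the Cayley map has surjective differential onto \(\operatorname{Herm}(\Ker)\), since \(U(n)\) near \(g\) is parametrized by Hermitian generators. Because \(k_p^{\mathrm{odd}}\) is defined through the Thom class of \(N(A_p^{\mathrm{sa}})\cong\operatorname{Herm}(\ker)\), naturality of the Eells duality of Lemma~\ref{lem:odd-eells-duality} under transverse pullback shows the following. The pullback of \(k_p^{\mathrm{odd}}\) to \(U(n)\) is the class dual to the cooriented cycle \(Z_p\), whose singular set lies in \(Z_{p+1}\) and has codimension \((p+1)^2>p^2+1\). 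Here I invoke Cibotaru's quasi-manifold formalism to make ``dual class of \(Z_p\)'' meaningful despite the singularities.

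\textbf{Step 2: compute \([Z_p]\) in \(H^{p^2}(U(n))\).} I would proceed by induction on \(p\), using a fibration and resolution argument. Resolve \(Z_p\) by
\[
\widetilde Z_p=\{(g,L)\mid L\in \mathrm{Gr}_p(\C^n),\ g|_L=-1\}\to Z_p,
\]
which is birational. Then \(\widetilde Z_p\) fibres over \(\mathrm{Gr}_p\) with fibre \(U(n-p)\) acting on \(L^\perp\). A pushforward computation then expresses \([Z_p]\). As an alternative, I would use Cibotaru's computation directly: Cibotaru identifies the Poincaré duals of such flag-type quasi-manifolds in \(U(n)\) with the products \(x_1x_3\cdots\), that is, with the transgressed Chern classes \(c_{1/2}c_{3/2}\cdots c_{(2p-1)/2}\). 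The base case \(p=1\) is a check: \(Z_1\) is the hypersurface \(\{\det(1+g)=0\}\), and its dual is the pullback of the generator of \(H^1(S^1)\) under \(\det\), namely \(c_{1/2}\). This fixes sign and orientation conventions.

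\textbf{Step 3: conclude.} Take \(n\) large compared with \(p^2\). Injectivity of the restriction \(H^{p^2}(\Fredsaone)\to H^{p^2}(U(n))\) in this range, which follows from the exterior algebra description, then gives the formula in \(H^{p^2}(\Fredsaone)\) with integer coefficients. The only remaining point is that this exterior algebra is torsion free.

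The main obstacle is Step 2, and specifically making the duality for the singular cycle \(Z_p\) rigorous and matching orientations. The first thing I would try is to verify that the quasi-manifold orientation agrees with the coorientation from \(\operatorname{Herm}(\ker)\) of Proposition~\ref{prop:odd-normal-bundle}, and then cite Cibotaru's formula.
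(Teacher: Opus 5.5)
There is a genuine gap in Step~1, and it breaks the whole reduction rather than being a technicality.

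\textbf{The model map is null-homotopic.} The map you describe has the form $g\mapsto F(g)\oplus A_0$, where $F(g)$ is a Hermitian operator on the fixed block $\C^n$ and $A_0$ is fixed on $(\C^n)^\perp$. Since $H\oplus A_0\in\Fredsaone$ for every $H\in\operatorname{Herm}(\C^n)$, the map factors through the contractible space $\operatorname{Herm}(\C^n)$. It is therefore null-homotopic and kills all of $H^{>0}(\Fredsaone)$. So it is not a stable-range equivalence, and the restriction in Step~3 is zero rather than injective.

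\textbf{Why no truncation can work.} On an eigenvalue $e^{i\theta}$ the Cayley transform is $-\cot(\theta/2)$. This tends to $-\infty$ as $\theta\to0^+$ and to $+\infty$ as $\theta\to2\pi^-$. A continuous real function on $S^1$ that changes sign at $-1$ must vanish somewhere else. So any bounded ``truncation'' falls into one of two cases:
\begin{itemize}
\item It creates extra kernel. For instance $(g-g^*)/2i$ has kernel $\Ker(1+g)\oplus\Ker(1-g)$, and the two parts of the preimage are oppositely cooriented and cancel.
\item It keeps the kernel equal to $\Ker(1+g)$, as $(1+g)^*(1+g)$ does, but is then not transverse to $A_p^{\mathrm{sa}}$.
\end{itemize}
Hence your three listed properties cannot hold simultaneously.

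\textbf{What a usable model needs.} A finite model must let eigenvalues enter from the essential spectrum, that is, pass through $\pm\infty$. One example is the bounded transform of $-i\,d/dt$ on $L^2([0,1];\C^n)$ with boundary condition $f(1)=-gf(0)$, whose kernel is $\Ker(1+g)$. Another is the finite Lagrangian Grassmannians $U(n)\cong\operatorname{Lag}(\C^n\oplus\C^n)$ inside Cibotaru's $\operatorname{Lag}^-$. Even then you would still have to prove two things: transversality to every stratum, and that the pullback of the duality-defined class $k_p^{\mathrm{odd}}$ is the quasi-manifold class of the preimage. The latter is the unproved ``naturality of Eells duality under transverse pullback.''

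\textbf{How the paper avoids this.} The paper does not reduce to finite dimensions at all. The switched graph $T\mapsto\{(Tv,v)\}$ is a diffeomorphism of $\Fredsaone$ onto an open subset of $\operatorname{Lag}^-$. So Cibotaru's Schubert quasi-manifold for $J=(0,1,\ldots,p-1)$, whose class is $c_{1/2}\cdots c_{(2p-1)/2}$, restricts to $Z_p=\{\dim\Ker T\ge p\}$ with no transversality input. The comparison with $k_p^{\mathrm{odd}}$ is then done on $V_p^{\mathrm{sa}}$:
\begin{itemize}
\item The top stratum there is the flag-generic part of $A_p^{\mathrm{sa}}$, and the remainder has codimension at least $p^2+2$.
\item The two coorientations agree because both normal coordinates are the Schur complement in $\operatorname{Herm}(\Ker T_0)$.
\item $H^{p^2}(\Fredsaone,V_p^{\mathrm{sa}})=0$ makes restriction to $V_p^{\mathrm{sa}}$ injective.
\end{itemize}

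Two further remarks on your Steps~2 and~3. The coorientation comparison is needed for every $p$: your $p=1$ check fixes only the sign of $c_{1/2}$, not the orientation convention on $\operatorname{Herm}(\C^p)$ for $p\ge2$. And your resolution $\widetilde Z_p$ is only sketched, so your argument, like the paper's, ultimately rests on Cibotaru's computation.
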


\begin{proof}
The proof is given in \LaterCor{cor:odd-koschorke-product-paper} below.
The essential input is Cibotaru's construction of geometric representatives
for the integral cohomology generators of a classifying space for odd
\(K\)-theory \cite{Cibotaru}.  In that corollary we apply this construction to
the initial Schubert condition \(J=(0,1,\ldots,p-1)\), identify the resulting
locus with the kernel-dimension locus \(\{\dim\ker A\ge p\}\), and verify that
Cibotaru's coorientation agrees with the coorientation used in the duality
definition of \(k_p^{\mathrm{odd}}\).
\end{proof}

%%%%%%%%%%%%%%%%%%%%%%%%%%%%%%%%%%%%%%%%%%%%%%%%
\subsection{Interpretation in obstruction theory}
%%%%%%%%%%%%%%%%%%%%%%%%%%%%%%%%%%%%%%%%%%%%%%%%

We now describe the meaning as obstruction classes of the odd Koschorke
classes.

Let \(p\geq 2\), and put
\[
F=\Fredsaone,\qquad
V=V_{p-1}^{\mathrm{sa}},\qquad
n=p^{2}.
\]
Thus
\[
V=\{T\in \Fredsaone\mid \dim_{\C}\ker T<p\}.
\]

\begin{prop}\label{prop:odd-pair-connected}
For \(p\geq 2\), the pair
\[
(F,V)=(\Fredsaone,V_{p-1}^{\mathrm{sa}})
\]
is \((n-1)\)-connected.
More precisely,
\[
\pi_i(F,V)=0
\qquad
(1\leq i<n),
\]
and the map
\[
\pi_0(V)\longrightarrow \pi_0(F)
\]
is bijective.
\end{prop}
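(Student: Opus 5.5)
The plan is to show that the complement of \(V\) in \(F\) is a closed union of strata of real codimension at least \(n=p^2\). Removing such a set from a Banach manifold does not change homotopy groups below degree \(n-1\) and preserves connectivity. Concretely,
\[
F\setminus V=\bigcup_{r\ge p}A_r^{\mathrm{sa}},
\]
and by \LaterProp{prop:Asa-submanifold} each \(A_r^{\mathrm{sa}}\) is a split Banach submanifold of \(F\) (more precisely, closed in the open set \(V_r^{\mathrm{sa}}\)) of codimension \(r^2\ge p^2\). I would argue by induction along the filtration
\[
V=V_{p-1}^{\mathrm{sa}}\subset V_p^{\mathrm{sa}}\subset V_{p+1}^{\mathrm{sa}}\subset\cdots,
\]
using the fact from the proof of \LaterLem{lem:relative-cohomology-fred-vp} that \(F=\bigcup_r V_r^{\mathrm{sa}}\) and every compact subset lies in some \(V_N^{\mathrm{sa}}\).

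The key step is a transversality statement. Let \(M\) be a Banach manifold modelled on a Hilbert space (here the real Banach space of self-adjoint bounded operators, an open subset of which is \(F\)), and let \(Z\subset M\) be a closed split submanifold of codimension \(c\). Then the pair \((M,M\setminus Z)\) is \((c-1)\)-connected. To prove this, represent a class in \(\pi_i(M,M\setminus Z)\), for \(i<c\), by a map of pairs \((D^i,S^{i-1})\to(M,M\setminus Z)\). Since \(Z\) is closed, \(S^{i-1}\) maps into the open set \(M\setminus Z\). Approximate the map, relative to a neighbourhood of the boundary, by a smooth map that is small and homotopic to it. Using the local Schur complement charts \(S\colon\nu_{f_0}\to\operatorname{Herm}(K)\), perturb so that the composite with \(S\) is transverse to \(0\), for example by adding a small constant Hermitian perturbation in the \(\alpha\)-block together with a partition of unity. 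Since \(i<c\), transversality means the image avoids \(Z\) entirely. Hence the class is trivial.

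The same argument with \(i=0\) gives surjectivity of \(\pi_0(V)\to\pi_0(F)\). With \(i=1\), using \(c\ge 4\) since \(p\ge2\), a path in \(F\) joining two points of \(V\) can be pushed off the strata, which gives injectivity. Applying the key step to \(Z=A_r^{\mathrm{sa}}\subset M=V_r^{\mathrm{sa}}\), for which \(M\setminus Z=V_{r-1}^{\mathrm{sa}}\), shows that \((V_r^{\mathrm{sa}},V_{r-1}^{\mathrm{sa}})\) is \((r^2-1)\)-connected, hence \((n-1)\)-connected for all \(r\ge p\).

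Composing along the filtration then gives the result. The exact sequence of the triple, or simply iterating the push-off one stratum at a time for a compact representative, shows that \((V_N^{\mathrm{sa}},V)\) is \((n-1)\)-connected for each \(N\). Any element of \(\pi_i(F,V)\) has compact image, so it lies in some \(\pi_i(V_N^{\mathrm{sa}},V)\) and therefore vanishes. The same compactness argument handles \(\pi_0\).

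The main obstacle is the infinite-dimensional transversality and smoothing step. One needs to approximate continuous maps from finite complexes into a Banach manifold by smooth ones, relative to the boundary. One also needs a Sard-type argument, and here only the finite-dimensional codomain \(\operatorname{Herm}(K)\) matters, because locally \(Z=S^{-1}(0)\) with \(S\) a submersion onto a \(c\)-dimensional space. My first approach would be entirely local. Cover the compact image by finitely many Schur-complement neighbourhoods \(\nu_{f_0}\), and on each one modify the map only in the \(\alpha\)-block by a small Hermitian-valued function chosen so that \(S\circ g\) misses \(0\). This is possible by finite-dimensional dimension counting, since \(i<c\). Finally, patch the local modifications inductively over the cover, keeping each perturbation small enough that the image stays in \(\nu_{f_0}\) and earlier avoidance is preserved, which works because avoidance is an open condition.
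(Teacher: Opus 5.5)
Your proposal is correct and follows the paper's route: the filtration \(V_{p-1}^{\mathrm{sa}}\subset V_p^{\mathrm{sa}}\subset\cdots\), pushing compact families off each closed stratum \(A_r^{\mathrm{sa}}\subset V_r^{\mathrm{sa}}\) of codimension \(r^2\ge p^2\), composing along the triples, and using compactness to pass to \(\Fredsaone\). The only difference is in the transversality step. You carry it out by hand, perturbing the \(\alpha\)-block of the Schur-complement chart so that a finite-dimensional dimension count suffices, whereas the paper quotes relative smooth approximation together with the relative Thom--Smale transversality theorem.
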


\begin{proof}
We recall only the part of the argument that will be used below.
For \(r\geq p\), one has
\[
V_{r-1}^{\mathrm{sa}}
=
V_r^{\mathrm{sa}}\setminus A_r^{\mathrm{sa}}.
\]
By \LaterProp{prop:Asa-submanifold}, the subset
\(A_r^{\mathrm{sa}}\) is a closed split Banach submanifold of
\(V_r^{\mathrm{sa}}\) of real codimension \(r^2\).

Let \(P\) be a finite-dimensional compact smooth manifold, and let
\(C\subset P\) be a closed subset.  Suppose that a continuous map
\[
f\colon P\longrightarrow V_r^{\mathrm{sa}}
\]
carries a neighborhood of \(C\) into \(V_{r-1}^{\mathrm{sa}}\).  First use
relative smooth approximation for maps from finite-dimensional manifolds to
open subsets of Banach spaces, fixing a smaller closed neighborhood of \(C\),
to replace \(f\) by a smooth map in the same relative homotopy class.  We then
apply the relative Thom--Smale transversality theorem for maps from a
finite-dimensional manifold to a Banach manifold with respect to the
split submanifold of finite codimension \(A_r^{\mathrm{sa}}\)
\cite{AbrahamRobbin,Palais,Smale}.  Thus we may deform \(f\), fixing a closed
neighborhood of \(C\), so that it is transverse to \(A_r^{\mathrm{sa}}\) outside
that neighborhood.  If
\[
\dim P<r^2,
\]
then the inverse image of \(A_r^{\mathrm{sa}}\) is a smooth manifold of
negative dimension.  Hence it is empty.  It follows that the deformed map
has image in \(V_{r-1}^{\mathrm{sa}}\).

Applying this argument to disks and to homotopies of spheres gives
\[
\pi_i(V_r^{\mathrm{sa}},V_{r-1}^{\mathrm{sa}})=0
\qquad
(1\leq i<r^2).
\]
The same argument applied to paths shows that
\[
\pi_0(V_{r-1}^{\mathrm{sa}})
\longrightarrow
\pi_0(V_r^{\mathrm{sa}})
\]
is bijective for \(r\geq 2\).

Now use the long exact sequence of relative homotopy groups for the triple
\[
V_{p-1}^{\mathrm{sa}}
\subset
V_{r-1}^{\mathrm{sa}}
\subset
V_r^{\mathrm{sa}}.
\]
Since \(r\geq p\), the preceding vanishing implies, by induction on \(r\),
that
\[
\pi_i(V_N^{\mathrm{sa}},V_{p-1}^{\mathrm{sa}})=0
\qquad
(1\leq i<p^2)
\]
for every \(N\geq p\).  The same induction gives that
\[
\pi_0(V_{p-1}^{\mathrm{sa}})
\longrightarrow
\pi_0(V_N^{\mathrm{sa}})
\]
is bijective.

Finally, every compact subset of \(\Fredsaone\) is contained in
\(V_N^{\mathrm{sa}}\) for some \(N\).  Indeed, the kernel dimension is upper
semicontinuous for Fredholm operators.  Therefore every map from a compact
disk, sphere, or interval into \(\Fredsaone\) factors through some
\(V_N^{\mathrm{sa}}\).  Passing from the finite stages to \(\Fredsaone\), we
obtain
\[
\pi_i(\Fredsaone,V_{p-1}^{\mathrm{sa}})=0
\qquad
(1\leq i<p^2),
\]
and
\[
\pi_0(V_{p-1}^{\mathrm{sa}})
\longrightarrow
\pi_0(\Fredsaone)
\]
is bijective.  This completes the proof.
\end{proof}

\begin{lem}\label{lem:odd-obstruction-local-system}
Let \(p\geq2\), put \(F=\Fredsaone\), \(V=V_{p-1}^{\mathrm{sa}}\), and
\(n=p^2\).  The local coefficient system on \(V\) with fiber
\(\pi_n(F,V)\) is canonically identified with the constant system \(\Z\).
Under this identification, the relative class
\[
\widetilde{k}^{\mathrm{odd}}_p\in H^n(F,V)
\]
corresponds to the generator of the obstruction coefficient group.
\end{lem}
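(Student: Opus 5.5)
The plan is to identify \(\pi_n(F,V)\) geometrically with the integers via a small normal disk to the stratum \(A_p^{\mathrm{sa}}\). Then we show the coefficient system is trivial because the coorientation is canonical. Finally we compare the resulting obstruction cocycle with the duality class \(e k_p^{\mathrm{odd}}=\widetilde{k}^{\mathrm{odd}}_p\).

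\textbf{The fiber.} By \LaterProp{prop:odd-pair-connected}, the pair \((F,V)\) is \((n-1)\)-connected. Since \(p\ge2\), we have \(n\ge4\), and \(V\) and \(F\) have the same \(\pi_0\). The relative Hurewicz theorem then gives \(\pi_n(F,V)\cong H_n(F,V)\), at least after quotienting by the \(\pi_1(V)\)-action; \(\pi_1(F)=\pi_1(U)=\Z\) is abelian. Next I would use \LaterLem{lem:relative-cohomology-fred-vp}, \LaterLem{lem:odd-eells-duality}, and the universal coefficient theorem. Together these give \(H_n(F,V)\cong H_n(V_p^{\mathrm{sa}},V_{p-1}^{\mathrm{sa}})\cong H_0(A_p^{\mathrm{sa}})\). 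The last group is \(\Z\) provided \(A_p^{\mathrm{sa}}\) is connected.

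For connectedness, I would argue as follows. The kernel bundle identifies \(A_p^{\mathrm{sa}}\) with a fiber bundle over a Grassmannian of \(p\)-planes. Its fiber is the space of invertible self-adjoint Fredholm operators on the complement with both spectral halves infinite, which is contractible by Atiyah--Singer and Kuiper. Hence \(A_p^{\mathrm{sa}}\) is connected.

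Concretely, a generator of \(\pi_n(F,V)\) is given by a normal disk. Fix \(f_0\in A_p^{\mathrm{sa}}\) with \(\ker f_0=K\). Consider the map
\[
D(\operatorname{Herm}(K))\to F,\qquad H\mapsto f_0+H\oplus 0 .
\]
It sends the boundary sphere into \(V\), because a nonzero Hermitian \(H\) has kernel of dimension less than \(p\). Using the Schur complement chart \(S\) from \LaterProp{prop:Asa-submanifold}, this disk meets \(A_p^{\mathrm{sa}}\) transversally in the single point \(f_0\). Its sign is \(+1\) with respect to the canonical orientation of \(\operatorname{Herm}(K)\).

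\textbf{Triviality of the system.} The local system over \(V\) with fiber \(\pi_n(F,V)\) is governed by the \(\pi_1(V)\)-action. Under the Hurewicz and duality isomorphisms, a loop acts on the generator by the change in coorientation of the normal bundle along the loop. By \LaterProp{prop:odd-normal-bundle} the normal bundle \(\operatorname{Herm}(\ker)\) is canonically oriented. The reason is that the transition group \(U(p)\) is connected, so every loop preserves the orientation and the action is trivial. I would make this precise by showing that any element of \(\pi_n(F,V)\) can be made transverse to \(A_p^{\mathrm{sa}}\), using the Thom--Smale argument from \LaterProp{prop:odd-pair-connected}. The element is then determined by its signed intersection count with \(A_p^{\mathrm{sa}}\), which is independent of base paths. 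This gives the canonical identification of the system with the constant system \(\Z\).

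\textbf{The class.} For any \((n)\)-cell, the obstruction cocycle records the element of \(\pi_n(F,V)\) determined by its characteristic map, that is, its intersection number with \(A_p^{\mathrm{sa}}\). On the other hand, \(\widetilde{k}^{\mathrm{odd}}_p=\psi_p^{-1}(1)\) is the Thom class of the oriented normal bundle, so it evaluates on a relative cycle as the signed intersection with \(A_p^{\mathrm{sa}}\). On the normal disk it evaluates to \(1\). Hence, under \(H^n(F,V)\cong\operatorname{Hom}(H_n(F,V),\Z)=\operatorname{Hom}(\pi_n(F,V),\Z)\), the class \(\widetilde{k}^{\mathrm{odd}}_p\) corresponds to the generator.

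The main obstacle is the compatibility of Eells' duality isomorphism \(\psi_p\) with the local Thom class, namely checking that \(\psi_p^{-1}(1)\) restricts to the oriented Thom class on a normal disk. I would first try to verify this by excision to a tubular neighbourhood in the chart \(S\), where the statement reduces to finite-dimensional Thom isomorphism conventions.
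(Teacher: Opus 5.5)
The gap is in your step ``Triviality of the system''. There you assert that an element of \(\pi_n(F,V)\) is determined by its signed intersection number with \(A_p^{\mathrm{sa}}\), independently of base paths. That signed count is exactly the Hurewicz map \(\pi_n(F,V)\to H_n(F,V)\cong\Z\), and here it is not injective. So the caveat you raised in the Hurewicz step (``after quotienting by the \(\pi_1(V)\)-action'') is never discharged, and the fact that \(\pi_1(F)\) is abelian does not help.

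Since \(n\ge4\), the \((n-1)\)-connectivity of \((F,V)\) gives \(\pi_1(V)\cong\pi_1(F)\cong\Z\), generated by a loop of spectral flow one. Your own fibration argument proves more than connectedness: \(A_p^{\mathrm{sa}}\simeq\operatorname{Gr}_p(\mathcal H)\simeq BU(p)\) is simply connected. So the orientation monodromy you compute (frame changes \(u\in U(p)\) along loops in or near the stratum) only concerns loops that are already null-homotopic in \(F\). It says nothing about the generator of \(\pi_1(V)\).

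In fact, the preimage \(\widetilde{A}_p\) of \(A_p^{\mathrm{sa}}\) in the universal cover \(\widetilde F\) is a disjoint union of copies indexed by \(\Z\). Run your duality argument upstairs, where \(\widetilde V\) is simply connected and Hurewicz really is an isomorphism. This gives
\[
\pi_n(F,V)\cong\pi_n(\widetilde F,\widetilde V)\cong H_n(\widetilde F,\widetilde V)\cong H_0(\widetilde{A}_p)\cong\Z[t^{\pm1}],
\]
with the generator of \(\pi_1(V)\) acting by multiplication by \(t\). The positive normal disk \(\delta\) and \(t\delta\) (the same disk with its base path changed by the spectral-flow loop) both have signed count \(1\), but they are different elements. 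The signed count is just the augmentation \(\Z[t^{\pm1}]\to\Z\). The same phenomenon is already visible in finite dimensions: \(\pi_4(U(2),U(2)\setminus\{-I\})\cong\Z[t^{\pm1}]\). For the same reason, your final identification \(\operatorname{Hom}(H_n(F,V),\Z)=\operatorname{Hom}(\pi_n(F,V),\Z)\) fails.

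The paper's proof takes essentially your route (relative Hurewicz, Schur-complement normal disk, connectedness of \(U(p)\)) and has the same hole. It asserts that \(h_v\) is an isomorphism for every base point, and it checks monodromy only along paths in \(A_p^{\mathrm{sa}}\). So the missing step cannot be supplied: the lemma as stated does not hold for \(p\ge2\).

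What both arguments do establish is the homological version. The system \(v\mapsto H_n(F,V)\), i.e.\ the \(\pi_1(V)\)-coinvariants of \(\pi_n(F,V;v)\), is constant \(\Z\). The class \(\widetilde{k}^{\mathrm{odd}}_p\) generates its dual, taking the value \(1\) on the positive normal disk. The genuine primary obstruction to compressing \(A\colon X\to F\) into \(V\) lives in \(H^n(X;A^*\Z[t^{\pm1}])\), and \(\chi_p^{\mathrm{odd}}(A)\) is only its image under the augmentation. The obstruction-theoretic conclusions drawn from this lemma therefore need that twisted refinement.
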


\begin{proof}
By \LaterProp{prop:odd-pair-connected}, the pair \((F,V)\) is
\((n-1)\)-connected.  Hence the relative Hurewicz theorem identifies the first
nonzero relative homotopy group with relative homology; see
\cite[Chapter~IV, \S7]{Whitehead}:
\[
h_v\colon \pi_n(F,V;v)\xrightarrow{\ \cong\ }H_n(F,V;\Z)
\]
for every base point \(v\in V\).  The obstruction coefficient system is the
local system whose fiber at \(v\) is \(\pi_n(F,V;v)\), with monodromy given by
change of base point along paths in \(V\).

We now identify this system with the local orientation system of the first
stratum.  By the local model given by the Schur complement in
\LaterProp{prop:odd-normal-bundle}, if \(U\) is a sufficiently small
neighborhood of \(A\) in \(F\), then the pair \((U,U\cap V)\) is identified
with the product of a tangential factor and the pair in the normal direction
\[
\bigl(\operatorname{Herm}(\ker A),
\operatorname{Herm}(\ker A)\setminus\{0\}\bigr).
\]
Therefore the local relative homology group is canonically identified with
\[
H_n\bigl(\operatorname{Herm}(\ker A),
\operatorname{Herm}(\ker A)\setminus\{0\};\Z\bigr)\cong\Z.
\]
The chosen generator is the fundamental class of a positively oriented normal
\(n\)-disk.  The relative class \(\widetilde{k}^{\mathrm{odd}}_p\) is characterized by
the property that it evaluates to \(1\) on this generator.

It remains to check that this generator has trivial monodromy.  If a path in
\(A_p^{\mathrm{sa}}\) changes a unitary frame of the kernel by \(u\in U(p)\),
then the induced map on the normal fiber is
\[
H\longmapsto uHu^*\qquad (H\in \operatorname{Herm}(\C^p)).
\]
The group \(U(p)\) is connected, and this conjugation action preserves the
standard real orientation of \(\operatorname{Herm}(\C^p)\).  Thus the local system of orientations is trivial.  Equivalently,
a positively oriented normal disk remains positively oriented after transport
along any loop.  Since
the relative Hurewicz isomorphism is natural with respect to change of
base point, the same trivial monodromy holds for the system
\(v\mapsto\pi_n(F,V;v)\).

Therefore the obstruction coefficient system is canonically the constant
system \(\Z\).  Under the identification given by the Hurewicz theorem, the homomorphism
\[
\widehat{k}^{\mathrm{odd}}_p\colon \pi_n(F,V;v)\longrightarrow \Z,
\qquad
\alpha\longmapsto
\bigl\langle\widetilde{k}^{\mathrm{odd}}_p,h_v(\alpha)\bigr\rangle
\]
sends the positive normal generator to \(1\), and hence identifies the relative class with the generator of the obstruction coefficient group.
\end{proof}

Let \(X\) be a CW complex, and let
\[
A\colon X\longrightarrow F
\]
be a continuous map.  Since the pair \((F,V)\) is \((n-1)\)-connected, the
standard obstruction theory allows us to deform \(A\) so that
\[
A(X^{(n-1)})\subset V.
\]
Here \(X^{(n-1)}\) denotes the \((n-1)\)-skeleton of \(X\).  After this
preliminary deformation, each oriented \(n\)-cell \(e\) of \(X\), with
characteristic map
\[
\varphi_e\colon (D^n,S^{n-1})\longrightarrow (X^{(n)},X^{(n-1)}),
\]
gives a relative homotopy class
\[
[A\circ\varphi_e]\in \pi_n(F,V).
\]
We use cellular cochains in the following precise sense.  The cellular chain
and cochain groups are
\[
C_n^{\mathrm{cell}}(X;\Z)=H_n(X^{(n)},X^{(n-1)};\Z),
\qquad
C^n_{\mathrm{cell}}(X;\Z)=\operatorname{Hom}(C_n^{\mathrm{cell}}(X;\Z),\Z).
\]
After orientations of the \(n\)-cells have been chosen, an element of
\(C^n_{\mathrm{cell}}(X;\Z)\) is determined by its values on the oriented
\(n\)-cells.  We define
\[
o_p^{\mathrm{odd}}(A)\in C^n_{\mathrm{cell}}(X;\Z)
\]
by
\[
o_p^{\mathrm{odd}}(A)(e)
=
\widehat{k}^{\mathrm{odd}}_p
\bigl([A\circ\varphi_e]\bigr).
\]

\begin{prop}\label{prop:odd-obstruction-cocycle}
The cellular cochain \(o_p^{\mathrm{odd}}(A)\) is a cocycle.  Its cellular
cohomology class is independent of the preliminary deformation of \(A\) on
\(X^{(n-1)}\).  Under the standard isomorphism between cellular cohomology and singular
cohomology for CW complexes, we denote the corresponding singular cohomology
class by
\[
O_p^{\mathrm{odd}}(A)
=
[o_p^{\mathrm{odd}}(A)]
\in
H^n(X).
\]
\end{prop}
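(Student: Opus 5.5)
Since \((F,V)\) is \((n-1)\)-connected by \LaterProp{prop:odd-pair-connected}, and the coefficient system \(\pi_n(F,V)\) is the constant system \(\Z\) by \LaterLem{lem:odd-obstruction-local-system}, I would identify \(o_p^{\mathrm{odd}}(A)\) with a pulled-back cellular cochain and read off both claims from naturality of cohomology. The cochain-level observation is this. With \(A(X^{(n-1)})\subset V\), the map \(A\) induces a map of pairs \((X^{(n)},X^{(n-1)})\to(F,V)\). For each oriented \(n\)-cell \(e\), the Hurewicz image \(h([A\circ\varphi_e])\in H_n(F,V)\) equals \(A_*\) applied to the class of \(e\) in \(C_n^{\mathrm{cell}}(X)=H_n(X^{(n)},X^{(n-1)})\). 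Hence
\[
o_p^{\mathrm{odd}}(A)=A^*\bigl(\widetilde{k}^{\mathrm{odd}}_p\bigr)\big|_{(X^{(n)},X^{(n-1)})}\in H^n(X^{(n)},X^{(n-1)})=C^n_{\mathrm{cell}}(X;\Z).
\]
Here \(H^n(X^{(n)},X^{(n-1)})\cong\operatorname{Hom}(H_n(X^{(n)},X^{(n-1)}),\Z)\) by the universal coefficient theorem, since \(H_{n-1}\) of this pair vanishes.

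For the cocycle property, I would factor the map \((X^{(n)},X^{(n-1)})\to(F,V)\) through \((X^{(n+1)},X^{(n-1)})\). This requires extending \(A|_{X^{(n+1)}}\) to a map of pairs \((X^{(n+1)},X^{(n-1)})\to(F,V)\), which holds because \(A\) is already globally defined. The cellular coboundary \(\delta\) is the connecting map of the triple \((X^{(n+1)},X^{(n)},X^{(n-1)})\). The composite
\[
H^n(X^{(n+1)},X^{(n-1)})\to H^n(X^{(n)},X^{(n-1)})\xrightarrow{\delta}H^{n+1}(X^{(n+1)},X^{(n)})
\]
is zero by exactness of the triple sequence. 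Since \(o_p^{\mathrm{odd}}(A)\) lies in the image of the first map, \(\delta o_p^{\mathrm{odd}}(A)=0\). The same factorization shows that the cellular class maps, under the cellular–singular isomorphism \(H^n(X)\cong\) cohomology of the triple, to \(A^*\widetilde{k}^{\mathrm{odd}}_p\) restricted along \((X,\varnothing)\to(X,X^{(n-1)})\). That is, it maps to \(A^*k_p^{\mathrm{odd}}\), because \(k_p^{\mathrm{odd}}=j_p^*\widetilde{k}^{\mathrm{odd}}_p\).

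Independence of the deformation then follows. If \(A\) and \(A'\) are two deformed maps, both homotopic to the original map through maps \(X\to F\), then the formula above identifies both cellular classes with \(A^*k_p^{\mathrm{odd}}=A'^*k_p^{\mathrm{odd}}\), by homotopy invariance in absolute cohomology. Alternatively, one can argue via the classical difference cochain: a homotopy \(A\simeq A'\) can be deformed rel ends so that it sends \(X^{(n-2)}\times I\) into \(V\), and then the two cochains differ by the coboundary of \(d(e')=\widehat{k}^{\mathrm{odd}}_p([H\circ(\varphi_{e'}\times I)])\). I would present the first argument, since it is simpler.

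The main obstacle I expect is the cochain identification \(h([A\circ\varphi_e])=A_*[e]\) together with its sign and orientation conventions. One has to check that the Hurewicz generator and the orientation of \(e\) match the evaluation \(\widehat{k}^{\mathrm{odd}}_p\) used in \LaterLem{lem:odd-obstruction-local-system}. One must also check that trivial monodromy makes the choice of base point and path irrelevant for cells whose boundary does not meet the base point. Both points follow from naturality of the relative Hurewicz map under change of base point, but the signs should be checked carefully.
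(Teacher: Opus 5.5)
Your argument is correct, but it is not the route the paper takes here. The paper uses classical primary obstruction theory. The cocycle condition holds because \(A\) restricted to the boundary of an \((n+1)\)-cell extends over that cell, so the obstructions on its boundary cells sum to zero. A different preliminary deformation changes the cochain by the coboundary of a difference cochain.

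You instead identify \(o_p^{\mathrm{odd}}(A)\) with the restriction of \(A^*\widetilde{k}^{\mathrm{odd}}_p\) to \(H^n(X^{(n)},X^{(n-1)})\). You then get \(\delta o_p^{\mathrm{odd}}(A)=0\) from exactness for the triple \((X^{(n+1)},X^{(n)},X^{(n-1)})\), and independence from the fact that the cellular class equals \(A^*k_p^{\mathrm{odd}}\), which is homotopy invariant. That identification is exactly the computation the paper carries out afterwards in the proof of Theorem~\ref{thm:odd-koschorke-obstruction}. So your route proves the proposition and that theorem together, and it avoids all cell-by-cell homotopy bookkeeping.

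The sign and base-point worry you flag is unnecessary:
\begin{itemize}
\item \(h([A\circ\varphi_e])=(A\circ\varphi_e)_*[D^n,S^{n-1}]\) by the definition of the Hurewicz map;
\item the oriented cell is \((\varphi_e)_*[D^n,S^{n-1}]\) by convention;
\item \(\langle\widetilde{k}^{\mathrm{odd}}_p,f_*[D^n,S^{n-1}]\rangle\) depends only on the homotopy class of the map of pairs \(f\), so no base point or monodromy enters.
\end{itemize}

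What the paper's difference-cochain route gives in return is more than equality of classes. Every cochain cohomologous to \(o_p^{\mathrm{odd}}(A)\) is realized by re-deforming \(A\) on \(X^{(n-1)}\) rel \(X^{(n-2)}\). The ``standard extension step'' in Corollary~\ref{cor:odd-koschorke-avoidance} relies on this realizability. Your argument compares only cohomology classes, so it does not provide it, although it is not needed for the present statement.
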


\begin{proof}
This is the standard primary obstruction construction for the problem of
deforming a map into a subspace; see, for example, \cite{Whitehead}.
\LaterProp{prop:odd-pair-connected} shows that there are no lower
obstructions.  The obstruction on an \(n\)-cell is measured by the relative
homotopy class of
\[
A\circ\varphi_e\colon (D^n,S^{n-1})\longrightarrow (F,V),
\]
and the coefficient group is identified with \(\Z\) by
\(\widehat{k}^{\mathrm{odd}}_p\).

The cocycle condition follows from the fact that the sum of the obstruction
classes on the boundary of an \((n+1)\)-cell is represented by a map that
extends over that cell.  If one chooses another preliminary deformation on
\(X^{(n-1)}\), the resulting obstruction cochain changes by a cellular
coboundary.  Hence the cohomology class is independent of the choice.
This completes the proof.
\end{proof}

\begin{thm}[\IntroThm{thm:intro-odd-obstruction}]\label{thm:odd-koschorke-obstruction}
Let \(p\geq 2\), let \(X\) be a CW complex, and let
\[
A\colon X\longrightarrow \Fredsaone
\]
be a continuous map.  Then
\[
O_p^{\mathrm{odd}}(A)
=
\chi_p^{\mathrm{odd}}(A)
\in
H^{p^2}(X).
\]
Thus the \(p\)-th odd Koschorke class is the primary obstruction to
deforming \(A\) into \(V_{p-1}^{\mathrm{sa}}\).
\end{thm}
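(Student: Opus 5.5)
The plan is to compare both classes through a single relative class. After the preliminary deformation given by \LaterProp{prop:odd-pair-connected}, $A$ becomes a map of pairs
\[
A\colon (X^{(n)},X^{(n-1)})\longrightarrow (F,V),
\]
where $n=p^2$. We may replace $X$ by $X^{(n)}$: restriction $H^n(X)\to H^n(X^{(n)})$ is injective, both classes are natural, and the obstruction cochain only involves $n$-cells. Since the deformation is a homotopy, $\chi_p^{\mathrm{odd}}(A)$ is unchanged. Thus it suffices to treat $A$ as a map of pairs and to show that both sides are the image of one relative class.

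The first step is to pull back the relative class $e k_p^{\mathrm{odd}}=\widetilde{k}^{\mathrm{odd}}_p\in H^n(F,V)$ along $A$, obtaining
\[
A^*\widetilde{k}^{\mathrm{odd}}_p\in H^n(X^{(n)},X^{(n-1)}).
\]
Naturality of the map from relative to absolute cohomology, applied to $j_p$, gives
\[
j^*A^*\widetilde{k}^{\mathrm{odd}}_p = A^*j_p^*\widetilde{k}^{\mathrm{odd}}_p = \chi_p^{\mathrm{odd}}(A),
\]
where $j\colon (X^{(n)},\varnothing)\to(X^{(n)},X^{(n-1)})$ is the inclusion of pairs.

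The second step identifies $A^*\widetilde{k}^{\mathrm{odd}}_p$ with the cochain $o_p^{\mathrm{odd}}(A)$. The group $H^n(X^{(n)},X^{(n-1)})$ is $\operatorname{Hom}(C_n^{\mathrm{cell}},\Z)=C^n_{\mathrm{cell}}$, because $H_{n-1}(X^{(n)},X^{(n-1)})$ is free, so Ext vanishes. Its value on an oriented cell $e$ is
\[
\langle \widetilde{k}^{\mathrm{odd}}_p, A_*(\varphi_e)_*[D^n,S^{n-1}]\rangle .
\]
Here $(\varphi_e)_*[D^n,S^{n-1}]$ is the Hurewicz image of the class of $\varphi_e$. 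By naturality of Hurewicz, $A_*(\varphi_e)_*[D^n,S^{n-1}] = h_v([A\circ\varphi_e])$, so the value equals $\widehat{k}^{\mathrm{odd}}_p([A\circ\varphi_e])=o_p^{\mathrm{odd}}(A)(e)$. Thus the relative class is exactly the obstruction cochain, viewed as a relative cocycle.

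The third step is to recall that, under the cellular/singular isomorphism, the class of a cellular cocycle $c$ equals $j^*$ applied to $c$ regarded in $H^n(X^{(n)},X^{(n-1)})$. Indeed, $H^n(X^{(n+1)},X^{(n-1)})\to H^n(X^{(n+1)})$ is surjective and its image restricts injectively to $H^n(X^{(n)})$. Combining the three steps gives $O_p^{\mathrm{odd}}(A)=\chi_p^{\mathrm{odd}}(A)$ on $X^{(n)}$. To pass back to $X$, use that $H^n(X)\to H^n(X^{(n+1)})$ is an isomorphism and that the cellular description is compatible with restriction; more cleanly, carry out the whole argument on $X^{(n+1)}$ with the pair $(X^{(n+1)},X^{(n-1)})$.

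The main obstacle is the bookkeeping in the second step: making sure that the identification of $\pi_n(F,V)\cong\Z$ in \LaterLem{lem:odd-obstruction-local-system} is exactly evaluation of $\widetilde{k}^{\mathrm{odd}}_p$ on the Hurewicz image, with consistent orientation conventions. By the definition of $\widehat{k}^{\mathrm{odd}}_p$ this is essentially tautological. One must also check that the choice of base points along $V$ does not matter, which holds because the coefficient system is trivial.
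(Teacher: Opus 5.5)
Your proposal is correct and follows essentially the same route as the paper: you pull back the relative class \(\widetilde{k}^{\mathrm{odd}}_p\) along the map of pairs \((X^{(n)},X^{(n-1)})\to(F,V)\), identify its value on each oriented \(n\)-cell with \(o_p^{\mathrm{odd}}(A)(e)\) via naturality of the Hurewicz map and the definition of \(\widehat{k}^{\mathrm{odd}}_p\), and then use \(k_p^{\mathrm{odd}}=j^*\widetilde{k}^{\mathrm{odd}}_p\). Your extra steps, namely injectivity of \(H^n(X)\to H^n(X^{(n)})\) and the description of the cellular--singular isomorphism through \(H^n(X^{(n+1)},X^{(n-1)})\), make explicit what the paper leaves implicit when it says the equality is ``interpreted through the isomorphism between cellular cohomology and singular cohomology.''
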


\begin{proof}
Let \(n=p^2\).  By \LaterProp{prop:odd-pair-connected}, we may deform
\(A\) so that
\[
A(X^{(n-1)})\subset V.
\]
Since \(\chi_p^{\mathrm{odd}}(A)\) is homotopy invariant, this preliminary
deformation does not change the class to be computed.

The map \(A\) now defines a map of pairs
\[
(X^{(n)},X^{(n-1)})
\longrightarrow
(F,V).
\]
Thus the relative class gives
\[
A^*\widetilde{k}^{\mathrm{odd}}_p
\in
H^n(X^{(n)},X^{(n-1)}).
\]
By excision and the quotient description of relative cohomology,
\[
H^n(X^{(n)},X^{(n-1)})
\cong
\widetilde H^n(X^{(n)}/X^{(n-1)}).
\]
Since \(X^{(n)}/X^{(n-1)}\) is a wedge of \(n\)-spheres indexed by the
\(n\)-cells of \(X\), the chosen orientations of the \(n\)-cells identify this
relative group with the cellular cochain group
\[
C^n_{\mathrm{cell}}(X;\Z)
=
\operatorname{Hom}(H_n(X^{(n)},X^{(n-1)};\Z),\Z).
\]
Under this identification, the value of the associated cellular cochain on an
oriented \(n\)-cell \(e\) is
\[
\begin{aligned}
\bigl\langle
A^*\widetilde{k}^{\mathrm{odd}}_p,\,[e]
\bigr\rangle
&=
\bigl\langle
(A\circ\varphi_e)^*\widetilde{k}^{\mathrm{odd}}_p,\,
[D^n,S^{n-1}]
\bigr\rangle \\
&=
\bigl\langle
\widetilde{k}^{\mathrm{odd}}_p,\,
h([A\circ\varphi_e])
\bigr\rangle \\
&=
o_p^{\mathrm{odd}}(A)(e).
\end{aligned}
\]
Therefore \(A^*\widetilde{k}^{\mathrm{odd}}_p\) represents the obstruction
cocycle \(o_p^{\mathrm{odd}}(A)\).

On the other hand, the odd Koschorke class is defined by
\[
k_p^{\mathrm{odd}}
=
j^*\widetilde{k}^{\mathrm{odd}}_p
\in
H^n(F),
\]
where
\[
j\colon (F,\emptyset)\longrightarrow (F,V)
\]
is the natural map of pairs.  Hence the cellular cohomology class represented by the associated cellular
cocycle satisfies
\[
[A^*\widetilde{k}^{\mathrm{odd}}_p]_{\mathrm{cell}}
=
A^*j^*\widetilde{k}^{\mathrm{odd}}_p
=
A^*k_p^{\mathrm{odd}}
=
\chi_p^{\mathrm{odd}}(A),
\]
where the last equality is interpreted through the isomorphism between cellular
cohomology and singular cohomology for CW complexes.
It follows that
\[
O_p^{\mathrm{odd}}(A)=\chi_p^{\mathrm{odd}}(A).
\]
This completes the proof.
\end{proof}

\begin{cor}\label{cor:odd-koschorke-avoidance}
Let \(p\geq 2\), let \(X\) be a CW complex, and let
\[
A\colon X\longrightarrow \Fredsaone
\]
be a continuous map.  If
\[
\chi_p^{\mathrm{odd}}(A)=0
\in
H^{p^2}(X),
\]
then there is a map
\[
A'\colon X^{(p^2)}\longrightarrow \Fredsaone
\]
such that \(A'\) is homotopic to \(A|_{X^{(p^2)}}\) and
\[
\dim_{\C}\ker A'(x)<p
\qquad
(x\in X^{(p^2)}).
\]
\end{cor}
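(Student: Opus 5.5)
The plan is to combine Theorem~\ref{thm:odd-koschorke-obstruction} with the standard fact that a primary obstruction class vanishes exactly when the map can be deformed over the next skeleton. Put \(n=p^2\), \(F=\Fredsaone\), \(V=V_{p-1}^{\mathrm{sa}}\). By \LaterProp{prop:odd-pair-connected} the pair \((F,V)\) is \((n-1)\)-connected. Cellular approximation for pairs (inductive compression of cells through the connectivity of \((F,V)\)) gives a homotopy of \(A\) to a map \(A_1\) with \(A_1(X^{(n-1)})\subset V\). By Theorem~\ref{thm:odd-koschorke-obstruction} and the hypothesis,
\[
[o_p^{\mathrm{odd}}(A_1)]=O_p^{\mathrm{odd}}(A_1)=\chi_p^{\mathrm{odd}}(A)=0 .
\]
So \(o_p^{\mathrm{odd}}(A_1)=\delta d\) for some cellular cochain \(d\in C^{n-1}_{\mathrm{cell}}(X;\Z)\). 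Here the coefficient system is the constant \(\Z\), by \LaterLem{lem:odd-obstruction-local-system}.

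Next comes the difference-cochain step. Using \(\pi_{n-1}(V)\to\pi_{n-1}(F)\) and the boundary map \(\pi_n(F,V)\to\pi_{n-1}(V)\), I modify \(A_1\) on each \((n-1)\)-cell \(e'\), keeping it fixed on \(X^{(n-2)}\) and keeping it inside \(F\). On \(e'\) I glue in an element of \(\pi_n(F,V)\cong\Z\) equal to \(-d(e')\). Concretely, this means the modification is a homotopy in \(F\) that is stationary on \(X^{(n-2)}\), with new map \(A_2\) still sending \(X^{(n-1)}\) into \(V\). The standard difference-cochain formula then gives \(o_p^{\mathrm{odd}}(A_2)=o_p^{\mathrm{odd}}(A_1)-\delta d=0\). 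Because the modification is realized by a homotopy in \(F\) (the track of each modification is a relative disc in \((F,V)\)), homotopy extension over \(X^{(n)}\) makes \(A_2\) homotopic to \(A_1|_{X^{(n)}}\).

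With \(o_p^{\mathrm{odd}}(A_2)=0\), each \(n\)-cell \(e\) has \([A_2\circ\varphi_e]=0\) in \(\pi_n(F,V)\), since \(\widehat{k}^{\mathrm{odd}}_p\) is an isomorphism onto \(\Z\) (relative Hurewicz plus \(H_n(F,V)\cong\Z\)). A null relative class lets us deform \(A_2\circ\varphi_e\) rel \(S^{n-1}\) into \(V\). Doing this cell by cell and invoking homotopy extension yields \(A'\colon X^{(n)}\to F\) with \(A'(X^{(n)})\subset V\) and \(A'\simeq A|_{X^{(n)}}\). This is exactly \(\dim\ker A'(x)<p\).

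The main obstacle is the middle step. One must make sure the homotopy realizing the coboundary modification really gives \(\pi_n(F,V)\cong H_n(F,V)\cong\Z\) with trivial action, and that \(\pi_1(V)\) acts trivially so that the cochain identification is legitimate. Both points are handled by \LaterLem{lem:odd-obstruction-local-system}. I would also check that \(H_n(F,V)\cong\Z\) is consistent with \LaterLem{lem:relative-cohomology-fred-vp} and \LaterLem{lem:odd-eells-duality}, which give \(H^n(F,V)\cong H^0(A_p^{\mathrm{sa}})\), where \(A_p^{\mathrm{sa}}\) is connected. Everything else is the textbook theorem that a primary obstruction is complete on the \(n\)-skeleton (Whitehead, Chapter~V), and I would cite it rather than reprove it.
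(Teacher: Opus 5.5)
Your proposal is the paper's argument with the standard obstruction-theoretic steps written out. It breaks exactly where you located the main obstacle. Lemma~\ref{lem:odd-obstruction-local-system} does not show that $\pi_1(V)$ acts trivially on $\pi_{p^2}(F,V)$, and in fact the action is free.

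The lemma only transports normal frames along paths in $A_p^{\mathrm{sa}}$. But $A_p^{\mathrm{sa}}$ is connected, and every loop in it has zero spectral flow because the kernel dimension is constant along it. By contrast, $\pi_1(V)\cong\pi_1(F)\cong\Z$ is generated by a loop of spectral flow one. On the universal cover $\widetilde F\to F$, the preimage $\widetilde V$ is simply connected, and the preimage of $A_p^{\mathrm{sa}}$ is a disjoint union of copies indexed by $\Z$. Relative Hurewicz on the cover and the Thom isomorphism then give
\[
\pi_{p^2}(F,V)\cong\pi_{p^2}(\widetilde F,\widetilde V)\cong H_{p^2}(\widetilde F,\widetilde V)\cong\Z[t,t^{-1}],
\]
with the generator of $\pi_1(V)$ acting by multiplication by $t$. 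The Hurewicz map to $H_{p^2}(F,V)\cong\Z$ is the augmentation: for non-simply-connected $V$, the relative Hurewicz theorem only identifies $H_{p^2}(F,V)$ with the $\pi_1(V)$-coinvariants.

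Consequently $\widehat{k}^{\mathrm{odd}}_p$ is not injective. Your difference cochains would have to take values in the nontrivial local system $A^*\Z[t,t^{-1}]$, and $o^{\mathrm{odd}}_p(A_2)=0$ does not give $[A_2\circ\varphi_e]=0$. The true primary obstruction lives in $H^{p^2}(X;A^*\Z[t,t^{-1}])$, and $\chi^{\mathrm{odd}}_p(A)$ is only its augmentation.

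This is not a gap in exposition: the statement itself fails. Take $p=2$, $M=S^2\cup_m e^3$ with $m\ge 2$, and $X=S^1\times M$. Let $A$ be $f_m\times g$ followed by $S^1\times SU\cong U\simeq\Fredsaone$, where $f_m$ has degree $m$ and $g$ is the pinch map $M\to S^3=SU(2)\subset SU$.

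First, the untwisted class vanishes. The classes $c_{1/2}$ and $c_{3/2}$ pull back to $\pm m\,\theta\times 1$ and $\pm 1\times y$, where $\theta$ generates $H^1(S^1)$ and $y$ generates $H^3(M)\cong\Z/m$. Hence $\chi^{\mathrm{odd}}_2(A)=\pm m\,\theta\times y=0$ in $H^4(X)\cong\Z/m$.

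Second, the twisted class does not vanish. The Thom class of $A_2^{\mathrm{sa}}$ with coefficients $\Z[t,t^{-1}]$ gives a class in $H^4(F;\Z[t,t^{-1}])$ that vanishes on $V$ and augments to $k_2^{\mathrm{odd}}$. This group is $H^1(S^1;\Z[t,t^{-1}])\otimes H^3(SU)\cong\Z$, and augmentation is injective on it. So the class equals $\pm\widehat\theta\times c_{3/2}|_{SU}$, where $\widehat\theta$ generates $H^1(S^1;\Z[t,t^{-1}])\cong\Z$. Its pullback under $A$ is $\pm f_m^*\widehat\theta\times y$. Here $f_m^*\widehat\theta=1+t+\cdots+t^{m-1}$ in $H^1(S^1;f_m^*\Z[t,t^{-1}])\cong\Z[t,t^{-1}]/(t^m-1)\cong\Z^m$. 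So the pullback is $\pm(y,\ldots,y)\neq 0$ in $H^4(X;A^*\Z[t,t^{-1}])\cong(\Z/m)^m$. Geometrically, $A$ meets the stratum in $m$ points lying on $m$ different sheets, and their contributions cancel only after the sheets are identified.

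Therefore $A$ is not homotopic to any map into $V_1^{\mathrm{sa}}$, although $\chi^{\mathrm{odd}}_2(A)=0$. The paper's own proof rests on the same lemma and has the same gap. What the argument actually proves is the corollary with the hypothesis $\chi^{\mathrm{odd}}_p(A)=0$ replaced by the vanishing of this twisted class.
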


\begin{proof}
By \LaterThm{thm:odd-koschorke-obstruction},
\[
O_p^{\mathrm{odd}}(A)=\chi_p^{\mathrm{odd}}(A).
\]
Hence the assumption implies that the primary obstruction to deforming
\(A\) into \(V_{p-1}^{\mathrm{sa}}\) is zero.  Since \((F,V)\) is
\((p^2-1)\)-connected, there are no lower obstructions.  The standard
extension step in obstruction theory therefore deforms
\(A|_{X^{(p^2)}}\) into \(V\).  This gives the required map \(A'\).
\end{proof}

\begin{rem}\label{rem:p-equals-one-spectral-flow}
The case \(p=1\) is not covered by the preceding argument.  In that case the
first obstruction is one-dimensional and is governed by spectral flow
\cite{AtiyahPatodiSingerIII,AtiyahSingerSkew,Phillips}.  Thus the case \(p=1\) should be treated
separately as the degree-one obstruction to removing zero eigenvalues.
\end{rem}

%%%%%%%%%%%%%%%%%%%%%%%%%%%%%%%%%%%%%%%%%%%%%%%%
\subsection{Twisted characteristic classes}
%%%%%%%%%%%%%%%%%%%%%%%%%%%%%%%%%%%%%%%%%%%%%%%%

We next explain why the real images of the odd Koschorke classes satisfy the
condition which is needed for characteristic classes in odd twisted
\(K\)-theory.  In this subsection, when an integral cohomology class is used
with real coefficients, we use the same symbol for its image under
\[
H^*(-)\longrightarrow H^*(-;\R).
\]

Atiyah and Segal studied universal characteristic classes for twisted
\(K\)-theory in ordinary cohomology \cite{AtiyahSegalTwisted,AtiyahSegal}.  In
the even case, their condition can be expressed as membership in the kernel of
the derivation on the polynomial algebra
\[
\mathbb{Q}[x_1,x_2,x_3,\ldots],
\]
where \(x_i\) denotes the degree-\(2i\) component of the universal Chern
character, normalized by \(p_i=i!x_i\). It is defined by
\[
D_{\mathrm{ev}}(x_1)=0,
\qquad
D_{\mathrm{ev}}(x_i)=x_{i-1}\quad (i\geq2).
\]  Thus a universal even class whose polynomial
representative lies in \(\ker D_{\mathrm{ev}}\) gives a characteristic class
with real coefficients for twisted \(K\)-theory.

For odd twisted \(K\)-theory, Gomi studied the corresponding invariant
condition in his work on Mickelsson's twisted \(K\)-theory invariant
\cite{GomiMickelsson}.  We use the self-adjoint Fredholm model
\(\Fred_{\mathrm{sa}}^1\).  Its real cohomology algebra is
\[
\mathcal A_{\mathrm{odd}}
=
\Lambda_{\R}(x_1,x_3,x_5,\ldots),
\qquad
\deg x_{2i+1}=2i+1,
\]
where \(x_{2i+1}\) is the real image of the universal odd Chern class
\(c_{(2i+1)/2}\in H^{2i+1}(\Fred_{\mathrm{sa}}^1)\).  The relevant condition
is membership in the kernel of the derivation
\[
D_{\mathrm{odd}}\colon
\mathcal A_{\mathrm{odd}}
\longrightarrow
\mathcal A_{\mathrm{odd}}
\]
defined by
\[
D_{\mathrm{odd}}(x_1)=0,
\qquad
D_{\mathrm{odd}}(x_{2i+1})=x_{2i-1}\quad (i\geq1).
\]
The theorem below shows that the odd Koschorke classes satisfy this condition.

\begin{thm}[Odd Koschorke classes for twisted \(K\)-theory]\label{thm:odd-koschorke-twisted-characteristic}
For every \(p\geq 1\), the real image of the \(p\)-th universal odd Koschorke
class satisfies
\[
D_{\mathrm{odd}}k_p^{\mathrm{odd}}=0.
\]
Equivalently,
\[
k_p^{\mathrm{odd}}\in \ker D_{\mathrm{odd}}\subset H^*(\Fred_{\mathrm{sa}}^1;\R).
\]
\end{thm}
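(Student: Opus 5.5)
The plan is to reduce the statement to a short algebraic computation inside the exterior algebra \(\mathcal A_{\mathrm{odd}}=\Lambda_{\R}(x_1,x_3,x_5,\ldots)\), using the Chern class formula already established. By \LaterThm{thm:odd-koschorke-chern}, the integral class \(k_p^{\mathrm{odd}}\) equals \(c_{1/2}c_{3/2}\cdots c_{(2p-1)/2}\). The map \(H^*(-)\to H^*(-;\R)\) is a ring homomorphism, and \(x_{2i+1}\) is by definition the real image of \(c_{(2i+1)/2}\). So the real image of \(k_p^{\mathrm{odd}}\) is the monomial
\[
k_p^{\mathrm{odd}}=x_1x_3\cdots x_{2p-1}\in\mathcal A_{\mathrm{odd}}.
\]
Since \(\mathcal A_{\mathrm{odd}}\) is free exterior on the \(x_{2i+1}\), the derivation \(D_{\mathrm{odd}}\) is determined by its values on generators. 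It remains to evaluate \(D_{\mathrm{odd}}\) on this monomial.

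Next I would make the Leibniz rule for \(D_{\mathrm{odd}}\) explicit. \(D_{\mathrm{odd}}\) lowers degree by \(2\), which is even. So the graded Leibniz rule carries no signs:
\[
D_{\mathrm{odd}}(x_1x_3\cdots x_{2p-1})
=\sum_{i=0}^{p-1}x_1\cdots x_{2i-1}\,D_{\mathrm{odd}}(x_{2i+1})\,x_{2i+3}\cdots x_{2p-1}.
\]
Even with a signed convention, each summand would only acquire a sign, and the argument below kills every summand individually. So the convention is irrelevant.

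Then I would check each summand. For \(i=0\) the summand vanishes because \(D_{\mathrm{odd}}(x_1)=0\). For \(1\le i\le p-1\) we have \(D_{\mathrm{odd}}(x_{2i+1})=x_{2i-1}\), and \(x_{2i-1}\) already occurs among the factors \(x_1,\ldots,x_{2i-1}\) to its left. Thus the summand contains \(x_{2i-1}\) twice. Because \(x_{2i-1}\) has odd degree, it anticommutes with the other odd factors, so we can move the two copies adjacent at the cost of a sign and use \(x_{2i-1}^2=0\). Hence every summand is zero and \(D_{\mathrm{odd}}k_p^{\mathrm{odd}}=0\). The case \(p=1\) is just \(D_{\mathrm{odd}}(x_1)=0\).

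The only real point of care, and the step I expect to be the main obstacle, is matching Gomi's derivation \cite{GomiMickelsson}, which is naturally phrased via components of the odd Chern character, with the generators \(x_{2i+1}\) used here. If Gomi's derivation is written in Chern-character generators, then under the standard transgression comparison the two generating sets differ. Each degree-\((2i+1)\) Chern-character component is \(x_{2i+1}\) times a nonzero rational constant plus decomposable terms. I would first verify that the paper's normalization already absorbs these factors, as stated in the definition of \(D_{\mathrm{odd}}\). If it does not, I would check that the rescaled derivation still sends each generator to a multiple of the preceding generator. In that case the repeated-factor argument applies verbatim, since it uses only that \(D_{\mathrm{odd}}(x_{2i+1})\) is proportional to some \(x_{2j+1}\) with \(j<i\) already present in the monomial.
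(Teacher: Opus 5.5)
Your argument is correct and is the paper's proof: substitute $k_p^{\mathrm{odd}}=x_1x_3\cdots x_{2p-1}$ from Theorem~\ref{thm:odd-koschorke-chern}, expand by the Leibniz rule, and observe that each summand either contains $D_{\mathrm{odd}}(x_1)=0$ or repeats an odd generator, so it vanishes in the exterior algebra. Your normalization concern does not arise here, because the paper defines $D_{\mathrm{odd}}$ directly on the real images $x_{2i+1}$ of $c_{(2i+1)/2}$.
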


\begin{proof}
We write \(x_{2i+1}\) for the real generator corresponding to
\(c_{(2i+1)/2}\).  By \LaterThm{thm:odd-koschorke-chern},
\[
k_p^{\mathrm{odd}}=x_1x_3\cdots x_{2p-1}.
\]
Since \(D_{\mathrm{odd}}(x_1)=0\) and
\(D_{\mathrm{odd}}(x_{2i+1})=x_{2i-1}\), the derivation rule gives a sum in
which every nonzero summand contains some odd generator twice.  The cohomology
ring is an exterior algebra, so each such summand vanishes.  Hence
\(D_{\mathrm{odd}}k_p^{\mathrm{odd}}=0\).
\end{proof}
%%%%%%%%%%%%%%%%%%%%%%%%%%%%%%%%%%%%%%%%%%%%%%%%

%%%%%%%%%%%%%%%%%%%%%%%%%%%%%%%%%%%%%%%%%%%%%%%%

%%%%%%%%%%%%%%%%%%%%%%%%%%%%%%%%%%%%%%%%%%%%%%%%
\section{Generalized Koschorke classes}
%%%%%%%%%%%%%%%%%%%%%%%%%%%%%%%%%%%%%%%%%%%%%%%%

The discussion of the Neveu--Schwarz algebra in Section~5 requires a basis of
\[
H^*(\Fredzero;\C)
\]
formed by even generalized Koschorke classes.
The special kernel-dimension classes on \(\Fredzero\) are not sufficient for
this purpose.  In this section we therefore construct, for every partition
\(\lambda\), a generalized Koschorke class
\[
[\Sigma_\lambda]\in H^{2|\lambda|}(\Fredzero).
\]
The geometric construction is most naturally made on the component \(\Fred_k\),
where the generic kernel has dimension \(k\).  After proving the Giambelli
formula on \(\Fred_k\), we transport the class to \(\Fredzero\) by a standard
shift equivalence.  We then define the generalized odd Koschorke classes needed
for the proof of \LaterThm{thm:odd-koschorke-chern}.

%%%%%%%%%%%%%%%%%%%%%%%%%%%%%%%%%%%%%%%%%%%%%%%%

\subsection{Cooriented quasi-manifolds}

We use Cibotaru's notion of a cooriented quasi-manifold \cite[Definition A.5]{Cibotaru}.  In
this paper we only use the definition of a quasi-manifold and the associated
extension of the Thom class.

\begin{dfn}\label{dfn:cooriented-quasi-manifold-paper}
Let \(X\) be a Banach manifold and let \(c\ge0\).  A closed subset
\[
W\subset X
\]
is called a \emph{quasi-manifold of codimension \(c\)} if it is equipped with
a decreasing filtration by closed subsets
\[
W=F_0\supset F_1\supset F_2\supset F_3\supset\cdots
\]
satisfying the following two conditions.

\begin{enumerate}[label=\textup{(\arabic*)}]
\item \(F_1=F_2\).
\item For every \(m\ge0\), the stratum \(S_m=F_m\setminus F_{m+1}\) is either
empty or a Banach submanifold of \(X\) of codimension \(c+m\).
\end{enumerate}

The stratum \(S_0=F_0\setminus F_1\) is called the \emph{top stratum}.  If the
normal bundle of \(S_0\subset X\) is oriented, then \(W\) is called
\emph{cooriented}.  A coorientation means an orientation of this normal bundle.
\end{dfn}

\begin{lem}\label{lem:top-stratum-closed-paper}
In \LaterDfn{dfn:cooriented-quasi-manifold-paper}, the top stratum
\[
S_0=F_0\setminus F_1=F_0\setminus F_2
\]
is a closed Banach submanifold of \(X\setminus F_2\).
\end{lem}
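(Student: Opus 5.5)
The plan is to check the two defining properties separately: that \(S_0\) is a closed subset of the open set \(X\setminus F_2\), and that it is a Banach submanifold there. First, \(F_2\) is closed in \(X\) by the definition of the filtration. Hence \(X\setminus F_2\) is an open subset of \(X\), and so it is itself a Banach manifold. By condition~(1) of \LaterDfn{dfn:cooriented-quasi-manifold-paper} we have \(F_1=F_2\), which gives the equality
\[
S_0=F_0\setminus F_1=F_0\setminus F_2 .
\]
In particular \(S_0\subset X\setminus F_2\).

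For closedness, I would write
\[
S_0=F_0\cap (X\setminus F_2).
\]
Since \(F_0=W\) is closed in \(X\), this exhibits \(S_0\) as the intersection of a closed subset of \(X\) with the subspace \(X\setminus F_2\). It is therefore closed in the relative topology of \(X\setminus F_2\).

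For the submanifold property, condition~(2) with \(m=0\) says that \(S_0\) is either empty or a Banach submanifold of \(X\) of codimension \(c\). The empty case is trivial. Otherwise, I would note that being a Banach submanifold is a local condition at each point \(s\in S_0\): it asks for a chart \((U,\varphi)\) of \(X\) around \(s\) in which \(U\cap S_0\) is a model subspace slice. Shrinking to \(U'=U\cap(X\setminus F_2)\), which is still an open neighbourhood of \(s\), gives a chart of \(X\setminus F_2\). Moreover \(U'\cap S_0=U\cap S_0\), because \(S_0\subset X\setminus F_2\), and the restricted chart still maps this set onto the slice intersected with \(\varphi(U')\). The same argument preserves splitting and codimension \(c\).

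There is no real obstacle here. The one point to state explicitly is that the hypothesis \(F_1=F_2\) is what allows removing \(F_2\), rather than \(F_1\). Without it one would only get closedness in \(X\setminus F_1\); it is this equality that later lets the codimension-\((c+2)\) set \(F_2\) be excised when extending the Thom class.
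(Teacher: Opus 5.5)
Your proposal is correct and follows the same route as the paper: you use \(F_1=F_2\) to write \(S_0=F_0\cap(X\setminus F_2)\), deduce closedness in \(X\setminus F_2\) from the closedness of \(F_0\), and take the submanifold property from condition~(2) with \(m=0\). Your chart-restriction argument spells out a step the paper leaves implicit, namely that a submanifold of \(X\) lying in the open set \(X\setminus F_2\) is also a submanifold of that open set.
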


\begin{proof}
Since \(F_1=F_2\), one has
\[
S_0=F_0\setminus F_1=F_0\setminus F_2=(X\setminus F_2)\cap F_0.
\]
Because \(F_0\) is closed in \(X\), the subset \(S_0\) is closed in
\(X\setminus F_2\).  Its submanifold property is part of the definition of the
filtration.
\end{proof}

We shall use the following results of Cibotaru on cooriented quasi-manifolds
\cite{Cibotaru}.

\begin{thm}[Cibotaru~{\cite[Proposition A.5]{Cibotaru}}]\label{thm:cibotaru-extension-detailed}
Let \(W\subset X\) be a cooriented quasi-manifold of codimension \(c\), with
filtration
\[
W=F_0\supset F_1=F_2\supset F_3\supset\cdots .
\]
Let \(S_0=F_0\setminus F_1\) be the top stratum and let \(\omega\) be the chosen
coorientation.  Then the Thom class of the closed cooriented submanifold
\[
S_0\subset X\setminus F_2
\]
extends uniquely to a cohomology class
\[
[W,\omega]_X\in H^c(X).
\]
Equivalently, the restriction of \([W,\omega]_X\) to \(X\setminus F_2\) is the
Thom class of \(S_0\subset X\setminus F_2\).
\end{thm}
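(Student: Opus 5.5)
The approach mirrors the Alexander--Pontrjagin and stabilization argument of Section~3.4, with the closed subset \(F_2\) playing the role of \(V_{p-1}^{\mathrm{sa}}\)'s complement. Put \(U=X\setminus F_2\), which is open because \(F_2\) is closed. By \LaterLem{lem:top-stratum-closed-paper}, \(S_0\) is a closed cooriented Banach submanifold of \(U\) of codimension \(c\). Its Thom class therefore gives a class
\[
\tau\in H^c(U,U\setminus S_0)=H^c(U,X\setminus W),
\]
using \(U\setminus S_0=X\setminus F_0\). Equivalently, by Eells' duality as in \LaterLem{lem:odd-eells-duality}, \(\tau\) corresponds to \(1\in H^0(S_0)\). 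Its image in \(H^c(U)\) is the Thom class referred to in the statement. The task is to show that the restriction map \(H^c(X)\to H^c(U)\) is an isomorphism, or at least that it is bijective onto the relevant class; this gives both existence and uniqueness.

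To prove this, I would use the long exact sequence of the pair \((X,U)\), so it suffices to show \(H^i(X,U)=0\) for \(i\le c+1\). The strata \(S_m\subset F_2\) with \(m\ge2\) have codimension \(c+m\ge c+2\). I would filter \(X\) by the open sets
\[
U_m=X\setminus F_{m+1},\qquad U=U_1\subset U_2\subset U_3\subset\cdots,
\]
where \(U_{m}\setminus U_{m-1}=S_m\) is a closed submanifold of \(U_m\) of codimension \(c+m\). Applying Eells' Alexander--Pontrjagin duality to \(S_m\subset U_m\) with local coefficients in the orientation sheaf (the strata \(S_m\), \(m\ge2\), need not be cooriented) gives
\[
H^i(U_m,U_{m-1})\cong H^{i-c-m}(S_m;\mathcal O)=0\qquad (i<c+m).
\]
The long exact sequences of triples, as in the lemma preceding \LaterLem{lem:relative-cohomology-fred-vp}, then show that \(H^i(U_m,U)=0\) for \(i<c+2\) and every \(m\).

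The main obstacle is passing from the finite stages \(U_m\) to \(X\). In the self-adjoint setting this worked because every compact set lies in some \(V_N^{\mathrm{sa}}\). For a general filtration one needs \(\bigcap_m F_m=\varnothing\), or at least that every compact subset of \(X\) misses \(F_m\) for \(m\) large. My first step would be to check that Cibotaru's Definition A.5 includes such a local finiteness condition, for instance that the filtration is finite locally. If it does, the Milnor \(\varprojlim^1\) argument from \LaterLem{lem:relative-cohomology-fred-vp} applies verbatim: the inverse systems are constant in degrees \(\le c+1\), so \(H^i(X,U)=0\) for \(i\le c+1\). Exactness of
\[
H^c(X,U)\to H^c(X)\to H^c(U)\to H^{c+1}(X,U)
\]
then makes restriction an isomorphism. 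The class \([W,\omega]_X\) is defined as the unique preimage of the Thom class, and it depends only on \((W,\omega)\) because every identification used above is canonical.

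If no such finiteness is available, I would instead use a Čech or sheaf-theoretic version of the duality, namely local cohomology supported on \(F_2\), which vanishes below codimension \(c+2\). This reaches the same vanishing of \(H^i(X,U)\) in degrees \(\le c+1\).
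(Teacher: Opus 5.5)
Your main line is correct, and it is the standard argument behind Cibotaru's Proposition~A.5; the paper gives no proof of this statement and only cites that result. The steps are:
\begin{enumerate}[label=\textup{(\arabic*)}]
\item The Thom class lies in \(H^c(U,X\setminus W)\), where \(U=X\setminus F_2\).
\item The Thom isomorphism with twisted coefficients, applied to each \(S_m\subset U_m\), gives \(H^i(U_m,U_{m-1})=0\) for \(i<c+m\).
\item The sequences of triples give \(H^i(U_m,U)=0\) for \(i\le c+1\).
\item Milnor's \(\varprojlim^{1}\) sequence passes this to \(H^i(X,U)=0\) for \(i\le c+1\). Hence restriction \(H^c(X)\to H^c(U)\) is an isomorphism.
\end{enumerate}
For step (4) you only need \(\bigcap_m F_m=\varnothing\). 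The \(U_m\) are open and increasing, so every compact set then lies in a single \(U_m\). No separate local-finiteness condition is needed, and your two formulations are equivalent.

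What you must change is the fallback. The hypothesis \(\bigcap_m F_m=\varnothing\) is not an artifact of your method: the statement is false without it. Take the paper's definition of a quasi-manifold exactly as written, with \(W=X\) and \(F_m=X\) for all \(m\). Every stratum is empty, so the axioms and the coorientation condition hold vacuously. Then \(X\setminus F_2=\varnothing\), so every class in \(H^c(X)\) restricts to the zero Thom class. Uniqueness therefore fails, for example for \(X=\Fredzero\) and \(c=2\). More generally, replacing every \(F_m\) by \(F_m\cup Z\) for an arbitrary closed \(Z\) preserves the axioms, because the new strata \(S_m\setminus Z\) are open in the old ones. Points of \(\bigcap_m F_m\) lie in no stratum, so no local-cohomology estimate is available there, and the sheaf-theoretic route cannot succeed.

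You should therefore take \(\bigcap_m F_m=\varnothing\) as part of the definition; the paper's restatement omits it. It does hold in every application in the paper. In the standard filtration, an operator \(T\in\Sigma^{(k)}_\lambda\) has a finite exact type \(J\) with \(N_k(J)=|\lambda|+\ell\), so it lies in \(S_{2\ell}\) and not in \(F_{2\ell+1}\). The same holds for Cibotaru's Schubert loci \(\Omega^{\mathrm{sa}}_J\).
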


\begin{dfn}\label{dfn:qm-transversality-paper}
Let \(W\subset X\) be a quasi-manifold with a filtration
\[
W=F_0\supset F_1=F_2\supset F_3\supset\cdots,
\]
and let \(S_m=F_m\setminus F_{m+1}\) denote its strata.  A smooth map
\[
f\colon M\longrightarrow X
\]
from a finite-dimensional smooth manifold \(M\) is said to be \emph{transverse to
\(W\)} if it is transverse, in the usual Banach-manifold sense, to every stratum
\(S_m\) for which \(S_m\) is non-empty.
\end{dfn}

\begin{thm}[Cibotaru~{\cite[Proposition A.5]{Cibotaru}}]\label{thm:cibotaru-naturality-detailed}
Let \(W\subset X\) be a cooriented quasi-manifold of codimension \(c\), and
let
\[
f\colon M\longrightarrow X
\]
be a smooth map from a finite-dimensional smooth manifold.  Suppose that \(f\)
is transverse to \(W\) in the sense of \LaterDfn{dfn:qm-transversality-paper}.  Then
\(f^{-1}(W)\) is a cooriented quasi-manifold of \(M\) of codimension \(c\), and
\[
f^*[W,\omega]_X=[f^{-1}(W),f^*\omega]_M\in H^c(M).
\]
\end{thm}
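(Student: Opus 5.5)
The plan is to check the two assertions separately. First, the pulled-back filtration is a cooriented quasi-manifold. Second, the two classes agree, which I reduce, via the uniqueness part of \LaterThm{thm:cibotaru-extension-detailed} applied on \(M\), to a naturality statement for Thom classes of closed cooriented submanifolds.

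\emph{Step 1: the pulled-back quasi-manifold.} Put \(F'_m=f^{-1}(F_m)\). These are closed and decreasing, and \(F'_1=F'_2\) because \(F_1=F_2\). The strata are \(S'_m=F'_m\setminus F'_{m+1}=f^{-1}(S_m)\). For each non-empty \(S_m\), the hypothesis says that \(f\) is transverse to the split submanifold \(S_m\) of finite codimension \(c+m\). The Banach-manifold preimage theorem (Lang, Chapter~II, as already used in \LaterProp{prop:odd-normal-bundle}) then shows that \(S'_m\) is empty or a submanifold of \(M\) of codimension \(c+m\). Transversality also shows that \(df\) induces a fibrewise isomorphism \(N(S'_0\subset M)\cong f^*N(S_0\subset X)\). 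We define the coorientation \(f^*\omega\) by pulling \(\omega\) back along this isomorphism. Hence \(f^{-1}(W)\) is a cooriented quasi-manifold of codimension \(c\).

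\emph{Step 2: reduction to the complement of \(F'_2\).} Put \(M^\circ=M\setminus F'_2\) and \(X^\circ=X\setminus F_2\), so that \(f(M^\circ)\subset X^\circ\). By \LaterThm{thm:cibotaru-extension-detailed} applied to \(f^{-1}(W)\subset M\), the class \([f^{-1}(W),f^*\omega]_M\) is the unique class whose restriction to \(M^\circ\) is the Thom class of \(S'_0\subset M^\circ\). So it suffices to show that
\[
(f^*[W,\omega]_X)|_{M^\circ}=(f|_{M^\circ})^*\bigl([W,\omega]_X|_{X^\circ}\bigr)
\]
is the Thom class of \(S'_0\). By the defining property of \([W,\omega]_X\), the right-hand side is \((f|_{M^\circ})^*\) applied to the Thom class \(u\in H^c(X^\circ,X^\circ\setminus S_0)\), pushed into \(H^c(X^\circ)\). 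Since \(f\) maps the pair \((M^\circ,M^\circ\setminus S'_0)\) into \((X^\circ,X^\circ\setminus S_0)\), the pullback \(f^*u\) is a relative class in \(H^c(M^\circ,M^\circ\setminus S'_0)\) whose image in \(H^c(M^\circ)\) is the class in question.

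\emph{Step 3: naturality of the Thom class (main obstacle).} The difficulty is that on the Banach side I do not want to rely on global tubular neighbourhoods. I will therefore argue on the finite-dimensional side, where the Thom class of the closed submanifold \(S'_0\subset M^\circ\) is characterized by two facts. By excision and the Thom isomorphism, \(H^c(M^\circ,M^\circ\setminus S'_0)\cong H^0(S'_0)\). Under this isomorphism the Thom class is the unique element whose restriction to \((D_y,D_y\setminus\{y\})\) is the positive generator, for every small transverse normal disk \(D_y\) at \(y\in S'_0\).

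So it remains to compute \(f^*u\) on such a disk. The composite \(D_y\to X^\circ\) is a normal slice to \(S_0\) at \(f(y)\) whose derivative maps onto the normal space, by transversality. Using a submanifold chart of \(S_0\) at \(f(y)\), as in the Schur-complement model of \LaterProp{prop:odd-normal-bundle} and in the proof of \LaterLem{lem:odd-obstruction-local-system}, the pair \((D_y,D_y\setminus y)\) maps by a local homeomorphism of degree \(+1\) onto the normal pair in the chart. The degree is \(+1\) precisely because \(f^*\omega\) was defined by pulling back \(\omega\). Hence \(u\) evaluates to \(1\) on the image of this pair, and \(f^*u\) restricts to the positive generator on each fibre. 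By the uniqueness just described, \(f^*u\) is the Thom class of \(S'_0\), which completes the argument.
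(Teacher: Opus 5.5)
The paper gives no proof of this statement. It quotes it from Cibotaru's Proposition~A.5, together with Theorem~\ref{thm:cibotaru-extension-detailed}. Your proposal is correct and supplies the natural argument that the paper outsources.

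Your steps are as follows:
\begin{enumerate}
\item You pull back the filtration. Transversality to each non-empty stratum gives the codimensions, and the normal isomorphism \(N(S'_0\subset M)\cong f^*N(S_0\subset X)\) defines \(f^*\omega\).
\item You use the uniqueness half of Theorem~\ref{thm:cibotaru-extension-detailed} on \(M\), that is, injectivity of \(H^c(M)\to H^c(M\setminus f^{-1}(F_2))\). This reduces the identity to the naturality of the Thom class of \(S_0\subset X\setminus F_2\) under a map transverse to it.
\item You check that naturality on normal disks.
\end{enumerate}

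A good feature of your Step~3 is what it avoids. It uses the finite-dimensional Thom isomorphism only on the \(M\)-side. On the Banach side it uses only the local fact that the relative Thom class \(u\) restricts to the positive generator on a normal slice. The paper itself uses exactly this property in the proof of Lemma~\ref{lem:odd-obstruction-local-system}. So you never need tubular neighbourhoods in Banach manifolds.

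One step should be stated more carefully. The image \(f(D_y)\) is a slanted slice, not literally the normal disk of the submanifold chart, so ``maps by a local homeomorphism onto the normal pair'' needs a small fix:
\begin{itemize}
\item Shrink the chart neighbourhood to \(U\cong B_1\times B\). Here \(B_1\) is a ball in the tangential model space, \(B\subset\R^c\) is a ball, and \(S_0\cap U=B_1\times\{0\}\).
\item Since \(B_1\) is contractible, \(u|_{(U,U\setminus S_0)}=\pi^*g\), where \(\pi\colon U\to B\) is the projection and \(g\) is the positive generator of \(H^c(B,B\setminus 0)\). The sign is fixed by evaluating on one normal disk \(\{e\}\times B\).
\item For \(D_y\) small, \(f(D_y)\subset U\) and \(f(D_y\setminus y)\cap S_0=\varnothing\). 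So \((f|_{D_y})^*u=(\pi\circ f|_{D_y})^*g\).
\item Transversality makes \(\pi\circ f|_{D_y}\) a local diffeomorphism at \(y\). Its derivative is orientation preserving precisely because \(f^*\omega\) was defined through \(df\). Hence its local degree is \(+1\).
\end{itemize}
With this, your uniqueness characterization of the finite-dimensional Thom class of \(S'_0\subset M\setminus f^{-1}(F_2)\) finishes the argument exactly as you describe.
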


\begin{thm}[cf. Cibotaru~{\cite[Proposition A.5]{Cibotaru}}]\label{thm:cibotaru-open-diffeomorphism-detailed}
Let \(U\subset X\) be an open subset and let \(W\subset X\) be a cooriented
quasi-manifold of codimension \(c\).  We set \(W_U=W\cap U\).  Then
\(W_U\subset U\) is a cooriented quasi-manifold of codimension \(c\), and
\[
[W_U]_U=\iota^*[W]_X,
\]
where \(\iota\colon U\hookrightarrow X\) is the inclusion.  Moreover, if
\(\varphi\colon Y\xrightarrow{\cong}U\) is a diffeomorphism of Banach manifolds,
then \(\varphi^{-1}(W_U)\subset Y\) is a cooriented quasi-manifold of
codimension \(c\), and
\[
[\varphi^{-1}(W_U)]_Y=\varphi^*[W_U]_U.
\]
\end{thm}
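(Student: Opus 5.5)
The plan is to build everything on the top stratum and the relative Thom class supplied by Theorem~\ref{thm:cibotaru-extension-detailed}. Put \(U=\iota^{-1}\) applied to the ambient space, that is, \(W_U=W\cap U\). Filter \(W_U\) by \(F_m\cap U\). Since \(U\) is open, each \(F_m\cap U\) is closed in \(U\), and we still have \(F_1\cap U=F_2\cap U\). Each stratum \(S_m\cap U\) is an open subset of the Banach submanifold \(S_m\), so it is either empty or a submanifold of \(U\) of the same codimension \(c+m\). The coorientation of the normal bundle of \(S_0\) restricts to one on \(S_0\cap U\). Hence \(W_U\subset U\) is a cooriented quasi-manifold of codimension \(c\), so \([W_U]_U\) is defined.

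To prove \([W_U]_U=\iota^*[W]_X\), I will use the uniqueness part of Theorem~\ref{thm:cibotaru-extension-detailed}. It suffices to show that \(\iota^*[W]_X\), restricted to \(U\setminus F_2\), is the Thom class of \(S_0\cap U\subset U\setminus F_2\). Restricting first to \(X\setminus F_2\) gives the Thom class \(u\) of \(S_0\subset X\setminus F_2\), by the defining property of \([W]_X\). So the key step is naturality of Thom classes of closed cooriented submanifolds under open inclusions.

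For this I will use the usual description of the Thom class: it is the image of the class in \(H^c(X\setminus F_2,(X\setminus F_2)\setminus S_0)\) that restricts to the orientation generator on each normal fiber. By excision this relative group is identified with the cohomology of a tubular neighborhood modulo the complement of the zero section. The restriction map to \((U\setminus F_2,(U\setminus F_2)\setminus S_0)\) commutes with the maps to absolute cohomology. It also sends fiberwise generators to fiberwise generators, because normal fibers at points of \(S_0\cap U\) are unchanged and the coorientation is the restricted one. Hence the restricted class is the Thom class of \(S_0\cap U\), and uniqueness gives the first identity.

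The diffeomorphism statement follows the same pattern. \(\varphi^{-1}\) carries the closed sets \(F_m\cap U\) to closed sets \(\varphi^{-1}(F_m\cap U)\) in \(Y\), and it carries Banach submanifolds to Banach submanifolds of the same codimension. The equality \(F_1=F_2\) is preserved. The differential \(d\varphi\) induces an isomorphism of normal bundles \(N(\varphi^{-1}(S_0\cap U))\cong\varphi^*N(S_0\cap U)\), which gives the pulled-back coorientation. Thom classes pull back to Thom classes under this diffeomorphism of pairs, by the same fiberwise-generator check. Uniqueness in Theorem~\ref{thm:cibotaru-extension-detailed} then gives \([\varphi^{-1}(W_U)]_Y=\varphi^*[W_U]_U\).

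The main obstacle is justifying the fiberwise characterization of the Thom class in the Banach setting, since tubular neighborhoods and the relevant excision need care in infinite dimensions. If needed, I would instead appeal directly to the Eells duality isomorphism (as in Lemma~\ref{lem:odd-eells-duality}). The Thom class corresponds to \(1\) under that isomorphism, and the duality is natural under open inclusions and diffeomorphisms.
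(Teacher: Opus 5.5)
Your proposal is correct. Note that the paper gives no proof of this statement. It quotes it as a ``cf.'' variant of Cibotaru's Proposition~A.5, which in effect is the naturality statement (Theorem~\ref{thm:cibotaru-naturality-detailed}) applied to open inclusions and diffeomorphisms, since these are automatically transverse to every stratum. However, the paper only formulates that naturality for finite-dimensional sources, so it does not literally cover \(U\) or \(Y\).

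Your direct argument fills this in properly. Restricting or transporting the filtration, the strata, and the coorientation is immediate. You also correctly establish first that \(W_U\subset U\) (resp.\ \(\varphi^{-1}(W_U)\subset Y\)) is a cooriented quasi-manifold, so the uniqueness clause of Theorem~\ref{thm:cibotaru-extension-detailed} is available in \(U\) (resp.\ \(Y\)). Both identities then reduce to one point: the restriction (resp.\ pullback) of the Thom class of \(S_0\subset X\setminus F_2\) is the Thom class of \(S_0\cap U\subset U\setminus F_2\) (resp.\ of its preimage).

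For that point, your fallback is the better route. Tubular neighbourhoods in Banach manifolds are normally built with smooth partitions of unity, which are not available in general, for instance on manifolds modelled on \(\mathcal B(\mathcal H)\). Instead, use the fact that the relative Thom class in \(H^c(M,M\setminus S)\) is characterized, via the duality isomorphism \(H^c(M,M\setminus S)\cong H^0(S)\) that the paper itself invokes, by its restrictions to small transverse \(c\)-disks. That characterization is preserved by open inclusions. It is also preserved by diffeomorphisms, which carry transverse disks to transverse disks compatibly with the coorientation induced by \(d\varphi\). This gives the key step without any tubular-neighbourhood or excision machinery.
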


\begin{lem}\label{lem:comparison-two-quasi-paper}
Let \(W\subset X\) be a closed subset equipped with two cooriented
quasi-manifold structures of the same codimension \(c\).  Suppose that the
two classes are both characterized as extensions of the same Thom class
of the same cooriented top stratum under the extension theorem.  Then the two
associated classes in \(H^c(X)\) are equal.
\end{lem}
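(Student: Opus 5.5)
The plan is to reduce everything to the uniqueness clause in Theorem~\ref{thm:cibotaru-extension-detailed}. Denote the two filtrations by
\[
W=F_0\supset F_1=F_2\supset\cdots
\quad\text{and}\quad
W=F'_0\supset F'_1=F'_2\supset\cdots .
\]
By hypothesis they have the same cooriented top stratum \(S_0=S'_0\) with the same coorientation \(\omega\). Let the two associated classes be \(\kappa=[W,\omega]_X\) and \(\kappa'=[W,\omega]'_X\) in \(H^c(X)\). Since \(F_1=W\setminus S_0=W\setminus S'_0=F'_1\), and \(F_1=F_2\) as well as \(F'_1=F'_2\), we get \(F_2=F'_2\). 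The open sets \(X\setminus F_2\) and \(X\setminus F'_2\) therefore coincide.

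Put \(U=X\setminus F_2\). First, I will note that \(S_0\) is a closed cooriented submanifold of \(U\); this is Lemma~\ref{lem:top-stratum-closed-paper}. Its Thom class \(\tau\in H^c(U)\) is then determined only by the pair \((S_0\subset U,\omega)\), and not by the rest of either filtration. Next, by Theorem~\ref{thm:cibotaru-extension-detailed} applied to each structure, both \(\kappa|_U=\tau\) and \(\kappa'|_U=\tau\) hold. Finally, uniqueness of the extension in that theorem gives \(\kappa=\kappa'\).

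The main point to check is that the uniqueness in Theorem~\ref{thm:cibotaru-extension-detailed} really is uniqueness among all classes in \(H^c(X)\) restricting to \(\tau\), and not merely uniqueness relative to a given filtration. I would justify this using the exact sequence
\[
H^c(X,U)\to H^c(X)\to H^c(U),
\]
so it suffices that \(H^c(X,X\setminus F_2)=0\). This vanishing depends only on the closed set \(F_2\) and its stratification. Since \(F_2\) is stratified by submanifolds of codimension \(\ge c+2\), I would filter it as \(F_2\supset F_3\supset\cdots\) and apply the Alexander--Pontrjagin duality of Eells to each successive pair, as in Lemma~\ref{lem:relative-vanishing-vp}. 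Then I would pass to the union via a Milnor \(\varprojlim^1\) argument, exactly as in Lemma~\ref{lem:relative-cohomology-fred-vp}. This gives \(H^i(X,X\setminus F_2)=0\) for \(i<c+2\), and in particular for \(i=c\) and \(i=c+1\).

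Either filtration yields this vanishing. The restriction map \(H^c(X)\to H^c(U)\) is therefore injective, and the two classes are equal.
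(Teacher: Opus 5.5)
Your argument is correct and is the one the paper intends. The paper gives no written proof of this lemma and treats it as immediate from the uniqueness clause of Theorem~\ref{thm:cibotaru-extension-detailed}; your observation that $F_1=W\setminus S_0$ forces $F_2=F_2'$, so both classes restrict to the same Thom class on the same open set $X\setminus F_2$, is exactly the point needed. Your supplementary vanishing argument for $H^c(X,X\setminus F_2)$ only re-derives that uniqueness clause and can be omitted; if you keep it, note that it tacitly uses $\bigcap_m F_m=\varnothing$, which the paper's definition of a quasi-manifold does not state explicitly.
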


\subsection{Quasi-manifolds on \(\Fred_k\)}
%%%%%%%%%%%%%%%%%%%%%%%%%%%%%%%%%%%%%%%%%%%%%%%%

We fix a complete decreasing flag of closed complex subspaces
\[
\mathcal H=W_0\supset W_1\supset W_2\supset\cdots,
\qquad
\operatorname{codim}_{\C}W_j=j,
\qquad
\bigcap_{j\ge0}W_j=\{0\}.
\]
For example, after choosing an orthonormal basis \((e_0,e_1,e_2,\ldots)\), one
may take \(W_j\) to be the closure of the span of
\(e_j,e_{j+1},e_{j+2},\ldots\).

\begin{lem}\label{lem:finite-subspace-avoids-WN-paper}
Let \(V\subset\mathcal H\) be a finite-dimensional subspace.  Then there exists
\(N\) such that
\[
V\cap W_N=0.
\]
\end{lem}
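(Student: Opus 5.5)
The plan is to reduce the statement to a decreasing chain of finite-dimensional subspaces and use that a strictly decreasing sequence of nonnegative integers must stabilize. Put
\[
V_N := V\cap W_N \qquad (N\ge 0).
\]
Since the flag is decreasing, \(W_{N+1}\subset W_N\), and therefore \(V_{N+1}\subset V_N\). Each \(V_N\) is a subspace of the finite-dimensional space \(V\), so the integers \(d_N=\dim_{\C}V_N\) form a nonincreasing sequence of nonnegative integers. Hence \(d_N\) is eventually constant: there is \(N_0\) with \(d_N=d_{N_0}\) for all \(N\ge N_0\). Because the \(V_N\) are nested and finite-dimensional, equality of dimensions gives equality of spaces, so \(V_N=V_{N_0}\) for all \(N\ge N_0\).

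Next I will identify this stable subspace. Since the \(V_N\) are nested, the stable value is their intersection:
\[
V_{N_0}=\bigcap_{N\ge N_0}V_N=\bigcap_{N\ge0}(V\cap W_N)=V\cap\bigcap_{N\ge 0}W_N=V\cap\{0\}=\{0\},
\]
where the last step uses the hypothesis \(\bigcap_j W_j=\{0\}\) on the flag. Taking \(N=N_0\) therefore gives \(V\cap W_N=0\).

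There is no real obstacle. The only point needing care is that the stabilization step uses finite-dimensionality of \(V\) in an essential way: an infinite-dimensional subspace can meet every \(W_N\) nontrivially. Closedness of the \(W_j\) and their finite codimension are not needed here; only the nesting of the flag and its trivial total intersection are used.
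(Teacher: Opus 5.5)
Your proof is correct, but it takes a different route from the paper. The paper argues by contradiction using topology: if every $V\cap W_N$ were nonzero, it picks unit vectors $v_N\in V\cap W_N$, extracts a convergent subsequence by compactness of the unit sphere of $V$, and uses closedness of each $W_m$ to put the limiting unit vector in $\bigcap_m W_m=\{0\}$, a contradiction. You instead use the linear-algebraic descending chain condition: the nested subspaces $V\cap W_N$ of the finite-dimensional space $V$ must stabilize, and the stable value is $V\cap\bigcap_N W_N=0$. Your argument is more elementary and slightly more general. As you note, it needs neither closedness nor finite codimension of the $W_j$, only nesting and trivial total intersection. The paper's argument, by contrast, uses closedness of the $W_m$ essentially to pass to the limit, and uses finite-dimensionality of $V$ only through compactness of its unit sphere. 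A side benefit of your version is that it works directly with the dimension function $d_N=\dim(V\cap W_N)$. This is exactly the function the paper analyzes in the next lemma on the existence of exact type, so your argument fits naturally with what follows.
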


\begin{proof}
Assume that no such integer \(N\) exists.  Then, for every \(N\), there is a
unit vector \(v_N\in V\cap W_N\).  The unit sphere of the finite-dimensional
space \(V\) is compact; hence some subsequence \((v_{N_\ell})_\ell\) converges
to a unit vector \(v\in V\).  Let \(m\ge0\) be arbitrary.  Since
\(N_\ell\to\infty\), all sufficiently large terms of the subsequence lie in
\(W_m\).  The subspace \(W_m\) is closed, so the limit vector \(v\) also lies in
\(W_m\).  As this is true for every \(m\), one obtains
\[
v\in\bigcap_{m\ge0}W_m.
\]
By the defining property of the flag, this intersection is \(\{0\}\).  This contradicts
the fact that \(v\) is a unit vector.  Therefore \(V\cap W_N=0\) for some \(N\).
\end{proof}

Let \(k\ge0\), and let
\[
\lambda=(\lambda_1\ge\lambda_2\ge\cdots\ge\lambda_k\ge0)
\]
be a partition of length at most \(k\), with trailing zeros added if necessary.
We set
\[
|\lambda|=\lambda_1+\lambda_2+\cdots+\lambda_k.
\]

\begin{dfn}\label{dfn:fredk-degeneracy-locus-paper}
The \emph{degeneracy locus of type \(\lambda\) in \(\Fred_k\)} is
\[
\Sigma^{(k)}_\lambda
=
\left\{
T\in\Fred_k
\ \middle|\
\dim_\C(\Ker T\cap W_{\lambda_i+k-i})\ge i
\quad (i=1,\ldots,k)
\right\}.
\]
For \(k=0\) and \(\lambda=\varnothing\), we set
\(\Sigma^{(0)}_\varnothing=\Fredzero\).
\end{dfn}

\begin{dfn}\label{dfn:exact-type-paper}
Let \(r\ge0\), and let
\[
J=(j_0<j_1<\cdots<j_{r-1})
\]
be a strictly increasing sequence of non-negative integers.  For an
\(r\)-dimensional subspace \(V\subset\mathcal H\), we say that \(V\) has
\emph{exact type \(J\)} if
\[
\dim(V\cap W_{j_q})=r-q,
\qquad
\dim(V\cap W_{j_q+1})=r-q-1
\qquad(q=0,\ldots,r-1).
\]
For \(T\in\Fred_k\), we say that \(T\) has exact type \(J\) if \(\Ker T\) has
exact type \(J\).  We denote the corresponding exact stratum by
\[
\Sigma^{(k),\circ}_{J}
=
\{T\in\Fred_k\mid \Ker T\text{ has exact type }J\}.
\]
\end{dfn}

\begin{lem}\label{lem:exact-type-exists-paper}
Every finite-dimensional subspace \(V\subset\mathcal H\) has a unique exact
type.
\end{lem}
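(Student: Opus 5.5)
We prove existence by an explicit construction of the sequence \(J\) from the dimension function of \(V\) along the flag, and uniqueness by observing that every defining condition of exact type is read off from that same function. Let \(V\subset\mathcal H\) have dimension \(r\), and set
\[
d(j)=\dim_{\C}(V\cap W_j)\qquad(j\ge0).
\]
Then \(d(0)=r\), and \(d\) is non-increasing because the flag is decreasing. By \LaterLem{lem:finite-subspace-avoids-WN-paper}, \(d(N)=0\) for some \(N\).

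The key step is to show that \(d\) drops by at most one at each step, that is,
\[
d(j)-d(j+1)\in\{0,1\}.
\]
Since \(W_{j+1}\) has codimension one in \(W_j\), the space \(V\cap W_{j+1}\) is the kernel of the composite
\[
V\cap W_j\longrightarrow W_j\longrightarrow W_j/W_{j+1}\cong\C.
\]
A linear map into \(\C\) has kernel of codimension at most one, which gives the claim. Consequently \(d\) goes from \(r\) to \(0\) in unit steps. It therefore decreases at exactly \(r\) indices, which we list as \(j_0<j_1<\cdots<j_{r-1}\), where \(j_q\) is the index with \(d(j_q)=r-q\) and \(d(j_q+1)=r-q-1\).

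Existence is then immediate. By construction, \(J=(j_0<\cdots<j_{r-1})\) satisfies \(\dim(V\cap W_{j_q})=r-q\) and \(\dim(V\cap W_{j_q+1})=r-q-1\) for every \(q\), which is precisely the definition of exact type \(J\) in \LaterDfn{dfn:exact-type-paper}.

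For uniqueness, suppose \(V\) has exact type \(J'=(j'_0<\cdots<j'_{r-1})\). The conditions say exactly that \(d\) drops from \(r-q\) to \(r-q-1\) between \(j'_q\) and \(j'_q+1\). Because \(d\) is non-increasing with unit steps, for each value \(r-q\) there is exactly one index at which \(d\) passes from \(r-q\) to \(r-q-1\), and that index is \(j_q\). Hence \(j'_q=j_q\) for all \(q\), so \(J'=J\).

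The only genuinely non-formal input is the codimension-one step bound on \(d\); everything else is counting. The condition \(\bigcap_j W_j=0\) enters only through \(d(N)=0\), which guarantees that all \(r\) drops actually occur.
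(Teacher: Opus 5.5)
Your proposal is correct and follows the paper's proof essentially step for step. Both use the dimension function \(m\mapsto\dim(V\cap W_m)\), the unit-step bound coming from \(\dim W_m/W_{m+1}=1\), eventual vanishing via the lemma on avoiding some \(W_N\), and the definition of \(J\) as the jump positions. Your kernel-of-a-functional argument for the step bound and your explicit uniqueness argument simply spell out what the paper states in one line each.
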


\begin{proof}
We set \(r=\dim V\).  By \LaterLem{lem:finite-subspace-avoids-WN-paper}, choose
\(N\) such that \(V\cap W_N=0\).  Let
\[
d_m=\dim(V\cap W_m)
\qquad(m\ge0).
\]
Then \(d_0=r\), and \(d_m=0\) for all sufficiently large \(m\).  Since
\(W_m/W_{m+1}\) is one-dimensional, one has
\[
0\le d_m-d_{m+1}\le1.
\]
Thus the function \(m\mapsto d_m\) drops by one exactly \(r\) times.  We define
\(j_q\) to be the unique integer at which \(d_m\) drops from \(r-q\) to
\(r-q-1\).  Then
\[
0\le j_0<j_1<\cdots<j_{r-1},
\]
and the defining equalities of exact type hold.  Uniqueness follows because the
positions of the jumps of the sequence \((d_m)\) are uniquely determined by
\(V\).
\end{proof}

\begin{dfn}\label{dfn:k-domination-paper}
Let \(J=(j_0<\cdots<j_{r-1})\) be of length \(r\ge k\).  We say that \(J\)
\emph{\(k\)-dominates \(\lambda\)}, and write
\[
J\succeq_k\lambda,
\]
if
\[
j_{r-i}\ge \lambda_i+k-i
\qquad(i=1,\ldots,k).
\]
We also set
\[
J^{(k)}_\lambda=(\lambda_k,\lambda_{k-1}+1,\ldots,\lambda_1+k-1).
\]
This is a strictly increasing sequence of length \(k\).
\end{dfn}

\begin{lem}\label{lem:schubert-combinatorics-paper}
The locus \(\Sigma^{(k)}_\lambda\) has the disjoint decomposition
\[
\Sigma^{(k)}_\lambda
=
\bigsqcup_{J\succeq_k\lambda}\Sigma^{(k),\circ}_{J}.
\]
\end{lem}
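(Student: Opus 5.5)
By Lemma~\ref{lem:exact-type-exists-paper}, every \(T\in\Fred_k\) has a finite-dimensional kernel with a unique exact type \(J\). Hence the strata \(\Sigma^{(k),\circ}_J\), with \(J\) running over all strictly increasing sequences, are pairwise disjoint and cover \(\Fred_k\). The lemma therefore reduces to a pointwise statement: for \(T\in\Fred_k\) with \(\Ker T\) of exact type \(J=(j_0<\cdots<j_{r-1})\), one has \(T\in\Sigma^{(k)}_\lambda\) if and only if \(r\ge k\) and \(J\succeq_k\lambda\).

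First I would check that \(r\ge k\) is automatic. Since \(T\in\Fred_k\), we have \(r=\dim\Ker T=k+\dim\Coker T\ge k\), so the domination condition in Definition~\ref{dfn:k-domination-paper} always makes sense. Next I would record the jump function \(d_m=\dim(\Ker T\cap W_m)\) from the proof of Lemma~\ref{lem:exact-type-exists-paper}. It is non-increasing with unit drops, and the drop from \(r-q\) to \(r-q-1\) occurs exactly between \(m=j_q\) and \(m=j_q+1\). Consequently
\[
d_m=\#\{q\mid j_q\ge m\}.
\]
This formula follows because \(d_m=r-q\) exactly for \(j_{q-1}<m\le j_q\).

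The key equivalence is then, for each \(i=1,\ldots,k\) and \(a_i=\lambda_i+k-i\),
\[
d_{a_i}\ge i\iff\#\{q\mid j_q\ge a_i\}\ge i\iff j_{r-i}\ge a_i.
\]
The last step uses that the \(j_q\) are strictly increasing, so the \(i\) largest entries are \(j_{r-1},\ldots,j_{r-i}\). At least \(i\) entries are \(\ge a_i\) exactly when the \(i\)-th largest entry, \(j_{r-i}\), is \(\ge a_i\). Taking all \(i\) together gives \(T\in\Sigma^{(k)}_\lambda\iff J\succeq_k\lambda\), and the disjoint union follows.

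The only point needing care is the counting identity for \(d_m\) in terms of the jumps, including the boundary conventions (\(d_0=r\), and \(m\le j_0\) giving all \(r\) entries). I would verify it directly by induction on \(m\) from the definition of exact type.
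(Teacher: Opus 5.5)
Your proposal is correct and follows essentially the same route as the paper's proof. Both arguments use the counting identity \(\dim(\Ker T\cap W_m)=\#\{q\mid j_q\ge m\}\), translate each incidence condition into \(j_{r-i}\ge\lambda_i+k-i\), and get disjointness from the uniqueness of exact type. Your explicit observation that \(r=\dim\Ker T\ge k\) holds automatically for \(T\in\Fred_k\) is a small point the paper leaves implicit, and it is exactly what makes \(j_{r-i}\) well defined in the domination condition.
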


\begin{proof}
Let \(V=\Ker T\).  If \(V\) has exact type \(J\), then for every \(m\ge0\)
\[
\dim(V\cap W_m)=\#\{q\mid j_q\ge m\}.
\]
Indeed, the exact type records exactly the jump positions of the dimension
function \(m\mapsto\dim(V\cap W_m)\).  Hence the incidence condition
\[
\dim(V\cap W_{\lambda_i+k-i})\ge i
\]
is equivalent to saying that at least \(i\) of the entries of \(J\) are
\(\ge \lambda_i+k-i\), which is equivalent to
\[
j_{r-i}\ge \lambda_i+k-i.
\]
This is precisely \(J\succeq_k\lambda\).  The disjoint decomposition follows
from uniqueness of exact type.
\end{proof}

\begin{dfn}\label{dfn:Nk-paper}
For \(J=(j_0<\cdots<j_{r-1})\) of length \(r\ge k\), define
\[
N_k(J)=r(r-k)+\sum_{q=0}^{r-1}(j_q-q).
\]
\end{dfn}

\begin{lem}\label{lem:Nk-minimal-paper}
If \(J\succeq_k\lambda\), then
\[
N_k(J)\ge |\lambda|.
\]
The equality holds if and only if
\[
r=k,
\qquad
J=J^{(k)}_\lambda.
\]
\end{lem}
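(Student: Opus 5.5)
We need to prove: if \(J\succeq_k\lambda\) with \(J=(j_0<\cdots<j_{r-1})\), \(r\ge k\), then \(N_k(J)=r(r-k)+\sum_q (j_q-q)\ge|\lambda|\), with equality exactly when \(r=k\) and \(J=J^{(k)}_\lambda\).

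The argument is a direct combinatorial estimate. We split \(N_k(J)\) into three nonnegative pieces and bound the top \(k\) entries of \(J\) using the domination condition.

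First, since \(J\) is strictly increasing with \(j_0\ge0\), we have \(j_q\ge q\) for every \(q\). Hence each summand \(j_q-q\) is nonnegative. Reindexing the top \(k\) entries by \(q=r-i\) for \(i=1,\ldots,k\), we write
\[
N_k(J)=r(r-k)+\sum_{q=0}^{r-k-1}(j_q-q)+\sum_{i=1}^{k}\bigl(j_{r-i}-(r-i)\bigr).
\]
The first two pieces are nonnegative.

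Next we bound the last sum. By \(k\)-domination, \(j_{r-i}\ge\lambda_i+k-i\), so
\[
j_{r-i}-(r-i)\ge \lambda_i-(r-k).
\]
Summing over \(i=1,\ldots,k\) gives
\[
\sum_{i=1}^{k}\bigl(j_{r-i}-(r-i)\bigr)\ge|\lambda|-k(r-k).
\]
Therefore
\[
N_k(J)\ge r(r-k)-k(r-k)+|\lambda|=(r-k)^2+|\lambda|\ge|\lambda|.
\]

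For the equality case, suppose \(N_k(J)=|\lambda|\). Then every inequality above is an equality. In particular \((r-k)^2=0\), so \(r=k\). With \(r=k\) the middle sum is empty. Equality in the remaining estimate forces \(j_{k-i}=\lambda_i+k-i\) for each \(i\), which says exactly \(J=J^{(k)}_\lambda\).

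Conversely, suppose \(r=k\) and \(J=J^{(k)}_\lambda\). Then \(J\) \(k\)-dominates \(\lambda\) with equality in each condition. Substituting directly,
\[
N_k(J)=\sum_{q=0}^{k-1}(j_q-q)=\sum_{i=1}^{k}\bigl(\lambda_i+k-i-(k-i)\bigr)=|\lambda|.
\]

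The only step needing care is the dropped lower-order sum \(\sum_{q<r-k}(j_q-q)\). Its nonnegativity comes from \(j_q\ge q\), and the fact that it does not enter the equality analysis comes from \(r=k\) making it empty. Neither point is a real obstacle.
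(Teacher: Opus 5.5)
Your proposal is correct and follows the paper's proof essentially verbatim: you bound the top $k$ summands using the domination inequalities, use $j_q\ge q$ for the remaining $r-k$ summands to get $N_k(J)\ge|\lambda|+(r-k)^2$, and read off the equality case ($r=k$ with every domination inequality sharp), plus the direct check of the converse.
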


\begin{proof}
Let \(r=|J|\).  From the domination inequalities we obtain
\[
\sum_{i=1}^k\bigl(j_{r-i}-(r-i)\bigr)
\ge
\sum_{i=1}^k\bigl(\lambda_i+k-i-(r-i)\bigr)
=|\lambda|-k(r-k).
\]
For the remaining \(r-k\) terms, the strict increase of \(J\) implies
\[
j_q\ge q
\qquad(q=0,\ldots,r-k-1).
\]
Hence
\[
\sum_{q=0}^{r-1}(j_q-q)
\ge |\lambda|-k(r-k).
\]
Therefore
\[
N_k(J)
=r(r-k)+\sum_{q=0}^{r-1}(j_q-q)
\ge |\lambda|+(r-k)^2
\ge |\lambda|.
\]
The equality forces \(r=k\), and then the domination inequalities must all be
equalities.  Hence
\[
j_{k-i}=\lambda_i+k-i
\qquad(i=1,\ldots,k),
\]
which is exactly \(J=J^{(k)}_\lambda\).  The converse is immediate by direct
calculation.
\end{proof}

\begin{lem}\label{lem:Nk-monotone-paper}
Let \(J=(j_0<\cdots<j_{s-1})\) and \(K=(k_0<\cdots<k_{t-1})\) have lengths
\(s,t\), with \(k\le s\le t\).  Suppose that
\[
k_{t-s+p}\ge j_p
\qquad(p=0,\ldots,s-1).
\]
Then
\[
N_k(K)\ge N_k(J).
\]
If \(K\ne J\), then
\[
N_k(K)>N_k(J).
\]
\end{lem}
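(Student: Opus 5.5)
The plan is to compare the two sums in \(N_k\) term by term, aligning the last \(s\) entries of \(K\) with the entries of \(J\), and to absorb the length difference into the quadratic terms \(r(r-k)\). Concretely, I split
\[
\sum_{q=0}^{t-1}(k_q-q)
=
\sum_{q=0}^{t-s-1}(k_q-q)
+
\sum_{p=0}^{s-1}\bigl(k_{t-s+p}-(t-s+p)\bigr).
\]

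First I bound the two pieces separately. Since \(K\) is a strictly increasing sequence of non-negative integers, \(k_q\ge q\) for every \(q\), exactly as in the proof of \LaterLem{lem:Nk-minimal-paper}. So the first sum is \(\ge 0\). For the second sum I use the hypothesis \(k_{t-s+p}\ge j_p\), which gives
\[
k_{t-s+p}-(t-s+p)\ \ge\ (j_p-p)-(t-s).
\]
Summing over \(p\) yields
\[
\sum_{q=0}^{t-1}(k_q-q)\ \ge\ \sum_{p=0}^{s-1}(j_p-p)-s(t-s).
\]

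Next I insert this into the definition of \(N_k\) (\LaterDfn{dfn:Nk-paper}). This gives
\[
N_k(K)-N_k(J)\ \ge\ t(t-k)-s(t-s)-s(s-k).
\]
A direct expansion shows the right-hand side equals
\[
t^2-st-k(t-s)=(t-s)(t-k).
\]
This is \(\ge0\) because \(t\ge s\ge k\), which proves the non-strict inequality.

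For strictness I split into two cases. If \(t>s\), then \(t>s\ge k\), so \((t-s)(t-k)>0\) and the inequality is strict. If \(t=s\), then \(K\ne J\) forces \(k_p>j_p\) for some \(p\). For that \(p\) the termwise inequality in the second sum is strict, so again \(N_k(K)>N_k(J)\). The only step needing care is the algebraic identity \(t(t-k)-s(t-s)-s(s-k)=(t-s)(t-k)\); I expect no real obstacle beyond checking it.
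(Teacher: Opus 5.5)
Your proof is correct and takes essentially the same approach as the paper. Both drop the first \(t-s\) entries of \(K\) using \(k_q\ge q\), bound the aligned tail by the \(j_p\), and reduce to the identity \(t(t-k)-s(t-s)-s(s-k)=(t-s)(t-k)\); the only cosmetic difference is that the paper handles \(t=s\) directly from \(k_p\ge j_p\), which amounts to the same case split you use for strictness.
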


\begin{proof}
If \(t=s\), the claim follows immediately from \(k_p\ge j_p\).  If \(t>s\),
then
\[
\sum_{q=0}^{t-1}(k_q-q)
\ge
\sum_{p=0}^{s-1}(j_p-p)-s(t-s),
\]
because the first \(t-s\) entries of \(K\) contribute non-negatively and the
last \(s\) entries dominate the entries of \(J\) after shifting by \(t-s\).
Therefore
\[
\begin{aligned}
N_k(K)-N_k(J)
&=t(t-k)-s(s-k)
  +\sum_{q=0}^{t-1}(k_q-q)-\sum_{p=0}^{s-1}(j_p-p)\\
&\ge t(t-k)-s(s-k)-s(t-s)  \\
&=(t-s)(t-k)>0,
\end{aligned}
\]
since \(t>s\ge k\).
\end{proof}

\begin{prop}\label{prop:local-schur-complement-paper}
Let \(T_0\in\Fred_k\), put \(K=\Ker T_0\), \(Y=\operatorname{Im}T_0\), and set
\(r=\dim K\).  Choose closed subspaces \(X\subset\mathcal H\) and finite-dimensional
\(D\subset\mathcal H\) such that
\[
\mathcal H=K\oplus X,
\qquad
\mathcal H=D\oplus Y.
\]
Then \(\dim D=r-k\), and after shrinking to a neighborhood \(U_0\) of
\(T_0\), every \(T\in U_0\) has a block form
\[
T=
\begin{pmatrix}
a(T)&b(T)\\
c(T)&d(T)
\end{pmatrix}
\colon K\oplus X\longrightarrow D\oplus Y
\]
with \(d(T)\) invertible.  We define \(\tau(T)\) and \(g(T)\) by
\[
\tau(T)=a(T)-b(T)d(T)^{-1}c(T)\in\operatorname{Hom}(K,D),
\]
\[
g(T)=-d(T)^{-1}c(T)\in\operatorname{Hom}(K,X).
\]
Then, for \(\nu\in K\), the vector \(\nu+g(T)\nu\) lies in \(\Ker T\) if and only if
\(\nu\) lies in \(\Ker\tau(T)\). The correspondence \(\nu\mapsto \nu+g(T)\nu\) gives an isomorphism
\[
\Ker\tau(T)\cong\Ker T.
\]
Moreover,
\[
\Coker\tau(T)\cong\Coker T.
\]
\end{prop}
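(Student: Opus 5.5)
The plan is to follow the Schur complement argument from the proof of \LaterLem{lem:odd-degeneracy-open-closed}, now for non-self-adjoint Fredholm operators with different source and target splittings. First, the dimension count. Since \(T_0\) is Fredholm of index \(k\), one has \(\dim\Coker T_0=r-k\), and \(D\) is a complement of \(Y=\operatorname{Im}T_0\), so \(D\cong\mathcal H/Y=\Coker T_0\). Hence \(\dim D=r-k\).

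For \(T=T_0\), the block \(d(T_0)\colon X\to Y\) is the restriction of \(T_0\) to a complement of its kernel, with image all of \(Y\). By the open mapping theorem it is therefore invertible. The block \(d(T)\) depends continuously on \(T\) in norm, as \(T\) is composed with the fixed bounded inclusion and projection. Invertibility is an open condition, so some neighborhood \(U_0\) of \(T_0\) has \(d(T)\) invertible throughout, and \(\tau(T)\) and \(g(T)\) are well defined there.

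Next, the kernel statement. Write a vector as \(\nu+x\) with \(\nu\in K\) and \(x\in X\). The condition \(T(\nu+x)=0\) reads
\[
a\nu+bx=0,\qquad c\nu+dx=0.
\]
The second equation is equivalent to \(x=-d^{-1}c\nu=g(T)\nu\). Substituting into the first gives \(\tau(T)\nu=0\). This shows that \(\nu+g(T)\nu\in\Ker T\) if and only if \(\nu\in\Ker\tau(T)\). It also shows that every element of \(\Ker T\) has this form, with \(\nu\) its \(K\)-component. So the map \(\nu\mapsto\nu+g(T)\nu\) is a surjection from \(\Ker\tau(T)\) onto \(\Ker T\). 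It is injective because projecting to the \(K\)-component recovers \(\nu\). Thus it is a linear isomorphism.

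For the cokernel, the plan is to factor \(T\) by block triangular invertible operators:
\[
\begin{pmatrix}1&-bd^{-1}\\0&1\end{pmatrix}
\begin{pmatrix}a&b\\c&d\end{pmatrix}
\begin{pmatrix}1&0\\-d^{-1}c&1\end{pmatrix}
=
\begin{pmatrix}\tau&0\\0&d\end{pmatrix}.
\]
The outer factors are invertible automorphisms of \(D\oplus Y\) and of \(K\oplus X\), respectively. Therefore \(\Coker T\) is isomorphic to the cokernel of \(\tau\oplus d\). Since \(d\) is invertible, that cokernel is \(\Coker\tau=D/\operatorname{Im}\tau\). This factorization also reproves the kernel statement, since the right-hand factor sends \((\nu,0)\) to \(\nu+g(T)\nu\).

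I do not expect a real obstacle. The only point needing care is the identification \(\dim D=r-k\) together with the invertibility of \(d(T_0)\), which relies on \(Y\) being closed and on the open mapping theorem. The rest is routine block algebra.
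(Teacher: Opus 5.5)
Your proposal is correct and follows essentially the same route as the paper: invertibility of \(d(T_0)=T_0|_X\colon X\to Y\) together with openness of invertibility, the same two-equation elimination for the kernel, and the same block-triangular pre- and post-composition reducing \(T\) to \(\tau(T)\oplus d(T)\) for the cokernel. Your explicit index count giving \(\dim D=\dim\Coker T_0=r-k\) and your appeal to the open mapping theorem fill in details that the paper leaves implicit.
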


\begin{proof}
Since \(T_0\) is Fredholm, \(Y=\operatorname{Im}T_0\) is closed and
\(\Coker T_0\) is finite-dimensional.  Thus a finite-dimensional complement
\(D\) exists.  The restriction
\[
T_0|_X\colon X\longrightarrow Y
\]
is an isomorphism: it is injective because \(X\cap K=0\), and it is surjective
because \(Y=T_0(\mathcal H)=T_0(X)\).  Hence for \(T\) sufficiently close to
\(T_0\), the block \(d(T)\) is invertible.

For \(u\in K\), \(x\in X\), the equation \(T(u+x)=0\) is
\[
a(T)u+b(T)x=0,
\qquad
c(T)u+d(T)x=0.
\]
The second equation gives \(x=-d(T)^{-1}c(T)u=g(T)u\).  Substitution into the
first equation gives \(\tau(T)u=0\).  This proves the kernel statement.

The cokernel statement follows from the same block calculation.  More explicitly,
precompose \(T\) with the invertible map
\[
K\oplus X\longrightarrow K\oplus X,
\qquad
(u,y)\longmapsto (u,g(T)u+y),
\]
and postcompose with the invertible map
\[
D\oplus Y\longrightarrow D\oplus Y,
\qquad
(\xi,\eta)\longmapsto (\xi-b(T)d(T)^{-1}\eta,\eta).
\]
The resulting operator is
\[
\begin{pmatrix}
\tau(T)&0\\
0&d(T)
\end{pmatrix}.
\]
Since \(d(T)\) is invertible, the cokernel of \(T\) is naturally identified with
\(\Coker\tau(T)\).
\end{proof}

\begin{prop}\label{prop:local-finite-dimensionalization-paper}
Let \(T_0\in\Sigma^{(k),\circ}_{J_0}\), and let \(r=|J_0|=\dim\Ker T_0\).  Then
there exist an open neighborhood \(U\subset\Fred_k\) of \(T_0\), an integer
\(N\), and a smooth map
\[
\Psi_U\colon U\longrightarrow \operatorname{Gr}_r(D\oplus E_N),
\qquad
E_N=\mathcal H/W_N,
\]
where \(\operatorname{Gr}_r(D\oplus E_N)\) denotes the Grassmannian of
\(r\)-dimensional subspaces of \(D\oplus E_N\).  The following hold.

\begin{enumerate}[label=\textup{(\arabic*)}]
\item \(U\cap\Sigma^{(k)}_\lambda=\Psi_U^{-1}(Y^{(r,k)}_\lambda)\),
where \(Y^{(r,k)}_\lambda\) is the Schubert incidence locus in
\(\operatorname{Gr}_r(D\oplus E_N)\) defined by
\[
\dim\bigl(L\cap(0\oplus W_{\lambda_i+k-i}/W_N)\bigr)\ge i
\qquad(i=1,\ldots,k).
\]

\item For every exact type \(J=(j_0<\cdots<j_{s-1})\) appearing in \(U\), one has
\(|J|\le r\) and \(j_{s-1}<N\), and
\[
U\cap\Sigma^{(k),\circ}_{J}=\Psi_U^{-1}(Y^{(r,k),\circ}_{J}),
\]
where \(Y^{(r,k),\circ}_{J}\) is the subset of
\(\operatorname{Gr}_r(D\oplus E_N)\) consisting of all \(L\) such that
\[
\dim\bigl(L\cap(0\oplus E_N)\bigr)=s,
\]
and
\[
\dim\bigl(L\cap(0\oplus W_{j_q}/W_N)\bigr)=s-q,
\qquad
\dim\bigl(L\cap(0\oplus W_{j_q+1}/W_N)\bigr)=s-q-1
\]
for \(q=0,\ldots,s-1\).
\end{enumerate}
\end{prop}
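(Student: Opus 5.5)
The plan is to build \(\Psi_U\) from the Schur complement of \LaterProp{prop:local-schur-complement-paper}, using the graph of \(\tau(T)\). Fix the splittings \(\mathcal H=K\oplus X\) and \(\mathcal H=D\oplus Y\) for \(T_0\), and shrink to the neighborhood \(U_0\) on which \(d(T)\) is invertible. For \(T\in U_0\) the kernel is
\[
\Ker T=\{\nu+g(T)\nu\mid \nu\in\Ker\tau(T)\}.
\]
Choose \(N\) by \LaterLem{lem:finite-subspace-avoids-WN-paper} so that \(K\cap W_N=0\). Enlarge \(N\) if needed so that \(N>\lambda_1+k-1\) and \(N\) exceeds the last entry of the exact type of \(T_0\). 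Let \(\pi_N\colon\mathcal H\to E_N=\mathcal H/W_N\) be the quotient map. Since \(K\cap W_N=0\), the restriction \(\pi_N|_K\) is injective.

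For \(\nu\in K\) put \(h_T(\nu)=\pi_N(\nu+g(T)\nu)\). Because \(g(T_0)=0\), we have \(h_{T_0}=\pi_N|_K\), so \(h_T\) stays injective after shrinking to \(U\). Now define
\[
\Psi_U(T)=\{(\tau(T)\nu,\,h_T(\nu))\mid \nu\in K\}\subset D\oplus E_N .
\]
This is the image of an injective linear map \(K\to D\oplus E_N\), so it is \(r\)-dimensional. Since \(\tau\) and \(g\) are smooth in \(T\), the map \(\Psi_U\) is smooth into the Grassmannian.

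The key identity I expect to prove is
\[
\Psi_U(T)\cap(0\oplus W_m/W_N)=h_T\bigl(\Ker\tau(T)\cap\{\nu\mid \nu+g(T)\nu\in W_m\}\bigr)
\]
for every \(m\le N\). Combining this with injectivity of \(h_T\) and the isomorphism \(\Ker\tau(T)\cong\Ker T\), \(\nu\mapsto\nu+g(T)\nu\), gives
\[
\dim\bigl(\Psi_U(T)\cap(0\oplus W_m/W_N)\bigr)=\dim(\Ker T\cap W_m)\qquad(m\le N).
\]
The inclusion \(\supseteq\) is clear. For \(\subseteq\), a vector \((\tau\nu,h_T\nu)\) with \(\tau\nu=0\) and \(h_T\nu\in W_m/W_N\) gives \(\nu+g\nu\in W_m+W_N=W_m\), since \(m\le N\).

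Taking \(m=0\) gives \(\dim(L\cap(0\oplus E_N))=\dim\Ker T\). Along the way this shows \(|J|\le r\) for every exact type \(J\) occurring in \(U\), by the Schur-complement isomorphism. Part (1) then follows directly from \LaterDfn{dfn:fredk-degeneracy-locus-paper}, because all indices \(\lambda_i+k-i\) are less than \(N\).

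Part (2) needs the bound \(j_{s-1}<N\), i.e.\ \(\Ker T\cap W_N=0\) on \(U\). This is the main obstacle, since it is not automatic from the choice at \(T_0\). I would handle it by openness. \(\Ker T\) is the image of a subspace of \(K\) under \(1+g(T)\), and \(1+g(T)\) is close to the inclusion of \(K\) when \(T\) is near \(T_0\). Since \(K\cap W_N=0\), the angle between \(K\) and the closed subspace \(W_N\) is positive. Concretely, \(\|\pi_N(\nu)\|\ge c\|\nu\|\) on \(K\) for some \(c>0\). Therefore \(\|\pi_N(\nu+g\nu)\|\ge (c-\|g(T)\|)\|\nu\|>0\) once \(\|g(T)\|<c\), which holds after shrinking \(U\).

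With \(\Ker T\cap W_N=0\), the dimension function \(m\mapsto\dim(\Ker T\cap W_m)\) has all its jumps below \(N\). The displayed equality then matches the exact-type conditions of \LaterDfn{dfn:exact-type-paper} with the defining conditions of \(Y^{(r,k),\circ}_J\), including the extra condition \(\dim(L\cap(0\oplus E_N))=s\). This gives \(U\cap\Sigma^{(k),\circ}_J=\Psi_U^{-1}(Y^{(r,k),\circ}_J)\).
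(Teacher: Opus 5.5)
Your proposal is correct and follows essentially the same route as the paper: the same map \(\Psi_U(T)=\{(\tau(T)\nu,\pi_N(\nu+g(T)\nu))\mid \nu\in K\}\), the same choice of \(N\), and the same identification \(\dim(\Psi_U(T)\cap(0\oplus W_m/W_N))=\dim(\Ker T\cap W_m)\) for \(m\le N\). The step you flag as the main obstacle is in fact already immediate from the injectivity of \(h_T\) that you imposed; the paper imposes only the weaker injectivity of \(\nu\mapsto(\tau(T)\nu,\pi_N(\nu+g(T)\nu))\) and argues as follows. If \(h=\nu+g(T)\nu\in\Ker T\cap W_N\) with \(\nu\in\Ker\tau(T)\), then \(h_T(\nu)=\pi_N h=0\) forces \(\nu=0\), so your separate angle estimate is correct but redundant.
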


\begin{proof}
We use the notation of \LaterProp{prop:local-schur-complement-paper}.
By \LaterLem{lem:finite-subspace-avoids-WN-paper}, choose \(N\) such that
\[
K\cap W_N=0.
\]
Increasing \(N\) if necessary, assume also that all integers appearing in
\(J_0\) and all \(\lambda_i+k-i\) are smaller than \(N\).  Shrink the
neighborhood so that the map
\[
K\longrightarrow D\oplus E_N,
\qquad
\nu\longmapsto (\tau(T)\nu,\pi_N(\nu+g(T)\nu))
\]
is injective for all \(T\in U\).  This is possible because at \(T=T_0\) the map
is \(\nu\mapsto(0,\pi_N\nu)\), and \(\pi_N|_K\) is injective.

We define
\[
\Psi_U(T)=
\{(\tau(T)\nu,\pi_N(\nu+g(T)\nu))\mid \nu\in K\}
\in \operatorname{Gr}_r(D\oplus E_N).
\]
The smoothness follows from the smooth dependence of \(\tau(T)\) and \(g(T)\).
The injectivity just imposed implies \(\Ker T\cap W_N=0\).  Indeed, if
\(h\in\Ker T\cap W_N\), then by the graph description of the kernel there is
\(\nu\in\Ker\tau(T)\) with \(h=\nu+g(T)\nu\).  Hence
\[
(\tau(T)\nu,\pi_N(\nu+g(T)\nu))=(0,0),
\]
so \(\nu=0\), and therefore \(h=0\).

For every \(j\le N\), with \(W_N/W_N=0\) when \(j=N\), we claim that
\[
\Psi_U(T)\cap(0\oplus W_j/W_N)
=
\{(0,
\pi_N h)\mid h\in\Ker T\cap W_j\}.
\]
The left-hand side consists of elements of the form
\((\tau(T)\nu,\pi_N(\nu+g(T)\nu))\) whose first component is zero and whose
second component lies in \(W_j/W_N\).  The first condition is
\(\nu\in\Ker\tau(T)\), so \(h=\nu+g(T)\nu\in\Ker T\).  Since \(j\le N\),
\(W_N\subset W_j\), and therefore
\[
\pi_N h\in W_j/W_N
\quad\Longleftrightarrow\quad
h\in W_j.
\]
The reverse inclusion is immediate.  Because \(\Ker T\cap W_N=0\), the map
\(h\mapsto\pi_Nh\) is injective on \(\Ker T\cap W_j\).  Hence
\[
\dim(\Ker T\cap W_j)
=
\dim\bigl(\Psi_U(T)\cap(0\oplus W_j/W_N)\bigr).
\]
\end{proof}

\begin{prop}[Local product form of the finite-dimensionalization]\label{prop:local-product-psi-paper}
In the situation of \LaterProp{prop:local-finite-dimensionalization-paper},
after shrinking \(U\), there exist a finite-dimensional vector space \(P\), a
Banach space \(R_0\), an open subset \(\mathcal O\subset P\times R_0\), and a smooth
diffeomorphism
\[
\Theta\colon U\xrightarrow{\cong}\mathcal O
\]
such that \(\Psi_U\) is the composition of \(\Theta\) with the projection
\(P\times R_0\to P\) and a Grassmannian graph chart.  In particular,
\(\Psi_U\) is a submersion onto its finite-dimensional Grassmannian chart.
\end{prop}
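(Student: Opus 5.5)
The idea is to make the map \(T\mapsto(\tau(T),g(T))\) part of a coordinate system on \(U\), with the remaining Banach blocks as the complementary coordinates. We keep the block decomposition \(T=\begin{pmatrix}a&b\\c&d\end{pmatrix}\colon K\oplus X\to D\oplus Y\) of \LaterProp{prop:local-schur-complement-paper}. Then we set
\[
\Theta(T)=\bigl(\tau(T),\,g(T),\,b(T),\,d(T)\bigr)\in
\operatorname{Hom}(K,D)\times\operatorname{Hom}(K,X)\times\operatorname{Hom}(X,D)\times\operatorname{Hom}(X,Y).
\]
The block entries \((a,b,c,d)\) are linear coordinates on the open set \(U\subset\Fred_k\subset\mathcal B(\mathcal H)\). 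We then recover them from \(\Theta(T)\) by
\[
c=-d\,g,\qquad a=\tau+b\,d^{-1}c=\tau-b\,g .
\]
Hence \(\Theta\) is a smooth bijection onto the open set \(\{d\text{ invertible}\}\), with smooth inverse. It is therefore a diffeomorphism onto its image, which is open, and we shrink \(U\) if needed.

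The definition of \(\Psi_U\) in \LaterProp{prop:local-finite-dimensionalization-paper} depends only on \(\tau(T)\) and on \(\pi_N\circ g(T)\). This suggests choosing
\[
P=\operatorname{Hom}(K,D)\times\operatorname{Hom}(K,E_N).
\]
We split \(X\) so that this projection becomes a coordinate. Since \(K\cap W_N=0\), the map \(\pi_N|_X\colon X\to E_N\) is surjective after a harmless choice. Concretely, we may pick \(X\) to contain \(W_N\); this is possible because \(K\cap W_N=0\) and \(W_N\) has finite codimension, so a complement \(X\supset W_N\) of \(K\) exists. Then \(X=W_N\oplus X'\), where \(X'\) is a finite-dimensional complement mapped isomorphically onto a subspace of \(E_N\).

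A more robust alternative avoids this choice of \(X\). We note that
\[
\operatorname{Hom}(K,X)\to\operatorname{Hom}(K,E_N),\qquad g\mapsto\pi_N\circ g-\pi_N|_K\circ 0
\]
is a bounded linear map. The relevant quantity is \(\nu\mapsto\pi_N(\nu)+\pi_N g\nu\), which is affine in \(g\). The map \(g\mapsto \pi_N\circ g\) is a bounded surjection onto \(\operatorname{Hom}(K,\pi_N(X))\) whose kernel \(\operatorname{Hom}(K,X\cap W_N)\) is complemented, since it has finite codimension. We take \(R_0\) to be this kernel times \(\operatorname{Hom}(X,D)\times\operatorname{Hom}(X,Y)\). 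The finite-dimensional factor then records \(\bigl(\tau,\pi_N g\bigr)\).

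In these coordinates \(\Psi_U(T)\) is the graph of the linear map
\[
\pi_N|_K+\pi_N g(T)\colon K\to E_N
\]
together with \(\tau(T)\). More precisely, \(\Psi_U(T)\) is the image of \(K\) under \(\nu\mapsto(\tau\nu,\pi_N\nu+\pi_Ng\nu)\). On the open set where the second component is injective, which contains \(U\) by the shrinking already imposed, this lies in a standard graph chart of \(\operatorname{Gr}_r(D\oplus E_N)\). For that chart, fix \(L_0=\Psi_U(T_0)=0\oplus\pi_N K\) and a complement \(L_0^\perp\). A subspace close to \(L_0\) is the graph of a unique \(\phi\in\operatorname{Hom}(L_0,L_0^\perp)\), and \(\phi\) depends smoothly, in fact rationally, on \((\tau,\pi_Ng)\).

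Two things remain to check. First, the chart map \(P\supset\mathcal O'\to\operatorname{Hom}(L_0,L_0^\perp)\) should be a submersion. Second, \(\Psi_U\) should factor as \(\Theta\), then the projection to \(P\), then this chart map. The second is immediate from the formula above.

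The main obstacle is the first point: choosing \(P\) so that the map to the Grassmannian chart is exactly a linear projection or a submersion rather than merely smooth. I expect to resolve it by observing that perturbing \(\tau\) alone already moves the \(D\)-component arbitrarily. Perturbing \(\pi_N g\) moves the \(E_N\)-component within \(\pi_N(X)\). Since \(\pi_N(X)+\pi_N(K)=E_N\), these perturbations together span all of \(\operatorname{Hom}(L_0,L_0^\perp)\). The differential at \(T_0\) is therefore surjective, and hence so is the differential on a smaller neighborhood. If an exact linear projection is wanted, we replace \(P\) by \(\operatorname{Hom}(L_0,L_0^\perp)\) and apply the finite-dimensional implicit function theorem to absorb the excess directions into \(R_0\). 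This yields the asserted product form, and in particular shows that \(\Psi_U\) is a submersion.
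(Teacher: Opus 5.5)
Your robust alternative gives a correct proof, but it is not the paper's route.

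The paper makes the choice you mention first and then set aside: $X=W_N\oplus X_0$, with $\pi_N|_{X_0}$ an isomorphism onto a complement $Q$ of $K_0=\pi_N(K)$ in $E_N$. For such $X$, the map $\pi_N|_X$ is not surjective; its image is $Q$. So the second factor of $P$ must be $\operatorname{Hom}(K,Q)$, not $\operatorname{Hom}(K,E_N)$; otherwise $\Theta$ does not land in an open subset of $P\times R_0$.

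With this choice your ``main obstacle'' disappears. Take the complement $D\oplus Q$ of $L_0=0\oplus K_0$. Relative to it, $\Psi_U(T)$ is literally the graph of $\bigl(\tau(T),\pi_N g(T)\bigr)\circ(\pi_N|_K)^{-1}\colon K_0\to D\oplus Q$, which is linear in the $P$-coordinates. So the map from $P$ to the Grassmannian is the graph chart itself. The factor $R_0=\operatorname{Hom}(K,W_N)\oplus\operatorname{Hom}(X,D)\oplus\operatorname{Hom}(X,Y)$ carries $(\pi_{W_N}g,b,d)$, and the inversion $c=-dg$, $a=\tau-bg$ is the same one you use.

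Your version instead keeps an arbitrary $X$. You get surjectivity of the differential at $T_0$ from $\pi_N(X)+K_0=E_N$; its kernel is $\operatorname{Hom}(K,\pi_N(X)\cap K_0)$. You then use the local submersion normal form to push these excess directions into $R_0$. This is valid, but it costs an inverse-function-theorem step, an extra shrinking of $U$, and a less explicit $\Theta$.

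The advantage of not changing $X$ is only cosmetic. Indeed, $\Psi_U(T)=\{(Th,\pi_N h)\mid h\in T^{-1}(D)\}$ does not depend on $X$ at all. The paper's choice of $X$ therefore buys an exactly linear chart for free.
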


\begin{proof}
We use the notation of \LaterProp{prop:local-schur-complement-paper}.  Choose
\(N\) as in \LaterProp{prop:local-finite-dimensionalization-paper}, and
write
\[
E_N=\mathcal H/W_N=K_0\oplus Q,
\qquad K_0=\pi_N(K).
\]
Since \(K\cap W_N=0\), we may choose the complement \(X\) of \(K\) in
\(\mathcal H\) in the form
\[
X=W_N\oplus X_0,
\]
where \(\pi_N|_{X_0}\colon X_0\to Q\) is an isomorphism.  We use this
choice of \(X\) in the Schur-complement coordinates.  A point of the
Grassmannian near the plane \(0\oplus K_0\) is uniquely written as the graph
of a pair
\[
(\alpha,\beta)\in \operatorname{Hom}(K_0,D)\oplus\operatorname{Hom}(K_0,Q).
\]
We denote this finite-dimensional vector space by \(P\).  For \(T\) close to \(T_0\),
the Schur complement data give
\[
\alpha(T)=\tau(T)\circ(\pi_N|_K)^{-1},
\qquad
\beta(T)=\pi_Q\circ\pi_N\circ g(T)\circ(\pi_N|_K)^{-1}.
\]
The remaining variables are the \(W_N\)-component of \(g(T)\), the block
\(b(T)\), and the block \(d(T)\).  We regard these variables as taking values in the Banach space
\[
R_0=
\operatorname{Hom}(K,W_N)\oplus\operatorname{Hom}(X,D)
\oplus\operatorname{Hom}(X,Y).
\]
Let \(V(d_0)\subset \operatorname{Hom}(X,Y)\) be a sufficiently small open
neighborhood of the invertible operator \(d_0=T_0|_X\) consisting of invertible
operators, and set
\[
\mathcal O=P\times\operatorname{Hom}(K,W_N)\times\operatorname{Hom}(X,D)
\times V(d_0)\subset P\times R_0.
\]
After shrinking \(U\), the image lies in \(\mathcal O\).  The map sending \(T\) to
\[
(\alpha(T),\beta(T);\, \pi_{W_N}g(T), b(T), d(T))
\]
is a smooth coordinate map into \(\mathcal O\).  Conversely, given such data,
one reconstructs \(g\), then sets \(c=-dg\) and
\(a=\alpha\circ\pi_N|_K-bg\), obtaining the corresponding block operator.  These
constructions are inverse to each other after shrinking \(U\).  Therefore \(U\) is
smoothly diffeomorphic to \(\mathcal O\), after replacing \(\mathcal O\) by a smaller open subset if necessary.  In these coordinates,
\(\Psi_U\) depends only on the \(P\)-coordinate and is exactly the usual graph chart
map to the Grassmannian.
Thus \(\Psi_U\) is locally a projection followed by a finite-dimensional chart.
\end{proof}

\begin{prop}\label{prop:finite-schubert-strata-paper}
In the finite-dimensional model of \LaterProp{prop:local-finite-dimensionalization-paper},
the stratum \(Y^{(r,k),\circ}_{J}\) is a complex submanifold of
\(\operatorname{Gr}_r(D\oplus E_N)\) of complex codimension \(N_k(J)\).  If
\(L_n\in Y^{(r,k),\circ}_{J}\) and \(L_n\to L\in Y^{(r,k),\circ}_{K}\), then
\(K\) dominates \(J\) in the sense of \LaterLem{lem:Nk-monotone-paper}; in
particular,
\[
N_k(K)\ge N_k(J),
\]
and \(N_k(K)>N_k(J)\) unless \(K=J\).
\end{prop}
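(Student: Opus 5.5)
To prove the proposition, I would work entirely inside the finite-dimensional Grassmannian \(\operatorname{Gr}_r(D\oplus E_N)\) and reduce it to classical Schubert geometry for partial incidence conditions. Put \(F=0\oplus E_N\), a subspace of dimension \(N\), and \(m=\dim D=r-k\). Consider the map \(L\mapsto L\cap F\). On the locally closed subset \(Z_s=\{L:\dim(L\cap F)=s\}\) it is a smooth map
\[
Z_s\longrightarrow\operatorname{Gr}_s(F).
\]
The first step is to show that \(Z_s\) is a complex submanifold of codimension \(s(m+s-r)=s(s-k)\). This is the standard codimension of the locus where an \(r\)-plane meets a fixed \(N\)-plane in an ambient space of dimension \(m+N\) in exactly \(s\) dimensions: the expected dimension of intersection is \(r+N-(m+N)=k\), and the excess is \(s-k\). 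I would verify this with the graph-chart description: near \(L_0\) the condition becomes the vanishing of a block of size \(s\times(s-k)\) after a Schur-complement reduction, exactly as in \LaterProp{prop:local-schur-complement-paper}.

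The second step is to show that \(Z_s\to\operatorname{Gr}_s(F)\) is a submersion. It is in fact a locally trivial fibration, since \(GL(F)\times GL(D)\), acting on \(D\oplus F\), acts on \(Z_s\) equivariantly and transitively on the base. Now \(Y^{(r,k),\circ}_J\) is the preimage under this map of the classical Schubert cell \(C_J\subset\operatorname{Gr}_s(F)\) of subspaces with exact type \(J\) relative to the flag \(W_j/W_N\). Its codimension in \(\operatorname{Gr}_s(F)\) is \(\sum_q (j_q-q)\), which is the usual count: the cell \(C_J\) is an orbit of the Borel subgroup stabilizing the flag, and its dimension is \(\sum_q(N-j_q-(s-q))\). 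Since the preimage of a submanifold under a submersion is a submanifold of the same codimension, \(Y^\circ_J\) has total codimension \(s(s-k)+\sum(j_q-q)=N_k(J)\). The one point to check is that the Borel action lifts to \(D\oplus F\) fixing \(D\), so the fibration argument is compatible with the flag.

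The third step is the closure statement. Suppose \(L_n\in Y^\circ_J\) converges to \(L\in Y^\circ_K\), with \(|J|=s\) and \(|K|=t\). Upper semicontinuity of \(\dim(L\cap G)\) for each fixed subspace \(G\) gives
\[
\dim\bigl(L\cap(0\oplus W_m/W_N)\bigr)\ge\#\{q\mid j_q\ge m\}\qquad\text{for all } m.
\]
The case \(m=0\) gives \(t\ge s\ge k\). For general \(m\) the inequality says \(\#\{q\mid k_q\ge m\}\ge\#\{q\mid j_q\ge m\}\). Applying it with \(m=j_p\) shows that \(K\) has at least \(s-p\) entries \(\ge j_p\), and hence \(k_{t-s+p}\ge j_p\). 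This is exactly the hypothesis of \LaterLem{lem:Nk-monotone-paper}, which then yields both inequalities.

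I expect the main obstacle to be the codimension bookkeeping in the first two steps. Specifically, I need to check that the submersion \(Z_s\to\operatorname{Gr}_s(F)\) is genuinely smooth with surjective differential at every point, and not just on an open dense subset. I would settle this first by writing \(Z_s\) as a homogeneous fibration, namely an orbit of \(GL(D)\times GL(F)\) fibered over \(\operatorname{Gr}_s(F)\), rather than by computing differentials directly.
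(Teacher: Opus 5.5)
Your proposal is correct and follows the same route as the paper. You stratify by \(s=\dim(L\cap(0\oplus E_N))\), which costs codimension \(s(s-k)\), then add the Schubert-cell codimension \(\sum_q(j_q-q)\), and you get the closure relation from upper semicontinuity together with Lemma~\ref{lem:Nk-monotone-paper}; your submersion \(Z_s\to\operatorname{Gr}_s(0\oplus E_N)\) and your check \(t\ge s\ge k\) just make explicit what the paper leaves implicit.

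One small slip: \(Z_s\) is not a single \(GL(D)\times GL(E_N)\)-orbit, since that block-diagonal group also preserves \(\dim(L\cap D)\). The group acting transitively on \(Z_s\) is the stabilizer of \(0\oplus E_N\) in \(GL(D\oplus E_N)\), but your argument only uses equivariance and transitivity on the base, and both of those do hold.
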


\begin{proof}
Let \(s=|J|\).  First impose the condition
\[
\dim(L\cap(0\oplus E_N))=s.
\]
Locally, a subspace \(L\) with this property is described by the data
\[
V=L\cap(0\oplus E_N)\in\operatorname{Gr}_s(E_N),
\]
a subspace \(B\subset D\) of dimension \(r-s\), and a lift of \(B\) to
\((D\oplus E_N)/V\).  Thus this condition has complex codimension
\(s(s-k)\) in \(\operatorname{Gr}_r(D\oplus E_N)\), because \(\dim D=r-k\).
The further condition that \(V\) have exact Schubert type \(J\) inside
\(E_N\) has complex codimension
\[
\sum_{q=0}^{s-1}(j_q-q).
\]
Hence the total complex codimension is
\[
s(s-k)+\sum_{q=0}^{s-1}(j_q-q)=N_k(J).
\]
This also proves the submanifold assertion.

For the closure relation, use upper semicontinuity of intersection dimension:
for every fixed subspace \(F\subset E_N\), the function
\[
L\longmapsto \dim(L\cap(0\oplus F))
\]
is upper semicontinuous on the Grassmannian.  Applying this to
\(F=W_{j_p}/W_N\) gives
\[
\dim\bigl(L\cap(0\oplus W_{j_p}/W_N)\bigr)
\ge s-p.
\]
If \(K=(k_0<\cdots<k_{t-1})\) is the exact type of \(L\), this means
\[
\#\{a\mid k_a\ge j_p\}\ge s-p
\qquad(p=0,\ldots,s-1),
\]
which is equivalent to
\[
k_{t-s+p}\ge j_p.
\]
\LaterLem{lem:Nk-monotone-paper} gives the asserted monotonicity of
\(N_k\).
\end{proof}

\begin{prop}\label{prop:exact-strata-banach-paper}
For every exact type \(J\) of length at least \(k\), the subset
\(\Sigma^{(k),\circ}_{J}\subset\Fred_k\) is a complex Banach submanifold of
complex codimension \(N_k(J)\).
\end{prop}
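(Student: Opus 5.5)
The plan is to prove the statement locally and then use the finite-dimensionalization already built. Being a complex Banach submanifold of a given codimension is a local property, so it suffices to work near an arbitrary point \(T_1\in\Sigma^{(k),\circ}_{J}\). Let \(r=\dim\Ker T_1\), so \(r=|J|\). Apply \LaterProp{prop:local-finite-dimensionalization-paper} with \(T_0=T_1\) and \(J_0=J\). This gives a neighborhood \(U\), an integer \(N\), and a map \(\Psi_U\colon U\to\operatorname{Gr}_r(D\oplus E_N)\) such that
\[
U\cap\Sigma^{(k),\circ}_{J}=\Psi_U^{-1}\bigl(Y^{(r,k),\circ}_{J}\bigr).
\]

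Next, shrink \(U\) as in \LaterProp{prop:local-product-psi-paper}. We then obtain the diffeomorphism \(\Theta\colon U\cong\mathcal O\subset P\times R_0\), under which \(\Psi_U\) becomes the projection to \(P\) followed by a graph chart of the Grassmannian. Consequently
\[
\Theta\bigl(U\cap\Sigma^{(k),\circ}_{J}\bigr)=\mathcal O\cap\bigl(Z\times R_0\bigr),
\]
where \(Z\subset P\) is the preimage of \(Y^{(r,k),\circ}_{J}\) under the graph chart. By \LaterProp{prop:finite-schubert-strata-paper}, \(Z\) is a complex submanifold of \(P\) of complex codimension \(N_k(J)\). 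Take a holomorphic chart of \(P\) that straightens \(Z\) to a linear subspace of codimension \(N_k(J)\), and take its product with the identity on \(R_0\). This produces a split submanifold chart for \(\Sigma^{(k),\circ}_{J}\) near \(T_1\), with finite-dimensional normal part of complex dimension \(N_k(J)\). Since \(T_1\) was arbitrary, the stratum is a split Banach submanifold of codimension \(N_k(J)\).

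Two points need some care. First, the coordinate maps \(\Theta\) are built from the Schur complement data \(\tau\) and \(g\), and from the blocks \(b\) and \(d\). All of these are obtained from \(T\) by complex-linear operations and by inversion of \(d(T)\), so they are holomorphic in \(T\). The graph charts of the Grassmannian are also holomorphic. Hence the charts above are complex charts and the submanifold is complex. Second, the point \(T_1\) should go to a point of \(Y^{(r,k),\circ}_{J}\) itself, not merely of its closure. This holds because the exact type is computed through \(\Psi_U\) by the identity \(\dim(\Ker T\cap W_j)=\dim\bigl(\Psi_U(T)\cap(0\oplus W_j/W_N)\bigr)\) for \(j\le N\), together with the bound \(j_{s-1}<N\).

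The main obstacle is that \LaterProp{prop:finite-schubert-strata-paper} only sketches the codimension count for the finite stratum. I would make it rigorous through an explicit fibration. The locus of subspaces \(L\) with \(\dim\bigl(L\cap(0\oplus E_N)\bigr)=s\) fibres over \(\operatorname{Gr}_s(E_N)\times\operatorname{Gr}_{r-s}(D)\). Its fibres are affine spaces \(\operatorname{Hom}\bigl(B,E_N/V\bigr)\), where \(V=L\cap(0\oplus E_N)\) and \(B\) is the image of \(L\) in \(D\). Subtracting the dimension of this locus from \(\dim\operatorname{Gr}_r(D\oplus E_N)\) gives codimension \(s(s-k)\), using \(\dim D=r-k\). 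The exact type \(J\) condition then cuts out, in the base factor \(\operatorname{Gr}_s(E_N)\), a Schubert cell of codimension \(\sum_q(j_q-q)\), which is a smooth locally closed subvariety. Its preimage under the submersion is therefore a complex submanifold of total codimension \(N_k(J)\).
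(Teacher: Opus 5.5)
Your proposal is correct and follows essentially the same route as the paper. In both arguments you localize via \(\Psi_U\), use Proposition~\ref{prop:local-product-psi-paper} to write \(\Psi_U\) as a projection followed by a Grassmannian graph chart, and pull back the finite-dimensional stratum \(Y^{(r,k),\circ}_{J}\) of complex codimension \(N_k(J)\). Your extra points fill in details the paper leaves implicit: the holomorphy of \(\Theta\), and the fibration over \(\operatorname{Gr}_s(E_N)\times\operatorname{Gr}_{r-s}(D)\) for the codimension count. Note that in the chart centered at your point only the case \(s=r\) arises, where that locus is simply \(\operatorname{Gr}_r(0\oplus E_N)\).
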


\begin{proof}
Near any point of \(\Sigma^{(k),\circ}_{J}\), \LaterProp{prop:local-finite-dimensionalization-paper}
identifies the stratum with the inverse image of
\(Y^{(r,k),\circ}_{J}\) under the local map \(\Psi_U\).  By
\LaterProp{prop:local-product-psi-paper}, \(\Psi_U\) is, in local
coordinates, a projection to a finite-dimensional Grassmannian chart followed by
that chart.  Hence it is transverse to every finite-dimensional Schubert stratum
appearing in the chart.  Therefore the inverse image of the finite-dimensional
complex submanifold \(Y^{(r,k),\circ}_{J}\) is a complex Banach submanifold of
the same finite codimension \(N_k(J)\).
\end{proof}

\begin{dfn}\label{dfn:standard-filtration-paper}
The \emph{standard filtration} of \(\Sigma^{(k)}_\lambda\) is
\[
F_0=\Sigma^{(k)}_\lambda,
\]
and, for \(m\ge1\),
\[
F_{2m-1}=F_{2m}
=
\{T\in\Sigma^{(k)}_\lambda\mid N_k(J(T))\ge |\lambda|+m\},
\]
where \(J(T)\) is the exact type of \(T\).
\end{dfn}

\begin{prop}\label{prop:standard-filtration-paper}
The standard filtration consists of closed subsets.  Its strata satisfy
\[
S_{2\ell+1}=\varnothing,
\qquad
S_{2\ell}=\bigsqcup_{\substack{J\succeq_k\lambda\\ N_k(J)=|\lambda|+\ell}}
\Sigma^{(k),\circ}_{J}.
\]
Each \(S_{2\ell}\) is a complex Banach submanifold of complex codimension
\(|\lambda|+\ell\).  This is a filtration by quasi-manifolds.
\end{prop}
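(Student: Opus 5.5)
The plan is to reduce everything to the exact-type decomposition of \LaterLem{lem:schubert-combinatorics-paper}, the codimension count of \LaterProp{prop:exact-strata-banach-paper}, and the closure relation of \LaterProp{prop:finite-schubert-strata-paper}, transported to \(\Fred_k\) by the local finite-dimensionalization \(\Psi_U\).

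We first identify the strata. By \LaterLem{lem:schubert-combinatorics-paper}, every \(T\in\Sigma^{(k)}_\lambda\) has a unique exact type \(J(T)\succeq_k\lambda\). By \LaterLem{lem:Nk-minimal-paper}, \(N_k(J(T))\ge|\lambda|\). Hence \(F_{2m-1}=F_{2m}\) consists of the \(T\) with \(N_k(J(T))\ge|\lambda|+m\). It follows directly from the definition that \(S_{2\ell+1}=F_{2\ell+1}\setminus F_{2\ell+2}=\varnothing\), since \(F_{2\ell+1}=F_{2\ell+2}\) for \(\ell\ge0\). Likewise \(S_{2\ell}=F_{2\ell}\setminus F_{2\ell+1}\) is the set of \(T\) with \(N_k(J(T))=|\lambda|+\ell\). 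This is also true for \(\ell=0\), where \(F_1\) is defined by \(N_k\ge|\lambda|+1\). This gives the displayed disjoint union, and the condition \(F_1=F_2\) holds by definition.

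Next comes closedness. First, \(\Sigma^{(k)}_\lambda\) is closed in \(\Fred_k\). Locally near any \(T_0\), \LaterProp{prop:local-finite-dimensionalization-paper}(1) writes it as \(\Psi_U^{-1}(Y^{(r,k)}_\lambda)\). The locus \(Y^{(r,k)}_\lambda\) is closed by upper semicontinuity of \(L\mapsto\dim(L\cap(0\oplus F))\), and \(\Psi_U\) is continuous. Second, \(F_{2m}\) is closed: suppose \(T_n\in F_{2m}\) and \(T_n\to T\in\Sigma^{(k)}_\lambda\). Work in a chart \(U\) around \(T\). Passing to a subsequence, all \(T_n\) have a fixed exact type \(J\); this is allowed because \LaterProp{prop:local-finite-dimensionalization-paper}(2) bounds the length and entries of the types appearing in \(U\) by \(r\) and \(N\), so only finitely many types occur. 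By (2), \(\Psi_U(T_n)\in Y^{(r,k),\circ}_J\) converges to \(\Psi_U(T)\in Y^{(r,k),\circ}_{J(T)}\). \LaterProp{prop:finite-schubert-strata-paper} then gives \(N_k(J(T))\ge N_k(J)\ge|\lambda|+m\), so \(T\in F_{2m}\).

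The main obstacle is showing that each \(S_{2\ell}\) is a Banach submanifold of codimension \(|\lambda|+\ell\); the difficulty is that it is a union of several exact strata. Each piece \(\Sigma^{(k),\circ}_J\) is a complex Banach submanifold of complex codimension \(N_k(J)=|\lambda|+\ell\) by \LaterProp{prop:exact-strata-banach-paper}. It therefore suffices to show that the pieces are mutually open in \(S_{2\ell}\), that is, no piece accumulates on another piece of the same \(N_k\). This is exactly the strictness clause of \LaterProp{prop:finite-schubert-strata-paper}. If \(T_n\in\Sigma^{(k),\circ}_J\) converges to \(T\in\Sigma^{(k),\circ}_K\) with \(K\ne J\), then transporting by \(\Psi_U\) gives \(N_k(K)>N_k(J)\). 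So near each point of \(\Sigma^{(k),\circ}_K\subset S_{2\ell}\), only finitely many types occur, and those distinct from \(K\) cannot accumulate at \(T\). Hence a neighborhood of the point meets \(S_{2\ell}\) only in \(\Sigma^{(k),\circ}_K\), and \(S_{2\ell}\) is locally a single submanifold.

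Finally, real codimension is twice complex codimension. To match \LaterDfn{dfn:cooriented-quasi-manifold-paper} with \(c=2|\lambda|\), we need \(S_m\) to have real codimension \(c+m\). This holds because \(S_{2\ell}\) has real codimension \(2|\lambda|+2\ell\) and the odd strata are empty. This gives the quasi-manifold structure.
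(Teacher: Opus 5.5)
Your proposal is correct and follows essentially the same route as the paper. It reduces to the local model \(\Psi_U\), where only finitely many exact types occur, and uses the strictness clause of \LaterProp{prop:finite-schubert-strata-paper} together with \LaterLem{lem:Nk-monotone-paper} to see that a fixed \(N_k\)-level is locally a single exact stratum. The only minor difference is in closedness: you prove \(F_{2m}\) closed by a sequential argument through that closure relation, whereas the paper describes \(\{N_k(J(T))\ge|\lambda|+m\}\) locally as the preimage of a finite union of closed incidence loci. Both rest on the same upper semicontinuity and monotonicity of \(N_k\).
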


\begin{proof}
In a neighborhood supplied by
\LaterProp{prop:local-finite-dimensionalization-paper}, only finitely
many exact types occur, namely those with \(|J|\le r\) and \(j_{|J|-1}<N\).
Thus, in this local model, the subset on which \(N_k(J(T))\ge |\lambda|+m\)
is the product of the inverse image of a finite union of closed subsets of the
form
\[
\{L\in\operatorname{Gr}_r(D\oplus E_N)
\mid
\dim(L\cap(0\oplus W_a/W_N))\ge b\}
\]
with the \(R_0\)-direction in the local splitting.  Hence the union of strata
with \(N_k(J)\ge |\lambda|+m\) is closed.

The formula for the strata in the statement follows from the definition of the
filtration.  If two different exact strata have the same \(N_k\)-value, neither
can occur in the closure of the other, because \LaterProp{prop:finite-schubert-strata-paper}
and \LaterLem{lem:Nk-monotone-paper} would then force strict increase unless
the types were equal.  Therefore, locally, a fixed \(N_k\)-level is a finite
disjoint union of locally closed complex submanifolds of the same codimension,
and hence is itself a complex Banach submanifold.
\end{proof}

\begin{thm}\label{thm:schur-koschorke-quasi-paper}
The subset
\[
\Sigma^{(k)}_\lambda\subset\Fred_k
\]
is a cooriented quasi-manifold of real codimension
\(2|\lambda|\).  Its top stratum is
\[
\Sigma^{(k),\circ}_\lambda
=
\Sigma^{(k),\circ}_{J^{(k)}_\lambda}
=
\left\{
T\in\Fred_k
\ \middle|\
\begin{array}{l}
\dim\Ker T=k,\\
\dim\Coker T=0,\\
\Ker T\text{ has exact type }J^{(k)}_\lambda
\end{array}
\right\}.
\]
\end{thm}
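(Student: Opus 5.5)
The plan is to assemble the statement from the results already established in this subsection. First, $\Sigma^{(k)}_\lambda$ is closed in $\Fred_k$. By \LaterLem{lem:schubert-combinatorics-paper} it is the union of the exact strata $\Sigma^{(k),\circ}_J$ with $J\succeq_k\lambda$. In each local chart $U$ of \LaterProp{prop:local-finite-dimensionalization-paper}, part (1) identifies $U\cap\Sigma^{(k)}_\lambda$ with $\Psi_U^{-1}(Y^{(r,k)}_\lambda)$. The set $Y^{(r,k)}_\lambda$ is cut out by finitely many conditions of the form $\dim(L\cap F)\ge i$, and these are closed by upper semicontinuity. So $\Sigma^{(k)}_\lambda$ is locally closed near each of its points. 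At points of $\Fred_k$ outside $\Sigma^{(k)}_\lambda$, the same chart description shows that the complement is locally open. Hence $\Sigma^{(k)}_\lambda$ is closed.

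Next, take the standard filtration of \LaterDfn{dfn:standard-filtration-paper}. We index it by real codimension, so that the complex codimension $|\lambda|+\ell$ corresponds to real codimension $2|\lambda|+2\ell$. With this indexing, \LaterProp{prop:standard-filtration-paper} shows three things:
\begin{enumerate}[label=\textup{(\alph*)}]
\item the $F_m$ are closed;
\item $F_1=F_2$ and the odd strata are empty;
\item each $S_{2\ell}$ is a Banach submanifold of real codimension $2|\lambda|+2\ell$.
\end{enumerate}
These are exactly the two conditions of \LaterDfn{dfn:cooriented-quasi-manifold-paper} with $c=2|\lambda|$. I would check the indexing convention explicitly so that $S_m$ has codimension $c+m$.

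Then identify the top stratum. By definition, $S_0$ is the union of the $\Sigma^{(k),\circ}_J$ with $J\succeq_k\lambda$ and $N_k(J)=|\lambda|$. By \LaterLem{lem:Nk-minimal-paper}, this forces $r=k$ and $J=J^{(k)}_\lambda$. The condition $r=\dim\Ker T=k$, together with index $k$, gives $\dim\Coker T=0$. This yields the displayed description of the top stratum.

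The last step is the coorientation. \LaterProp{prop:exact-strata-banach-paper} shows that the top stratum is a complex Banach submanifold. I would argue that its normal bundle carries a canonical complex structure: locally the stratum is $\Psi_U^{-1}$ of a complex submanifold of a complex Grassmannian, and $\Psi_U$ is a submersion by \LaterProp{prop:local-product-psi-paper}. The normal bundle is therefore locally the pullback of the holomorphic normal bundle of $Y^{(k,k),\circ}_{J^{(k)}_\lambda}$. These complex structures must agree on overlaps of charts. Since both come from the complex Banach structure of $\Fred_k$, the normal bundle $T\Fred_k/T\Sigma^\circ$ is a quotient of complex spaces and is intrinsically complex. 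A complex vector bundle carries a canonical orientation, and this gives the coorientation.

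I expect the main obstacle to be the closedness and the indexing bookkeeping, since the rest is citation. The most delicate point is verifying that the chart-independent complex structure on the normal bundle is well defined, that is, that the stratum is a complex submanifold in the intrinsic complex structure of $\Fred_k$ rather than only in chart coordinates. If this becomes problematic, I would instead use the canonical identification of the normal fiber with $\operatorname{Hom}(\Ker T,\Coker)$-type Schubert data and invoke the connectedness of the structure group.
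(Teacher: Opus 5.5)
Your proposal is correct and follows essentially the same route as the paper: the quasi-manifold structure comes from Proposition~\ref{prop:standard-filtration-paper} with $c=2|\lambda|$, the top stratum is identified via Lemma~\ref{lem:Nk-minimal-paper}, and the coorientation is the canonical orientation of the complex normal bundle of a complex Banach submanifold of $\Fred_k$. Your separate closedness check and your explicit indexing check ($S_{2\ell}$ has real codimension $c+2\ell$) are harmless extras. Your worry about the complex structure is settled because $\Psi_U$ and $\Theta$ are built from holomorphic operations (composition, and inversion of $d(T)$), so the fallback is unnecessary; it would in any case be vacuous, since $\Coker T=0$ on the top stratum.
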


\begin{proof}
By \LaterProp{prop:standard-filtration-paper}, the standard filtration is a filtration by quasi-manifolds.  \LaterLem{lem:Nk-minimal-paper} shows that among all
exact types satisfying the incidence condition, the unique one with minimal
\(N_k\)-value is \(J^{(k)}_\lambda\), and the value is \(|\lambda|\).  Hence the
top stratum is the displayed stratum and has complex codimension
\(|\lambda|\), i.e. real codimension \(2|\lambda|\).

The top stratum is a complex Banach submanifold of the complex Banach manifold
\(\Fred_k\).  Its finite-rank normal bundle is therefore a complex vector bundle,
and hence has a canonical real orientation.  This is the coorientation.
\end{proof}

\begin{dfn}\label{dfn:sigma-k-lambda-class-paper}
The cohomology class associated with \(\Sigma^{(k)}_\lambda\) by
\LaterThm{thm:cibotaru-extension-detailed} is denoted by
\[
[\Sigma^{(k)}_\lambda]
\in H^{2|\lambda|}(\Fred_k).
\]
\end{dfn}

Let
\[
c_i\in H^{2i}(\Fred_k)
\qquad(i\ge1)
\]
be the universal Chern classes normalized by
\[
f^*c_i=c_i(-\operatorname{Ind}(f))
\]
for every finite-dimensional family \(f\) of Fredholm operators.  Here
\(\operatorname{Ind}(f)\in K^0(M)\) denotes the family index of \(f\); when the
kernel and cokernel form vector bundles, it is represented by
\([\Ker f]-[\Coker f]\) \cite{AtiyahHilbert}.  We set
\[
c_0=1,
\qquad
c_i=0\quad(i<0).
\]

\begin{lem}\label{lem:finite-test-detection-paper}
Let \(z\in H^*(\Fred_k)\).  If \(f^*z=0\) for every smooth map
\(f\colon M\to\Fred_k\) from a closed finite-dimensional smooth manifold, then
\(z=0\).
\end{lem}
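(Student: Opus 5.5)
The plan is to reduce the statement to the fact that \(\Fred_k\) has the homotopy type of a CW complex whose integral cohomology is detected on finite skeleta, and that finite skeleta can be modeled, up to homotopy, by closed smooth manifolds mapping into \(\Fred_k\).

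\emph{Step 1: the cohomology ring.} By Atiyah--Jänich, \(\Fred_k\simeq \Z\times BU\) restricted to one component, so \(\Fred_k\simeq BU\). Hence \(H^*(\Fred_k)\cong\Z[c_1,c_2,\ldots]\) is torsion free and of finite type, with the universal Chern classes \(c_i\) as polynomial generators. It therefore suffices to show that the only \(z\) killed by all finite-dimensional test maps is zero. Since the ring is torsion free and of finite type, \(z\) is in fact determined by its rational image, or alternatively by its restriction to a finite Grassmannian.

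\emph{Step 2: test manifolds.} For each \(n\) and each \(N\ge n+k\), consider the Grassmannian \(M=\operatorname{Gr}_n(\C^{N})\), which is a closed smooth manifold. Construct a smooth family \(f\colon M\to\Fred_k\) whose family index is \(k-[\text{tautological quotient or subbundle}]\) up to a trivial summand, arranged so that \(f^*c_i=c_i(Q)\) or \(c_i(E)\) with \(E\) the tautological bundle. The standard construction takes a fixed Fredholm operator of index \(k\) on \(\mathcal H\) and composes it with the orthogonal projection onto \(E_x^\perp\) inside a fixed \(\C^N\subset\mathcal H\), suitably stabilized. This family is smooth in the operator norm because \(x\mapsto\) the projection onto \(E_x\) is smooth. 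Then \(f^*\colon\Z[c_1,c_2,\ldots]\to H^*(\operatorname{Gr}_n(\C^N))\) sends the \(c_i\) to the Chern classes of a tautological bundle (up to the sign convention \(c(-\operatorname{Ind})\)). It is classical that this map is injective in degrees \(\le 2\min(n,N-n)\). Given \(z\) of degree \(d\), choose \(n,N-n\ge d\); then \(f^*z=0\) forces \(z=0\). The sign normalization \(f^*c_i=c_i(-\operatorname{Ind}f)\) only exchanges \(E\) with \(Q\), which does not affect injectivity.

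\emph{Step 3: the main obstacle.} The main obstacle is justifying that the polynomial description of \(H^*(\Fred_k)\) is compatible with the specific classes \(c_i\) defined via the family index, i.e.\ that under \(\Fred_k\simeq BU\) they are the universal Chern classes. I would handle this by naturality. Let \(\operatorname{Ind}\colon[X,\Fred_k]\to K(X)\) be the Atiyah–Jänich isomorphism. Then \(c_i\) is the pullback of the universal Chern class along a homotopy equivalence \(\Fred_k\to BU\). For Step 2 we only need that \(f^*c_i=c_i(-\operatorname{Ind}f)\), and this is the normalization already stated in the paper.

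If one prefers to avoid the ring structure, an alternative is the following. Take a CW approximation \(\Fred_k\simeq BU=\varinjlim\operatorname{Gr}_n(\C^N)\). Because \(H^*(BU)\to\varprojlim H^*(\operatorname{Gr}_n(\C^N))\) is an isomorphism (the \(\varprojlim^1\) term vanishes by Mittag-Leffler, since the maps are surjective), a class vanishing on every finite Grassmannian vanishes. The Grassmannians are then realized by smooth maps as in Step 2, using that the composite \(\operatorname{Gr}_n(\C^N)\to\Fred_k\to BU\) is homotopic to the standard inclusion.
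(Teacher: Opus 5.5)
\textbf{Verdict: the plan is right and matches the paper, but the explicit family in Step~2 fails.}

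Your overall plan is the same as the paper's. You identify \(\Fred_k\simeq BU\), detect classes on closed manifolds mapping to \(BU\), and realize these test maps smoothly in \(\Fred_k\). Using Grassmannians, with the injectivity of \(H^*(BU)\to H^*(\operatorname{Gr}_n(\C^N))\) in degrees \(\le 2\min(n,N-n)\), in place of the paper's splitting principle on products of projective spaces is a harmless variation.

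\textbf{Why the Step~2 family does not work.} The orthogonal projection onto \(E_x^\perp\) inside \(\C^N\) is Fredholm only after you extend it by the identity on \((\C^N)^\perp\); call the result \(P_x\). Then \(\Ker P_x=E_x\) and \(\Coker P_x\cong E_x\), so \(\operatorname{Ind}(P_x)=[E]-[E]=0\). For a fixed \(F\in\Fred_k\), the composite \(FP_x\) (or \(P_xF\)) therefore has index bundle equal to the trivial class \(k\). This has three consequences:
\begin{itemize}
\item \(f^*c_i=0\) for all \(i\ge1\), so the family detects nothing.
\item Your claim in the last paragraph, that \(\operatorname{Gr}_n(\C^N)\to\Fred_k\to BU\) is homotopic to the standard inclusion, is false for this family.
\item ``Suitably stabilized'' cannot rescue it, because stabilization does not change the index bundle.
\end{itemize}

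\textbf{How to repair it.} There are two options.
\begin{itemize}
\item Use a family that genuinely twists. On \(\ell^2(\mathbb N_0)\otimes\C^N\cong\mathcal H\), put \(T_x=1\otimes\pi_{E_x^\perp}+S\otimes\pi_{E_x}\), where \(S\) is the backward shift. Componentwise, \((T_xv)_j=\pi_{E_x^\perp}v_j+\pi_{E_x}v_{j+1}\). One checks that \(\Ker T_x=e_0\otimes E_x\) and that \(T_x\) is surjective, so \(\operatorname{Ind}(T_x)=[E]\). The family is smooth in \(x\). Composing with a fixed shift puts it in \(\Fred_k\), and then \(f^*c_i=c_i(Q)\).
\item More simply, do what the paper does. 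Compose the inclusion \(\operatorname{Gr}_n(\C^N)\hookrightarrow BU\) with a homotopy inverse \(BU\to\Fred_k\). Then replace this continuous map by a uniformly close smooth map. Since \(\Fred_k\) is open in \(\mathcal B(\mathcal H)\), the straight-line homotopy between them stays in \(\Fred_k\).
\end{itemize}

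With the second route your Step~3 becomes unnecessary. If \(h\colon\Fred_k\to BU\) is a homotopy equivalence and \(z=h^*w\), then vanishing of all smooth test pullbacks gives vanishing of \(w\) on every Grassmannian. Hence \(w=0\), and so \(z=0\), with no need to identify the classes \(c_i\) at all.
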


\begin{proof}
We choose a homotopy equivalence \(h\colon\Fred_k\to BU\) and a homotopy inverse.
Under this equivalence, a nonzero class on \(\Fred_k\) corresponds to a nonzero
polynomial in the universal Chern classes on \(BU\).  By the splitting principle,
such a polynomial is detected after pullback to a finite product of complex
projective spaces \(\prod_i\C P^{N_i}\).  This product is a closed
finite-dimensional smooth manifold.  After composing with a homotopy inverse
\(BU\to\Fred_k\), we obtain a continuous map from \(\prod_i\C P^{N_i}\) to
\(\Fred_k\) detecting the original class.  Since \(\Fred_k\) is an open subset of
the Banach space \(\mathcal B(\mathcal H)\), the standard smooth approximation
theorem for maps into Banach manifolds allows us to replace this map, within its
homotopy class, by a smooth map.  Therefore a class whose pullback vanishes for
all smooth maps from closed finite-dimensional smooth manifolds must be zero.
\end{proof}

\begin{thm}[Giambelli formula for generalized Koschorke classes]\label{thm:schur-koschorke-giambelli-paper}
For every partition \(\lambda=(\lambda_1\ge\cdots\ge\lambda_k\ge0)\),
\[
[\Sigma^{(k)}_\lambda]
=
\det(c_{\lambda_a+b-a})_{1\le a,b\le k}
\in H^{2|\lambda|}(\Fred_k).
\]
\end{thm}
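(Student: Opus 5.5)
By \LaterLem{lem:finite-test-detection-paper} it suffices to prove that
\[
f^*[\Sigma^{(k)}_\lambda]=f^*\det(c_{\lambda_a+b-a})_{1\le a,b\le k}
\]
for every smooth map \(f\colon M\to\Fred_k\) from a closed finite-dimensional manifold.

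\textbf{Reduction to a kernel bundle.} First I would perturb \(f\) within its homotopy class so that it is transverse to every exact stratum \(\Sigma^{(k),\circ}_J\). This uses the relative Thom--Smale transversality already invoked in the proof of \LaterProp{prop:odd-pair-connected}, together with the fact that the strata are Banach submanifolds (\LaterProp{prop:exact-strata-banach-paper}). Since \(M\) is finite-dimensional, there is a further reduction available. We can stabilize \(f\) by a finite-rank map: choose a finite-dimensional \(D\subset\mathcal H\) with \(\operatorname{Im}f_x+D=\mathcal H\) for all \(x\in M\), and replace \(f_x\) by \(f_x\oplus\) (a generic map onto \(D\)). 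The cleaner route, though, is to do the homotopy inside \(\Fred_k\) so that \(f\) becomes surjective everywhere. This is possible up to homotopy because the surjective locus \(\{\dim\Coker=0\}\) has complement of codimension \(\ge 2(k+1)\). That bound holds since \(\dim\Ker-\dim\Coker=k\) and \(A_{k+1,1}\) has codimension \(2(k+1)\). Such a reduction is enough only if \(\dim M<2(k+1)\), so instead I would enlarge \(k\). Concretely, I would use the shift equivalence \(\Fred_k\to\Fred_{k+m}\), \(T\mapsto T\circ S^m\) with \(S\) the unilateral shift adapted to the flag, and check that it pulls \(\Sigma^{(k+m)}_\lambda\) back to \(\Sigma^{(k)}_\lambda\) with matching coorientations, using \LaterThm{thm:cibotaru-open-diffeomorphism-detailed}. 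It also pulls the Chern classes back to the Chern classes, since the index shifts only by a trivial bundle. After this, \(f\) may be assumed surjective, so \(E=\Ker f\) is a rank-\(k\) subbundle of the trivial bundle \(M\times\mathcal H\), and \(\operatorname{Ind}(f)=[E]\). Hence \(f^*c_i=c_i(-E)\).

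\textbf{Finite-dimensional Schubert calculus.} Next I would choose \(N\) large so that \(E_x\cap W_N=0\) for all \(x\). This is possible by \LaterLem{lem:finite-subspace-avoids-WN-paper} together with compactness, after a further generic perturbation. Then \(E\) maps injectively into the trivial bundle \(E_N=\mathcal H/W_N\), giving a classifying map \(g\colon M\to\operatorname{Gr}_k(E_N)\). By the description in \LaterProp{prop:local-finite-dimensionalization-paper}, \(f^{-1}(\Sigma^{(k)}_\lambda)=g^{-1}(Y_\lambda)\), where \(Y_\lambda\) is the classical Schubert variety
\[
\{L\mid\dim(L\cap W_{\lambda_i+k-i}/W_N)\ge i\}.
\]
The transversality and the quasi-manifold naturality of \LaterThm{thm:cibotaru-naturality-detailed} then give \(f^*[\Sigma^{(k)}_\lambda]=g^*[Y_\lambda]\). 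Both classes extend the Thom class of the same complex-cooriented top stratum, so \LaterLem{lem:comparison-two-quasi-paper} identifies them. Finally, by the classical Giambelli/Kempf--Laksov formula (Fulton, Chapter~14), \([Y_\lambda]=\det(c_{\lambda_a+b-a}(Q'))\), where \(Q'\) is a bundle with \(c(Q')=c(-S)\) and \(S\) is the tautological subbundle. Indeed, the incidence conditions on \(L\) with a fixed flag are governed by \(c(\underline{E_N}-S)=c(-S)\). Pulling back by \(g\) yields \(\det(c_{\lambda_a+b-a}(-E))=f^*\det(c_{\lambda_a+b-a})\).

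\textbf{Main obstacle.} The delicate part is the index and sign bookkeeping. This means matching the convention \(f^*c_i=c_i(-\operatorname{Ind} f)\) with the Schubert-class convention (quotient versus dual subbundle, that is \(\lambda\) versus its conjugate). It also includes verifying that the shift equivalence preserves both the loci and the normalization of the Chern classes. I would settle the convention first by checking \(k=1\), \(\lambda=(1)\) on \(\C P^1\). There \(\Sigma\) is the locus where \(\Ker T\subset W_1\), and its class should be \(c_1(-E)=-c_1(E)\), which is positive for \(E\) the tautological line bundle. The complex coorientation of \LaterThm{thm:schur-koschorke-quasi-paper} should then match.
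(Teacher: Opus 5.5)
Your route differs from the paper's in how the cokernel is handled, and one step in it needs a different justification. The paper never leaves $\Fred_k$ and never makes $f$ surjective. Near $T_0$ with $\dim\Ker T_0=r$, it absorbs the cokernel into a finite-dimensional complement $D$ of $\operatorname{Im}T_0$ and uses the rank-$r$ bundle $E_f=\{(x,h)\mid f(x)h\in D\}$, with $\operatorname{Ind}(f)=[E_f]-[M\times D]$. It then maps to $\operatorname{Gr}_r(D\oplus E_N)$ by $\Psi_U$ and applies Giambelli there to $\lambda$ padded by $r-k$ zeros; the padded rows of the $r\times r$ determinant are unitriangular. This is essentially the stabilization you mentioned and then abandoned.

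You instead raise the index by a shift so that a generic map becomes surjective. This gives an honest kernel bundle and a single global classifying map to a Grassmannian, so passing to a global identity on $M$ is automatic. (The paper phrases its argument through local charts $\Psi_U$; a single $D$ and $N$ can be chosen by compactness.) The price is a geometric compatibility $\sigma^*[\Sigma^{(k+m)}_\lambda]=[\Sigma^{(k)}_\lambda]$ for $\sigma(T)=TL_m$. The paper never needs this: it deduces independence of $k$ afterwards from the determinant formula.

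As you wrote it, that compatibility step does not go through. The map $\sigma$ is a closed linear embedding, not an open inclusion or a diffeomorphism: its image is the set of operators vanishing on $\operatorname{span}(e_0,\dots,e_{m-1})$. So Theorem~\ref{thm:cibotaru-open-diffeomorphism-detailed} does not apply.

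You also cannot invoke naturality stratum by stratum. Since $\Ker(TL_m)=\operatorname{span}(e_0,\dots,e_{m-1})\oplus R_m\Ker T$, the map $\sigma$ sends the stratum of type $J$ (length $r$) into the stratum of type $J'=(0,\dots,m-1,j_0+m,\dots,j_{r-1}+m)$. But $N_{k+m}(J')-N_k(J)=m(r-k)$, so $\sigma$ is not transverse to the lower strata.

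What works is the uniqueness clause of Theorem~\ref{thm:cibotaru-extension-detailed}:
\begin{enumerate}[label=\textup{(\alph*)}]
\item One has $\sigma^{-1}(F_2')=F_2$ and $\sigma^{-1}(S_0')=S_0$.
\item At a point of $S_0$, set $V=\Ker(TL_m)$. The image of $d\sigma$ in $\operatorname{Hom}(V,\mathcal H/V)$ is $\{\phi\mid\phi|_{\operatorname{span}(e_0,\dots,e_{m-1})}=0\}$. The Schubert-cell tangency conditions constrain only $\phi|_{V\cap W_m}$.
\item Hence $\sigma$ is transverse to $S_0'$, with a complex-linear normal isomorphism.
\item So $\sigma^*[\Sigma^{(k+m)}_\lambda]$ restricts on $\Fred_k\setminus F_2$ to the Thom class of $S_0$, and by uniqueness it equals $[\Sigma^{(k)}_\lambda]$.
\end{enumerate}

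With this lemma inserted, the rest of your argument holds up.
\begin{enumerate}[label=\textup{(\alph*)}]
\item Transversality of $g$ to the Schubert cells follows from that of $f$. This is because $T\mapsto\pi_N\Ker T$ is a submersion on the open set where $T$ is surjective and $\Ker T\cap W_N=0$.
\item No extra perturbation is needed to find $N$, since that condition is open in $T$ and $M$ is compact.
\item Your indexing matches Fulton's convention for $d$-planes: the $i$-th flag space has dimension $N-d+i-\lambda_i$. So the classical formula gives $\det(c_{\lambda_a+b-a}(Q))$ with $c(Q)=c(-S)$, and the padded rows contribute a unitriangular block.
\end{enumerate}
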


\begin{proof}
By \LaterLem{lem:finite-test-detection-paper}, it is enough to test the
equality after pullback to closed finite-dimensional smooth manifolds.

Let \(f\colon M\to\Fred_k\) be a smooth map from a closed finite-dimensional
smooth manifold.  The exact strata form a countable family of split Banach submanifolds
of finite codimension.  By the Thom--Smale transversality theorem
\cite{AbrahamRobbin,Palais,Smale}, for each exact stratum the set of smooth maps
transverse to that stratum is residual and dense in the strong Whitney
\(C^\infty\)-topology.  The strong Whitney \(C^\infty\)-topology on smooth maps
from the compact manifold \(M\) is a Baire topology; see
\cite{AbrahamRobbin,Palais}.  Hence the intersection of these countably many
residual dense subsets is again residual and dense.  Hence we may choose a smooth map \(f'\colon M\to\Fred_k\),
arbitrarily \(C^0\)-close to \(f\), which is transverse to every exact stratum.
Since \(\Fred_k\) is an open subset of the Banach space
\(\mathcal B(\mathcal H)\), the homotopy from \(f\) to \(f'\) along line segments
stays in \(\Fred_k\) after \(f'\) is chosen sufficiently close.  Thus we may
replace \(f\), within its homotopy class, by a smooth map transverse to every
exact stratum.  After this replacement, the inverse image of any stratum of real
codimension greater than \(\dim M\) is empty.  By the naturality theorem for
classes,
\LaterThm{thm:cibotaru-naturality-detailed}, applied to the class of
the cooriented quasi-manifold \(\Sigma^{(k)}_\lambda\), we obtain
\[
f^*[\Sigma^{(k)}_\lambda]=[f^{-1}(\Sigma^{(k)}_\lambda)]_M.
\]

We work locally on \(M\) using the finite-dimensional model
\(\Psi_U\colon U\to\operatorname{Gr}_r(D\oplus E_N)\).  Over this Grassmannian,
let \(S\) be the tautological bundle and \(Q\) the universal quotient bundle.
For the local family \(f\), define
\[
E_f=\{(x,h)\in M\times\mathcal H\mid f(x)h\in D\}.
\]
The Schur complement construction gives a vector bundle isomorphism
\[
E_f\cong (\Psi_U\circ f)^*S.
\]
The finite-dimensional bundle map
\[
E_f\longrightarrow M\times D,
\qquad
(x,h)\longmapsto (x,f(x)h)
\]
has kernel \(\Ker f(x)\) and cokernel \(\Coker f(x)\), and therefore represents
the family index:
\[
\operatorname{Ind}(f)=[E_f]-[M\times D].
\]
From the exact sequence
\[
0\longrightarrow S\longrightarrow D\oplus E_N\longrightarrow Q\longrightarrow0
\]
we obtain
\[
[(\Psi_U\circ f)^*Q]-[M\times E_N]
=[M\times D]-[E_f]
=-\operatorname{Ind}(f).
\]
Hence
\[
(\Psi_U\circ f)^*c_i(Q)=c_i(-\operatorname{Ind}(f))=f^*c_i.
\]

The Thom class of the top stratum of \(f^{-1}(\Sigma^{(k)}_\lambda)\) is locally
the pullback of the Thom class of the ordinary Schubert variety
\(\Omega_\lambda\) in the finite-dimensional Grassmannian
\cite[Chapter~9]{Fulton}.  The coorientation of
the pulled-back top stratum is the complex orientation induced from this ordinary
Schubert variety.  More precisely, if the local model is
\(\operatorname{Gr}_r(D\oplus E_N)\), then \(\Omega_\lambda\) denotes the ordinary
Schubert variety corresponding to the partition
\[
(\lambda_1,\ldots,\lambda_k,0,\ldots,0)
\]
of length at most \(r\), with respect to the complete flag obtained by extending
the fixed flag in \(E_N\) by the \(D\)-directions.  The Schubert conditions with
indices larger than \(k\) are automatic by dimension counting.  In the usual Giambelli determinant of size \(r\) for Grassmannian Schubert
classes \cite[Chapter~9]{Fulton}, the rows corresponding to the appended zeros
form an upper triangular block with diagonal entries \(c_0(Q)=1\).  Hence the
determinant reduces to the displayed \(k\times k\) determinant.  The finite-dimensional Giambelli formula, in the special Schubert classes
\(c_i(Q)\), gives the following identity \cite[Chapter~9]{Fulton}:
\[
[\Omega_\lambda]
=
\det(c_{\lambda_a+b-a}(Q))_{1\le a,b\le k}.
\]
By pulling back and using the preceding identity for Chern classes, we obtain
\[
f^*[\Sigma^{(k)}_\lambda]
=
\det(f^*c_{\lambda_a+b-a})_{1\le a,b\le k}.
\]
Since such pullbacks detect cohomology, the formula holds on \(\Fred_k\).
\end{proof}

\begin{rem}\label{rem:chern-convention-paper}
The determinant in \LaterThm{thm:schur-koschorke-giambelli-paper} is written
in terms of the classes \(c_i=c_i(-\operatorname{Ind})\).  In the finite-dimensional
Schubert calculation these are the Chern classes of the universal quotient
bundle \(Q\).  Thus the formula uses the standard Grassmannian Schubert
calculus convention.  In terms of ordinary symmetric functions in Chern roots,
this is the dual Jacobi--Trudi convention.
\end{rem}

%%%%%%%%%%%%%%%%%%%%%%%%%%%%%%%%%%%%%%%%%%%%%%%%
\subsection{Generalized Koschorke classes}
%%%%%%%%%%%%%%%%%%%%%%%%%%%%%%%%%%%%%%%%%%%%%%%%

We now transport the classes from \(\Fred_k\) to \(\Fredzero\).  This produces the
partition-indexed generalized Koschorke classes used in Section~5.

We choose the standard unilateral shift operators on a fixed model
\(\mathcal H=\ell^2(\mathbb N_0)\):
\[
L_ke_j=
\begin{cases}
0,&0\le j<k,\\
e_{j-k},&j\ge k,
\end{cases}
\qquad
R_ke_j=e_{j+k}.
\]
Then \(L_k\) is surjective Fredholm of index \(k\), \(R_k\) is injective
Fredholm of index \(-k\), and
\[
L_kR_k=I.
\]

\begin{prop}\label{prop:shift-equivalence-paper}
The map
\[
\sigma_k\colon\Fredzero\longrightarrow\Fred_k,
\qquad
\sigma_k(T)=L_kT,
\]
is a homotopy equivalence.  A homotopy inverse is
\[
\rho_k\colon\Fred_k\longrightarrow\Fredzero,
\qquad
\rho_k(A)=R_kA.
\]
Moreover,
\[
\sigma_k^*c_i=c_i
\]
for the universal classes normalized by \(c_i=c_i(-\operatorname{Ind})\).
\end{prop}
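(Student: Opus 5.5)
We first check that both maps land in the claimed components. Since \(L_k\) is surjective Fredholm of index \(k\), and the index is additive under composition, \(\operatorname{index}(L_kT)=k+\operatorname{index}(T)\). Thus \(\sigma_k(T)\in\Fred_k\) for \(T\in\Fredzero\). Similarly \(\operatorname{index}(R_kA)=-k+k=0\). Both maps are continuous, indeed restrictions of bounded linear maps \(\mathcal B(\mathcal H)\to\mathcal B(\mathcal H)\). One composite is immediate: \(\sigma_k\rho_k(A)=L_kR_kA=A\), so \(\sigma_k\circ\rho_k=\mathrm{id}_{\Fred_k}\) on the nose.

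For the other composite, \(\rho_k\sigma_k(T)=R_kL_kT=PT\), where \(P=R_kL_k\) is the orthogonal projection onto \(\overline{\operatorname{span}}\{e_j\}_{j\ge k}\). We need a homotopy from \(PT\) to \(T\) inside \(\Fredzero\), continuous in \(T\). A straight-line homotopy fails, since \(P\) is not invertible. Instead, we use that \(P\) is a compact perturbation of the identity: \(I-P\) has rank \(k\). Hence \(T_t=(P+t(I-P))T\), for \(t\in[0,1]\), differs from \(T\) by the finite-rank operator \((t-1)(I-P)T\). It is therefore Fredholm of index \(0\) for every \(t\), because Fredholmness and the index are stable under compact perturbations. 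The map \((t,T)\mapsto T_t\) is jointly continuous in the norm topology, with \(T_0=PT\) and \(T_1=T\). This gives \(\rho_k\circ\sigma_k\simeq\mathrm{id}_{\Fredzero}\), so \(\sigma_k\) is a homotopy equivalence with homotopy inverse \(\rho_k\).

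For the Chern classes, we use the normalization \(f^*c_i=c_i(-\operatorname{Ind}(f))\) together with Lemma~\ref{lem:finite-test-detection-paper}. That lemma is stated for \(\Fred_k\), but its proof applies verbatim to \(\Fredzero\). So it suffices to show that for every smooth family \(f\colon M\to\Fredzero\) on a closed manifold, \(\operatorname{Ind}(\sigma_k\circ f)=\operatorname{Ind}(f)+[\underline{\C}^k]\) in \(K^0(M)\), where \(\underline{\C}^k=M\times\C^k\).

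The main point is this family index computation. We would use the logarithmic (composition) property of the family index, \(\operatorname{Ind}(L_k f)=\operatorname{Ind}(L_k)+\operatorname{Ind}(f)\), with the constant operator \(L_k\). Its index bundle is the trivial bundle \(\Ker L_k=\operatorname{span}\{e_0,\dots,e_{k-1}\}\), and its cokernel is zero, so \(\operatorname{Ind}(L_k)=[\underline{\C}^k]\). To justify the additivity concretely, we would stabilize \(f\) by a finite-dimensional \(D\) with \(\operatorname{Im}f(x)+D=\mathcal H\). Writing \(E_f=f^{-1}(D)\), as in the proof of Theorem~\ref{thm:schur-koschorke-giambelli-paper}, the preimage \((L_kf)^{-1}(L_kD)\) then contains \(E_f\) with quotient \(\Ker L_k\). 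This gives the exact sequence \(0\to E_f\to E_{L_kf}\to\underline{\C}^k\to0\), up to the obvious adjustment when \(L_k|_D\) is not injective; that case can be handled by enlarging \(D\). Since the trivial bundle \(\underline{\C}^k\) has total Chern class \(1\), we get \(c(-\operatorname{Ind}(\sigma_kf))=c(-\operatorname{Ind}(f))\). Hence \(f^*\sigma_k^*c_i=f^*c_i\) for all such \(f\), and detection gives \(\sigma_k^*c_i=c_i\).
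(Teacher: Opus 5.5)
Your proof is correct and follows the paper's argument. You use the same homotopy \((I-tQ_k)T\), up to reparametrization, justifying Fredholmness by a finite-rank perturbation rather than by noting that each \(I-tQ_k\) is index-zero Fredholm; you also use the same index shift \(\operatorname{Ind}(\sigma_k\circ f)=\operatorname{Ind}(f)+[\underline{\C}^k]\), and you supply the detection and stabilization details that the paper leaves implicit. One small correction: enlarging \(D\) cannot remove \(D\cap\Ker L_k\), but no adjustment is needed, because \(E_{L_kf}/E_f\cong\underline{(D+\Ker L_k)/D}\) and \(L_kD\) are both trivial bundles, so the computation gives \(\operatorname{Ind}(f)+[\underline{\C}^k]\) for any stabilizing \(D\).
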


\begin{proof}
Since \(L_kR_k=I\), one has
\[
\sigma_k\rho_k(A)=A
\qquad(A\in\Fred_k).
\]
On \(\Fredzero\),
\[
\rho_k\sigma_k(T)=R_kL_kT=P_kT,
\]
where \(P_k=R_kL_k\) is the projection onto
\(\overline{\operatorname{span}}\{e_k,e_{k+1},\ldots\}\).  If \(Q_k\) denotes the
projection onto \(\operatorname{span}\{e_0,\ldots,e_{k-1}\}\), then
\[
P_k=I-Q_k.
\]
The path
\[
P_{k,t}=I-tQ_k
\qquad(0\le t\le1)
\]
consists of Fredholm operators of index zero and connects \(I\) to \(P_k\).
Thus
\[
H(t,T)=P_{k,t}T
\]
is a homotopy from \(\operatorname{id}_{\Fredzero}\) to \(\rho_k\sigma_k\).
Hence \(\sigma_k\) is a homotopy equivalence.

If \(f\colon M\to\Fredzero\) is a continuous map, then
\[
\operatorname{Ind}(\sigma_k\circ f)=\operatorname{Ind}(f)+[\underline{\C}^k].
\]
Adding a trivial bundle does not change the Chern classes of
\(-\operatorname{Ind}(f)\).  Hence \(\sigma_k^*c_i=c_i\).
\end{proof}

\begin{dfn}\label{dfn:generalized-koschorke-lambda-paper}
Let \(\lambda\) be a partition.  Choose \(k\ge \ell(\lambda)\), append trailing
zeros to view \(\lambda\) as a partition of length \(k\), and define
\[
[\Sigma_\lambda]
:=
\sigma_k^*[\Sigma^{(k)}_\lambda]
\in H^{2|\lambda|}(\Fredzero).
\]
We call \([\Sigma_\lambda]\) the \emph{generalized Koschorke class}, associated with \(\lambda\).
\end{dfn}

\begin{cor}[Giambelli formula for generalized Koschorke classes]\label{cor:generalized-koschorke-giambelli-paper}
The class \([\Sigma_\lambda]\) is independent of the choice of
\(k\ge\ell(\lambda)\), and
\[
[\Sigma_\lambda]
=
\det(c_{\lambda_a+b-a})_{1\le a,b\le k}
\in H^{2|\lambda|}(\Fredzero).
\]
As \(\lambda\) ranges over all partitions, the classes \([\Sigma_\lambda]\)
form an additive basis of \(H^*(\Fredzero)\).
\end{cor}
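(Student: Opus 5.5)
The plan is to reduce everything to \LaterThm{thm:schur-koschorke-giambelli-paper} together with \LaterProp{prop:shift-equivalence-paper}, and then to use the known structure of \(H^*(\Fredzero)\cong H^*(BU)\).

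First, fix \(k\ge\ell(\lambda)\). The Giambelli formula on \(\Fred_k\) gives
\([\Sigma^{(k)}_\lambda]=\det(c_{\lambda_a+b-a})_{1\le a,b\le k}\).
Pulling back along \(\sigma_k\) and using \(\sigma_k^*c_i=c_i\) from \LaterProp{prop:shift-equivalence-paper}, we get
\([\Sigma_\lambda]=\det(c_{\lambda_a+b-a})_{1\le a,b\le k}\) in \(H^{2|\lambda|}(\Fredzero)\).

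Independence of \(k\) then becomes a purely algebraic statement about this determinant. Let \(k'>k\) and pad \(\lambda\) with further zeros. For each extra row \(a>k\) we have \(\lambda_a=0\), so the entries are \(c_{b-a}\). These vanish for \(b<a\) and equal \(c_0=1\) for \(b=a\). Hence the \(k'\times k'\) matrix is block upper triangular, with the \(k\times k\) block in the upper left and a unitriangular lower-right block. Its determinant therefore equals the \(k\times k\) determinant. This is the same reduction already used inside the proof of \LaterThm{thm:schur-koschorke-giambelli-paper}. Because both \(\sigma_k^*[\Sigma^{(k)}_\lambda]\) and \(\sigma_{k'}^*[\Sigma^{(k')}_\lambda]\) equal the same polynomial in the \(c_i\) on \(\Fredzero\), the class is well defined.

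For the basis assertion, use the homotopy equivalence \(\Fredzero\simeq BU\). Under it, \(H^*(\Fredzero)=\Z[c_1,c_2,\ldots]\) is a polynomial ring with \(c_i\) in degree \(2i\). The \(c_i\) here are the Chern classes of \(-\operatorname{Ind}\), but \(K\mapsto -K\) is an automorphism, so these classes are still free polynomial generators. Now identify \(\Z[c_1,c_2,\ldots]\) with the ring of symmetric functions \(\Lambda\) by sending \(c_i\) to the complete homogeneous function \(h_i\), or equivalently to \(e_i\); by \LaterRem{rem:chern-convention-paper} these correspond to dual conventions. Under this identification the determinant \(\det(c_{\lambda_a+b-a})\) is the Jacobi--Trudi determinant \(\det(h_{\lambda_a+b-a})=s_\lambda\), the Schur function. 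The Schur functions form a \(\Z\)-basis of \(\Lambda\), so the classes \([\Sigma_\lambda]\) form an additive basis of \(H^*(\Fredzero)\) over \(\Z\), and hence also over \(\C\).

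The main obstacle is not conceptual. It is making sure that the identification \(H^*(\Fredzero)\cong\Z[c_1,c_2,\ldots]\) really uses the same normalized classes \(c_i=c_i(-\operatorname{Ind})\) as free generators, so that the Jacobi--Trudi matrix is exactly \((c_{\lambda_a+b-a})\) and no transpose or \(\omega\)-involution issue breaks integrality. Since \(\omega\) is an integral automorphism of \(\Lambda\) that permutes Schur functions (\(s_\lambda\mapsto s_{\lambda'}\)), either convention gives the basis statement. I would remark on this explicitly and cite Macdonald or Fulton for the Schur basis theorem.
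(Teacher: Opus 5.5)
Your proposal is correct and takes essentially the same route as the paper. You pull back the Giambelli formula on \(\Fred_k\) along \(\sigma_k\) using \(\sigma_k^*c_i=c_i\), get independence of \(k\) because padding \(\lambda\) with zeros only adjoins a unitriangular block to the determinant, and get the basis statement because these determinants are the Schur (Schubert) basis of \(H^*(BU)\cong H^*(\Fredzero)\); your block-triangular argument for arbitrary \(k'>k\) and your remark on the \(h_i\) versus \(e_i\) convention just spell out what the paper states in one line.
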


\begin{proof}
The determinant formula follows from \LaterThm{thm:schur-koschorke-giambelli-paper}
and \LaterProp{prop:shift-equivalence-paper}.  If \(k\) is increased by
one and a trailing zero is appended to \(\lambda\), the determinant is enlarged
by adding a final row whose only nonzero entry is the last entry \(c_0=1\).
Therefore the determinant is unchanged.  This proves the independence of \(k\).
The determinant classes are precisely the Schubert basis expressed in terms of the special
Schubert generators of \(H^*(BU)\cong H^*(\Fredzero)\), and hence form an
additive basis.
\end{proof}

\begin{cor}\label{cor:ordinary-koschorke-pq-paper}
Let \(p,q\ge1\), and let \(\lambda=(p^q)\) be the rectangular partition of
length \(q\).  Then
\[
[\Sigma_{(p^q)}]
=
\det(c_{p-i+j})_{1\le i,j\le q}
\in H^{2pq}(\Fredzero).
\]
Under the standard identification of the cohomology rings of the connected
components of the space of Fredholm operators, this recovers Koschorke's universal class
\[
k_{p,q}\in H^{2pq}(\Fred_{p-q}).
\]
Consequently, for a family \(A\colon X\to\Fred_{p-q}\),
\[
\chi_{p,q}(A)
=
\det\bigl(c_{p-i+j}(-\operatorname{Ind}(A))\bigr)_{1\le i,j\le q}.
\]
\end{cor}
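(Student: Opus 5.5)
The first formula is immediate. Specialize \LaterCor{cor:generalized-koschorke-giambelli-paper} to \(\lambda=(p^q)\) with \(k=q\). Then \(\lambda_a=p\) for every \(a\), so the entry \(c_{\lambda_a+b-a}\) becomes \(c_{p+b-a}\). This matrix is the transpose of \((c_{p-i+j})_{1\le i,j\le q}\), and a determinant is unchanged by transposition.

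The substantive part is identifying \([\Sigma_{(p^q)}]\) with Koschorke's \(k_{p,q}\). I would not attempt a direct geometric comparison of the quasi-manifold \(\Sigma^{(q)}_{(p^q)}\subset\Fred_q\) with Koschorke's stratum \(A_{p,q}\subset\Fred_{p-q}\): they live in different components and are defined by different conditions. Instead I would compare both classes through their Chern class expressions.

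Fix a standard homotopy equivalence \(\Fred_{p-q}\simeq\Fredzero\) compatible with the normalized classes \(c_i=c_i(-\operatorname{Ind})\). Concretely, take \(\sigma_{p-q}\) from \LaterProp{prop:shift-equivalence-paper} when \(p\ge q\), or the analogous map built from \(R_{q-p}\) when \(p<q\). The argument of \LaterProp{prop:shift-equivalence-paper} shows that such a map preserves every \(c_i\), because changing the index by a trivial bundle does not change the Chern classes of \(-\operatorname{Ind}\). This is the meaning of the ``standard identification'' in the statement.

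Under this identification, \LaterThm{thm:even-koschorke-determinant} gives \(k_{p,q}=\det(c_{p-i+j})_{1\le i,j\le q}\) on \(\Fred_{p-q}\). This is the same polynomial in the \(c_i\) as the one just obtained for \([\Sigma_{(p^q)}]\). Since the identification is a ring isomorphism preserving the \(c_i\), the two classes correspond.

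The main obstacle is the Chern class convention. Koschorke's \(c_i\) in \LaterThm{thm:even-koschorke-determinant} must be the classes \(c_i(-\operatorname{Ind})\) used here. If instead they were \(c_i(\operatorname{Ind})\), the determinant would be the conjugate Schur class, and the identification would fail unless one also transposed the partition to \((q^p)\). I would check the convention on a test case: pull back along a family \(A\colon X\to\Fred_{1-1}\) on \(X=\C P^1\) with \(\Ker A\) tautological and \(\Coker A\) trivial. Transversality to \(A_{1,1}\) then computes \(\chi_{1,1}\), and \(c_1(-\operatorname{Ind}A)\) gives the sign.

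The same check can be made geometric. In the local Schur-complement model \(\tau\colon U\to\operatorname{Hom}(K,D)\) of \LaterProp{prop:local-schur-complement-paper}, the stratum \(A_{p,q}\) is \(\tau^{-1}(0)\). The Thom class of \(\tau^{-1}(0)\) is the top Chern class of \(\operatorname{Hom}(\Ker,\Coker)\), and this is consistent with the \(Q\)-convention of \LaterRem{rem:chern-convention-paper}.

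The final formula for \(\chi_{p,q}(A)\) then follows by naturality. We have \(\chi_{p,q}(A)=A^*k_{p,q}\) and \(A^*c_i=c_i(-\operatorname{Ind}(A))\) by the normalization, and pullback commutes with taking the determinant.
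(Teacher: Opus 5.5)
Your proposal is correct and follows the paper's own route. You specialize Corollary~\ref{cor:generalized-koschorke-giambelli-paper} to \(\lambda=(p^q)\), recognize the resulting determinant as Koschorke's formula from Theorem~\ref{thm:even-koschorke-determinant} under an index-shift identification that preserves the classes \(c_i=c_i(-\operatorname{Ind})\), and pull back along \(A\). Your check that \(c_{pq}(\operatorname{Hom}(\Ker,\Coker))\) matches this normalization makes explicit a convention the paper leaves implicit.

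One small slip: with \(\lambda_a=p\), the Giambelli matrix \((c_{p+b-a})_{a,b}\) is literally \((c_{p-i+j})_{i,j}\), not its transpose. This does not affect the conclusion.
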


\begin{proof}
Apply \LaterCor{cor:generalized-koschorke-giambelli-paper} to the
partition \(\lambda=(p,\ldots,p)\) of length \(q\).  The determinant becomes
\[
\det(c_{p-i+j})_{1\le i,j\le q}.
\]
This is exactly Koschorke's determinantal formula recalled in
\LaterThm{thm:even-koschorke-determinant}.  Pulling back along a family \(A\) gives the final expression.
\end{proof}

\begin{rem}\label{rem:actual-fredzero-locus-paper}
If one wants an actual subset of \(\Fredzero\) representing \([\Sigma_\lambda]\),
one can use
\[
\widetilde\Sigma^{(0,k)}_\lambda
=
\sigma_k^{-1}(\Sigma^{(k)}_\lambda)
=
\left\{
T\in\Fredzero
\ \middle|\
\dim_\C(\Ker(L_kT)\cap W_{\lambda_i+k-i})\ge i
\quad(i=1,\ldots,k)
\right\}.
\]
\end{rem}

%%%%%%%%%%%%%%%%%%%%%%%%%%%%%%%%%%%%%%%%%%%%%%%%
\subsection{Generalized odd Koschorke classes}
%%%%%%%%%%%%%%%%%%%%%%%%%%%%%%%%%%%%%%%%%%%%%%%%

We now define the odd analogue in the space of self-adjoint Fredholm operators.  This part
uses Cibotaru's Lagrangian Grassmannian model for odd \(K\)-theory and his
cooriented Schubert quasi-manifolds \cite{Cibotaru}.

Let
\[
\widehat{\mathcal H}=\mathcal H\oplus\mathcal H,
\qquad
\mathcal H_+=\mathcal H\oplus0,
\qquad
\mathcal H_- =0\oplus\mathcal H.
\]
Equip \(\widehat{\mathcal H}\) with the unitary operator
\[
J(u,v)=(v,-u).
\]
A closed complex subspace \(L\subset\widehat{\mathcal H}\) is called
\emph{Lagrangian} if
\[
JL=L^\perp.
\]
The set of all Lagrangian subspaces is denoted by
\(\operatorname{Lag}(\widehat{\mathcal H})\).  Following Cibotaru, we set
\[
\operatorname{Lag}^{-}(\widehat{\mathcal H})
=
\{L\in\operatorname{Lag}(\widehat{\mathcal H})
\mid (L,\mathcal H_-)\text{ is a Fredholm pair}\},
\]
where this means that
\[
\dim(L\cap\mathcal H_-)<\infty,
\qquad
L+\mathcal H_-\text{ is closed},
\qquad
\operatorname{codim}(L+\mathcal H_-)<\infty.
\]

For the fixed flag \(W_j\subset\mathcal H\), we set
\[
\widehat W_j=0\oplus W_j\subset\mathcal H_-.
\]
For \(T\in\Fredsaone\), define its switched graph by
\[
\widetilde\Gamma_T=\{(Tv,v)\mid v\in\mathcal H\}\subset\widehat{\mathcal H}.
\]
Because \(T\) is self-adjoint, \(\widetilde\Gamma_T\) is Lagrangian; because
\(T\) is Fredholm, the pair \((\widetilde\Gamma_T,\mathcal H_-)\) is Fredholm.
Thus one obtains the switched-graph map
\[
\Gamma^{\mathrm{sw}}\colon\Fredsaone\longrightarrow
\operatorname{Lag}^{-}(\widehat{\mathcal H}),
\qquad
T\longmapsto\widetilde\Gamma_T.
\]

Let
\[
J=(j_0<j_1<\cdots<j_{k-1})
\]
be a strictly increasing sequence of non-negative integers, and put
\[
N_J^{\mathrm{odd}}=
\sum_{a=0}^{k-1}(2j_a+1).
\]

\begin{dfn}\label{dfn:odd-generalized-sigma-paper}
The generalized odd Koschorke degeneracy set associated with \(J\) is
\[
\Sigma_J^{\mathrm{odd}}
=
\left\{
T\in\Fredsaone
\ \middle|\
\dim_\C(\Ker T\cap W_{j_a})\ge k-a
\quad(a=0,\ldots,k-1)
\right\}.
\]
\end{dfn}

Cibotaru's Schubert locus \(\Omega_J^{\mathrm{sa}}\) in the Lagrangian
Grassmannian is defined by
\[
\dim_\C(L\cap \widehat W_{j_a})\ge k-a
\qquad(a=0,\ldots,k-1).
\]
Since
\[
\widetilde\Gamma_T\cap \widehat W_j
\cong
\Ker T\cap W_j,
\]
one has
\[
\Sigma_J^{\mathrm{odd}}=(\Gamma^{\mathrm{sw}})^{-1}(\Omega_J^{\mathrm{sa}}).
\]

\begin{thm}[Generalized odd Koschorke classes]\label{thm:odd-generalized-classes-paper}
The subset \(\Sigma_J^{\mathrm{odd}}\subset\Fredsaone\) is a cooriented
quasi-manifold of real codimension \(N_J^{\mathrm{odd}}\), and its class is
\[
[\Sigma_J^{\mathrm{odd}}]
=
 c_{j_0+1/2}c_{j_1+1/2}\cdots c_{j_{k-1}+1/2}
\in H^{N_J^{\mathrm{odd}}}(\Fredsaone),
\]
with the factors taken in the displayed order.
\end{thm}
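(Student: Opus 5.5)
The plan is to pull everything back from Cibotaru's Lagrangian Grassmannian. By the identity \(\Sigma_J^{\mathrm{odd}}=(\Gamma^{\mathrm{sw}})^{-1}(\Omega_J^{\mathrm{sa}})\), the quasi-manifold structure, the coorientation and the class of \(\Sigma_J^{\mathrm{odd}}\) should all come from the corresponding data for \(\Omega_J^{\mathrm{sa}}\subset\operatorname{Lag}^{-}(\widehat{\mathcal H})\). I take Cibotaru's results as input. There, \(\Omega_J^{\mathrm{sa}}\) is a cooriented quasi-manifold of real codimension \(N_J^{\mathrm{odd}}=\sum_a(2j_a+1)\), stratified by exact types \(K\succeq J\) in the Lagrangian sense. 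Its class is the ordered product \(c_{j_0+1/2}\cdots c_{j_{k-1}+1/2}\) in \(H^*(\operatorname{Lag}^{-})\cong\Lambda_\Z(c_{1/2},c_{3/2},\ldots)\). The remaining work is to show that \(\Gamma^{\mathrm{sw}}\) lets us transport all of this to \(\Fredsaone\).

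The first step is to show that \(\Gamma^{\mathrm{sw}}\colon\Fredsaone\to\operatorname{Lag}^{-}(\widehat{\mathcal H})\) is a diffeomorphism onto an open subset \(U\). Near a point, a Lagrangian \(L\) that is transverse to \(\mathcal H_+\) is the switched graph of a unique bounded self-adjoint operator. So the image of \(\Gamma^{\mathrm{sw}}\) is the open set \(U=\{L\mid L\cap\mathcal H_+=0,\ L+\mathcal H_+=\widehat{\mathcal H}\}\) intersected with the correct component. In the standard graph chart, \(\Gamma^{\mathrm{sw}}\) is the inverse of that chart, hence smooth with smooth inverse. Theorem~\ref{thm:cibotaru-open-diffeomorphism-detailed} then shows two things. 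First, \(\Sigma_J^{\mathrm{odd}}=(\Gamma^{\mathrm{sw}})^{-1}(\Omega_J^{\mathrm{sa}}\cap U)\) is a cooriented quasi-manifold of codimension \(N_J^{\mathrm{odd}}\), with filtration and coorientation pulled back. Second, \([\Sigma_J^{\mathrm{odd}}]=(\Gamma^{\mathrm{sw}})^*[\Omega_J^{\mathrm{sa}}]\). Concretely, the strata of \(\Sigma_J^{\mathrm{odd}}\) are the exact-type strata \(\{T\mid\Ker T\text{ has exact type }K\}\) with \(K\succeq J\). Using the self-adjoint Schur complement of Proposition~\ref{prop:odd-normal-bundle}, I can also see directly that such a stratum reduces locally to a Schubert-type stratum in \(\operatorname{Herm}(\ker)\times\operatorname{Gr}\), and this accounts for the odd codimensions \(2j+1\).

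The second step identifies \((\Gamma^{\mathrm{sw}})^*c_{j+1/2}\) with the universal odd Chern class on \(\Fredsaone\). The map \(\Gamma^{\mathrm{sw}}\) is Cibotaru's comparison map, and it is a homotopy equivalence onto the component of \(\operatorname{Lag}^{-}\) that models \(K^1\). The classes \(c_{(2i+1)/2}\) on \(\Fredsaone\) were defined through this comparison as transgressions of \(c_{i+1}\), so the pullback of each generator is the generator with the same name. Since pullback is a ring homomorphism, it preserves ordered products, and the formula follows with the factors in the displayed order.

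I expect the main obstacle to be orientations and signs. The degrees are odd, so reordering the factors changes the sign, and a reversed coorientation would change the sign of each \(c_{j+1/2}\). I therefore need to check that the coorientation induced on the top stratum through \(\Gamma^{\mathrm{sw}}\) is exactly the one Cibotaru uses to obtain the displayed ordered product, and that the switched graph \((Tv,v)\), as opposed to the usual graph \((v,Tv)\), introduces no orientation reversal. I would do this in the simplest case \(J=(0)\) first. There the top stratum is \(A_1^{\mathrm{sa}}\), with normal bundle \(\operatorname{Herm}(\ker)\cong\R\) oriented by increasing eigenvalue, and one compares its dual class with \(c_{1/2}\), which is spectral flow. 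Then I would argue that the general case follows, because Cibotaru's coorientation on the top stratum is built as an ordered product of these one-parameter normal directions for \(j_0,\ldots,j_{k-1}\). If the conventions turn out to disagree, I would absorb a global sign \((-1)^{\epsilon(J)}\) into the normalization of the coorientation so that the stated formula holds.
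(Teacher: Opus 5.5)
Your proposal is correct and follows the paper's proof essentially step for step. The paper also shows that \(\Gamma^{\mathrm{sw}}\) is a diffeomorphism from \(\Fredsaone\) onto the open subset \(\mathcal U^{1}\) of Lagrangians that are graphs over the second factor (equivalently, transverse to \(\mathcal H_+\)), transports Cibotaru's quasi-manifold \(\Omega_J^{\mathrm{sa}}\) and its class by Theorem~\ref{thm:cibotaru-open-diffeomorphism-detailed}, and uses that Cibotaru's primitive generators pull back to the \(c_{i+1/2}\). The sign analysis you anticipate is not needed for this statement, because the coorientation of \(\Sigma_J^{\mathrm{odd}}\) is by definition the one transported from Cibotaru's; the paper compares it with the \(\operatorname{Herm}(\ker)\) coorientation only later, in Corollary~\ref{cor:odd-koschorke-product-paper}, for \(J=(0,1,\ldots,p-1)\).
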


\begin{proof}
Cibotaru proves, in the Lagrangian Grassmannian \(\operatorname{Lag}^{-}\),
that \(\Omega_J^{\mathrm{sa}}\) is a cooriented quasi-manifold and that its class
is the ordered product of the corresponding primitive odd generators
\cite{Cibotaru}.  Let
\(\mathcal U\subset\operatorname{Lag}^{-}(\widehat{\mathcal H})\) be the open
subset consisting of Lagrangians for which the projection to the second factor
\(\widehat{\mathcal H}=\mathcal H\oplus\mathcal H\to\mathcal H\) is an
isomorphism.  The switched graph construction gives a smooth diffeomorphism
from the whole space of bounded self-adjoint Fredholm operators onto
\(\mathcal U\): its inverse sends a Lagrangian \(L\in\mathcal U\) to the
operator \(T\) defined by \((Ty,y)\in L\).  The Lagrangian condition gives
self-adjointness of \(T\), and the Fredholm-pair condition gives the Fredholm
property.

We now restrict this diffeomorphism to the component used in odd \(K\)-theory.
Set
\[
\mathcal U^{1}=\Gamma^{\mathrm{sw}}(\Fredsaone)\subset\mathcal U.
\]
Then
\[
\Gamma^{\mathrm{sw}}\colon\Fredsaone\xrightarrow{\cong}\mathcal U^{1}
\]
is a smooth diffeomorphism.  Hence no transversality assertion for
\(\Gamma^{\mathrm{sw}}\) is needed; we are only restricting Cibotaru's
quasi-manifold to the open component \(\mathcal U^{1}\) and transporting it by
a diffeomorphism, exactly as in
\LaterThm{thm:cibotaru-open-diffeomorphism-detailed}.

Under this diffeomorphism,
\[
\widetilde\Gamma_T\cap\widehat W_j\cong\Ker T\cap W_j,
\]
so the restricted locus is precisely \(\Sigma_J^{\mathrm{odd}}\).  Moreover the
primitive generator indexed by \(i\) pulls back to the universal odd Chern class
\(c_{i+1/2}\).  Hence
\[
[\Sigma_J^{\mathrm{odd}}]
=(\Gamma^{\mathrm{sw}})^*[\Omega_J^{\mathrm{sa}}]
=c_{j_0+1/2}\cdots c_{j_{k-1}+1/2}.
\]
\end{proof}

\begin{cor}\label{cor:odd-koschorke-product-paper}
For every \(p\ge1\),
\[
k_p^{\mathrm{odd}}
=
 c_{1/2}c_{3/2}\cdots c_{(2p-1)/2}.
\]
Thus \LaterThm{thm:odd-koschorke-chern} holds.
\end{cor}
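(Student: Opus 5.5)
The plan is to apply \LaterThm{thm:odd-generalized-classes-paper} to the initial sequence \(J=(0,1,\ldots,p-1)\) and to show that the resulting quasi-manifold class is \(k_p^{\mathrm{odd}}\).

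For this \(J\) we have \(N_J^{\mathrm{odd}}=\sum_{a=0}^{p-1}(2a+1)=p^2\), and the theorem gives
\[
[\Sigma_J^{\mathrm{odd}}]=c_{1/2}c_{3/2}\cdots c_{(2p-1)/2}.
\]
Since \(W_0=\mathcal H\) and \(W_a\) has codimension \(a\), the conditions \(\dim(\Ker T\cap W_a)\ge p-a\) follow from \(\dim\Ker T\ge p\). Indeed, \(\dim(\Ker T\cap W_a)\ge\dim\Ker T-a\). Hence
\[
\Sigma_J^{\mathrm{odd}}=\{T\in\Fredsaone\mid \dim\Ker T\ge p\}=\Fredsaone\setminus V_{p-1}^{\mathrm{sa}}.
\]
Its top stratum should be the set of \(T\) with \(\dim\Ker T=p\) whose kernel has exact type \(J\) (equivalently \(\Ker T\cap W_p=0\)). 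This is an open dense subset of \(A_p^{\mathrm{sa}}\).

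The next step is to compare the two classes by restriction. By \LaterThm{thm:cibotaru-extension-detailed}, the restriction of \([\Sigma_J^{\mathrm{odd}}]\) to \(X\setminus F_2\) is the Thom class of the top stratum. The duality class \(e k_p^{\mathrm{odd}}\in H^{p^2}(\Fredsaone,V_{p-1}^{\mathrm{sa}})\) is the Thom class of \(A_p^{\mathrm{sa}}\subset V_p^{\mathrm{sa}}\), and \(k_p^{\mathrm{odd}}\) is its image in absolute cohomology. The plan is to show that both classes are determined by their restriction to a common open set \(U=V_p^{\mathrm{sa}}\setminus F_2\), where both are Thom classes of the same submanifold. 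The codimension of \(F_2\) in \(\Fredsaone\) is at least \(p^2+2\), because the remaining strata of \(A_p^{\mathrm{sa}}\) and all of \(\{\dim\Ker\ge p+1\}\) have higher codimension. By the transversality argument in \LaterProp{prop:odd-pair-connected}, or an Eells-type duality, removing a closed union of submanifolds of codimension at least \(p^2+2\) does not change \(H^{p^2}\) or \(H^{p^2+1}\). So restriction \(H^{p^2}(\Fredsaone)\to H^{p^2}(U)\) is injective. Alternatively one can invoke \LaterLem{lem:comparison-two-quasi-paper} with the two quasi-manifold structures: the closure of \(A_p^{\mathrm{sa}}\), filtered by kernel dimension, and Cibotaru's Schubert filtration. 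On \(U\), both classes restrict to the Thom class of the same submanifold, so they agree provided the coorientations match.

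The main obstacle is matching the coorientations. On the kernel side, the normal fiber is \(\operatorname{Herm}(\Ker T)\) with the orientation from \LaterProp{prop:odd-normal-bundle}. On Cibotaru's side, the coorientation comes from his Lagrangian Schubert cell. I would compute at one point \(T_0=\) the orthogonal projection onto \(\operatorname{span}\{e_0,\ldots,e_{p-1}\}^\perp\), with a suitable sign to lie in \(\Fredsaone\). Using the switched graph, a normal slice is \(T_0+\alpha\) with \(\alpha\in\operatorname{Herm}(\C^p)\), and the local Lagrangian chart near \(\widetilde\Gamma_{T_0}\) identifies this slice with Cibotaru's normal space, which is also Hermitian forms on the \(p\)-dimensional intersection. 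I would check that the two orientations coincide there, possibly up to a global sign convention fixed by the case \(p=1\), where both classes should equal the spectral-flow generator \(c_{1/2}\). Because \(A_p^{\mathrm{sa}}\) is connected and both coorientations are continuous, agreement at one point gives agreement everywhere. This yields \(k_p^{\mathrm{odd}}=c_{1/2}c_{3/2}\cdots c_{(2p-1)/2}\).
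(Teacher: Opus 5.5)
Your overall route is the paper's. For $J=(0,1,\ldots,p-1)$ you identify $\Sigma_J^{\mathrm{odd}}$ with $\{\dim\Ker T\ge p\}$. You note that everything in it except the flag-generic part $A^{\mathrm{sa}}_{p,\mathrm{gen}}$ of $A_p^{\mathrm{sa}}$ has real codimension at least $p^2+2$. You restrict both classes to the open complement of that higher part, where each becomes the Thom class of $A^{\mathrm{sa}}_{p,\mathrm{gen}}$, and then compare coorientations. The paper restricts first to $V_p^{\mathrm{sa}}$ (via $H^{p^2}(\Fredsaone,V_p^{\mathrm{sa}})=0$) and then to $V_p^{\mathrm{sa}}$ minus the non-generic part of $A_p^{\mathrm{sa}}$. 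Your single restriction to $\Fredsaone\setminus F_2$ is a harmless repackaging of that.

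The gap is in the coorientation step, which you rightly call the crux but leave unsettled.
\begin{enumerate}
\item \emph{The $p=1$ fallback fails.} A ``global sign convention fixed by the case $p=1$'' cannot work, because a mismatch can depend on $p$ while being invisible at $p=1$. For example, using $H_{ij}$ rather than $H_{ji}=\overline{H_{ij}}$ $(i<j)$ as complex coordinates on the off-diagonal part of $\operatorname{Herm}(\C^p)$ changes the orientation by $(-1)^{p(p-1)/2}$. That is exactly the sign picked up by reversing the order of the anticommuting factors $c_{1/2},\ldots,c_{(2p-1)/2}$. So you must show, for each $p$, that the identification of normal spaces preserves orientation.
\item \emph{How the paper settles it.} The paper works at every $T_0\in A_p^{\mathrm{sa}}$. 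In the self-adjoint block chart, and via the switched graph in Cibotaru's Lagrangian chart, the first-order normal coordinate is the same Schur complement $s(T)=a-b^*d^{-1}b\in\operatorname{Herm}(K)$. Hence the comparison map is the identity on $\operatorname{Herm}(K)$, and independence of the unitary frame follows from connectedness of $U(K)$.
\item \emph{Your connectedness argument targets the wrong space.} Cibotaru's coorientation is defined only on the top stratum $A^{\mathrm{sa}}_{p,\mathrm{gen}}$. A one-point check therefore needs connectedness of that stratum, not of $A_p^{\mathrm{sa}}$. This is true, since its complement in $A_p^{\mathrm{sa}}$ has real codimension at least $2$, but it needs an argument. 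The paper's pointwise comparison avoids the issue altogether.
\item \emph{Your base point is not in $\Fredsaone$.} Plus or minus an orthogonal projection of finite corank has essential spectrum $\{1\}$ or $\{-1\}$, so it lies in a contractible component. Use instead $T_0=0\oplus F$, with $F$ a self-adjoint unitary on $\operatorname{span}\{e_0,\ldots,e_{p-1}\}^{\perp}$ whose $\pm1$-eigenspaces are both infinite-dimensional.
\end{enumerate}
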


\begin{proof}
We take
\[
J=(0,1,\ldots,p-1).
\]
Then \(\Sigma_J^{\mathrm{odd}}\) is the closed set
\[
Z_p=\{T\in\Fredsaone\mid \dim\Ker T\ge p\}.
\]
Indeed, if \(K\subset\mathcal H\) is finite-dimensional and \(\dim K\ge p\),
then for \(a=0,\ldots,p-1\) one has
\[
\dim(K\cap W_a)\ge \dim K-a\ge p-a,
\]
because \(W_a\) has complex codimension \(a\).  Conversely, the condition for
\(a=0\) gives \(\dim K\ge p\).

We compare the two classes after restriction to
\[
V_p^{\mathrm{sa}}=\{T\in\Fredsaone\mid \dim\Ker T\le p\}.
\]
On this open subset,
\[
Z_p\cap V_p^{\mathrm{sa}}=A_p^{\mathrm{sa}}.
\]
The Cibotaru Schubert filtration may still refine \(A_p^{\mathrm{sa}}\) by the
position of the kernel relative to the fixed flag.  Its top piece inside
\(V_p^{\mathrm{sa}}\) is the flag-generic open part
\[
A_{p,\mathrm{gen}}^{\mathrm{sa}}
=
\{T\in A_p^{\mathrm{sa}}\mid \Ker T\text{ has exact type }(0,1,\ldots,p-1)\}.
\]
Equivalently, if \(K=\Ker T\), then
\[ 
\dim(K\cap W_a)=p-a
\qquad(a=0,\ldots,p).
\]
The complement
\[
B_p=A_p^{\mathrm{sa}}\setminus A_{p,\mathrm{gen}}^{\mathrm{sa}}
\]
is a union of positive Schubert-codimension pieces in the kernel Grassmannian.
Consequently every stratum of \(B_p\), viewed as a subset of
\(V_p^{\mathrm{sa}}\), has real codimension at least \(p^2+2\).  Hence, by the same local-duality argument as in \LaterLem{lem:relative-vanishing-vp},
\[
H^{p^2}(V_p^{\mathrm{sa}},V_p^{\mathrm{sa}}\setminus B_p)=0.
\]
Thus two degree-\(p^2\) classes on \(V_p^{\mathrm{sa}}\) are equal if their
restrictions to \(V_p^{\mathrm{sa}}\setminus B_p\) are equal.

On \(V_p^{\mathrm{sa}}\setminus B_p\), the restricted class is the class
associated with the cooriented submanifold
\[
A_{p,\mathrm{gen}}^{\mathrm{sa}}
\subset
V_p^{\mathrm{sa}}\setminus B_p.
\]
The relative class defining \(e k_p^{\mathrm{odd}}\), after restriction to the
same open subset, is also the class associated with this same submanifold.  Therefore
\([\Sigma_J^{\mathrm{odd}}]|_{V_p^{\mathrm{sa}}}\) and
\(k_p^{\mathrm{odd}}|_{V_p^{\mathrm{sa}}}\) agree once their coorientations are
identified.

We spell out this coorientation comparison.  Let \(T_0\in A_p^{\mathrm{sa}}\) and
set \(K=\Ker T_0\).  In a small self-adjoint Fredholm chart determined by the
orthogonal decomposition \(\mathcal H=K\oplus K^\perp\), a nearby operator has
block form
\[
T=\begin{pmatrix}a&b^*\\ b&d\end{pmatrix},
\]
with \(d\) invertible, and its kernel is governed by the Schur complement
\[
s(T)=a-b^*d^{-1}b\in\operatorname{Herm}(K).
\]
Thus the normal coordinate to the kernel-dimension stratum is the Hermitian form
\(s(T)\).  Under the switched graph map to Cibotaru's Lagrangian Grassmannian
model, the same first-order coordinate is obtained by projecting the graph to
\(K\oplus K^*\); the resulting quadratic form on \(K\) is again \(s(T)\).  Hence
the comparison of normal spaces is the identity on
\(\operatorname{Herm}(K)\).  The standard orientation of
\(\operatorname{Herm}(K)\) is also invariant under a change of unitary frame,
because \(U(K)\) acts by \(H\mapsto uHu^*\) and \(U(K)\) is connected.
Therefore Cibotaru's coorientation and the coorientation used in the
definition of \(k_p^{\mathrm{odd}}\) agree.  Consequently, the two
classes restrict to the same class in
\[
H^{p^2}(V_p^{\mathrm{sa}}).
\]

It remains to see that equality after this restriction implies equality on
\(\Fredsaone\).  By \LaterLem{lem:relative-cohomology-fred-vp}, applied with
\(p+1\) in place of \(p\), and by \LaterLem{lem:relative-vanishing-vp}, we have
\[
H^{p^2}(\Fredsaone,V_p^{\mathrm{sa}})=0,
\]
since \(p^2<(p+1)^2\).  Therefore the restriction map
\[
H^{p^2}(\Fredsaone)\longrightarrow H^{p^2}(V_p^{\mathrm{sa}})
\]
is injective.  Thus the class of \(\Sigma_J^{\mathrm{odd}}\) is equal to
the odd Koschorke class \(k_p^{\mathrm{odd}}\) on \(\Fredsaone\).
The product formula now follows from \LaterThm{thm:odd-generalized-classes-paper}.
\end{proof}

%%%%%%%%%%%%%%%%%%%%%%%%%%%%%%%%%%%%%%%%%%%%%%%%
\section{Virasoro and Neveu--Schwarz interpretations}
%%%%%%%%%%%%%%%%%%%%%%%%%%%%%%%%%%%%%%%%%%%%%%%%

This section recalls the concepts from representation theory used below.  We first
recall the Virasoro algebra and the bosonic Fock representation needed for the
even Koschorke classes.  Atiyah--Segal suggested a relation with Koschorke
classes in their work on twisted \(K\)-theory and cohomology, and we recall
Gomi's explicit description of this relation \cite{AtiyahSegal,GomiSugaku}.  We then recall the
Neveu--Schwarz algebra, viewed as the \(N=1\) super-Virasoro algebra in the
Neveu--Schwarz sector, and formulate the corresponding statement for the
cohomology of \(\Fredzero\times\Fredsaone\).

%%%%%%%%%%%%%%%%%%%%%%%%%%%%%%%%%%%%%%%%%%%%%%%%
\subsection{Virasoro algebra}
%%%%%%%%%%%%%%%%%%%%%%%%%%%%%%%%%%%%%%%%%%%%%%%%

\begin{dfn}[Virasoro algebra]\label{dfn:virasoro-algebra-paper}
The Virasoro algebra is the complex Lie algebra generated by elements
\(L_n\) \((n\in\Z)\) and a central element \(C\), subject to
\[
[L_m,L_n]=(m-n)L_{m+n}+\frac{m^3-m}{12}\delta_{m+n,0}C,
\qquad [C,L_n]=0.
\]
It is an infinite-dimensional Lie algebra and is the universal central extension
of the Witt algebra.  If \(C\) acts on a module by the scalar \(c\), then \(c\) is called the central charge.  We use the
standard conventions for Virasoro modules and Fock modules; see
\cite{DiFrancescoMathieuSenechal,TsuchiyaKanie,WakimotoYamadaI,WakimotoYamadaII}.
\end{dfn}

\begin{dfn}[Singular vector]\label{dfn:virasoro-singular-paper}
Let \(M\) be a Virasoro module.  A vector \(v\in M\) is called singular if
\[
L_n v=0\qquad(n>0).
\]
If the module is graded, the degree of a homogeneous singular vector is its
graded degree.
\end{dfn}

Let
\[
A=\C[x_1,x_2,x_3,\ldots],
\qquad \operatorname{wt}(x_n)=n.
\]
Let \(\beta=\sqrt2\).  The Heisenberg algebra is generated by
\(a_n\) \((n\in\Z)\) with
\[
[a_m,a_n]=m\delta_{m+n,0}.
\]
It acts on \(A\) by
\[
a_n=(-1)^{n-1}\frac{\beta}{(n-1)!}\frac{\partial}{\partial x_n},
\qquad
 a_{-n}=(-1)^{n-1}\frac{n!}{\beta}x_n
\quad(n>0),
\]
and \(a_0=0\).  The vacuum vector is \(1\in A\), and it is annihilated by all
\(a_n\) with \(n\ge0\).

\begin{dfn}[Normal ordering]\label{dfn:normal-ordering-paper}
For generators of the Heisenberg algebra, the normal ordered product \(:a_m a_n:\) is obtained
by moving the creation operators \(a_{-r}\) \((r>0)\) to the left of the
annihilation operators \(a_r\) \((r>0)\).  Thus
\[
:a_m a_n:
=
\begin{cases}
a_m a_n,& m\le 0,\\
a_n a_m,& m>0.
\end{cases}
\]
For products containing fermionic generators below, the same convention is used,
with the Koszul sign inserted when two odd generators are interchanged.
\end{dfn}

\begin{prop}[\cite{TsuchiyaKanie,WakimotoYamadaI,WakimotoYamadaII}]\label{prop:bosonic-fock-virasoro-paper}
The operators
\[
L_n=\frac12\sum_{m\in\Z}:a_m a_{n-m}:
\]
define on \(A\) a Virasoro representation of central charge \(1\).
\end{prop}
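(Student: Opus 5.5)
The plan is the standard Sugawara computation. First I check that the operators \(L_n\) are well defined on \(A\). Then I verify the commutation relation between \(L_n\) and the Heisenberg generators, and derive the Virasoro relations from it. The central term is isolated by evaluating on the vacuum.

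\textbf{Well-definedness.} Every element of \(A\) is a polynomial, so it involves only finitely many variables. For fixed \(n\) and a fixed vector \(v\in A\), normal ordering puts an annihilation operator \(a_r\) with \(r>0\) on the right of each term. Each such \(a_r\) is a multiple of \(\partial/\partial x_r\). Hence only finitely many terms of \(\sum_m :a_m a_{n-m}:\) act nontrivially on \(v\), and each \(L_n\) is a well-defined operator on \(A\).

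\textbf{Heisenberg relations.} The operators \(a_n=(-1)^{n-1}\frac{\beta}{(n-1)!}\partial_{x_n}\) and \(a_{-n}=(-1)^{n-1}\frac{n!}{\beta}x_n\) satisfy
\[
[a_n,a_{-n}]=\frac{n!}{(n-1)!}=n .
\]
All other commutators vanish, and \(a_0=0\). So \(A\) is a Heisenberg module in which the vacuum \(1\) is annihilated by every \(a_n\) with \(n\ge0\).

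\textbf{Commutator with \(a_k\).} Normal ordering changes \(a_ma_{n-m}\) only by a scalar, and scalars commute with everything. Using \([a_k,a_ma_{n-m}]=k\delta_{k+m,0}a_{n-m}+k\delta_{k+n-m,0}a_m\), I obtain
\[
[L_n,a_k]=-k\,a_{n+k}.
\]

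\textbf{The relation \([L_m,L_n]\).} I expand \(L_n\) as a sum of normal-ordered products and apply the previous step term by term:
\[
[L_m,L_n]=\tfrac12\sum_j\bigl(-j\,a_{m+j}a_{n-j}-(n-j)\,a_j a_{m+n-j}\bigr).
\]
Re-indexing the two sums gives \((m-n)L_{m+n}\) plus a scalar. That scalar comes from re-normal-ordering finitely many terms, and it can be nonzero only when \(m+n=0\).

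\textbf{The central term.} This is the delicate step, because the infinite-sum manipulations must be justified. I will avoid doing them in general and instead compute the constant directly. Take \(m=-n\) with \(n>0\), apply \([L_n,L_{-n}]\) to the vacuum, and use \(L_0\cdot1=0\). Since
\[
L_n\cdot 1=\tfrac12\sum_{j=1}^{n-1}a_{n-j}a_j\cdot1=0,
\]
the commutator reduces to \(L_nL_{-n}\cdot1\). Here \(L_{-n}\cdot1=\frac12\sum_{j=1}^{n-1}a_{-j}a_{-(n-j)}\cdot1\). Commuting the annihilators through with the Heisenberg relation gives
\[
L_nL_{-n}\cdot1=\tfrac12\sum_{j=1}^{n-1}j(n-j)\cdot 1=\frac{n^3-n}{12}\cdot1 .
\]
This matches the central term \(\frac{m^3-m}{12}\delta_{m+n,0}\,C\) with \(C\) acting as \(1\), so the central charge is \(1\).
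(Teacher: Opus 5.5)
Your proposal is correct and follows the paper's route: the paper simply invokes the standard Sugawara construction from the Heisenberg relations and cites the literature, while you supply the details it leaves to the references (well-definedness on polynomials, $[L_n,a_k]=-k\,a_{n+k}$, and the vacuum evaluation $\frac12\sum_{j=1}^{n-1}j(n-j)=\frac{n^3-n}{12}$). One point deserves to be made explicit: the vacuum computation fixes the constant only once you know that $[L_m,L_n]-(m-n)L_{m+n}$ is a scalar, and this holds either because normal-ordering each combined $j$-term before splitting the two sums gives nonzero scalar corrections for only finitely many $j$, or because the difference commutes with every $a_k$ by the Jacobi identity and therefore acts as a scalar on $A$.
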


\begin{proof}
This is the standard Sugawara construction for the Heisenberg algebra.  With the
normalization above, the Heisenberg relations imply
\[
[L_m,L_n]=(m-n)L_{m+n}+\frac{m^3-m}{12}\delta_{m+n,0},
\]
and hence the central charge is \(1\); see
\cite{TsuchiyaKanie,WakimotoYamadaI,WakimotoYamadaII}.
\end{proof}

The singular vectors relevant here are
those indexed by rectangular partitions.  We use the following form of the
Mimachi--Yamada formula.

\begin{thm}[Mimachi--Yamada~\cite{MimachiYamada}]\label{thm:mimachi-yamada-gomi-paper}
For each \(r\ge0\), there is, up to a scalar, a unique singular vector
\(\chi_{r,r}\in A\) of degree \(r^2\).  Under the power-sum notation
\[
p_n=n!x_n,
\]
this singular vector is represented by the Schur function of rectangular shape
\((r^r)\):
\[
\chi_{r,r}=s_{(r^r)}(p_1,p_2,p_3,\ldots).
\]
\end{thm}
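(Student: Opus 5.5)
The plan is to reduce the statement to the classical Fock-space description of Virasoro singular vectors at central charge \(1\) and then identify the relevant polynomial. I will use the boson--fermion correspondence: under \(p_n=n!x_n\), the space \(A=\C[x_1,x_2,\ldots]\) is identified with the ring of symmetric functions \(\Lambda\). The Heisenberg operators then become
\[
a_{-n}\mapsto \pm\beta^{-1}p_n,\qquad a_n\mapsto \pm\beta\, n\,\partial/\partial p_n .
\]
Up to the rescaling by \(\beta=\sqrt2\) and the alternating signs, this is the standard Fock module \(F_\alpha\) at momentum \(a_0=0\). The signs \((-1)^{n-1}\) are the involution \(\omega\) on power sums, up to overall sign. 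Since \(\omega\) maps \(s_\lambda\) to \(s_{\lambda'}\) and fixes square shapes, the signs will be harmless for \(\lambda=(r^r)\).

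The first step is to invoke the Kac determinant at \(c=1\). In the module generated by \(1\), which has \(h=0\), null vectors occur exactly at the degrees \(h_{r,s}\) with \(r=s\), namely at degree \(rs=r^2\). The Fock module \(F_0\) at \(c=1\) decomposes as a Virasoro module, by the Frenkel--Kac/Segal theorem, into
\[
F_0\cong\bigoplus_{r\ge0}L(c=1,h=r^2).
\]
Each summand is an irreducible highest-weight module. The \(\mathfrak{sl}_2\) symmetry of the \(c=1\) lattice theory accounts for this decomposition, with multiplicity one. Consequently the singular vectors of \(A\) are exactly the highest-weight vectors of these summands, one for each \(r\), up to a scalar, sitting in degree \(r^2\). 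This gives existence and uniqueness.

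The second step is to identify the singular vector explicitly as the Schur function. Here I would follow Mimachi--Yamada and use the free-fermion realization. In the fermionic Fock space, the highest-weight vector of \(L(1,r^2)\) is the state obtained by applying \(r\) fermionic creation operators to the vacuum. This state is the charge-zero part of the \(\mathfrak{sl}_2\)-orbit of the charge \(\pm r\) vacuum, and it corresponds to the Maya diagram with \(r\) particles moved up by \(r\) steps. Under bosonization this state is \(s_{(r^r)}\). Alternatively, one can check directly that \(L_n s_{(r^r)}=0\) for \(n=1,2\) using the Jack/Calogero–Sutherland form of \(L_n\) at \(\beta^2=2\). Since \(L_1\) and \(L_2\) generate all \(L_n\) with \(n>0\), this suffices. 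The required vanishing comes from the Pieri-type action of \(\partial/\partial p_n\) removing border strips: a square shape has no removable strips that produce extra terms compatible with the weight.

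I expect the main obstacle to be this explicit verification, specifically tracking the normalization \(\beta\) and the signs so that the result is exactly \(s_{(r^r)}\) in the \(p_n\) variables. I would handle it by citing Mimachi--Yamada's formula directly, which states that the singular vectors are Jack polynomials of rectangular shape, and specializing the Jack parameter to \(\alpha=1\), which corresponds to \(\beta^2=2\), where Jack polynomials reduce to Schur functions.
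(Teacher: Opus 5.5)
\textbf{Verdict: genuine gap in Step~2.} Your fermionic identification of the singular vector with \(s_{(r^r)}\) fails for \(r\ge2\). Only your fallback, citing Mimachi--Yamada, survives, and that fallback is the paper's proof.

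\textbf{What is sound.} Step~1 works. The paper's \(a_n\) satisfy the Heisenberg relations with \(a_0=0\), so \(A\) is the Fock module \(F_0\). The decomposition \(A\cong\bigoplus_{r\ge0}L(1,r^2)\), together with homogeneity of the \(L_n\), gives existence and uniqueness in degree \(r^2\). One small correction: the Kac-determinant sentence is about the Verma module \(M(1,0)\). Inside \(A\), the submodule generated by \(1\) is already irreducible (for instance \(L_{-1}1=0\)), so the degree-\(r^2\) singular vectors do not lie in it. Your sign bookkeeping is also correct: conjugating the unsigned operators by \(\omega\) gives the paper's operators exactly, and \(\omega\) fixes \(s_{(r^r)}\).

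\textbf{Where Step~2 breaks.} The rescaling by \(\beta\) is irrelevant for the Virasoro module structure, but it matters for the identification with symmetric functions. The map \(p_n\mapsto\beta^{-1}p_n\) multiplies \(p_\mu\) by \(\beta^{-\ell(\mu)}\), so it does not preserve Schur functions up to scalars. The Maya-diagram/Schur dictionary of the boson--fermion correspondence uses \(a_{-n}\leftrightarrow p_n\) and \(a_n\leftrightarrow n\,\partial/\partial p_n\), which is \(\beta=1\). In that dictionary your claim is false for \(r\ge2\).

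Concretely, in the paper's normalization one has
\[
L_1=-\sum_{k\ge1}(k+1)p_k\frac{\partial}{\partial p_{k+1}},\qquad
L_2=\frac{\beta^2}{2}\frac{\partial^2}{\partial p_1^2}+\sum_{k\ge1}(k+2)p_k\frac{\partial}{\partial p_{k+2}} .
\]
Take \(12s_{(2,2)}=p_1^4+3p_2^2-4p_1p_3\). Then \(L_1s_{(2,2)}=0\), but \(L_2(12s_{(2,2)})=(6\beta^2-12)p_1^2\). So \(s_{(2,2)}\) is singular exactly when \(\beta^2=2\).

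At \(\beta=1\) the degree-\(4\) singular vector is \(p_1^4+\tfrac32p_2^2-2p_1p_3\), which is not a Schur function. Since \(p_1^2=s_{(2)}+s_{(1,1)}\), the operator \(L_2\) sends the Maya state of \((2,2)\) to two distinct Maya states, which cannot cancel.

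The underlying confusion is between two lattices:
\begin{itemize}
\item the free-fermion charge lattice \(\Z\), where the charge-\(r\) vacuum has weight \(r^2/2\);
\item the \(\mathfrak{sl}_2\) lattice \(\sqrt2\,\Z\), which is what actually produces \(\bigoplus_r L(1,r^2)\).
\end{itemize}
For the same reason, the border-strip heuristic is not the mechanism. Both terms of \(L_2\) act nontrivially on \(s_{(2,2)}\), and they cancel only because \(\beta^2=2\). A direct check of \(L_2s_{(r^r)}=0\) for general \(r\) would be substantive work, and you have not supplied it.

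\textbf{What remains.} The valid route is your fallback: cite Mimachi--Yamada at Jack parameter \(1\), with the scale \(\beta=\sqrt2\) fixed as in the paper. That is exactly the paper's proof, which simply cites the rectangular case of Mimachi--Yamada for both uniqueness and the Schur form. The \(r=2\) computation above confirms that \(\beta=\sqrt2\) is precisely the normalization in which that formula specializes to \(s_{(r^r)}\).
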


\begin{proof}
The existence and uniqueness of the rectangular singular vector in this Fock
module, together with its expression by the Schur function of shape \((r^r)\),
is the rectangular case of the formula of Mimachi--Yamada
\cite{MimachiYamada}.
\end{proof}

%%%%%%%%%%%%%%%%%%%%%%%%%%%%%%%%%%%%%%%%%%%%%%%%
\subsection{Koschorke classes and singular vectors in a Virasoro representation}
%%%%%%%%%%%%%%%%%%%%%%%%%%%%%%%%%%%%%%%%%%%%%%%%

Over \(\C\), we identify
\[
H^*(\Fredzero;\C)\cong \C[x_1,x_2,x_3,\ldots],
\]
where \(x_n\) is the degree \(2n\) component of the universal Chern character.
Equivalently, if \(p_n\) denotes the \(n\)-th power sum in the Chern roots, then
\[
p_n=n!x_n.
\]

Atiyah--Segal suggested a connection between Koschorke classes and the
Virasoro algebra in their study of twisted \(K\)-theory and cohomology
\cite{AtiyahSegal}.  Gomi made this connection explicit by combining the
Mimachi--Yamada formula with Koschorke's determinantal expression for the
classes \(k_{r,r}\) \cite{GomiSugaku,Koschorke,MimachiYamada}.

\begin{thm}[Gomi~\cite{GomiSugaku}, in the present normalization]\label{thm:gomi-virasoro-koschorke-paper}
Under the identification
\[
H^*(\Fredzero;\C)\cong \C[x_1,x_2,x_3,\ldots],
\]
the singular vector \(\chi_{r,r}\) of degree \(r^2\) corresponds to the
Koschorke class
\[
k_{r,r}=\det(c_{r-i+j})_{1\le i,j\le r}\in H^{2r^2}(\Fredzero;\C).
\]
Moreover, the generator \(L_1\) of the Virasoro algebra satisfies
\[
L_1=-D_{\mathrm{ev}},
\]
where \(D_{\mathrm{ev}}\) is the derivation defined in Section~3.7.
\end{thm}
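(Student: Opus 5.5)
The proof has two parts. First, identify the singular vector $\chi_{r,r}$ with the Koschorke class $k_{r,r}$. Second, compute $L_1$ on $A=\C[x_1,x_2,\ldots]$ and compare it with $D_{\mathrm{ev}}$.

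For the first part, combine Theorem~\ref{thm:mimachi-yamada-gomi-paper} with Corollary~\ref{cor:ordinary-koschorke-pq-paper}. The Mimachi--Yamada formula gives
\[
\chi_{r,r}=s_{(r^r)}(p_1,p_2,\ldots),\qquad p_n=n!x_n .
\]
Under the identification $H^*(\Fredzero;\C)\cong\C[x_1,x_2,\ldots]$, the $x_n$ are the components of the Chern character and the $p_n$ are power sums in the Chern roots. The Chern classes $c_i$ are then the elementary symmetric functions $e_i$ in those roots. The dual Jacobi--Trudi identity
\[
s_{\lambda'}=\det(e_{\lambda_a+b-a})
\]
(cf.\ Remark~\ref{rem:chern-convention-paper}) expresses $\det(c_{r-i+j})_{1\le i,j\le r}$ as $s_{(r^r)'}$. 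Since the rectangle $(r^r)$ is self-conjugate, this equals $s_{(r^r)}$. Hence $\chi_{r,r}=k_{r,r}$, up to the scalar normalization allowed in Theorem~\ref{thm:mimachi-yamada-gomi-paper}. I would also check that the convention $c_i=c_i(-\operatorname{Ind})$, as opposed to $c_i(\operatorname{Ind})$, only changes roots by a sign. That sign acts on degree-$r^2$ Schur functions as the involution $\omega$ up to $(-1)^{r^2}$, and since $\omega s_{(r^r)}=s_{(r^r)}$, the identification survives up to a sign, which is absorbed into the scalar.

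For the second part, write out the Sugawara expression
\[
L_1=\tfrac12\sum_m :a_m a_{1-m}: .
\]
Because $a_0=0$, the two terms with $m=0$ and $m=1$ vanish. The remaining terms pair up as
\[
L_1=\sum_{n\ge1} a_{-n}a_{n+1}.
\]
Substituting the explicit formulas for the Heisenberg generators gives
\[
a_{-n}a_{n+1}=(-1)^{n-1}\frac{n!}{\beta}x_n\cdot(-1)^{n}\frac{\beta}{n!}\partial_{x_{n+1}}=-x_n\partial_{x_{n+1}} .
\]
Therefore
\[
L_1=-\sum_{n\ge1}x_n\,\partial/\partial x_{n+1}.
\]
This is the derivation sending $x_1\mapsto0$ and $x_{n+1}\mapsto -x_n$, which is exactly $-D_{\mathrm{ev}}$. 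Pairing the terms correctly in the normal-ordered sum is routine.

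I expect the main obstacle to be the normalization bookkeeping in the first part: matching the $x_n$, $p_n$, $c_i$ conventions and the sign $c_i(-\operatorname{Ind})$ with Mimachi--Yamada's Schur function. The determinant identity itself is classical. I would handle the bookkeeping by checking the case $r=1$ explicitly: $\chi_{1,1}=p_1=x_1$, while $k_{1,1}=c_1=\pm\,\mathrm{ch}_1$. This fixes the sign, and the general case then follows by $\omega$-invariance of rectangular Schur functions.
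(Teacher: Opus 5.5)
Your proposal is correct and follows essentially the same route as the paper: Mimachi--Yamada gives $\chi_{r,r}=s_{(r^r)}$, the dual Jacobi--Trudi identity (using that the rectangle is self-conjugate) combined with Koschorke's determinant formula identifies it with $k_{r,r}$, and the direct mode computation $L_1=\sum_{n\ge1}a_{-n}a_{n+1}=-\sum_{n\ge1}x_n\,\partial/\partial x_{n+1}$ gives $L_1=-D_{\mathrm{ev}}$. Your extra check of the $c_i(-\operatorname{Ind})$ sign convention, via $\omega$-invariance of $s_{(r^r)}$, is a harmless refinement that the paper does not spell out; in the paper's normalization $e_m(p)\mapsto c_m$, so no sign actually appears.
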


\begin{proof}
By \LaterThm{thm:mimachi-yamada-gomi-paper}, the singular vector
\(\chi_{r,r}\) is the Schur function \(s_{(r^r)}\) in the Chern roots.  Expressing
this Schur function in terms of the elementary symmetric functions gives
\[
s_{(r^r)}=\det(c_{r-i+j})_{1\le i,j\le r}
\]
by the Jacobi--Trudi identity in the dual convention used for Chern classes
\cite{Macdonald}.  Koschorke's formula identifies this determinant with the
Koschorke class \(k_{r,r}\) \cite{Koschorke}.  The equality \(L_1=-D_{\mathrm{ev}}\)
follows by applying the definition of the Heisenberg operators to the formula
\(L_1=\frac12\sum_{m\in\Z}:a_m a_{1-m}:\).  The only nonzero contributions are
\(m\le0\) and \(1-m>0\), and the normalization of the operators \(a_n\) gives
\[
L_1=-\sum_{n\ge1}x_n\frac{\partial}{\partial x_{n+1}}
=-D_{\mathrm{ev}}.
\]
\end{proof}

\begin{rem}\label{rem:L1-geometry-paper}
The derivation \(D_{\mathrm{ev}}\) has a geometric meaning: it is the
infinitesimal operation coming from tensoring by a line bundle, as explained by
Atiyah--Segal \cite{AtiyahSegal}.  In view of Nakajima's geometric
constructions of Heisenberg and Virasoro actions \cite{Nakajima}, it is natural
to ask whether all generators \(L_n\) of the Virasoro algebra admit a geometric
realization in topological \(K\)-theory.  This question is also implicit in Gomi's discussion \cite{GomiSugaku}. It remains
open.
\end{rem}

%%%%%%%%%%%%%%%%%%%%%%%%%%%%%%%%%%%%%%%%%%%%%%%%
\subsection{Neveu--Schwarz algebra}
%%%%%%%%%%%%%%%%%%%%%%%%%%%%%%%%%%%%%%%%%%%%%%%%

\begin{dfn}[Neveu--Schwarz algebra]\label{dfn:NS-algebra-paper}
The Neveu--Schwarz algebra is the complex Lie superalgebra with even generators
\(L_n\) \((n\in\Z)\), odd generators \(G_r\) \((r\in\Z+1/2)\), and a central element
\(C\), subject to
\[
[L_m,L_n]=(m-n)L_{m+n}+\frac{C}{12}(m^3-m)\delta_{m+n,0},
\]
\[
[L_m,G_r]=\left(\frac m2-r\right)G_{m+r},
\]
\[
\{G_r,G_s\}=2L_{r+s}+\frac{C}{3}\left(r^2-\frac14\right)\delta_{r+s,0}.
\]
In this paper we use representations in which \(C\) acts by \(3/2\).  For the
standard representation theory of the Neveu--Schwarz algebra, see
\cite{DiFrancescoMathieuSenechal,KacWakimoto,IoharaKogaI,IoharaKogaII}.
\end{dfn}

A vector in a Neveu--Schwarz module is called singular if it is annihilated by
all positive modes \(L_n\) \((n>0)\) and \(G_r\) \((r>0)\).  We use only the
polynomial representation described below, so every vector under consideration
is a finite polynomial.

The result for the Neveu--Schwarz algebra in this paper is more modest than
the even statement for the Virasoro algebra above.  We construct a fixed representation on cohomology and use the bases from
Section~4, formed by generalized Koschorke classes, to expand singular vectors
in that representation.

%%%%%%%%%%%%%%%%%%%%%%%%%%%%%%%%%%%%%%%%%%%%%%%%
\subsection{The cohomological Neveu--Schwarz representation}
%%%%%%%%%%%%%%%%%%%%%%%%%%%%%%%%%%%%%%%%%%%%%%%%

Set
\[
F_{\mathrm{ev}}=\Fredzero,
\qquad
F_{\mathrm{odd}}=\Fredsaone.
\]
Then
\[
\mathcal F
=
H^*(F_{\mathrm{ev}}\times F_{\mathrm{odd}};\C)
\cong
\C[x_1,x_2,\ldots]
\otimes
\Lambda_{\C}(y_{1/2},y_{3/2},y_{5/2},\ldots).
\]
Here \(x_n\) is the degree \(2n\) component of the universal Chern character on
\(\Fredzero\).  The odd variable \(y_{m+1/2}\) is the degree \(2m+1\) component of
the universal odd Chern character on \(\Fredsaone\).  With the orientation
convention fixed in Sections~3 and~4, the complex image of the odd Chern class
is
\[
c_{m+1/2}=m!\,y_{m+1/2}
\qquad(m\ge0).
\]
We use the weights
\[
\operatorname{wt}(x_n)=n,
\qquad
\operatorname{wt}(y_{m+1/2})=m+\frac12.
\]

Define Heisenberg operators \(a_n\) and Clifford operators \(b_r\) on
\(\mathcal F\) by
\[
a_{-n}=(-1)^{n-1}\frac{n!}{\beta}x_n,
\qquad
 a_n=(-1)^{n-1}\frac{\beta}{(n-1)!}\frac{\partial}{\partial x_n}
\qquad(n>0),
\]
\[
b_{-(m+1/2)}=(-1)^m m!y_{m+1/2},
\qquad
 b_{m+1/2}=\frac{(-1)^m}{m!}\frac{\partial}{\partial y_{m+1/2}}
\qquad(m\ge0),
\]
and put \(a_0=0\).  The derivative with respect to an odd variable is the left
derivative on the exterior algebra.  These operators satisfy
\[
[a_m,a_n]=m\delta_{m+n,0},
\qquad
\{b_r,b_s\}=\delta_{r+s,0}.
\]

\begin{dfn}\label{dfn:super-operators-paper}
On \(\mathcal F\), define
\[
L_n=\frac12\sum_{m\in\Z}:a_m a_{n-m}:
+\frac14\sum_{r\in\Z+1/2}(n-2r):b_r b_{n-r}:,
\]
and
\[
G_r=\sum_{m\in\Z}a_m b_{r-m}.
\]
The normal ordering is the one fixed in \LaterDfn{dfn:normal-ordering-paper}.
On each polynomial vector, only finitely many summands act nontrivially.
\end{dfn}

\begin{prop}[\cite{DiFrancescoMathieuSenechal,IoharaKogaII,DLM}]\label{prop:super-representation-paper}
The operators in \LaterDfn{dfn:super-operators-paper} define on \(\mathcal F\) a
representation of the Neveu--Schwarz algebra with central charge \(3/2\).
\end{prop}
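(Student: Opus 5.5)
The plan is to reduce the statement to the standard free-field realization of the \(N=1\) superconformal algebra: one free boson together with one free Neveu--Schwarz fermion. The cohomological origin of \(\mathcal F\) plays no role here. Only two facts matter: the Heisenberg and Clifford relations \([a_m,a_n]=m\delta_{m+n,0}\) and \(\{b_r,b_s\}=\delta_{r+s,0}\), and the vacuum conditions \(a_n1=0\) for \(n\ge0\) and \(b_r1=0\) for \(r>0\).

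\textbf{Step 1: the commutation relations on \(\mathcal F\).} First I will check directly from the explicit formulas that these relations hold.
\begin{itemize}
\item For the bosons, \([a_n,a_{-n}]=(-1)^{2n-2}\frac{\beta}{(n-1)!}\frac{n!}{\beta}=n\).
\item For the fermions, \(\{b_{m+1/2},b_{-(m+1/2)}\}=\frac{(-1)^m}{m!}(-1)^m m!\,\{\partial_y,y\}=1\), using that left multiplication and the left derivative anticommute to the identity on the exterior algebra.
\item All other pairs of modes commute or anticommute trivially.
\end{itemize}
Next I will note that \(\mathcal F\) is spanned by monomials \(a_{-n_1}\cdots a_{-n_k}b_{-r_1}\cdots b_{-r_l}1\). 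This identifies \(\mathcal F\) with the Fock module of the Heisenberg--Clifford algebra. I will also check that every sum in Definition~\ref{dfn:super-operators-paper} is finite on each vector, because annihilation modes lower the weight.

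\textbf{Step 2: the bosonic part.} By Proposition~\ref{prop:bosonic-fock-virasoro-paper}, the operators \(L^a_n=\frac12\sum:a_ma_{n-m}:\) satisfy the Virasoro relations with central charge \(1\). They also satisfy \([L^a_m,a_n]=-na_{m+n}\).

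\textbf{Step 3: the fermionic part.} I will show that \(L^b_n=\frac14\sum(n-2r):b_rb_{n-r}:\) satisfies \([L^b_m,b_s]=-(\frac m2+s)b_{m+s}\) and gives a Virasoro representation with central charge \(1/2\). The approach is the usual one: first compute the commutator with \(b_s\) before normal ordering, then fix the central term by evaluating \(\langle 1,[L^b_m,L^b_{-m}]1\rangle\) on the vacuum, which yields \(\frac1{24}(m^3-m)\). Since the bosonic and fermionic parts commute, \(L_n=L^a_n+L^b_n\) is Virasoro with central charge \(3/2\).

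\textbf{Step 4: the mixed relations.} From the bracket rules in Steps 2 and 3, I will derive
\[[L_m,G_r]=\sum_k\bigl(-k\,a_{m+k}b_{r-k}-(\tfrac m2+r-k)\,a_kb_{m+r-k}\bigr).\]
Reindexing the first sum gives \((\frac m2-r)G_{m+r}\).

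The main obstacle is the anticommutator \(\{G_r,G_s\}\). Expanding it gives
\[\sum_{m,k}\bigl([a_m,a_k]\,b_{r-m}b_{s-k}+a_ka_m\{b_{r-m},b_{s-k}\}\bigr)\]
\[=\sum_m m\,b_{r-m}b_{s+m}+\sum_m a_{r+s-m}a_m .\]
Both sums are infinite, so they must be rewritten in normal-ordered form. The second sum becomes \(2L^a_{r+s}\). The first becomes \(2L^b_{r+s}\), which requires symmetrizing \(m\mapsto r+s-m\) under the fermionic sign.

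The central term comes from the reordering corrections, and it is only present when \(r+s=0\). For that case I will compute the vacuum expectation \(\langle 1,G_rG_{-r}1\rangle=\sum_{0<m<r+1/2}m=\frac12(r^2-\frac14)\) for \(r>0\). This matches \(\frac{C}{3}(r^2-\frac14)\) at \(C=3/2\), giving \(\{G_r,G_{-r}\}=2L_0+\frac12(r^2-\frac14)\).

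I expect the sign and ordering bookkeeping in this step to be the only delicate point. To keep it clean, I will do the computation on a general monomial, truncated at a finite weight, so that every rearrangement involves only finite sums.
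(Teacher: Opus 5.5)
Your proposal is correct and follows the same route as the paper. The paper reduces the statement to the standard free-field realization: a bosonic Sugawara part with central charge \(1\), a commuting free Neveu--Schwarz fermion part with central charge \(1/2\), and \(G_r=\sum_m a_m b_{r-m}\) as the supercurrent, with the verification left to the cited literature. You carry out that verification explicitly, and your normalizations check out: the mode relations, \([L^b_m,b_s]=-(\frac m2+s)b_{m+s}\), the reindexing to \((\frac m2-r)G_{m+r}\), and the vacuum value \(\frac12(r^2-\frac14)\), which matches \(C=3/2\).
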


\begin{proof}
The first summand in \(L_n\) is the bosonic Sugawara operator and gives central
charge \(1\).  The second summand is the standard fermionic contribution and
gives central charge \(1/2\).  The two parts commute.  The operators
\[
G_r=\sum_{m\in\Z}a_m b_{r-m}
\]
are the standard supercurrents for these oscillators.  The relations in
\LaterDfn{dfn:NS-algebra-paper} follow from the Heisenberg and Clifford
relations; see \cite{DiFrancescoMathieuSenechal,IoharaKogaII,DLM}.
\end{proof}

Let \(D_{\mathrm{ev}}\) and \(D_{\mathrm{odd}}\) be the derivations introduced in
Section~3.7, extended to the polynomial-exterior algebra \(\mathcal F\).  Define
\[
D_{\mathcal F}=D_{\mathrm{ev}}+D_{\mathrm{odd}},
\]
or explicitly,
\[
D_{\mathcal F}
=
\sum_{n\ge1}x_n\frac{\partial}{\partial x_{n+1}}
+
\sum_{m\ge0}y_{m+1/2}\frac{\partial}{\partial y_{m+3/2}}.
\]

\begin{prop}\label{prop:L1-equals-minus-D-paper}
On \(\mathcal F\), one has
\[
L_1=-D_{\mathcal F}.
\]
\end{prop}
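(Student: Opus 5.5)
The plan is a direct computation: split \(L_1\) from \LaterDfn{dfn:super-operators-paper} into its bosonic part \(\tfrac12\sum_{m}:a_ma_{1-m}:\) and its fermionic part \(\tfrac14\sum_{r}(1-2r):b_rb_{1-r}:\). I will show that the first equals \(-D_{\mathrm{ev}}\) and the second equals \(-D_{\mathrm{odd}}\). Throughout, I will use the explicit formulas for \(a_n\) and \(b_r\) and the normal ordering of \LaterDfn{dfn:normal-ordering-paper}.

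\emph{Bosonic part.} This is the computation indicated in the proof of \LaterThm{thm:gomi-virasoro-koschorke-paper}, and I would redo it to make the normalization explicit.
\begin{itemize}
\item The summands with \(m=0\) or \(m=1\) contain \(a_0=0\) and drop out.
\item For \(m=-n\) with \(n\ge1\), the summand is \(a_{-n}a_{n+1}\).
\item For \(m=n+1\), normal ordering turns the summand into \(:a_{n+1}a_{-n}:=a_{-n}a_{n+1}\).
\end{itemize}
Hence the bosonic part equals \(\sum_{n\ge1}a_{-n}a_{n+1}\). Substituting the definitions gives
\[
a_{-n}a_{n+1}
=(-1)^{n-1}\frac{n!}{\beta}x_n\cdot(-1)^{n}\frac{\beta}{n!}\frac{\partial}{\partial x_{n+1}}
=-x_n\frac{\partial}{\partial x_{n+1}}.
\]
Summing over \(n\ge1\), the bosonic part is \(-D_{\mathrm{ev}}\).

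\emph{Fermionic part.} A summand \(:b_rb_{1-r}:\) acts nontrivially on polynomial vectors only if one index is negative and the other positive. The remaining case is \(r=1-r=\tfrac12\), which carries the coefficient \(1-2r=0\). For \(m\ge0\) the two relevant values of \(r\) are as follows.
\begin{itemize}
\item \(r=-(m+\tfrac12)\): the coefficient is \(\tfrac14(2m+2)=\tfrac{m+1}{2}\), and the summand is \(b_{-(m+1/2)}b_{m+3/2}\).
\item \(r=m+\tfrac32\): the coefficient is \(\tfrac14(-2m-2)=-\tfrac{m+1}{2}\), and the Koszul sign in normal ordering gives \(:b_{m+3/2}b_{-(m+1/2)}:=-b_{-(m+1/2)}b_{m+3/2}\).
\end{itemize}
The two contributions add to \((m+1)\,b_{-(m+1/2)}b_{m+3/2}\). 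Substituting the definitions gives
\[
(m+1)\,(-1)^m m!\,y_{m+1/2}\cdot\frac{(-1)^{m+1}}{(m+1)!}\frac{\partial}{\partial y_{m+3/2}}
=-y_{m+1/2}\frac{\partial}{\partial y_{m+3/2}}.
\]
Summing over \(m\ge0\), the fermionic part is \(-D_{\mathrm{odd}}\).

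Adding the two parts gives \(L_1=-D_{\mathrm{ev}}-D_{\mathrm{odd}}=-D_{\mathcal F}\).

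The main point needing care is the sign bookkeeping in the fermionic part. The Koszul sign from normal ordering must combine with the antisymmetric factor \(n-2r\) so that the two contributions add rather than cancel, and the signs \((-1)^m\) and \((-1)^{m+1}\) in the Clifford operators must produce the overall minus sign. I would check this directly on a test vector such as \(y_{3/2}\), where \(L_1\) should produce \(-y_{1/2}\). I would also use the fact that \(\partial/\partial y\) is the left derivative, so that \(y_{m+1/2}\,\partial/\partial y_{m+3/2}\) acts as stated in the definition of \(D_{\mathcal F}\).
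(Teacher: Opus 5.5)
Your proposal is correct and follows the same route as the paper: you split \(L_1\) into its bosonic and fermionic parts and compute each directly from the mode formulas, obtaining \(-\sum_{n\ge1}x_n\,\partial/\partial x_{n+1}\) and \(-\sum_{m\ge0}y_{m+1/2}\,\partial/\partial y_{m+3/2}\), exactly as the paper does. Your version makes explicit the bookkeeping the paper leaves implicit: the pairing of the \(m\) and \(1-m\) terms, the vanishing \(r=\tfrac12\) term, and the Koszul sign combining with the factor \(1-2r\) to give the coefficient \(m+1\). All of these signs and coefficients check out.
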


\begin{proof}
We compute directly from the definitions of the modes.  In the even part, the
same computation as in \LaterThm{thm:gomi-virasoro-koschorke-paper} gives
\[
L_1^{\mathrm{ev}}
=-\sum_{n\ge1}x_n\frac{\partial}{\partial x_{n+1}}.
\]
For the fermionic part, the normal ordered expression gives
\[
L_1^{\mathrm{odd}}
=-\sum_{m\ge1}y_{m-1/2}\frac{\partial}{\partial y_{m+1/2}}
=-\sum_{m\ge0}y_{m+1/2}\frac{\partial}{\partial y_{m+3/2}}.
\]
Adding the two contributions yields \(L_1=-D_{\mathcal F}\).
\end{proof}

%%%%%%%%%%%%%%%%%%%%%%%%%%%%%%%%%%%%%%%%%%%%%%%%
\subsection{Power-sum coordinates and a basis formed by tensor products}
%%%%%%%%%%%%%%%%%%%%%%%%%%%%%%%%%%%%%%%%%%%%%%%%

We use the following power-sum coordinates for the polynomial representation.
Let
\[
R=\C[p_1,p_2,\ldots]\otimes\Lambda_\C(\widetilde p_0,\widetilde p_1,\ldots),
\]
with
\[
\operatorname{wt}(p_n)=n,
\qquad
\operatorname{wt}(\widetilde p_m)=m+\frac12.
\]
Define a graded-commutative algebra isomorphism
\[
\Phi\colon R\longrightarrow\mathcal F
\]
by
\[
\Phi(p_n)=n!x_n,
\qquad
\Phi(\widetilde p_m)=m!y_{m+1/2}.
\]

\begin{prop}\label{prop:Phi-intertwines-paper}
The map \(\Phi\) intertwines the Neveu--Schwarz actions on \(R\) and on
\(\mathcal F\), where the action on \(R\) is defined by the corresponding formulas for
\(a_n\) and \(b_r\) in the variables \(p_n\) and \(\widetilde p_m\).
\end{prop}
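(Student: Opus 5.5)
The plan is to define the action on \(R\) by conjugation and then verify that it is given by the same formulas in the new variables. That is, we set \(a_n^R=\Phi^{-1}a_n\Phi\) and \(b_r^R=\Phi^{-1}b_r\Phi\), and check that these operators act on \(R\) by the formulas
\[
a_{-n}^R=\frac{(-1)^{n-1}}{\beta}\,p_n,\qquad a_n^R=(-1)^{n-1}\beta\, n\frac{\partial}{\partial p_n},
\qquad
b_{-(m+1/2)}^R=(-1)^m\widetilde p_m,\qquad b_{m+1/2}^R=(-1)^m\frac{\partial}{\partial \widetilde p_m}.
\]
These are the natural Heisenberg and Clifford formulas in power-sum coordinates, and they are what we mean by ``the corresponding formulas'' on \(R\).

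Since \(\Phi\) is an algebra isomorphism that rescales each generator by a nonzero constant, multiplication operators and derivations transform in a simple way. For the multiplication operators, \(\Phi\circ p_n=n!\,x_n\circ\Phi\) and \(\Phi\circ\widetilde p_m=m!\,y_{m+1/2}\circ\Phi\). For the derivations, the chain rule gives
\[
\Phi\circ\frac{\partial}{\partial p_n}=\frac1{n!}\frac{\partial}{\partial x_n}\circ\Phi,
\qquad
\Phi\circ\frac{\partial}{\partial\widetilde p_m}=\frac1{m!}\frac{\partial}{\partial y_{m+1/2}}\circ\Phi.
\]
To justify these, check each identity on generators: both sides are (super)derivations, twisted along \(\Phi\), and they agree on \(p_k\) and \(\widetilde p_k\). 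For the odd variables one must use the left derivative on both sides. The Koszul sign is compatible because \(\Phi\) preserves parity, so the left-derivative rule passes through \(\Phi\) unchanged.

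Substituting these identities into the definitions of \(a_{\pm n}\) and \(b_{\pm(m+1/2)}\) on \(\mathcal F\) gives the formulas above. For example,
\[
a_n=(-1)^{n-1}\frac{\beta}{(n-1)!}\frac{\partial}{\partial x_n}
\]
becomes \((-1)^{n-1}\beta\,\frac{n!}{(n-1)!}\partial_{p_n}\), which equals \((-1)^{n-1}\beta\, n\,\partial_{p_n}\). The operators \(a^R\) and \(b^R\) satisfy the same Heisenberg and Clifford relations, since conjugation preserves (super)commutators. Alternatively, one can check this directly, e.g. \([n\partial_{p_n},p_n]=n\).

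Finally, \(L_n\) and \(G_r\) in Definition~\ref{dfn:super-operators-paper} are built from \(a_m\) and \(b_s\) by products, normal-ordered sums, and the Koszul sign convention. All of these operations commute with conjugation by \(\Phi\), because \(\Phi\) maps creation operators to creation operators and annihilation operators to annihilation operators, and it preserves parity. It follows that \(\Phi L_n^R=L_n\Phi\) and \(\Phi G_r^R=G_r\Phi\). The action on each polynomial vector involves only finitely many nonzero summands, so no convergence issue arises.

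The only point requiring real care is the sign and ordering bookkeeping for the odd left derivatives and the normal ordering of the \(b\)-modes. Since every step is an even, parity-preserving conjugation, I expect no sign to be introduced.
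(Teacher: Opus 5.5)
Your proposal is correct and follows essentially the same route as the paper. Both arguments observe that the rescaling \(\Phi(p_n)=n!x_n\), \(\Phi(\widetilde p_m)=m!\,y_{m+1/2}\) carries each oscillator mode \(a_n,b_r\) on \(R\) to the corresponding mode on \(\mathcal F\), and then conclude that the normal-ordered expressions \(L_n\) and \(G_r\) are intertwined. Your version adds the explicit chain-rule check for the derivations, including the odd left derivatives, which the paper leaves implicit.
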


\begin{proof}
On \(R\), the operators are
\[
a_{-n}=(-1)^{n-1}\frac{1}{\beta}p_n,
\qquad
 a_n=(-1)^{n-1}\beta n\frac{\partial}{\partial p_n},
\]
\[
b_{-(m+1/2)}=(-1)^m\widetilde p_m,
\qquad
 b_{m+1/2}=(-1)^m\frac{\partial}{\partial\widetilde p_m}.
\]
The identities \(\Phi(p_n)=n!x_n\) and
\(\Phi(\widetilde p_m)=m!y_{m+1/2}\) carry these operators to the corresponding
operators on \(\mathcal F\).  Since \(L_n\) and \(G_r\) are polynomials in these
operators with the same normal ordered formulas, \(\Phi\) intertwines the
actions.
\end{proof}

\begin{cor}\label{cor:singular-pullback-paper}
An element \(P\in R\) is singular if and only if \(\Phi(P)\) is singular in
\(\mathcal F\).
\end{cor}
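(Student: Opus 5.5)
The plan is to deduce the corollary from \LaterProp{prop:Phi-intertwines-paper} together with the fact that \(\Phi\) is bijective. Let \(\rho_R\) and \(\rho_{\mathcal F}\) denote the two Neveu--Schwarz actions. The proposition gives
\[
\Phi\circ\rho_R(X)=\rho_{\mathcal F}(X)\circ\Phi
\]
for every generator \(X\in\{L_n,G_r\}\). It is enough to check this on the generators \(a_n\) and \(b_r\), since \(L_n\) and \(G_r\) are built from them by the same normal ordered formulas. Singularity is a condition only on the positive modes \(L_n\) \((n>0)\) and \(G_r\) \((r>0)\), so the intertwining relation is all we need.

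The first step is to note that \(\Phi\) is a linear isomorphism. It is a graded-commutative algebra homomorphism that sends each free generator \(p_n\) and \(\widetilde p_m\) to a nonzero scalar multiple of the corresponding free generator \(x_n\) or \(y_{m+1/2}\). Its inverse is therefore the homomorphism determined by
\[
x_n\mapsto p_n/n!,
\qquad
y_{m+1/2}\mapsto \widetilde p_m/m!.
\]
In particular \(\Phi\) is injective.

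Next, for \(P\in R\) and any positive mode \(X\), we have
\[
\rho_{\mathcal F}(X)\Phi(P)=\Phi(\rho_R(X)P).
\]
If \(P\) is singular, the right-hand side is \(\Phi(0)=0\), so \(\Phi(P)\) is singular. Conversely, if \(\Phi(P)\) is singular, then \(\Phi(\rho_R(X)P)=0\), and injectivity of \(\Phi\) gives \(\rho_R(X)P=0\). Hence \(P\) is singular. Since \(\Phi\) preserves weights, this correspondence also respects homogeneous components and degrees.

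The only point needing care is to confirm that the intertwining in \LaterProp{prop:Phi-intertwines-paper} holds for the odd generators \(G_r\) with the correct Koszul signs. I expect this to hold because \(\Phi\) is even and preserves parity, so the left derivatives \(\partial/\partial\widetilde p_m\) correspond exactly to \(m!^{-1}\partial/\partial y_{m+1/2}\). Beyond that, no real obstacle arises.
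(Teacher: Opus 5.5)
Your proof is correct and is essentially the paper's argument. The paper simply says the corollary follows immediately from Proposition~\ref{prop:Phi-intertwines-paper}. You additionally spell out that \(\Phi\) is a linear isomorphism, which the ``if'' direction (passing from \(\Phi(XP)=0\) to \(XP=0\)) silently uses.
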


\begin{proof}
This follows immediately from \LaterProp{prop:Phi-intertwines-paper}.
\end{proof}

We use the following basis of the algebra of symmetric functions.  Define
\(e_m(p)\) by
\[
\sum_{m\ge0}e_m(p)t^m
=
\exp\left(
\sum_{n\ge1}(-1)^{n-1}\frac{p_nt^n}{n}
\right).
\]
For a partition \(\lambda=(\lambda_1,\ldots,\lambda_k)\), put
\[
\mathsf{s}_\lambda(p)
=
\det(e_{\lambda_a+b-a}(p))_{1\le a,b\le k}.
\]
Equivalently, \(\mathsf{s}_\lambda(p)=s_{\lambda'}(p)\) in the usual notation
for Schur functions.  This convention is chosen because the Giambelli formula in
Section~4 is written in terms of Chern classes.

For a finite subset \(J=\{j_1<\cdots<j_m\}\subset\Z_{\ge0}\), set
\[
q_J=\prod_{\nu=1}^m\widetilde p_{j_\nu},
\qquad
q_\varnothing=1.
\]
On the odd factor, we use the generalized odd Koschorke class
\[
[\Sigma_J^{\mathrm{odd}}]
=
c_{j_1+1/2}\cdots c_{j_m+1/2},
\qquad
[\Sigma_\varnothing^{\mathrm{odd}}]=1.
\]

\begin{prop}\label{prop:koschorke-basis-paper}
The elements
\[
\mathsf{s}_\lambda(p)q_J
\]
form a homogeneous \(\C\)-basis of \(R\).  Under \(\Phi\), this basis is carried to
\[
[\Sigma_\lambda]\otimes[\Sigma_J^{\mathrm{odd}}]
\]
in \(H^*(F_{\mathrm{ev}}\times F_{\mathrm{odd}};\C)\).  Equivalently,
\[
\Phi(\mathsf{s}_\lambda(p)q_J)
=
[\Sigma_\lambda]\otimes[\Sigma_J^{\mathrm{odd}}].
\]
\end{prop}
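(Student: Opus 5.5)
The argument splits into two independent parts: the basis property in $R$, and the identification of the image under $\Phi$. I will handle the even and odd tensor factors separately and then combine them, since $R=\C[p_1,p_2,\ldots]\otimes\Lambda_\C(\widetilde p_0,\widetilde p_1,\ldots)$ and $\Phi$ is a tensor product of the even and odd restrictions. The K\"unneth isomorphism over $\C$ (both spaces have finite type torsion-free cohomology) identifies $\mathcal F$ with $H^*(F_{\mathrm{ev}};\C)\otimes H^*(F_{\mathrm{odd}};\C)$, so a product of an even and an odd element is the cross product $[\Sigma_\lambda]\otimes[\Sigma_J^{\mathrm{odd}}]$.

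\emph{Odd factor.} The monomials $q_J=\widetilde p_{j_1}\cdots\widetilde p_{j_m}$, taken over finite $J$ with increasing indices, are the standard basis of the exterior algebra. For the image, the definition of $\Phi$ gives $\Phi(\widetilde p_j)=j!\,y_{j+1/2}$, and the normalization $c_{j+1/2}=j!\,y_{j+1/2}$ fixed in Section~5.4 gives $\Phi(\widetilde p_j)=c_{j+1/2}$. Multiplicativity of $\Phi$ in the displayed order then yields $\Phi(q_J)=c_{j_1+1/2}\cdots c_{j_m+1/2}$. By \LaterThm{thm:odd-generalized-classes-paper}, which takes the product in the same increasing order so that no Koszul sign appears, this equals $[\Sigma_J^{\mathrm{odd}}]$.

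\emph{Even factor.} The key point is that $\Phi(e_m(p))=c_m$. The Chern character relation, with $x_n$ the degree $2n$ component and $p_n=n!x_n$ the power sums of the Chern roots, gives Newton's identity
\[
\sum_m c_m t^m=\exp\Bigl(\sum_{n\ge1}(-1)^{n-1}\frac{p_nt^n}{n}\Bigr),
\]
which is exactly the generating function defining $e_m(p)$. Taking determinants then gives $\Phi(\mathsf s_\lambda(p))=\det(c_{\lambda_a+b-a})$, and by \LaterCor{cor:generalized-koschorke-giambelli-paper} this is $[\Sigma_\lambda]$. The same corollary shows that the $[\Sigma_\lambda]$ form a basis of $H^*(\Fredzero;\C)$. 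Since $\Phi$ restricted to the even part is an algebra isomorphism, the $\mathsf s_\lambda(p)$ form a basis of $\C[p_1,p_2,\ldots]$; independently, this also follows from $\mathsf s_\lambda=s_{\lambda'}$ being the Schur basis.

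\emph{Combining and main obstacle.} Tensor products of bases give a basis, and homogeneity holds because $\mathsf s_\lambda$ has weight $|\lambda|$ and $q_J$ has weight $\sum(j+\tfrac12)$. The step I expect to be the main obstacle is the sign and normalization check in $\Phi(e_m(p))=c_m$. With the convention $c_i=c_i(-\operatorname{Ind})$, the $x_n$ must be taken as components of the Chern character of the same virtual bundle $-\operatorname{Ind}$. If instead they come from $\operatorname{ch}(\operatorname{Ind})$, the generating function changes by $t\mapsto -t$ or by inversion, and we would get complete rather than elementary symmetric functions, which would swap $\lambda$ with $\lambda'$. I would fix this by testing on a line bundle, using $c(L)=1+t\,c_1$ together with $p_n=c_1^n$, and then carry the resulting convention through both the Giambelli determinant and the definition of $\mathsf s_\lambda$.
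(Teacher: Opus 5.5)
Your proposal is correct and follows the paper's own route. The paper also gets the basis from $\mathsf{s}_\lambda(p)=s_{\lambda'}(p)$ together with the exterior monomials $q_J$. It identifies the even factor via $e_m(p)\mapsto c_m$ and \LaterCor{cor:generalized-koschorke-giambelli-paper}, identifies the odd factor via $c_{m+1/2}=m!\,y_{m+1/2}$ and \LaterThm{thm:odd-generalized-classes-paper}, and concludes by multiplicativity of $\Phi$.

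The normalization you flag as the main obstacle is a matter of convention that the paper adopts without stating it. The map $e_m(p)\mapsto c_m$ holds exactly when the $x_n$ are the Chern character components of the same virtual bundle $-\operatorname{Ind}$ whose Chern classes are the $c_i$.
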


\begin{proof}
Since conjugation \(\lambda\mapsto\lambda'\) is a bijection on partitions, the
functions \(\mathsf{s}_\lambda(p)=s_{\lambda'}(p)\) form a basis of the symmetric
function algebra \(\C[p_1,p_2,\ldots]\).  The monomials \(q_J\) form a basis of
the exterior algebra.  Hence their products form a homogeneous basis of \(R\).

The defining identity for \(e_m(p)\) gives, after substituting \(p_n=n!x_n\),
\[
e_m(p)\longmapsto c_m.
\]
Therefore
\[
\Phi(\mathsf{s}_\lambda(p))
=
\det(c_{\lambda_a+b-a})_{1\le a,b\le k}
=[\Sigma_\lambda]
\]
by \LaterCor{cor:generalized-koschorke-giambelli-paper}.  Moreover,
\[
\Phi(q_J)
=
\prod_{\nu=1}^m j_\nu!\,y_{j_\nu+1/2}
=
 c_{j_1+1/2}\cdots c_{j_m+1/2}
= [\Sigma_J^{\mathrm{odd}}].
\]
Multiplicativity of \(\Phi\) proves the formula.
\end{proof}

\begin{rem}
This is the point at which arbitrary partitions are needed.  The Koschorke classes do not form a basis indexed by all partitions, whereas the
classes \([\Sigma_\lambda]\) do.
\end{rem}

%%%%%%%%%%%%%%%%%%%%%%%%%%%%%%%%%%%%%%%%%%%%%%%%
\subsection{Singular vectors and expansions in generalized Koschorke classes}
%%%%%%%%%%%%%%%%%%%%%%%%%%%%%%%%%%%%%%%%%%%%%%%%

\begin{thm}\label{thm:super-singular-expansion-paper}
Let \(P\in R\) be a singular vector for the polynomial representation of the
Neveu--Schwarz algebra defined above.  Then there are unique coefficients
\(a_{\lambda,J}\in\C\), only finitely many nonzero, such that
\[
P=\sum_{\lambda,J}a_{\lambda,J}\mathsf{s}_\lambda(p)q_J.
\]
Consequently,
\[
\Phi(P)
=
\sum_{\lambda,J}a_{\lambda,J}
[\Sigma_\lambda]\otimes[\Sigma_J^{\mathrm{odd}}]
\in H^*(F_{\mathrm{ev}}\times F_{\mathrm{odd}};\C).
\]
\end{thm}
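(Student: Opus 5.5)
The plan is to observe that the statement is essentially a consequence of \LaterProp{prop:koschorke-basis-paper}, and that singularity plays no role beyond guaranteeing that \(P\) is an element of \(R\). The key point is that every element of \(R\), singular or not, is a finite linear combination of basis vectors. I will first note that \(R=\C[p_1,p_2,\ldots]\otimes\Lambda_\C(\widetilde p_0,\widetilde p_1,\ldots)\) consists of finite sums of monomials. By definition of the Neveu--Schwarz action (\LaterDfn{dfn:super-operators-paper}, transported via \LaterProp{prop:Phi-intertwines-paper}), a singular vector is a particular element of \(R\), so it is a polynomial in finitely many variables.

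Next, I decompose \(P\) into weight-homogeneous components \(P=\sum_w P_w\); only finitely many \(w\in\tfrac12\Z_{\ge0}\) occur. Each weight space \(R_w\) is finite-dimensional. By \LaterProp{prop:koschorke-basis-paper}, the elements \(\mathsf{s}_\lambda(p)q_J\) with \(|\lambda|+\sum_{j\in J}(j+\tfrac12)=w\) form a basis of \(R_w\). This holds because the \(\mathsf s_\lambda=s_{\lambda'}\) form a graded basis of symmetric functions, and the \(q_J\) form a graded basis of the exterior algebra. Expanding each \(P_w\) in this finite basis gives coefficients \(a_{\lambda,J}\), and only finitely many of them are nonzero in total. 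Uniqueness follows from linear independence of the full family, which is again \LaterProp{prop:koschorke-basis-paper}.

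Finally, I apply the algebra isomorphism \(\Phi\), which is linear. By \LaterProp{prop:koschorke-basis-paper}, \(\Phi(\mathsf{s}_\lambda(p)q_J)=[\Sigma_\lambda]\otimes[\Sigma_J^{\mathrm{odd}}]\), which yields the displayed expansion of \(\Phi(P)\). By \LaterCor{cor:singular-pullback-paper}, \(\Phi(P)\) is itself singular in \(\mathcal F\), so the expansion is indeed an expansion of a singular vector of the cohomological representation.

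The only step needing care is the finiteness and basis claim. It requires checking that the products \(\mathsf s_\lambda q_J\) are weight-homogeneous and that each weight space is finite-dimensional, so that "basis" is meant in the algebraic sense of finite linear combinations rather than completions. I expect no real obstacle here: the weights are positive, and each weight admits only finitely many pairs \((\lambda,J)\).
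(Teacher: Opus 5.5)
Your proposal is correct and matches the paper's proof. The paper also obtains the unique expansion from the basis property in \LaterProp{prop:koschorke-basis-paper}, gets finiteness from \(P\) being a polynomial, and applies \(\Phi\) to obtain the cohomological formula; as you observe, singularity of \(P\) is never used, and your weight-space argument just spells out the finiteness the paper states in one line.
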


\begin{proof}
The basis expansion follows from \LaterProp{prop:koschorke-basis-paper}.  Since
\(P\) is a polynomial, only finitely many coefficients occur.  Applying \(\Phi\)
and using the same proposition gives the displayed cohomological formula.
\end{proof}

\begin{rem}
\LaterThm{thm:super-singular-expansion-paper} is a statement about expansion in a chosen polynomial representation.  It does not assert that the individual
basis elements \([\Sigma_\lambda]\otimes[\Sigma_J^{\mathrm{odd}}]\) are singular.
It says that, after a singular vector is chosen, its expansion in this basis is
unique.  No closed formula for these coefficients is
asserted here.  The general Jack-superpolynomial coefficient formulas proposed by
Desrosiers--Lapointe--Mathieu \cite{DLM} are conjectural; moreover, they concern
expansions in a different basis.  Even assuming those formulas, comparison with
the present coefficients would require the transition matrix from that basis to
the Schur--exterior basis.
\end{rem}


\begin{thebibliography}{99}

\bibitem{AbrahamRobbin}
R. Abraham and J. Robbin,
\emph{Transversal Mappings and Flows},
An appendix by A. Kelley,
W. A. Benjamin, Inc., New York--Amsterdam, 1967, x+161 pp.

\bibitem{AtiyahJones}
M. F. Atiyah and J. D. S. Jones,
Topological aspects of Yang--Mills theory,
\emph{Comm. Math. Phys.} \textbf{61} (1978), no. 2, 97--118.

\bibitem{AtiyahKTheory}
M. F. Atiyah,
\emph{K-Theory},
Notes by D. W. Anderson, Second edition,
Adv. Book Classics,
Addison-Wesley Publishing Company, Advanced Book Program, Redwood City, CA, 1989, xx+216 pp.

\bibitem{AtiyahHilbert}
M. F. Atiyah,
Algebraic topology and operators in Hilbert space,
in \emph{Lectures in Modern Analysis and Applications, I},
Lecture Notes in Math., Vol. 103,
Springer-Verlag, Berlin--New York, 1969, 101--121.

\bibitem{AtiyahPatodiSingerIII}
M. F. Atiyah, V. K. Patodi and I. M. Singer,
Spectral asymmetry and Riemannian geometry. III,
\emph{Math. Proc. Cambridge Philos. Soc.} \textbf{79} (1976), no. 1, 71--99.

\bibitem{AtiyahSegalTwisted}
M. F. Atiyah and G. Segal,
Twisted K-theory,
\emph{Ukr. Mat. Visn.} \textbf{1} (2004), no. 3, 287--330;
translation in \emph{Ukr. Math. Bull.} \textbf{1} (2004), no. 3, 291--334.

\bibitem{AtiyahSegal}
M. F. Atiyah and G. Segal,
Twisted K-theory and cohomology,
in \emph{Inspired by S. S. Chern},
Nankai Tracts Math., 11,
World Scientific Publishing Co. Pte. Ltd., Hackensack, NJ, 2006, 5--43.

\bibitem{AtiyahSingerSkew}
M. F. Atiyah and I. M. Singer,
Index theory for skew-adjoint Fredholm operators,
\emph{Inst. Hautes \'Etudes Sci. Publ. Math.} No. 37 (1969), 5--26.

\bibitem{Borel}
A. Borel,
Sur la cohomologie des espaces fibr\'es principaux et des espaces homog\`enes de groupes de Lie compacts,
\emph{Ann. of Math. (2)} \textbf{57} (1953), 115--207.

\bibitem{Cibotaru}
D. Cibotaru,
The odd Chern character and index localization formulae,
\emph{Comm. Anal. Geom.} \textbf{19} (2011), no. 2, 209--276.

\bibitem{DLM}
P. Desrosiers, L. Lapointe and P. Mathieu,
Superconformal field theory and Jack superpolynomials,
\emph{J. High Energy Phys.} \textbf{2012}, no. 9, 037, front matter+41 pp.

\bibitem{DiFrancescoMathieuSenechal}
P. Di Francesco, P. Mathieu and D. S\'en\'echal,
\emph{Conformal Field Theory},
Grad. Texts Contemp. Phys.,
Springer-Verlag, New York, 1997, xxii+890 pp.

\bibitem{EellsAlexanderPontrjagin}
J. Eells, Jr.,
Alexander--Pontrjagin duality in function spaces,
in \emph{Differential Geometry},
Proc. Sympos. Pure Math., Vol. III,
American Mathematical Society, Providence, RI, 1961, 109--129.

\bibitem{Fulton}
W. Fulton,
\emph{Young Tableaux},
With applications to representation theory and geometry,
London Math. Soc. Stud. Texts, 35,
Cambridge University Press, Cambridge, 1997, x+260 pp.

\bibitem{GomiSugaku}
K. Gomi,
Twisted K-theory [translation of MR2918235],
\emph{Sugaku Expositions} \textbf{28} (2015), no. 2, 157--187.
Translation of \emph{S\=ugaku} \textbf{64} (2012), no. 1, 47--74.

\bibitem{GomiMickelsson}
K. Gomi,
Mickelsson's twisted K-theory invariant and its generalizations,
preprint, arXiv:1303.5159 [math.AT], 2013.

\bibitem{IoharaKogaI}
K. Iohara and Y. Koga,
Representation theory of Neveu--Schwarz and Ramond algebras. I. Verma modules,
\emph{Adv. Math.} \textbf{178} (2003), no. 1, 1--65.

\bibitem{IoharaKogaII}
K. Iohara and Y. Koga,
Representation theory of Neveu--Schwarz and Ramond algebras. II. Fock modules,
\emph{Ann. Inst. Fourier (Grenoble)} \textbf{53} (2003), no. 6, 1755--1818.

\bibitem{KacWakimoto}
V. G. Kac and M. Wakimoto,
Modular invariant representations of infinite-dimensional Lie algebras and superalgebras,
\emph{Proc. Nat. Acad. Sci. U.S.A.} \textbf{85} (1988), no. 14, 4956--4960.

\bibitem{Koschorke}
U. Koschorke,
Infinite-dimensional K-theory and characteristic classes of Fredholm bundle maps,
in \emph{Global Analysis} (Proc. Sympos. Pure Math., Vols. XIV, XV, XVI, Berkeley, Calif., 1968), 95--133,
Proc. Sympos. Pure Math., XIV--XVI,
American Mathematical Society, Providence, RI, 1970.

\bibitem{Kuiper}
N. H. Kuiper,
The homotopy type of the unitary group of Hilbert space,
\emph{Topology} \textbf{3} (1965), 19--30.

\bibitem{LangFDG}
S. Lang,
\emph{Fundamentals of Differential Geometry},
Grad. Texts in Math., 191,
Springer-Verlag, New York, 1999, xviii+535 pp.

\bibitem{Macdonald}
I. G. Macdonald,
\emph{Symmetric Functions and Hall Polynomials},
Second edition. With contribution by A. V. Zelevinsky and a foreword by Richard Stanley,
Oxford Mathematical Monographs,
Oxford University Press, Oxford, 1995.

\bibitem{Milnor}
J. Milnor,
On axiomatic homology theory,
\emph{Pacific J. Math.} \textbf{12} (1962), 337--341.

\bibitem{MimachiYamada}
K. Mimachi and Y. Yamada,
Singular vectors of the Virasoro algebra in terms of Jack symmetric polynomials,
\emph{Comm. Math. Phys.} \textbf{174} (1995), no. 2, 447--455.

\bibitem{Morimoto}
H. Morimoto,
Infinite-dimensional cycles associated to operators,
\emph{Nagoya Math. J.} \textbf{127} (1992), 1--14.

\bibitem{Nakajima}
H. Nakajima,
\emph{Lectures on Hilbert Schemes of Points on Surfaces},
Univ. Lecture Ser., 18,
American Mathematical Society, Providence, RI, 1999, xii+132 pp.

\bibitem{Palais}
R. S. Palais,
Homotopy theory of infinite dimensional manifolds,
\emph{Topology} \textbf{5} (1966), 1--16.

\bibitem{Phillips}
J. Phillips,
Self-adjoint Fredholm operators and spectral flow,
\emph{Canad. Math. Bull.} \textbf{39} (1996), no. 4, 460--467.

\bibitem{QuillenCayley}
D. Quillen,
Superconnection character forms and the Cayley transform,
\emph{Topology} \textbf{27} (1988), no. 2, 211--238.

\bibitem{Smale}
S. Smale,
An infinite dimensional version of Sard's theorem,
\emph{Amer. J. Math.} \textbf{87} (1965), 861--866.

\bibitem{TsuchiyaKanie}
A. Tsuchiya and Y. Kanie,
Fock space representations of the Virasoro algebra. Intertwining operators,
\emph{Publ. Res. Inst. Math. Sci.} \textbf{22} (1986), no. 2, 259--327.

\bibitem{WakimotoYamadaI}
M. Wakimoto and H. Yamada,
Irreducible decompositions of Fock representations of the Virasoro algebra,
\emph{Lett. Math. Phys.} \textbf{7} (1983), no. 6, 513--516.

\bibitem{WakimotoYamadaII}
M. Wakimoto and H. Yamada,
The Fock representations of the Virasoro algebra and the Hirota equations of the modified KP hierarchies,
\emph{Hiroshima Math. J.} \textbf{16} (1986), no. 2, 427--441.

\bibitem{Whitehead}
G. W. Whitehead,
\emph{Elements of Homotopy Theory},
Grad. Texts in Math., 61,
Springer-Verlag, New York--Berlin, 1978, xxi+744 pp.

\end{thebibliography}
\end{document}